\documentclass[11pt, onecolumn, svgnames]{article}

\usepackage[svgnames]{xcolor}
\usepackage{constants}

\usepackage{stmaryrd}
\usepackage{tikz}
\usetikzlibrary{shapes.misc}
\usetikzlibrary{decorations.pathreplacing}
\tikzset{cross/.style={cross out, draw=black, minimum size=2*(#1-\pgflinewidth), inner sep=0pt, outer sep=0pt},
cross/.default={1pt}}

\usepackage{pstricks}
\usepackage[utf8]{inputenc}
\usepackage[T1]{fontenc}
\usepackage{lmodern}
\usepackage{xr}
\usepackage{bold-extra}
\usepackage{dsfont}

\usepackage{amsmath}
\usepackage{amssymb}
\usepackage{mathrsfs}
\usepackage{mathtools}

\usepackage{amsfonts}
\usepackage{amsthm}

\usepackage{enumerate}
\usepackage{multirow}
\usepackage{graphicx}
\parskip=5pt

\usepackage{bm}
\usepackage[hypertexnames=false]{hyperref}

\hypersetup{
    colorlinks,
    linkcolor={Black},
    citecolor={Black},
    urlcolor={Black}
}


\usepackage[top=2.5cm,bottom=2.5cm,left=2cm,right=2cm,marginparwidth=2.5cm]{geometry}
\setlength{\headheight}{23pt}
\reversemarginpar

\makeatletter
\renewcommand{\paragraph}{%
  \@startsection{paragraph}{4}%
  {\z@}{1.5ex \@plus 1ex \@minus .2ex}{-1em}%
  {\normalfont\normalsize\bfseries}%
}
\makeatother


{\theoremstyle{plain}
\newtheorem{Proposition}{\textbf{Proposition}}[section]

\newtheorem{Lemma}[Proposition]{\textbf{Lemma}}

\newtheorem{Claim}[Proposition]{\textbf{Claim}}
\newtheorem{Corollary}[Proposition]{\textbf{Corollary}}
\newtheorem{Assumption}{\textbf{Assumption}} }

{\theoremstyle{plain}
\newtheorem{Theorem}[Proposition]{Theorem}}

{\theoremstyle{definition}
\newtheorem{Definition}[Proposition]{Definition}}

{\theoremstyle{remark}

\newtheorem{Remark}[Proposition]{Remark}

}

\numberwithin{equation}{section}

\newcommand{\eps}{\varepsilon}


\newcommand{\bbE}{\mathbb{E}}

\newcommand{\Z}{\mathbb{Z}}
\newcommand{\R}{\mathbb{R}}
\newcommand{\Q}{\mathbb{Q}}
\renewcommand{\S}{\mathbb S} 


\newcommand{\cC}{\mathcal{C}}

\newcommand{\cX}{\mathcal{X}}
\newcommand{\cY}{\mathcal{Y}}



\newcommand{\fK}{\mathfrak{K}}



\newcommand{\rB}{\mathrm{B}}

\newcommand{\rG}{\mathrm{G}}



\let\limsup\relax
\let\liminf\relax
\DeclareMathOperator* \limsup {\overline{lim}}
\DeclareMathOperator* \liminf {\underline{lim}}

\DeclareMathOperator*{\argmin}{arg\,min}

\newcommand{\inclim}[1]{\lim_{#1}\!\!\uparrow\!}

\let\originalleft\left
\let\originalright\right
\renewcommand{\left}{\mathopen{}\mathclose\bgroup\originalleft}
\renewcommand{\right}{\aftergroup\egroup\originalright}

\newcommand{\p}[1]{\left( #1 \right)}
\newcommand{\acc}[1]{\left\{ #1 \right\}}
\newcommand{\cro}[1]{\left[ #1 \right]}

\newcommand{\ind}[1]{\mathds{1}_{#1}}

\newcommand{\dpe}{\coloneqq}

\newcommand{\eol}{\notag\\}

\def\restriction#1#2{\mathchoice
              {\setbox1\hbox{${\displaystyle #1}_{\scriptstyle #2}$}
              \restrictionaux{#1}{#2}}
              {\setbox1\hbox{${\textstyle #1}_{\scriptstyle #2}$}
              \restrictionaux{#1}{#2}}
              {\setbox1\hbox{${\scriptstyle #1}_{\scriptscriptstyle #2}$}
              \restrictionaux{#1}{#2}}
              {\setbox1\hbox{${\scriptscriptstyle #1}_{\scriptscriptstyle #2}$}
              \restrictionaux{#1}{#2}}}
\def\restrictionaux#1#2{{#1\,\smash{\vrule height .8\ht1 depth .85\dp1}}_{\,#2}} 

\newcommand{\module}[1]{\left\lvert #1 \right\rvert}
\newcommand{\norme}[2][]{\left\| #2 \right\|_{#1}}
\newcommand{\HD}{\mathfrak d_{\mathrm H}}

\newcommand{\ceil}[1]{\left\lceil #1 \right\rceil}
\newcommand{\floor}[1]{\left\lfloor #1 \right\rfloor}

\newcommand{\ball}[2][]{\mathrm{B}_{#1}\p{#2}}
\newcommand{\clball}[2][]{\overline{\mathrm{B}}_{#1}\p{#2}}

\newcommand{\Compacts}{\fK}

\newcommand{\intervalle}[4]{#1#2\,,#3#4}

\newcommand{\intervalleff}[2]{\intervalle{\left[}{#1}{#2}{\right]}}
\newcommand{\intervalleof}[2]{\intervalle{\left]}{#1}{#2}{\right]}}
\newcommand{\intervallefo}[2]{\intervalle{\left[}{#1}{#2}{\right[}}
\newcommand{\intervalleoo}[2]{\intervalle{\left]}{#1}{#2}{\right[}}

\newcommand{\intint}[2]{\left\llbracket#1\,,#2\right\rrbracket}



\renewcommand{\d}{\mathrm{d}}

\DeclareMathOperator \petito {o}
\DeclareMathOperator \grando {O}

\DeclareMathOperator \diam {diam}

\DeclareMathOperator \grad {grad}

\DeclareMathOperator \Ex {Ex}

\newcommand{\base}[1]{\mathrm e_{#1}}

\newcommand\ps[2]{\langle #1, #2 \rangle}



\newcommand{\Borel}[1]{\mathcal{B}\left( #1 \right) }

\newcommand{\Hau}[1][1]{\mathcal H^{#1} }
\newcommand{\Leb}{\operatorname{Leb}}

\newcommand{\E}[2][]{\mathbb{E}_{#1} \left[ #2\right]}

\newcommand{\Pb}[2][]{\mathbb{P}_{#1}\left( #2\right)}
\newcommand{\Pbcond}[3][]{\Pb[#1]{\left. #2 \right\rvert #3 }}


\newcommand{\Path}[1]{\overset{#1}{\rightsquigarrow}}

\newcommand{\ResPath}[2]{\underset{ #1 }{\overset{#2}{\rightsquigarrow}}}

\newcommand{\edges}[2][]{\mathrm E_{#1}\p{#2}}

\newcommand{\exedges}[1]{\edges[\mathrm{ext}]{#1}}
\newcommand{\bedges}[1]{\edges[\mathrm{bd}]{#1}}
\newcommand{\FastE}{E_\mathrm{fast}}

\newcommand{\Pathedges}[1]{\edges[\mathrm{path}]{#1}}

\newcommand{\InTiles}[1][k]{\mathrm{V}_{#1}^\mathrm{int} }
\newcommand{\ExTiles}[1][k]{\mathrm{V}_{#1}^\mathrm{ext} }
\DeclareMathOperator \Corridor {Corridor}

\DeclareMathOperator \Tile {Tile}



\newcommand{\FdT}[1][X]{I_{#1}}
\newcommand{\FdTsup}[1][X]{\overline{I}_{#1}}
\newcommand{\FdTinf}[1][X]{\underline{I}_{#1}}

\newcommand{\FdTn}{J}

\newcommand{\FdTpp}{I_{\mathrm{pp}} }
\newcommand{\FdTppx}{I_{\mathrm{pp}(x)} }
\newcommand{\FdTppxsup}{\overline{I}_{\mathrm{pp}(x)}}
\newcommand{\FdTppxinf}{\underline{I}_{\mathrm{pp}(x)}}

\newcommand{\exFdT}[2][X]{I^{(#2)}_{#1}}

\newcommand{\monexFdTppx}[1]{I^{+,(#1)}_{\mathrm{pp}(x)} }
\newcommand{\monexFdTppxsup}[1]{\overline{I}^{+,(#1)}_{\mathrm{pp}(x)}}
\newcommand{\monexFdTppxinf}[1]{\underline{I}^{+,(#1)}_{\mathrm{pp}(x)}}

\newcommand{\monFdT}[1][X]{I^+_{#1}}
\newcommand{\monFdTsup}[1][X]{\overline{I}^+_{#1}}
\newcommand{\monFdTinf}[1][X]{\underline{I}^+_{#1}}
\newcommand{\monFdTppx}{I_{\mathrm{pp}(x)}^+}
\newcommand{\monFdTppxsup}{\overline{I}_{\mathrm{pp}(x)}^+}
\newcommand{\monFdTppxinf}{\underline{I}_{\mathrm{pp}(x)}^+}

\newcommand{\monexFdT}[2][X]{I^{+, (#2)}_{#1}}
\newcommand{\monexFdTsup}[2][X]{\overline{I}^{+, (#2)}_{#1}}
\newcommand{\monexFdTinf}[2][X]{\underline{I}^{+, (#2)}_{#1}}

\newcommand{\FdTCT}{I_{\mathrm{cross}}}
\newcommand{\exFdTCT}[1]{I_{\mathrm{cross}}^{(#1)} }
\newcommand{\FdTCTsup}{\overline{I}_{\mathrm{cross}}}
\newcommand{\FdTCTinf}{\underline{I}_{\mathrm{cross}}}
\newcommand{\monFdTCT}{I^+_{\mathrm{cross}}}
\newcommand{\monexFdTCT}[1]{I_{\mathrm{cross}}^{+,(#1)} }
\newcommand{\monFdTCTsup}{\overline{I}^+_{\mathrm{cross}}}
\newcommand{\monFdTCTinf}{\underline{I}^+_{\mathrm{cross}}}
\newcommand{\monexFdTCTsup}[1]{\overline{I}^{+, (#1) }_{\mathrm{cross}}}
\newcommand{\monexFdTCTinf}[1]{\underline{I}^{+, (#1) }_{\mathrm{cross}}}

\newcommand{\LD}{\mathrm{LD}}
\DeclareMathOperator \Fav {Fav}



\newcommand{\AdmDistances}[1][X]{\mathcal D_{#1}}
\newcommand{\exAdmDistances}[2][X]{\mathcal D_{#1}^{(#2)}}
\newcommand{\AdmNorms}{\mathcal N}
\newcommand{\exNorms}[1]{\mathcal N^{(\alpha)}}
\newcommand{\NoncritAdmNorms}{\mathcal N^*}
\newcommand{\SNorms}{\mathcal{SN}}
\newcommand{\NoncritSNorms}{\mathcal{SN}^*}
\newcommand{\UnifDistance}{\mathfrak d_\infty}

\newcommand{\Extension}[2][]{\overline{#2}^{#1} }
\newcommand{\Scaling}[2]{\operatorname{Sc}_{#2}\p{#1} }
\newcommand{\Translation}[2]{\operatorname{Tr}_{#2}\p{#1} }


\newcommand{\Windows}{\mathcal K}
\newcommand{\LevelX}[2]{\Phi_{#1, #2} }

\newcommand{\ContHom}{\mathcal C_{\mathrm{Hom}}\p{\R^d, \R} }
\newcommand{\normeHom}[1]{\norme[\mathrm{Hom}]{#1} }


\newcommand{\pc}{p_c(\Z^d)}

\newcommand{\EPT}[1][]{\tau_{#1}}
\newcommand{\bEPT}[2][]{\tau_{#1}^{[#2]}}
\newcommand{\EPTG}[1][]{\EPT[#1]^\rG}
\newcommand{\EPTB}[1][]{\EPT[#1]^\rB}
\newcommand{\tEPT}[2][]{\EPT[#1]^{(#2)}}
\newcommand{\FastEPT}[2][]{\tau_{#1}^{[#2]}}
\newcommand{\FFastEPT}[1][]{\tau^*_{#1}}

\newcommand{\PathPT}[1]{\tau\p{ #1 } }

\newcommand{\PT}{\mathbf T}
\newcommand{\BoxPT}[1][X]{{\mathbf T}_{ #1 } }
\newcommand{\tBoxPT}[2][X]{{\mathbf T}^{(#2)}_{ #1 } }
\newcommand{\SPT}[1][n]{\widetilde {\mathbf T}_{#1} }
\newcommand{\BoxSPT}[1][n,X]{\widetilde {\mathbf T}_{ #1 } }
\newcommand{\tBoxSPT}[2][n,X]{\widetilde {\mathbf T}^{(#2)}_{ #1 } }
\newcommand{\BoxSPTB}[1][n,X]{\widetilde {\mathbf T}_{ #1 }^\rB }

\newcommand{\FastBoxSPT}[2][n,X]{\widetilde {\mathbf T}^{[#2]}_{ #1 } }
\newcommand{\FFastBoxSPT}[1][n,X]{\widetilde{\mathbf{T}}^*_{ #1 } }

\newcommand{\CT}{\widetilde {\mathbf T}_{\mathrm{cross}} }
\newcommand{\FFastCT}{ \widetilde{\mathbf T}^*_{\mathrm{cross}}}

\newcommand{\RUB}{\widetilde {\mathbf B} }

\newcommand{\TimeConstant}{\mu}


\newcommand{\Gag}{\gamma_{\mathrm{\scriptscriptstyle L}}}
\newcommand{\Gam}{\gamma_{\mathrm{M}}}
\newcommand{\Gad}{\gamma_{\mathrm{\scriptscriptstyle R}}}

\newcommand{\bj}{\mathbf j}
\newcommand{\hal}{\hat \alpha}

\newcommand{\rmg}{\mathrm{\scriptscriptstyle L}}

\newcommand{\rd}{\mathrm{\scriptscriptstyle R}}


\newcommand{\GB}[1][z]{\hat g_{#1} }
 
\newcommand{\TBox}{\operatorname{Box}}

\makeatletter
\newcommand{\SymbolsTileFav}[1]{%
  \ensuremath{%
    \ifcase#1
    \or 
      *%
    \or 
      \dagger
    \or 
      \ddagger
    \or 
      **
    \or 
      \dagger\dagger
    \or 
      \ddagger \ddagger
    \or 
      \diamond 
    \fi
  }%
}   
\makeatother

\newcounter{tile}

\newcommand{\cTile}{\Tile^{\SymbolsTileFav{\thetile} }}
\newcommand{\cFav}{\Fav^{\SymbolsTileFav{\thetile} }}


\title{Large deviation principle at speed $n^d$ for the random metric in first-passage percolation}
\author{Julien \textsc{Verges} \\ julien.verges@univ-tours.fr}
\date{\today}

\begin{document}
\maketitle
\begin{abstract}
	We consider the standard first passage percolation model on $\Z^d$ with bounded and bounded away from zero weights. We show that the rescaled passage time $\BoxSPT$ restricted to a compact set $X$ satisfies a large deviation principle (LDP) at speed $n^d$ in a space of geodesic metrics, i.e. an estimation of the form $\Pb{\BoxSPT\approx D}\approx\exp\p{-I(D)n^d }$ for any metric $D$. Moreover, $I(D)$ can be written as the integral over $X$ of an elementary cost. Consequences include LDPs at speed $n^d$ for the point-point passage time, the face-face passage time and the random ball of radius $n$. Our strategy consists in proving the existence of $\lim_{n\to\infty}-\frac{1}{n^d}\log \Pb{\BoxSPT[n,\intervalleff01^d] \approx g}$ for any norm $g$ with a multidimensional subaddivity argument, then using this result as an elementary building block to estimate $\Pb{\BoxSPT\approx D}$ for any metric $D$. 
\end{abstract}
\section{Introduction}
\label{sec : intro}

\subsection{Framework}
\label{subsec : intro/framework}

\paragraph{First passage percolation.}
Let $d\ge2$ be an integer. Let $\bbE^d$ the set of all non-oriented nearest-neighbour edges in $\Z^d$. A finite sequence $\pi\dpe (x_0,\dots, x_r)$ of elements of $\Z^d$ is called a \emph{discrete path} if for all $i\in \intint0{r-1}$, $(x_i,x_{i+1}) \in \bbE^d$.

Let $\nu$ be a probability distribution on $\intervallefo0\infty$ and denote by $a$ and $b$ the infimum and supremum of its support respectively. We consider a family $\p{\EPT[e]}_{e\in \bbE^d}$ of i.i.d. random variables with distribution $\nu$. The variable $\EPT[e]$ is called the \emph{passage time along the edge} $e$. Given a discrete path $\pi=(x_0,\dots , x_r)$, the \emph{passage time along} $\pi$ is defined as
\begin{align}
	\label{eqn : intro/framework/def_PathPT}
	\PathPT{\pi} &\dpe \sum_{i=0}^{r-1} \EPT[ (x_i, x_{i+1})].
	\intertext{For $x,y\in \Z^d$, the \emph{passage time between $x$ and $y$} is defined as}
	\label{eqn : intro/framework/def_PT}
	\PT(x,y) &\dpe \inf_{x \Path{\pi} y} \PathPT{\pi},
\end{align}
where the infimum is taken over all discrete paths whose endpoints are $x$ and $y$. The map $\PT(\cdot, \cdot)$ is a metric on $\Z^d$. We call \emph{discrete geodesic} between $x$ and $y$ any minimizer in~\eqref{eqn : intro/framework/def_PT}. A well-known result (Equation~2.4 in \cite{50yFPP}) states that under a moment condition on $\nu$, there exists an homogeneous function $\TimeConstant$ on $\R^d$, known as the \emph{time constant}, such that almost surely, for all $x\in \Z^d$,
\begin{equation}
	\label{eqn : intro/framework/def_TimeConstant}
	\frac{\PT(0,nx)}{n} \xrightarrow[n\to\infty]{} \TimeConstant(x).
\end{equation}
The time constant is a norm if $\nu(\acc 0)< \pc$, where $\pc$ is the critical parameter for bond percolation in $\Z^d$. Otherwise $\mu(x) = 0$ for all $x\in \Z^d$. As a consequence of~\eqref{eqn : intro/framework/def_TimeConstant} the probability of an event of the form $\acc{\PT(0,n\base 1)\le \zeta n}$, with $\zeta < \TimeConstant(\base 1)$ or $\acc{\PT(0,n\base 1)\ge \zeta n}$, with $\zeta > \TimeConstant(\base 1)$ (the so-called lower tail and upper tail large deviation events) converges to $0$ as $n\to\infty$. In 1984, Kesten \cite{KestenStFlour} obtained estimates for the speed of convergence: there exists (Theorem~5.2 in \cite{KestenStFlour}) a convex decreasing function $\FdTn:\intervalleoo{a}{\TimeConstant(\base{1})} \rightarrow \intervalleoo{0}{\infty}$ such that for all $a < \zeta < \TimeConstant(\base 1)$,
\begin{equation}
	\label{eqn : intro/framework/monotone_rate_function_n}
	\lim_{n\to\infty} -\frac1n \log\Pb{ \PT\p{0, n\base1}\le \zeta n } = \FdTn(\zeta).
\end{equation}
Moreover (Theorem~5.9 in \cite{KestenStFlour}), under the assumption $b<\infty$, for all $\mu(\base 1)<\zeta < b$,
\begin{equation}
		\label{eqn : intro/framework/LD_order_n^d}
		0 < \liminf_{n\to\infty} -\frac{1}{n^d} \log\Pb{ \PT\p{0, n\base1}\ge \zeta n } \le \limsup_{n\to\infty} -\frac{1}{n^d} \log\Pb{ \PT\p{0, n\base1}\ge \zeta n } < \infty.
\end{equation}
These results can be generalized to any direction with minor proof adaptations. Under an additional regularity assumption on $\nu$, Basu, Ganguly and Sly \cite{Bas21} proved in 2021 that the limit in~\eqref{eqn : intro/framework/LD_order_n^d} actually exists in the case $d=2$, $x=\base 1$, and as stated in their paper, their arguments are still valid for any dimension and any $x\in \R^d\setminus\acc0$. Although not explicitly stated, their theorem implies the existence of a rate function $\FdTpp:\intervalleff0b\rightarrow \intervalleff0\infty$ with which the process $\p{\frac{\PT(0, n\base 1)}{n}}_{n\ge1}$ satisfies the \emph{large deviation principle} (LDP) at speed $n^d$. In other words, for all Borel sets $A\subseteq \intervalleff0b$,
\begin{equation}
	\label{eqn : intro/framework/LDP_point_point_time}
	\inf_{\zeta \in \overline{A}} \FdTpp(\zeta)%
	\le \liminf_{n\to\infty}-\frac{1}{a_n} \log \Pb{ \frac{\PT(0,n\base 1)}{n} \in A } %
	\le \limsup_{n\to\infty}-\frac{1}{a_n} \log \Pb{ \frac{\PT(0,n\base 1)}{n} \in A } %
	\le \inf_{\zeta \in \mathring{A}} \FdTpp(\zeta).
\end{equation}
Corollary~\ref{cor : intro/applications/point_point} generalizes this LDP to any distribution with bounded support and a subcritical atom at $0$.

\paragraph{Large deviations.}
We give here some general large deviations theory tools. See Dembo and Zeitouni (2009) \cite{LDTA} for the general theory.
\begin{Definition}
	Let $\cX$ a Hausdorff topological space. We call \emph{rate function} a lower semicontinuous map $I:\cX\rightarrow \intervalleff0\infty$, i.e. a map whose sublevels are closed. We further say that $I$ is a \emph{good rate function} if its sublevels are compact.

	We say that a random process $(X_n)_{n\ge 1}$ with values in $\cX$ satisfies the \emph{large deviation principle}, at speed $a_n$, with the rate function $I$ if for every Borel set $A\subseteq \cX$,
\begin{equation}
	\label{eqn : intro/framework/LDP_general case}
	\inf_{x\in \overline{A}} I(x)%
	\le \liminf_{n\to\infty}-\frac{1}{a_n} \log \Pb{ X_n \in A } %
	\le \limsup_{n\to\infty}-\frac{1}{a_n} \log \Pb{ X_n \in A } %
	\le \inf_{x\in \mathring{A}} I(x).
\end{equation}  
\end{Definition}
In this article only the case $a_n = n^d$ is considered. Lemma~\ref{lem : intro/sketch/UB_LB} will be of constant use.
\begin{Lemma}
	\label{lem : intro/sketch/UB_LB}
	Let $(\cX, \d_\cX)$ be a metric space and $(X_n)_{n\ge 1}$ a random process with values in $\cX$. Define, for all $x\in \cX$,
	\begin{align*}
		\overline{I}(x)&\dpe\inclim{\eps \to 0} \limsup_{n\to\infty} -\frac{1}{n^d}\log \Pb{\d_\cX(x, X_n)\le \eps }\\
		\text{and  }\underline{I}(x)&\dpe\inclim{\eps \to 0} \liminf_{n\to\infty} -\frac{1}{n^d}\log \Pb{\d_\cX(x, X_n)\le \eps }.
	\end{align*}
	Then
	\begin{enumerate}[(i)]
		\item \label{item : intro/sketch/UB_LB/rate_function}%
		$\overline I$ and $\underline I$ are rate functions on $\cX$.
		\item For every open set $U\subseteq \cX$,
			\begin{equation}
				\label{eqn : intro/sketch/UB_LB/UB}
				\limsup_{n\to\infty}-\frac{1}{n^d}\log\Pb{X_n \in U} \le \inf_{x\in U} \overline{I}(x).
			\end{equation}
		\item For every compact set $K\subseteq \cX$,
			\begin{equation}
				\label{eqn : intro/sketch/UB_LB/LB}
				\liminf_{n\to\infty}-\frac{1}{n^d}\log\Pb{X_n \in K} \ge \min_{x\in K} \underline{I}(x).
			\end{equation}
	\end{enumerate}
\end{Lemma}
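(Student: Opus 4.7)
The plan is to exploit the simple observation that, since $\eps\mapsto \Pb{d_\cX(x,X_n)\le \eps}$ is nondecreasing, the auxiliary quantities
\[
    \overline{I}_\eps(x)\dpe \limsup_{n\to\infty}-\frac{1}{n^d}\log \Pb{d_\cX(x,X_n)\le \eps},\qquad \underline{I}_\eps(x)\dpe \liminf_{n\to\infty}-\frac{1}{n^d}\log \Pb{d_\cX(x,X_n)\le \eps}
\]
are nonincreasing in $\eps$, so the monotone limits defining $\overline I$ and $\underline I$ are in fact suprema: $\overline I(x)=\sup_{\eps>0}\overline I_\eps(x)$ and similarly for $\underline I$. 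All three items then reduce to elementary comparisons between balls.

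For \emph{(i)}, let $x_k\to x$ in $\cX$, fix $\eps>0$, and take $k$ so large that $d_\cX(x_k,x)<\eps$. The triangle inequality gives $\acc{d_\cX(X_n,x_k)\le \eps}\subseteq \acc{d_\cX(X_n,x)\le 2\eps}$, whence $\overline I(x_k)\ge \overline I_\eps(x_k)\ge \overline I_{2\eps}(x)$. Taking $\liminf_k$ and then $\eps\downarrow 0$ yields $\liminf_k \overline I(x_k)\ge \overline I(x)$, so $\overline I$ is lower semicontinuous; the same argument works for $\underline I$. A consequence is that $\underline I$ attains its infimum on any compact set, which is why \eqref{eqn : intro/sketch/UB_LB/LB} is stated with a minimum. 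Part \emph{(ii)} is then immediate: if $U$ is open and $x\in U$, choose $\eps>0$ with $\clball[\eps]{x}\subseteq U$, so that $\Pb{X_n\in U}\ge \Pb{d_\cX(x,X_n)\le \eps}$ and $\limsup_n -\frac{1}{n^d}\log \Pb{X_n\in U}\le \overline I_\eps(x)\le \overline I(x)$; then take the infimum over $x\in U$.

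Part \emph{(iii)} requires a bit more care. Set $M\dpe\min_{x\in K}\underline I(x)$, which is well-defined by (i), and fix $\delta>0$ and $C\in(0,\infty)$. For each $x\in K$, the identity $\underline I(x)=\sup_\eps \underline I_\eps(x)$ allows one to choose $\eps(x)>0$ such that $\underline I_{\eps(x)}(x)\ge \min(\underline I(x),C)-\delta\ge \min(M,C)-\delta$. Compactness of $K$ extracts a finite subcover $\ball[\eps(x_i)]{x_i}$, $i=1,\ldots,N$, and the union bound yields $\Pb{X_n\in K}\le N\max_i \Pb{d_\cX(x_i,X_n)\le \eps(x_i)}$, so that
\[
    -\frac{1}{n^d}\log \Pb{X_n\in K} \ge -\frac{\log N}{n^d} + \min_{1\le i\le N}\cro{-\frac{1}{n^d}\log \Pb{d_\cX(x_i,X_n)\le \eps(x_i)}}.
\]
The one step that is not mere bookkeeping is the passage to $\liminf$ on the right: one needs the fact that $\liminf_n$ commutes with a minimum over a finite index set, which follows from a short pigeonhole argument (some index $i^*$ realizes the min infinitely often along any $\liminf$-achieving subsequence). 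This gives $\liminf_n -\frac{1}{n^d}\log \Pb{X_n\in K} \ge \min_i \underline I_{\eps(x_i)}(x_i)\ge \min(M,C)-\delta$, and letting $\delta\downarrow 0$ and $C\uparrow\infty$ concludes.
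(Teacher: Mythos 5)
Your proofs of (i) and (ii) coincide with the paper's, modulo notation. For (iii) you take a genuinely different route. The paper covers $K$ by balls of a single fixed radius $\eps$, obtains from the union bound a minimizer $\hat x(\eps)$ among the finitely many centers, and then sends $\eps\to0$ along a subsequence for which $\hat x(\eps_k)$ converges to some $\hat x\in K$; the conclusion is read off from the definition of $\underline{I}(\hat x)$. You instead use the monotone representation $\underline{I}(x)=\sup_{\eps>0}\underline{I}_\eps(x)$ to pick an adaptive radius $\eps(x)$ for each $x\in K$, so that $\underline{I}_{\eps(x)}(x)$ is already within $\delta$ of the truncated target $\min\bigl(\underline{I}(x),C\bigr)$, and only then extract a finite subcover. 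This is the textbook scheme for the lower bound in large deviations (cf.\ \cite{LDTA}); it avoids extracting a convergent subsequence of minimizers entirely, at the cost of a two-parameter limit $\delta\downarrow0$, $C\uparrow\infty$, the cutoff $C$ being what handles the case $\min_{x\in K}\underline{I}(x)=\infty$ (the paper's argument also covers that case, since $\hat x\in K$, just less explicitly). Both arguments are correct, and yours is arguably the more standard and slightly cleaner one. You are also right to single out the commutation of $\liminf_n$ with a finite minimum as the only non-trivial step; the paper asserts it without comment, and your pigeonhole justification is exactly what is needed.
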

Although they are straightforward adaptations of the proof of Theorem~4.1.11 in~\cite{LDTA}, we prove them in Appendix~\ref{appsec : LD} for completeness, and because the formalisms are quite different. Note that on a compact metric space, with the notations of Lemma~\ref{lem : intro/sketch/UB_LB}, $I\dpe\overline I = \underline I$ implies that $(X_n)_{n\ge1}$ satisfies a LDP at speed $n^d$, with the rate function $I$.


\subsection{Main theorem}
\label{subsec : intro/main_thm}
The aim of this paper is to prove that the rescaled metric $\PT$ restricted to a box satisfies a LDP with a good rate function $\FdT$ (i.e. a function whose sublevels are compact) at speed $n^d$. Unless specified otherwise we work under the following assumption.

\begin{Assumption}
\label{ass : intro/main_thm/support}
	The bounds of $\nu$'s support satisfy $0<a<b<\infty$.
\end{Assumption}

We endow $\R^d$ with the norm defined by
\begin{equation}
	\norme{x} = \norme[1]{x} \dpe \sum_{i=1}^d \module{x_i},
\end{equation}
for all $x=(x_1,\dots,x_d) \in \R^d$. We define $\d$ as the metric associated with this norm. Let $\S$ denote the unit sphere for this norm and $\ball{z,r}$ (resp. $\clball{z,r}$) the open (resp. closed) ball of center $z$ and radius $r$. Let $\Windows$ denote the set of all compact convex subsets of $\R^d$ with nonempty interior. Elementary properties of such subsets are gathered in Appendix~\ref{appsec : windows}. 

\paragraph{The rescaled metric.}
Given a polygonal path $\pi=(x_0,\dots, x_r)$ with $\forall i\in \intint0r, x_i \in \R^d$, we generalize~\eqref{eqn : intro/framework/def_PathPT} by defining
\begin{equation}
	\label{eqn : intro/main_thm/generalized_PathPT1}
	\PathPT{\pi} \dpe \sum_{i=0}^{r-1}\PathPT{x_i, x_{i+1}},
\end{equation}
where for all $z,z'\in \R^d$,
\begin{equation}
	\label{eqn : intro/main_thm/generalized_PathPT2}
	\PathPT{z,z'}  \dpe%
		\begin{cases}
			\EPT[v,v'] \norme{z-z'} \qquad &\text{if there exist adjacent vertices $v,v'\in\Z^d$ such that $z,z'\in \intervalleff{v}{v'}$, }\\
			b\norme{z-z'}				   &\text{otherwise.}
		\end{cases}
\end{equation}
The passage time between two points $x,y\in \R^d$ is generalized to noninteger points by
\begin{equation}
\label{eqn : intro/main_thm/def_PT}
	\PT(x,y) \dpe \inf_{x \Path{\pi} y} \PathPT{\pi},
\end{equation}
where the infimum is taken over all finite sequences whose endpoints are $x$ and $y$. Note that it differs from the usual generalization, which consists in replacing $x$ and $y$ by their projections on $\Z^d$. For all $X\in \Windows$, $x,y\in X$, we define
\begin{equation}
\label{eqn : intro/main_thm/def_BoxPT}
	\BoxPT(x,y) \dpe \inf_{\substack{x \Path{\pi} y \\ \pi\subseteq X} } \PathPT{\pi}.
\end{equation}
The maps $\PT$ and $\BoxPT[X]$ are metrics on $\R^d$ and $X$ respectively. It is shown in Proposition~\ref{prop : limit_space/geodesics/extension_tranquille} that the maps defined in~\eqref{eqn : intro/framework/def_PT} and~\eqref{eqn : intro/main_thm/def_PT} indeed coincide on integer points. Likewise, if $X = \prod_{i=1}^d\intervalleff{t_i}{t_i'}$ with $t_i,t_i'\in \R$ and $t_i<t_i'$, and $x,y\in X\cap\Z^d$ then restricting the infimum in the definition of $\BoxPT[X](x,y)$ on discrete paths only defines the same object.

For all $n\ge1$, we also define the \emph{rescaled} versions of $\PT$ and $\BoxPT$ as
\begin{alignat}{2}
 \BoxSPT : X \times X &\longrightarrow \intervallefo0\infty & \text{and}\quad \SPT : \R^d \times \R^d &\longrightarrow \intervallefo0\infty\\
  (x,y) &\longmapsto \frac1n \BoxPT[nX](nx,ny) &  (x,y) &\longmapsto \frac1n \PT(nx,ny). \notag
\end{alignat}

\paragraph{The limit space.}
The random variable $\BoxSPT$ belongs to the space $\AdmDistances[X]$ defined below (see Proposition~\ref{prop : limit_space/geodesics/BoxSPT_espace}).

\begin{Definition}
\label{def : intro/main_thm/adm_metrics}
	Let $X\in \Windows\cup \acc{\R^d}$. We define the set of \emph{admissible metrics} $\AdmDistances[X]$ as the set of all metrics $D$ on $X$ such that
	\begin{enumerate}[(i)]
		\item For all $x,y\in X$,
			\begin{equation}
			\label{eqn : intro/main_thm/equivalence_distances}
				a\norme{x-y} \le D(x,y) \le b\norme{x-y}.
			\end{equation}
		\item The metric space $(X,D)$ is \emph{geodesic}, i.e. for all $x,y\in X$, there exists an isometry $\sigma:\intervalleff{0}{D(x,y)}\rightarrow X$ which maps $0$ to $x$ and $D(x,y)$ to $y$.
	\end{enumerate}
	We call such a function $\sigma$ a $D$-\emph{geodesic} (or simply \emph{geodesic} when there is no ambiguity) between $x$ and $y$. A geodesic may be seen as a continuous path linking $x$ to $y$, whose $D$-length (see~\eqref{eqn : intro/notations/length_curve}) is minimal, with a parametrization chosen such that the travel speed (with respect to the metric $D$) is $1$. For all $X\in\Windows$, $\AdmDistances[X]$ is endowed with the \emph{uniform distance} defined for every $D_1, D_2 \in \AdmDistances[X]$ by
		\begin{equation}
		\label{eqn : intro/main_thm/distance_uniforme}
			\UnifDistance(D_1, D_2) \dpe \max_{(x,y)\in X^2}\module{D_1(x,y) - D_2(x,y)},
		\end{equation}
	which makes it compact (see Proposition~\ref{prop : limit_space/compactness/compactness}), and its Borel $\sigma$-algebra $\Borel{\AdmDistances[X]}$. For all $D_1, D_2\in \AdmDistances[X]$ we denote by $D_1\le D_2$ the assertion 
	\begin{equation}
	\label{eqn : intro/main_thm/partial_order}
		\forall x,y\in X,\quad  D_1(x,y)\le D_2(x,y),
	\end{equation}
	which defines a partial order on $\AdmDistances[X]$.
\end{Definition}

\begin{Definition}
\label{def : intro/main_thm/AdmNorms}
	Let $\AdmNorms$ denote the set of norms $g$ on $\R^d$ such that $a\norme\cdot \le g \le b\norme\cdot$. If $D$ is a metric on $X\in \Windows$ associated with such a norm, i.e. $D(x,y)=g(x-y)$ for all $x,y\in X$, we will identify $D$ and $g$. Let $\NoncritAdmNorms$ denote the subset of $\AdmNorms$ consisting of norms $g$ such that $\forall x\neq 0, g(x) < b\norme x$.
\end{Definition}

One key property of a metric $D\in \AdmDistances[X]$ is that around almost every $z\in X$, $D$ "behaves like" a norm $(\grad D)_z \in \AdmNorms$ (see Subsection~\ref{subsec : limit_space/gradient}). 

\paragraph{Result statement.}
\begin{Theorem}
	\label{thm : MAIN}
	Let $X\in \Windows$. Under Assumption~\ref{ass : intro/main_thm/support}, the process $\p{\BoxSPT}_{n\ge 1}$ satisfies the large deviation principle at speed $n^d$ with a good rate function $\FdT : \AdmDistances[X] \rightarrow \intervalleff0\infty$, \textit{i.e.} for all $A\in \Borel{\AdmDistances[X]}$,
	\begin{equation}
	\label{eqn : MAIN/LDP}
   		\inf_{D\in \overline{A}} \FdT(D) \le %
   		\liminf_{n\to\infty}-\frac{1}{n^d}\log\Pb{\BoxSPT \in A} \le %
   		\limsup_{n\to\infty}-\frac{1}{n^d}\log\Pb{\BoxSPT \in A} \le %
   		\inf_{D\in \mathring{A}} \FdT(D).
   \end{equation}
   Moreover, $\FdT$ is nondecreasing for the partial order defined in~\eqref{eqn : intro/main_thm/partial_order} and for all $D\in \AdmDistances[X]$,
   \begin{equation}
   \label{eqn : MAIN/integral}
   		\FdT(D) = \int_X \FdT[\intervalleff01^d]\p{ (\grad D)_z} \d z.
   \end{equation}
\end{Theorem}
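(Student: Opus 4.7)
The argument proceeds in two stages, matching the strategy announced in the abstract. \textbf{Stage~1.} Specialize first to $X=\intervalleff01^d$ and to targets that are genuine norms $g\in\AdmNorms$. The goal is to show that
\[
	\FdT[\intervalleff01^d](g)\dpe\inclim{\eps\to0}\lim_{n\to\infty}-\frac{1}{n^d}\log\Pb{\UnifDistance\bigl(\BoxSPT[n,\intervalleff01^d],g\bigr)\le\eps}
\]
is a genuine limit. The tool is a multidimensional subadditive argument: tile the rescaled box $\intervalleff0{nm}^d$ by $m^d$ disjoint translates of $\intervalleff0{n}^d$; by independence of the edge weights across tiles, the probability that the rescaled metric is $\approx g$ on every tile factorizes into a product. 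Provided one can show that ``$\approx g$ on each tile'' implies ``$\approx g$ on the union'' (the global metric is recovered by splicing local almost-geodesics across tile interfaces), this yields a super-multiplicativity $\Pb{\BoxSPT[nm]\approx g}\gtrsim\Pb{\BoxSPT[n]\approx g}^{m^d}$ up to sub-leading corrections, and hence the limit exists. The non-critical regime $g\in\NoncritAdmNorms$ provides the slack $g<b\norme\cdot$ needed to absorb the boundary cost between tiles; the critical boundary of $\AdmNorms$ is then recovered by approximation/continuity.

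\textbf{Stage~2.} For general $X\in\Windows$ and $D\in\AdmDistances[X]$, define
\[
	\FdT(D)\dpe\int_X\FdT[\intervalleff01^d]\bigl((\grad D)_z\bigr)\,\d z.
\]
This is well-defined (the integrand is measurable and bounded thanks to the a.e.\ gradient structure of admissible metrics) and yields a good rate function, since $(\AdmDistances[X],\UnifDistance)$ is already compact and the integral is lower semicontinuous in $D$ by Fatou. By Lemma~\ref{lem : intro/sketch/UB_LB}, the LDP reduces to matching the upper and lower asymptotics of $\Pb{\UnifDistance(\BoxSPT,D)\le\eps}$ with $\FdT(D)$. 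Both bounds come from tiling $X$ by small sub-cubes $(B_i)$ of side $\delta$, on each of which $D$ is close to the affine metric determined by the norm $(\grad D)_{z_i}$. For the \textbf{upper bound}, the event $\{\UnifDistance(\BoxSPT,D)\le\eps\}$ sits inside the intersection of the local events $\{\UnifDistance(\BoxSPT[n,B_i],(\grad D)_{z_i})\le\eps_\delta\}$; by independence the probability factorizes, and $-n^{-d}\log\Pb{\cdot}$ is bounded below by the Riemann sum $\sum_i\delta^d\FdT[\intervalleff01^d]\bigl((\grad D)_{z_i}\bigr)$, itself converging to the integral defining $\FdT(D)$ as $\delta\to 0$.

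For the \textbf{lower bound}, use the Stage~1 estimate to guarantee each local event with the correct exponential probability, and multiply via independence. One then has to show that the intersection of these local events forces $\UnifDistance(\BoxSPT,D)\le\eps$ \emph{globally}, by splicing local almost-geodesics of each tile along (piecewise-linear approximations of) $D$-geodesics of $X$ and controlling the cost paid at tile interfaces. I expect this global-metric reconstruction to be the main obstacle of the entire proof. Once both bounds match the integral, the formula~\eqref{eqn : MAIN/integral} holds by construction. Finally, the \textbf{monotonicity} of $\FdT$ reduces via the integral formula to that of $\FdT[\intervalleff01^d]$ on $\AdmNorms$, which one might obtain by a coupling/resampling argument: given weights producing $\BoxSPT\approx g_2$ with $g_1\le g_2$, resample a sub-exponential set of edges to push the rescaled metric down to $\approx g_1$, at an $\exp(o(n^d))$ multiplicative cost in probability.
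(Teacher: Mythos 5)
Your two-stage architecture matches the paper's (subadditivity on $\intervalleff01^d$ for norms, then tiling for general $D$), but there are two structural gaps in Stage~2 that make the plan as stated incomplete.

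First, the inclusion you assert for the ``upper bound'' --- that $\{\UnifDistance(\BoxSPT,D)\le\eps\}$ sits inside $\bigcap_i\{\UnifDistance(\BoxSPT[n,B_i],(\grad D)_{z_i})\le\eps_\delta\}$ --- is false for the two-sided event. Restricting the infimum defining $\BoxSPT$ to paths inside $B_i$ only \emph{increases} the metric; so from $\BoxSPT[n,X]\le D+\eps$ on $X$ you cannot conclude $\BoxSPT[n,B_i]\le(\grad D)_{z_i}+\eps_\delta$ on $B_i$, since an $X$-geodesic between two points of $B_i$ may leave $B_i$. Only the one-sided event $\{\BoxSPT\ge D-\eps\}$ restricts cleanly to tiles. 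To work with the one-sided event you must first prove that the lower and upper rate functions are nondecreasing for the partial order on $\AdmDistances$, so that $\LD$ and $\LD^+$ give the same rate. This means the monotonicity statement is not, as you propose, a downstream consequence of the integral formula --- it has to be established \emph{first} (for general admissible metrics, not just norms), by an iterated edge-resampling that creates one near-optimal geodesic at a time while controlling the cumulative metric distortion. Your sketch of the resampling argument for norms is in the right spirit but does not yet handle arbitrary $D$ and does not see that this step gates the whole tiling decomposition.

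Second, the ``probability factorizes by independence'' step over a partition of $X$ into small cubes fails as stated: adjacent tiles share boundary edges, so the local rescaled metrics are not independent. You need to decouple by resampling a negligible layer of boundary edges (an $o(n^d)$ set) from independent copies of the environment, and check that this resampling changes neither the inclusion you need nor the exponential order. Relatedly, the reverse inclusion for the lower bound --- showing that the intersection of local one-sided events forces $\BoxSPT\gtrsim D$ globally --- requires a quantitative splicing device: a deterministic ``corridor lemma'' bounding how much the metric between two far-apart tiles can be undercut by paths that cross the interstitial region, in terms of a width parameter and a lower bound on the metric's density in the corridor. You correctly flag this reconstruction as the main obstacle, but it needs a concrete statement (not just an expectation that it works), together with a density/approximation argument to pass from metrics with piecewise-constant noncritical gradient to arbitrary $D\in\AdmDistances$ (regularize the gradient field, control the resulting metric by Jensen, take limits using lower semicontinuity of the rate functions). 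None of these are mere details: each affects the exponential order, and without them the two inequalities $\FdTsup(D)\le\int_X\FdT[\intervalleff01^d]((\grad D)_z)\,\d z\le\FdTinf(D)$ that carry the theorem do not follow.
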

\subsection{Applications} 
\label{subsec : intro/applications}
By the so-called \emph{contraction principle} (see e.g. Theorem~4.2.1 in \cite{LDTA}), Theorem~\ref{thm : MAIN} implies a LDP at speed $n^d$ for the image of the process $(\BoxSPT)_{n\ge 1}$ through a continuous map (see Lemma~\ref{lem : app/contraction}). We give three examples of such processes.

\paragraph{Point-point passage time.}
\begin{Corollary}
\label{cor : intro/applications/point_point}
	Assume that $0\le a < b <\infty$ and $\nu\p{\acc 0}<\pc$. Let $x \in \R^d\setminus \acc 0$. Then the process $\p{\frac{\PT(0,nx)}{n}}_{n\ge 1}$ satisfies the LDP at speed $n^d$ with a good rate function $\FdTppx : \intervalleff{a\norme x}{b\norme x} \rightarrow \intervalleff0\infty$. Moreover, $\FdTppx$ is continuous and nondecreasing.
	In particular, for all $a\norme x \le \zeta < b\norme x$,
	\begin{equation}
		\lim_{n\to\infty} -\frac{1}{n^d} \log \Pb{ \frac{\PT(0,nx)}{n} \ge \zeta} = \FdTppx(\zeta).
	\end{equation}
\end{Corollary}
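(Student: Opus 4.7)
The strategy is to apply the contraction principle to Theorem~\ref{thm : MAIN} via the evaluation map $D\mapsto D(0,x)$, after identifying $\PT(0,nx)/n$ with $\BoxSPT(0,x)$ for a suitable box $X\in\Windows$.

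\textbf{Reduction to a box.} First I would work under Assumption~\ref{ass : intro/main_thm/support} (so $a>0$). Since the $\ell^1$-straight path from $0$ to $nx$ has passage time at most $bn\norme{x}$, any geodesic from $0$ to $nx$ has at most $bn\norme{x}/a$ edges, hence stays inside the $\norme\cdot$-ball of radius $bn\norme{x}/a$. Choosing $X\in\Windows$ with $\clball{0,b\norme{x}/a}\subseteq\mathring X$ yields $\PT(0,nx)=\BoxPT[nX](0,nx)$ and therefore $\PT(0,nx)/n=\BoxSPT(0,x)$. When $a=0$ but $\nu(\acc 0)<\pc$ this reduction must be performed by approximation: compare with the truncated model $\EPT[e]'\dpe\max(\EPT[e],\eps)$, which satisfies Assumption~\ref{ass : intro/main_thm/support}, apply the preceding case to it, and let $\eps\to0$, using the subcritical behavior of the zero-cluster of $\nu$ to control the passage-time error.

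\textbf{Contraction.} The evaluation map $\Phi\colon\AdmDistances[X]\to\intervalleff{a\norme{x}}{b\norme{x}}$, $\Phi(D)\dpe D(0,x)$, is $1$-Lipschitz for $\UnifDistance$ and surjective (convex combinations of $a\norme\cdot$ and $b\norme\cdot$ span the interval). Applying the contraction principle (Lemma~\ref{lem : app/contraction}) to Theorem~\ref{thm : MAIN}, the process $(\BoxSPT(0,x))_{n\ge1}$ satisfies the LDP at speed $n^d$ with the good rate function
\begin{equation*}
    \FdTppx(\zeta)\dpe\inf\acc{\FdT(D):D\in\AdmDistances[X],\ D(0,x)=\zeta}.
\end{equation*}

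\textbf{Properties and upper tail.} For monotonicity, given $\zeta\le\zeta'$ and a near-minimizer $D$ of $\FdTppx(\zeta')$, the interpolation $D_t\dpe tD+(1-t)a\norme\cdot$ lies in $\AdmDistances[X]$, satisfies $D_t\le D$ pointwise, and $t\mapsto D_t(0,x)$ is continuous from $a\norme{x}$ to $\zeta'$; picking $t$ with $D_t(0,x)=\zeta$ and using that $\FdT$ is nondecreasing gives $\FdTppx(\zeta)\le\FdT(D_t)\le\FdT(D)$. Lower semi-continuity combined with monotonicity forces left-continuity automatically; right-continuity should follow from a perturbation of a near-minimizer based on the integral representation~\eqref{eqn : MAIN/integral}. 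Once continuity is established, the upper tail limit is obtained by applying the LDP to $A=\intervalleff{\zeta}{b\norme{x}}$, whose closure and interior infima of $\FdTppx$ both equal $\FdTppx(\zeta)$. The main obstacle is the $a=0$ case, which is outside Theorem~\ref{thm : MAIN}'s scope and requires a careful approximation to transfer the LDP from the truncated model without perturbing the rate function in the limit.
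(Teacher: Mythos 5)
Your reduction and contraction argument under Assumption~\ref{ass : intro/main_thm/support} (that is, $a>0$) is sound and arguably cleaner than the paper's, which instead works with the monotone sets $A_\eps=\{D : D(0,x)\ge\zeta-\eps\}$ and then uses FKG. But the case $0\le a$ with $\nu(\{0\})<\pc$ is the entire point of Corollary~\ref{cor : intro/applications/point_point}, and your proposal leaves it at "compare with the truncated model and let $\eps\to0$." That step is where essentially all the technical content of the paper's proof lives, and several concrete ideas are missing. First, when $a=0$ the bound $\norme{\sigma}\le (b/a)\norme{x-y}$ is vacuous, so the box must be chosen as $X=[-C,C]^d$ with $C=b\norme{x}/\alpha_0$ where $\alpha_0=\tfrac12\inf_{u\in\S}\mu(u)>0$, and the confinement of geodesics to $nX$ is only a high-probability event, established via the Cox--Durrett shape theorem and combined with FKG to show $\Pb{\BoxSPT(0,x)\ge\zeta-\eps}\sim\Pb{\SPT(0,x)\ge\zeta-\eps}$; this is not a deterministic identity as in the $a>0$ case. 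Second, letting the truncation level $\alpha\to0$ requires showing the rate functions converge, and an LDP is not automatically preserved along such a limit: the paper needs the "hard edges'' estimate of Lemma~\ref{lem : app/prelis/cvg_fdt_elem}, which forces geodesics in the untruncated model to be of linear length so that the error from the truncation is $O(\alpha n)$, and then a compactness/monotone-limit argument (Equations~\eqref{eqn : app/pp/LDP/tronque_mon}--\eqref{eqn : app/pp/LDP/limit_alpha}) to identify the limiting rate function. Third, even after the monotone tail rate $\monFdTppx(\zeta)$ is identified, obtaining a full LDP (i.e.\ $\FdTppxsup=\FdTppxinf$) requires the modification argument of Equations~\eqref{eqn : app/pp/unboxing}--\eqref{eqn : app/pp/modification}, where one resamples passage times along an $\ell^1$-path from $0$ to $\floor{nx}$ at cost $\exp(-O(n))$ to turn a realization of $\{\SPT(0,x)\ge\zeta-\eps\}$ into one of $\{|\SPT(0,x)-\zeta|\le O(\eps)\}$; without this you only control the upper tail, not the two-sided deviations needed for the LDP.

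Two smaller points. The left-continuity from lower semicontinuity plus monotonicity is indeed automatic, but the right-continuity you defer to "a perturbation of a near-minimizer based on the integral representation" is a nontrivial lemma (Lemma~\ref{lem : app/pp/continuite}) whose argument splits into two genuinely different cases depending on whether $\nu(\{b\})>0$: when there is an atom at $b$ the paper scatters the octants $X_A$ apart and fills the gaps with $b\norme\cdot$; when $\nu(\{b\})=0$ it inflates the gradient on the low-speed region $X_1(\delta)$ and caps it near the critical region $X_2(\delta)$, controlling the cost of $X_2(\delta)$ via Lemma~\ref{lem : FdT_elem/ordre_grandeur/infty}. Finally, your rate function is defined with the constraint $D(0,x)=\zeta$ while the paper's uses $D(0,x)\ge\zeta$; these coincide by monotonicity of $\FdT$, but you should say so, since the $\ge$ formulation is what interacts cleanly with the FKG step.
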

Note that by the definition of the time constant~\eqref{eqn : intro/framework/def_TimeConstant} and the order of the probability of the upper tail large deviation events~\eqref{eqn : intro/framework/LD_order_n^d}, for all $x\in \R^d\setminus\acc 0$ and $a\norme x\le \zeta \le b\norme x$,
\begin{equation*}
 	\FdTppx(\zeta)>0 \Leftrightarrow \zeta > \TimeConstant(x).	
\end{equation*} 

\paragraph{Crossing times.}
For all $n\ge 1$, the \emph{rescaled crossing times} of the box $\intervalleff0n^d$ are defined as
\begin{equation}
\label{eqn : intro/applications/CT}
	\CT(n) \dpe \p{\BoxSPT[n, \intervalleff01^d](H_1, H_1'), \dots, \BoxSPT[n, \intervalleff01^d](H_d, H_d')  },
\end{equation}
where $H_i \dpe \acc{x =(x_1,\dots, x_d) \in \intervalleff01^d \Bigm| x_i= 0 }$ and $H_i' \dpe \acc{x =(x_1,\dots, x_d)\in \intervalleff01^d \Bigm| x_i= 1 }$.

\begin{Corollary}
\label{cor : intro/applications/crossing}
	Assume that $0\le a < b < \infty$. The process $\p{ \CT(n)}_{n\ge 1}$ satisfies the LDP at speed $n^d$ with the good rate function
	\begin{align}
    \FdTCT : \intervalleff{a}{b}^d &\longrightarrow \intervalleff{0}{\infty}\eol
    \label{eqn : intro/applications/def_FdTCT}%
    \zeta = (\zeta_1,\dots, \zeta_d) &\longmapsto \monFdT[\intervalleff01^d]\p{g^\zeta},
  \end{align}
  where $g^\zeta$ is the seminorm defined by
  \begin{equation}
  \label{eqn : intro/applications/def_magic_norm}
  		g^\zeta(u) \dpe \max_{1\le i \le d}\p{\zeta_i \module{u_i}},
  \end{equation}
  and $\monFdT[\intervalleff01^d]$ is a function on seminorms defined in Lemma~\ref{lem : app/prelis/cvg_fdt_elem}  which coincide with $\FdT[\intervalleff01^d]$ on elements of $\AdmNorms$ under Assumption~\ref{ass : intro/main_thm/support}. Moreover:
  \begin{enumerate}[(i)]
  	\item \label{item : intro/applications/crossing/continuite}%
  	$\FdTCT$ is continuous on $\intervalleff{a}{b}^d$.
  	\item \label{item : intro/applications/crossing/croissance}%
  	$\FdTCT$ is nondecreasing on $\intervalleff{a}{b}^d$ for the componentwise order, \textit{i.e.} for all $\zeta= (\zeta_1,\dots, \zeta_d)$ and $\zeta'=(\zeta_1',\dots, \zeta_d')$ in $\intervalleff{a}{b}^d$, if $\zeta_i \le \zeta_i'$ for all $i$, then $\FdTCT(\zeta) \le \FdTCT(\zeta')$.
  	\item \label{item : intro/applications/crossing/convexite}%
  	$\FdTCT$ is separately convex on $\intervalleff{a}{b}^d$, \textit{i.e.} for all $1\le i \le d$ and $\zeta_1,\dots, \zeta_{i-1}, \zeta_{i+1},\dots, \zeta_d \in \intervalleff ab$, the function
  	\begin{equation*}
  		t \mapsto \FdTCT\p{\zeta_1 ,\dots, \zeta_{i-1}, t, \zeta_{i+1}, \dots, \zeta_d }
  	\end{equation*}
  	is convex on $\intervalleff ab$.
  	\item \label{item : intro/applications/crossing/cas_infty}%
  	$\FdTCT(b,\dots,b)<\infty$ if and only if $\nu(\acc b)>0$. 
  \end{enumerate}
  In particular, for all $\zeta \in \intervallefo{a}{b}^d$,
  \begin{equation}
  \label{eqn : intro/applications/crossing_monotone_evt}
  		\lim_{n\to\infty}-\frac{1}{n^d}\log \Pb{ \bigcap_{i=1}^d \acc{\BoxSPT[n, \intervalleff 01^d](H_i, H_i') \ge \zeta_i} } = \FdTCT(\zeta).
  \end{equation}
\end{Corollary}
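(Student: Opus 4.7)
The strategy is to apply the contraction principle (Lemma~\ref{lem : app/contraction}) to Theorem~\ref{thm : MAIN}. Define the \emph{crossing-times map}
\begin{equation*}
\Phi : \AdmDistances[\intervalleff01^d] \longrightarrow \intervalleff{a}{b}^d, \qquad \Phi(D) \dpe \p{D(H_1, H_1'), \dots, D(H_d, H_d')},
\end{equation*}
where $D(H_i, H_i') \dpe \inf\acc{D(x,y) : x\in H_i,\ y \in H_i'}$. Each component $D \mapsto D(H_i,H_i')$ is $1$-Lipschitz for $\UnifDistance$, and admissibility together with the choice $y-x=\base{i}$ forces $D(H_i,H_i')\in\intervalleff ab$. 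Since $\CT(n) = \Phi(\BoxSPT[n, \intervalleff01^d])$, the contraction principle yields a LDP for $\p{\CT(n)}_{n\ge 1}$ at speed $n^d$ with good rate function
\begin{equation*}
I(\zeta) \dpe \inf\acc{ \FdT[\intervalleff01^d](D) : D \in \AdmDistances[\intervalleff01^d],\ \Phi(D) = \zeta }.
\end{equation*}

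The heart of the argument is the variational identification $I(\zeta) = \monFdT[\intervalleff01^d](g^\zeta)$. For the upper bound, if $g \in \AdmNorms$ with $g\ge g^\zeta$, the associated metric $D_g(x,y)\dpe g(y-x)$ lies in $\AdmDistances[\intervalleff01^d]$, has constant gradient $g$, and satisfies $D_g(H_i,H_i')\ge g^\zeta(\base{i})=\zeta_i$; by~\eqref{eqn : MAIN/integral}, $\FdT(D_g)=\FdT[\intervalleff01^d](g)$. After a mild perturbation to enforce $\Phi(D_g)=\zeta$ exactly, passing to the infimum over admissible norms $g \ge g^\zeta$ and appealing to the definition of $\monFdT$ provided by Lemma~\ref{lem : app/prelis/cvg_fdt_elem} gives $I(\zeta)\le\monFdT[\intervalleff01^d](g^\zeta)$. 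The main obstacle is the reverse inequality. Given any $D\in\AdmDistances$ with $\Phi(D)=\zeta$, the plan is to combine the integral formula~\eqref{eqn : MAIN/integral} with a Fubini-type averaging along coordinate directions: the crossing constraint $D(H_i,H_i')\ge\zeta_i$ should translate, upon integrating along lines parallel to $\base{i}$, into $\int_{\intervalleff 01^d}(\grad D)_z(\base{i})\,\d z \ge \zeta_i$ for each $i$, so that the averaged gradient dominates $g^\zeta$ on each axis; convexity of $\FdT[\intervalleff01^d]$ on $\AdmNorms$ and Jensen's inequality then deliver $\FdT(D)\ge\monFdT[\intervalleff01^d](g^\zeta)$.

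The four properties of $\FdTCT$ follow from structural facts about $\monFdT$. Monotonicity~\eqref{item : intro/applications/crossing/croissance} is immediate since $\zeta\le\zeta'$ componentwise implies $g^\zeta\le g^{\zeta'}$ pointwise, and $\monFdT$ is monotone by construction. For separate convexity~\eqref{item : intro/applications/crossing/convexite}, the convexity of the map $\max$ gives $g^{\lambda\zeta+(1-\lambda)\zeta'}\le\lambda g^\zeta+(1-\lambda)g^{\zeta'}$ pointwise when $\zeta,\zeta'$ differ only in the $i$-th coordinate; combining with monotonicity and convexity of $\monFdT[\intervalleff01^d]$ concludes. Continuity~\eqref{item : intro/applications/crossing/continuite} reduces to the obvious continuity of $\zeta\mapsto g^\zeta$ and to continuity of $\monFdT[\intervalleff01^d]$ on the relevant set of seminorms, delivered by Lemma~\ref{lem : app/prelis/cvg_fdt_elem}. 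For~\eqref{item : intro/applications/crossing/cas_infty}, one has $\FdTCT(b,\dots,b)=\monFdT[\intervalleff01^d](b\norme[\infty]{\cdot})$: when $\nu(\acc b)>0$, the configuration in which every edge in $n\intervalleff01^d$ has weight $b$ has probability $\nu(\acc b)^{\grando\p{n^d}}$, giving finite cost; when $\nu(\acc b)=0$, forcing $\CT(n)$ into an $\eps$-neighborhood of $(b,\dots,b)$ requires weights close to $b$ on a macroscopic set of edges, and the resulting rate diverges as $\eps\to 0$.

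Equation~\eqref{eqn : intro/applications/crossing_monotone_evt} is obtained by applying the LDP to the closed set $A_\zeta\dpe\acc{\zeta'\in\intervalleff ab^d : \zeta'\ge\zeta \text{ componentwise}}$: for $\zeta\in\intervallefo ab^d$, $A_\zeta$ has nonempty interior, and monotonicity together with continuity of $\FdTCT$ give $\inf_{A_\zeta}\FdTCT=\inf_{\mathring{A_\zeta}}\FdTCT=\FdTCT(\zeta)$, so the LDP upper and lower bounds collapse to the announced limit.
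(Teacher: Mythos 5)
Your high-level plan — contraction principle to get an LDP with a variational rate function, then identify that rate function as $\monFdT[\intervalleff01^d](g^\zeta)$ — is the right skeleton, but the proposal has three genuine gaps, two of them substantial.

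\textbf{The case $a=0$.} The corollary assumes only $0 \le a < b < \infty$, but Theorem~\ref{thm : MAIN} and hence Lemma~\ref{lem : app/contraction} require Assumption~\ref{ass : intro/main_thm/support}, i.e.\ $a>0$. Your proposal applies the contraction principle directly and never addresses this. The paper handles it by first proving everything for the truncated weights $\tEPT{\alpha}=\EPT\vee\alpha$ (yielding $\exFdTCT{\alpha}$), then passing $\alpha\to0$ via Lemmas~\ref{lem : app/prelis/FdTmon} and~\ref{lem : app/prelis/cvg_fdt_elem} and a modification argument in Lemma~\ref{lem : app/CT/alpha_to_0}. This machinery is not optional; without it the statement you are proving is weaker than the corollary.

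\textbf{The lower bound.} Your Fubini step correctly gives $\bar g(\base i)\dpe\int_{\intervalleff01^d}(\grad D)_z(\base i)\,\d z\ge\zeta_i$. But ``the averaged gradient dominates $g^\zeta$ on each axis'' is strictly weaker than $\bar g\ge g^\zeta$, and Jensen does not close the gap. Concretely, in $d=2$ take $\bar g(u)=\module{u_1+u_2}+\eps\module{u_1-u_2}$, $\zeta=(1+\eps,1+\eps)$: then $\bar g(\base i)=1+\eps=\zeta_i$, yet $\bar g(1,-1)=2\eps<1+\eps=g^\zeta(1,-1)$, so $\bar g\not\ge g^\zeta$ and monotonicity of the rate function cannot be invoked. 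The missing ingredient is a symmetrization: one needs to show the minimizing metric can be taken invariant under coordinate reflections, after which the estimate $\module{x_i}g(\base i)\le g(x)$ (via the identity $x_i\base i=2^{-d+1}\sum_{\lambda:\lambda_i=1}\sum_j\lambda_j x_j\base j$) upgrades the axis domination to $g\ge g^\zeta$. This is exactly what the paper's reflection iterator $s(D)$ and Claims~\ref{claim : app/CT/ptes_s_1}--\ref{claim : app/CT/ptes_s_2} accomplish; your proposal omits this entirely.

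\textbf{Unestablished convexity.} Both your lower bound (Jensen) and your item~(\ref{item : intro/applications/crossing/convexite}) argument invoke convexity of $\FdT[\intervalleff01^d]$ on $\AdmNorms$. The paper never establishes this, and it is not a cheap consequence of anything proved: the fine-stripe construction that one might try produces a metric whose effective gradient is a series/parallel combination of $g_1,g_2$, not the pointwise convex combination $\theta g_1+(1-\theta)g_2$. The paper instead proves separate convexity of $\FdTCT$ directly, by prescribing a gradient of the form $g^{(\zeta_1,\ldots)}$ on $\{0\le z_1\le\theta\}$ and $g^{(\zeta_1',\ldots)}$ on $\{\theta\le z_1\le1\}$ and reading off the crossing times; this avoids convexity of $\FdT$ on norms altogether. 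Your argument for item~(\ref{item : intro/applications/crossing/convexite}) should follow that construction rather than assume an unproven convexity.

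The upper bound ($I(\zeta)\le\monFdT(g^\zeta)$ via $g^\zeta\vee\cro{a\norme\cdot}$), the monotonicity and continuity arguments in items~(\ref{item : intro/applications/crossing/croissance}) and~(\ref{item : intro/applications/crossing/continuite}), item~(\ref{item : intro/applications/crossing/cas_infty}), and the derivation of~\eqref{eqn : intro/applications/crossing_monotone_evt} from continuity and monotonicity are essentially sound.
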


Note that Corollaries~\ref{cor : intro/applications/point_point} and~\ref{cor : intro/applications/crossing} are stated in more general context than Theorem~\ref{thm : MAIN}. For each result, we first treat the case $a>0$ then take the limit when $a\to0$ with a standard truncation argument. The assumption $\nu\p{\acc 0}<\pc$ in Corollary~\ref{cor : intro/applications/point_point} is used to show that with high probability, any geodesic from $0$ to $nx$ is included in a box of linear size, which is crucial to apply Theorem~\ref{thm : MAIN}. 

\paragraph{Rescaled growing ball.}
For all $n\ge 1$, let us consider the \emph{rescaled growing ball}
\begin{equation}
	\RUB(n) \dpe \frac1n \acc{x\in \R^d \Bigm| \PT(0,x) \le n } = \acc{x\in \R^d \Bigm| \SPT(0,x)\le 1 }.
\end{equation}
\begin{Corollary}\label{cor : intro/applications/ball}
	Under Assumption~\ref{ass : intro/main_thm/support}, the process $\p{\RUB(n)}_{n\ge 1}$ satisifies the LDP at speed $n^d$ with a good rate function, in the space of compact sets of $\R^d$ endowed with the Hausdorff distance defined as
	\begin{equation}
		\HD(K_1, K_2) \dpe \inf\acc{\eps >0 \Bigm| K_1 \subseteq K_2+\clball{0,\eps} \text{ and } K_2 \subseteq K_1+\clball{0,\eps}  }.
	\end{equation}
\end{Corollary}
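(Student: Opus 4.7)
The plan is to apply the contraction principle (Lemma~\ref{lem : app/contraction}) to Theorem~\ref{thm : MAIN} through a suitable continuous ``unit-ball'' map. First I would fix $X \dpe \clball{0, 1/a}\in\Windows$. Since every edge weight is at least $a$, any discrete path of passage time at most $n$ uses at most $n/a$ edges, and hence remains within $\ell_1$-distance $n/a$ from its starting point. This gives two observations: every $x\in \R^d$ with $\PT(0,nx)\le n$ already satisfies $\norme{x}\le 1/a$, i.e.\ $x\in X$; and every minimizer for $\PT(0,nx)\le n$ stays in $nX$, so that $\PT(0,nx) = \BoxPT[nX](0,nx)$. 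Combining these facts,
\begin{equation*}
\RUB(n) = \acc{x\in X \Bigm| \BoxSPT(0,x)\le 1} \edp \Phi\p{\BoxSPT},
\end{equation*}
where $\Phi : \AdmDistances[X] \to \Compacts(X)$ is defined by $\Phi(D)\dpe \acc{x\in X \mid D(0,x)\le 1}$ and $\Compacts(X)$ denotes the set of non-empty compact subsets of $X$ endowed with the Hausdorff distance $\HD$. The map $\Phi$ is well-defined since $D(0,\cdot)$ is $b$-Lipschitz and $0\in \Phi(D)$.

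The hard part is the continuity of $\Phi$, since sub-level-set maps are in general discontinuous for the Hausdorff distance. What rescues the argument is the combination of the geodesic structure of admissible metrics with the lower bound $D\ge a\norme{\cdot}$. Given $D, D' \in \AdmDistances[X]$ and $x\in\Phi(D)$, I would pick a $D'$-geodesic $\sigma$ from $0$ to $x$ (provided by Definition~\ref{def : intro/main_thm/adm_metrics}), truncate it at time $\min(1, D'(0,x))$, and let $y$ denote the endpoint of the truncated geodesic. Then $D'(0,y)\le 1$, so $y\in \Phi(D')$, while
\begin{equation*}
a\norme{x-y} \le D'(x,y) = \max\p{D'(0,x) - 1,\, 0} \le \max\p{D(0,x) + \UnifDistance(D,D') - 1,\, 0} \le \UnifDistance(D,D'),
\end{equation*}
using $D(0,x)\le 1$ in the last step. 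Swapping the roles of $D$ and $D'$ gives $\HD\p{\Phi(D), \Phi(D')} \le \UnifDistance(D,D')/a$, so $\Phi$ is even $(1/a)$-Lipschitz.

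The contraction principle applied to Theorem~\ref{thm : MAIN} then yields the LDP for $\p{\RUB(n)}_{n\ge 1} = \p{\Phi(\BoxSPT)}_{n\ge 1}$ at speed $n^d$ in $\Compacts(X)$ with the good rate function
\begin{equation*}
J(K) \dpe \inf\acc{\FdT(D) \Bigm| D\in \AdmDistances[X],\ \Phi(D)=K},
\end{equation*}
with the convention $\inf\emptyset=\infty$. Extending $J$ by $+\infty$ to non-empty compact subsets of $\R^d$ not contained in $X$ produces the announced LDP in the space of non-empty compact subsets of $\R^d$ endowed with $\HD$.
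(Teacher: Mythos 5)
Your proof is correct and takes essentially the same route as the paper: fix a window $X\in\Windows$ containing $\RUB(n)$, write $\RUB(n) = \Phi\p{\BoxSPT}$ for the sub-level-set map $\Phi$, show that $\Phi$ is $(1/a)$-Lipschitz for the Hausdorff distance by truncating a $D'$-geodesic, and invoke the contraction principle (Lemma~\ref{lem : app/contraction}). The paper uses the box $\intervalleff{-\frac1a}{\frac1a}^d$ for $X$ and truncates the geodesic at a slightly different (but equivalent) time, which are purely cosmetic differences from your argument.
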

We don't know much about the rate function mentioned in Corollary~\ref{cor : intro/applications/ball} beyond the properties granted by the contraction principle.

\subsection{Sketch of the proof}
Let $X\in\Windows$. For all $D\in \AdmDistances[X], n\ge 1$ and $\eps>0$, we define the large deviation event
\begin{align}
	\LD_{n,X}(D,\eps) &\dpe \acc{\UnifDistance\p{D, \BoxSPT}\le \eps },
	\intertext{alongside with the \emph{lower} and \emph{upper rate functions}}
	\label{eqn : intro/sketch/FdTinf}
  	\FdTinf[X](D) &\dpe \inclim{\eps \to 0} \liminf_{n\to\infty} -\frac{1}{n^d} \log \Pb{\LD_{n,X}(D,\eps) },\\
  	\label{eqn : intro/sketch/FdTsup}
  	\FdTsup[X](D) &\dpe \inclim{\eps \to 0} \limsup_{n\to\infty} -\frac{1}{n^d} \log \Pb{\LD_{n,X}(D,\eps) },
\end{align}
i.e. $\FdTinf[X]$ and $\FdTsup[X]$ are a special case of $\underline I$ and $\overline I$ as defined in Lemma~\ref{lem : intro/sketch/UB_LB}, for $(\cX, \d_\cX) = (\AdmDistances[X], \UnifDistance)$. The core of the proof consists in proving that $\FdTinf$ and $\FdTsup$ are actually equal, which implies the LDP thanks to Lemma~\ref{lem : intro/sketch/UB_LB}.

Section~\ref{sec : limit_space} presents some topological preliminaries on the space of admissible metrics $\AdmDistances$ and properties of its elements, including the definition and existence of the gradient of any $D\in \AdmDistances$ at almost every point of $X$. We also present general methods for constructing and transforming metrics in $\AdmDistances$. 

In Section~\ref{sec : mon} it is shown that $\FdTinf[X]$ and $\FdTsup[X]$ are nondecreasing, which will imply that $\FdT$ is nondecreasing once the equality $\FdTinf[X] = \FdTsup[X]$ is proven. The general idea is that if $D_1\le D_2$, then a configuration in which $\BoxSPT \simeq D_2$ can be transformed into a configuration in which $\BoxSPT \simeq D_1$ by altering a subvolumic number of edge passage times.

In Section~\ref{sec : FdT_elem} we prove with a somewhat classic subadditive argument (see Section~\ref{subsec : intro/oq_and_rw}) that $\FdTinf[\intervalleff01^d]$ and $\FdTsup[\intervalleff01^d]$ coincide on $\AdmNorms$, i.e. Theorem~\ref{thm : intro/sketch/FdT_elem}. We also study properties of $\FdT[\intervalleff01^d]$ on $\AdmNorms$, namely Propositions~\ref{prop : intro/sketch/ordre_grandeur} and~\ref{prop : intro/sketch/continuite}.
\begin{Theorem}
\label{thm : intro/sketch/FdT_elem}
	Under Assumption~\ref{ass : intro/main_thm/support}, for all $g\in\AdmNorms$, $\FdTinf[\intervalleff01^d](g) = \FdTsup[\intervalleff01^d](g)$. We will denote $\FdT[\intervalleff01^d](g)$ their common value.
\end{Theorem}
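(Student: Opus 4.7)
The trivial direction is $\FdTinf[\intervalleff01^d](g) \le \FdTsup[\intervalleff01^d](g)$, so the task reduces to the reverse inequality. My plan is a multidimensional Fekete-type subadditivity argument on the LD probabilities.

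Fix $g \in \AdmNorms$ and $\eps > 0$. For integers $k, m \ge 1$ with $N = km$, I would tile $\intervalleff0N^d$ by $k^d$ translates $C_j = jm + \intervalleff0m^d$, $j \in \intint{0}{k-1}^d$. On each $C_j$, consider the event $E_j$ that the translated and rescaled subcube passage time is within uniform distance $\eps'$ of $g$, for some $\eps' \in \intervalleoo{0}{\eps}$ to be chosen. The crux is a geometric lemma: if $\bigcap_j E_j$ occurs, then $\LD_{N, \intervalleff01^d}(g, \eps)$ occurs. The upper bound $\BoxSPT[N, \intervalleff01^d] \le g + \eps$ is obtained by concatenating subcube paths along the straight-line segment $\intervalleff{Nx}{Ny}$: because $g$ is a norm, the collinear boundary-crossing points $w_i$ satisfy $\sum g(w_i - w_{i-1}) = g(Ny - Nx)$, and summing subcube upper bounds gives an $O(\eps' N)$ error. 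The lower bound $\BoxSPT[N, \intervalleff01^d] \ge g - \eps$ is obtained by decomposing an arbitrary path $\pi$ from $Nx$ to $Ny$ at its subcube boundary crossings $z_0, \dots, z_\ell$, bounding each piece $\PathPT{\pi_i} \ge g(z_i - z_{i-1}) - \eps' m$ via $E_{j_i}$, and summing with the triangle inequality for $g$ to obtain $\PathPT{\pi} \ge g(Ny - Nx) - \ell \eps' m$.

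Granted the geometric lemma, the events $E_j$ depend on essentially disjoint sets of edges (shared boundary edges form a subvolumic set handled by a standard corridor argument), so by independence
\begin{equation*}
\Pb{\LD_{N, \intervalleff01^d}(g, \eps)} \ge \Pb{\LD_{m, \intervalleff01^d}(g, \eps')}^{k^d} e^{-o(N^d)}.
\end{equation*}
Taking $-\log$, dividing by $N^d$, then $\limsup_{N \to \infty}$ along $N = km$, followed by $\liminf_{m \to \infty}$ and finally $\eps \downarrow 0$ (with $\eps'(\eps) \downarrow 0$) would yield $\FdTsup[\intervalleff01^d](g) \le \FdTinf[\intervalleff01^d](g)$, with values of $N$ not divisible by $m$ absorbed into the $o(N^d)$ boundary term.

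The main obstacle is the lower-bound half of the geometric lemma: the error $\ell \eps' m$ involves the number $\ell$ of subcube boundary crossings by $\pi$, which a priori is only bounded by the edge count $\PathPT{\pi}/a = O(N)$, overshooting the desired $O(k)$ by a factor of $m$. I would resolve this either by taking $\eps' = \eps'(m) = O(1/m)$ (balanced against the matching decay of $\Pb{\LD_m(g, \eps')}$) or by bypassing it via the monotonicity of $\FdTinf, \FdTsup$ from Section~\ref{sec : mon}: split $\LD_N(g, \eps)$ into an upward subevent $\{\BoxSPT[N, \intervalleff01^d] \le g + \eps\}$ (for which path concatenation gives a clean subadditive inequality) and a downward subevent $\{\BoxSPT[N, \intervalleff01^d] \ge g - \eps\}$, the latter enjoying a clean superadditive inequality $\Pb{\BoxSPT[N, \intervalleff01^d] \ge D} \le \Pb{\BoxSPT[m, \intervalleff01^d] \ge D}^{k^d}$ coming from the subcube containment $\BoxPT[C_j] \ge \BoxPT[\intervalleff0N^d]$ (restricting paths can only increase the passage time). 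The two monotone rate functions thus produced can then be combined, using Section~\ref{sec : mon}'s monotonicity, to recover $\FdTsup(g) = \FdTinf(g)$ on $\AdmNorms$.
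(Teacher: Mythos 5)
Your high-level strategy — a multidimensional Fekete-type subadditivity argument with tiles assembling large-scale configurations from independent small-scale ones — is indeed the paper's strategy, and you correctly identify the central obstacle: an arbitrary path in $\intervalleff0N^d$ can cross subcube boundaries $\Theta(N)$ times, which makes the naive sum of per-tile errors $\ell\eps'm$ of order $\Theta(N m\eps')$ rather than $O(N\eps')$. However, neither of your two proposed fixes closes this gap, and both miss the device the paper actually uses.

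The paper's resolution is the Corridor Lemma (Lemma~\ref{lem : limit_space/corridor/corridor}). Rather than tiling $\intervalleff0m^d$ by \emph{adjacent} subcubes, the proof of Lemma~\ref{lem : FdT_elem/existence/main_lemma} uses subcubes $\cTile_n(v) = \intervalleff01^d + \frac{\floor{n(1+\delta)}}{n}v$ that are \emph{separated by corridors} of width $\approx n\delta$, and the favorable event additionally forces every corridor edge to have passage time $\ge b-\eps$. This extra constraint costs probability $\nu\p{\intervalleff{b-\eps}{b}}^{O(\delta m^d)}$, contributing $O(\delta)\log\nu\p{\intervalleff{b-\eps}{b}}$ to the rate, which vanishes in the $\delta\to0$ limit. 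But in exchange, any excursion between two distinct tiles must traverse the expensive corridor, so a near-geodesic can make at most $O(\diam X /\delta_1)=O(1/\delta)$ such excursions — a bound independent of $n$ and $m$. This is exactly what tames the accumulated per-tile error $\delta_2$: the Corridor Lemma yields a lower bound with additive error $3\diam(X)\p{\eps+\delta_2/\delta_1}$, which in the rescaled picture is $O(\eps+\delta)$.

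Your Plan 1 (taking $\eps'=\eps'(m)=O(1/m)$) does not work. You would then be comparing $\limsup_N -\tfrac{1}{N^d}\log\Pb{\LD_N(g,\eps)}$ against $\liminf_m -\tfrac{1}{m^d}\log\Pb{\LD_m(g,\eta_m)}$ with $\eta_m\to0$; since $\Pb{\LD_m(g,\eta_m)}\le\Pb{\LD_m(g,\eps')}$ for $m$ large, this diagonal $\liminf$ is $\ge\FdTinf(g)$, which is the trivial direction. There is no reason the decay of $\Pb{\LD_m(g,\eps')}$ as $\eps'\to0$ should "balance" at speed $m^d$ — $\Pb{\LD_m(g,\eps')}$ tends to $0$ for fixed $m$ as $\eps'\to0$, potentially much faster than any speed-$m^d$ rate.

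Your Plan 2 also has a gap. The superadditivity $\Pb{\BoxSPT[N]\ge D}\le\Pb{\BoxSPT[m]\ge D}^{k^d}$ for the one-sided downward event $\LD^+$ is indeed clean and gives a Fekete limit. But you then need to transfer this from $\LD^+$ back to $\LD$ at the $\limsup$ level — exactly what a $\FdTsup$ analogue of Corollary~\ref{cor : mon/LD+} would do — and the paper explicitly flags in Remark~\ref{rk : LD/inv_limsup_min_non} that the $\liminf$/$\min$ interchange used there fails with $\limsup$, so no straightforward $\FdTsup$ analogue is available. FKG goes the wrong way (the downward event is increasing, the upward event decreasing, so $\Pb{\LD_n}\le\Pb{\text{down}}\Pb{\text{up}}$), and the highway/modification argument of Section~\ref{sec : mon} is formulated between two two-sided events $\LD(D_{p+1})$ and $\LD(D_p)$, not from a one-sided event to a two-sided one. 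Bridging this requires genuine additional work that the proposal does not supply, and in fact the paper sidesteps the issue entirely by doing the Fekete argument on the two-sided event directly, made possible by the corridor trick.
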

\begin{Proposition}
\label{prop : intro/sketch/ordre_grandeur}
	Under Assumption~\ref{ass : intro/main_thm/support}, for all $g\in \AdmNorms$,
	\begin{enumerate}[(i)]
		\item $\FdT[\intervalleff01^d](g)=\infty$ if and only if $g\in\AdmNorms \setminus \NoncritAdmNorms$ (i.e. $\exists x\neq 0, g(x)=b\norme x$)  and $\nu(\acc b)=0$.
		\item $\FdT[\intervalleff01^d](g)=0$ if and only if $g\le \TimeConstant$, where $\mu$ is defined by~\eqref{eqn : intro/framework/def_TimeConstant}.
	\end{enumerate}
\end{Proposition}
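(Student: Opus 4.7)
Both statements are ``if and only if'' and I would prove each direction separately; throughout let $X \dpe \intervalleff01^d$. The necessary condition in each part ($\FdT(g)=\infty$ or $\FdT(g)=0$ forcing the listed property) follows via contrapositive from Kesten's upper-tail estimate~\eqref{eqn : intro/framework/LD_order_n^d}, while the sufficient conditions rely on explicit constructions that either condition edge weights over the whole box (for (i)) or plant pairwise edge-disjoint deterministic paths (for (ii)). The monotonicity of $\FdT$ established in Section~\ref{sec : mon} will be used freely, along with Cramér's theorem for i.i.d. sums and the speed-$n$ lower-tail estimate~\eqref{eqn : intro/framework/monotone_rate_function_n}.

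For part (i) direction $(\Leftarrow)$, pick $u_0 \ne 0$ with $g(u_0)=b\norme{u_0}$; on $\LD_{n,X}(g,\eps)$ we have $\BoxSPT(0,u_0)\ge b\norme{u_0}-\eps$, and by~\eqref{eqn : intro/framework/LD_order_n^d} applied in direction $u_0$ this has probability at most $\exp(-n^d c_\eps)$ with $c_\eps>0$. Under $\nu(\acc b)=0$, $c_\eps\to\infty$ as $\eps\to 0$: bringing $\BoxSPT(0,u_0)$ arbitrarily close to $b\norme{u_0}$ forces a positive-volume fraction of edges to have weights arbitrarily close to $b$, whose joint probability decays faster than any $\exp(-Cn^d)$ (Sanov-type bound using that the Cramér rate $I_\nu(b-\delta)$ diverges as $\delta\to 0$). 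This yields $\FdT(g)=\infty$. For direction $(\Rightarrow)$ in contrapositive form, suppose $\nu(\acc b)>0$ or $g\in\NoncritAdmNorms$. In the first case, condition all $\Theta(n^d)$ edges in $nX$ on equalling $b$ (probability $\nu(\acc b)^{\Theta(n^d)}=\exp(-Cn^d)$ with $C<\infty$); then $\BoxSPT=b\norme{\cdot}$, so $\FdT(b\norme{\cdot})\le C$ and by monotonicity $\FdT(g)<\infty$. In the second case, $g\le c\norme{\cdot}$ for some $c\in\intervallefo{a}{b}$ in the support of $\nu$; conditioning all edges on lying in a small interval around $c$ (probability $\exp(-Cn^d)$) makes $\BoxSPT\approx c\norme{\cdot}$, so $\FdT(g)\le\FdT(c\norme{\cdot})<\infty$ by monotonicity.

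For part (ii), the direction $\FdT(g)=0 \Rightarrow g\le\TimeConstant$ is again contrapositive plus~\eqref{eqn : intro/framework/LD_order_n^d}: if $g(u_0)>\TimeConstant(u_0)$, then for $\eps<(g(u_0)-\TimeConstant(u_0))/2$, $\LD_{n,X}(g,\eps)\subseteq\acc{\BoxSPT(0,u_0)>\TimeConstant(u_0)}$, so $\FdT(g)>0$. The heart of the proposition is the reverse direction, $g\le\TimeConstant \Rightarrow \FdT(g)=0$. Fix $\eps>0$ and an $\eps$-net $\acc{z_1,\ldots,z_K}\subseteq X$. For each pair $(i,j)$ pick a near-$\ell_1$-geodesic discrete path $\pi_{ij}^{(n)}$ in $nX$ from $\ceil{nz_i}$ to $\ceil{nz_j}$ of graph-length $L_{ij}^{(n)}$ with $L_{ij}^{(n)}/n\to\norme{z_j-z_i}$, chosen so that the $\pi_{ij}^{(n)}$ are pairwise edge-disjoint (feasible since the total edge count is $O_\eps(n)\ll n^d$). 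The target average weight $w_{ij}\dpe g(z_j-z_i)/\norme{z_j-z_i}$ lies in $\intervallefo{a}{b}$ by $g\in\AdmNorms$ and $g\le\TimeConstant<b\norme{\cdot}$ (strict since $a<b$ forces $\TimeConstant<b\norme{\cdot}$). Cramér's theorem applied to the i.i.d. sum $\PathPT{\pi_{ij}^{(n)}}$ shows that the two-sided event $A_{ij}^{(n)}=\acc{\module{\PathPT{\pi_{ij}^{(n)}}/L_{ij}^{(n)}-w_{ij}}\le\delta}$ has probability $\ge\exp(-\Theta(n))$ for each fixed $\delta>0$, and by independence from disjointness $\Pb{\bigcap_{i,j}A_{ij}^{(n)}}\ge\exp(-\Theta_\eps(n))=\exp(-o(n^d))$. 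On this event the planted paths yield $\BoxSPT(z_i,z_j)\le g(z_j-z_i)+O(\delta)$, extending to $\BoxSPT\le g+O(\eps)$ uniformly via the triangle inequality. For the lower bound $\BoxSPT\ge g-O(\eps)$: the two-sided Cramér window ensures each planted path has weight $\ge n(g(z_j-z_i)-\delta)$, while any off-planted path from $z_i$ to $z_j$ of weight $<n(g(z_j-z_i)-\eps)\le n(\TimeConstant(z_j-z_i)-\eps)$ is a lower-tail event of probability $\exp(-\Theta(n))$ by~\eqref{eqn : intro/framework/monotone_rate_function_n}; a union bound over the $K^2$ pairs gives probability $1-o(1)$ of no such shortcut. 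Since the planted and off-planted events depend on disjoint edge sets they are independent, yielding $\Pb{\LD_{n,X}(g,\eps)}\ge\exp(-o(n^d))$ and hence $\FdT(g)=0$. The main obstacle is calibrating the planted paths precisely enough (two-sided Cramér window, not just a one-sided lower tail) to realize $g$ uniformly in both directions, while edge-disjointness is the structural tool keeping the joint probability subvolumic and turning the pair events into independent ones.
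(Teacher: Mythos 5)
Your overall structure (two if-and-only-ifs, contrapositive for the necessity directions, constructions for the sufficiency directions) matches the paper, and your argument for the finiteness of $\FdT[\intervalleff01^d]$ when $\nu(\acc b)>0$ or $g\in\NoncritAdmNorms$ is essentially the paper's: force all edges in $n[0,1]^d$ to be high, use monotonicity of $\FdT$, and pay $\exp(-\Theta(n^d))$. However, there are two genuine gaps. First, in both (i)$(\Leftarrow)$ and (ii) necessity you invoke~\eqref{eqn : intro/framework/LD_order_n^d} to bound $\Pb{\BoxSPT(0,u_0)\ge\zeta}$, but that estimate concerns the unrestricted passage time $\PT$, and $\BoxPT\ge\PT$: restricting paths to the box can only increase the distance, so an upper-tail bound for $\PT$ gives no control on the corresponding $\BoxSPT$ event. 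The paper does not cite~\eqref{eqn : intro/framework/LD_order_n^d} at all; it reconstructs the two needed estimates for the box quantity directly. For (i) it reduces to $g(\base1)\ge b-\eta$ by convexity and symmetry of $g$, observes that on $\LD_n(g,\eta)$ all $n^{d-1}$ i.i.d. vertical segments $\pi_v$ must satisfy $\PathPT{\pi_v}\ge n(b-2\eta)$, and bounds the single-column probability by an explicit binomial count (Lemma~\ref{lem : FdT_elem/ordre_grandeur/infty}); this makes your ``positive-volume fraction of edges near $b$'' gesture precise. For (ii) it adapts Kesten's tilted-cylinder argument with Talagrand concentration and a packing of $\Theta(k^{d-1})$ disjoint cylinders (Lemma~\ref{lem : FdT_elem/ordre_grandeur/zero}).

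Second, your proof of the sufficiency in (ii) ($g\le\mu\Rightarrow\FdT(g)=0$) is both much heavier than necessary and incomplete. You say you will use monotonicity freely, but you do not use it here; had you, the argument collapses to: by Proposition~\ref{prop : mon/mon} it suffices that $\FdT[\intervalleff01^d](\mu)=0$, and via Corollary~\ref{cor : mon/LD+} this only requires $\Pb{\LD^+_{n}(\mu,\eps)}\to1$, which is immediate from the shape theorem since a.s.\ $\SPT(x,y)\to\mu(x-y)$ (this is exactly Lemma~\ref{lem : FdT_elem/ordre_grandeur/zero}(i)). The one-sided event $\LD^+$ is crucial: you only need $\BoxSPT\ge\mu-\eps$, which happens with probability tending to $1$, with no rare conditioning at all. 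Your planting construction aims at the two-sided event $\LD_n(g,\eps)$ and therefore must calibrate $\BoxSPT$ both above and below $g$; but the two-sided Cramér window you impose only controls the \emph{total} weight of each planted path $\pi_{ij}^{(n)}$, not partial sums along it, so an anomalously fast subsegment of some $\pi_{kl}^{(n)}$ could be borrowed by a competing route and push $\BoxSPT(z_i,z_j)$ below $g(z_j-z_i)-\eps$. Controlling partial sums is exactly the difficulty the paper's highway argument confronts with its milestone construction in Section~\ref{subsec : mon/HW_is_free}; your sketch silently skips it. (There is also a minor edge case: if $g(u)=a\norme u$ in some direction $u$ and $\nu(\acc a)=0$, the Cramér rate at $w_{ij}=a$ is infinite, so the two-sided window argument as written fails for such pairs.)
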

\begin{Proposition}
\label{prop : intro/sketch/continuite}
	Under Assumption~\ref{ass : intro/main_thm/support}, the restriction of $\FdT[\intervalleff01^d]$ on $\NoncritAdmNorms$ is continuous for the metric $\UnifDistance$ defined by~\eqref{eqn : intro/main_thm/distance_uniforme}. If furthermore $\nu(\acc b)=0$, then the restriction of $\FdT[\intervalleff01^d]$ on $\AdmNorms$ is continuous.
\end{Proposition}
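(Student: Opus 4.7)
The plan is to split the continuity of $\FdT[\intervalleff01^d]$ into lower and upper semicontinuity. Lower semicontinuity is immediate: by Theorem~\ref{thm : intro/sketch/FdT_elem}, $\FdT[\intervalleff01^d]$ agrees with $\FdTsup[\intervalleff01^d]$ on $\AdmNorms$, and Lemma~\ref{lem : intro/sketch/UB_LB}(\ref{item : intro/sketch/UB_LB/rate_function}) makes the latter a rate function, hence lower semicontinuous. The substance of the proposition is therefore upper semicontinuity on $\NoncritAdmNorms$, which I would reduce to a one-parameter directional statement via monotonicity.

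Fix $g\in\NoncritAdmNorms$. Since $g<b\norme{\cdot}$ on the compact unit sphere, there exists $\eta>0$ with $g+\eta\norme{\cdot}\le b\norme{\cdot}$. Given $(g_k)\subseteq\AdmNorms$ with $g_k\to g$ for $\UnifDistance$, set $\delta_k\dpe\sup_{\norme{u}=1}\module{g_k(u)-g(u)}$, so that $g_k\le g+\delta_k\norme{\cdot}$ pointwise and $g+\delta_k\norme{\cdot}\in\AdmNorms$ for large $k$. Invoking the monotonicity of $\FdT[\intervalleff01^d]$ established in Section~\ref{sec : mon} yields $\FdT[\intervalleff01^d](g_k)\le\FdT[\intervalleff01^d](g+\delta_k\norme{\cdot})$. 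Combined with lower semicontinuity, continuity at $g$ thus reduces to the directional upper bound
\[
   \limsup_{\delta\to 0^+}\FdT[\intervalleff01^d](g+\delta\norme{\cdot})\le\FdT[\intervalleff01^d](g).
\]

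To prove this, the plan is a renormalization-type construction: cheaply perturb typical configurations realizing $\LD_{n,\intervalleff01^d}(g,\eps)$ into configurations realizing $\LD_{n,\intervalleff01^d}(g+\delta\norme{\cdot},2\eps)$. At a scale $m$ supplied by Theorem~\ref{thm : intro/sketch/FdT_elem}, each sub-box of side $m$ tiling $\intint0n^d$ enjoys the base event $\BoxSPT[m,\intervalleff01^d]\approx g$ with probability approximately $e^{-m^d\FdT[\intervalleff01^d](g)}$. Between adjacent sub-boxes, I insert thin ``slow slabs'' on which the edge passage times are forced into $\intervalleff{b-\eta'}{b}$, a set of positive $\nu$-measure since $b\in\Supp\nu$. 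By choosing the slab spacing of order $1/\delta$, every straight path is delayed by an amount that matches $\delta\norme{\cdot}$ up to $o(\delta)$, while the forcing involves $O(n^d\delta)$ edges and contributes a log-cost $O\bigl(n^d\delta\log(1/\nu(\intervalleff{b-\eta'}{b}))\bigr)$. Together with the baseline cost $n^d\FdT[\intervalleff01^d](g)$, this yields $\FdT[\intervalleff01^d](g+\delta\norme{\cdot})\le\FdT[\intervalleff01^d](g)+C\delta$, and the required limit follows as $\delta\to 0$.

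For the second statement, when $\nu(\acc{b})=0$ Proposition~\ref{prop : intro/sketch/ordre_grandeur} gives $\FdT[\intervalleff01^d]\equiv+\infty$ on $\AdmNorms\setminus\NoncritAdmNorms$; since $\NoncritAdmNorms$ is open in $\AdmNorms$ and $\FdT[\intervalleff01^d]$ is lower semicontinuous, continuity on $\NoncritAdmNorms$ extends automatically to the whole space $\AdmNorms$. The main obstacle in the argument is the geometric step in the slow-slab construction: one must arrange the slabs so that \emph{every} direction of travel is genuinely slowed by approximately $\delta\norme{\cdot}$ (which is delicate because paths may be far from axis-aligned), while keeping their density of order $\delta$ so that the probabilistic cost remains linear in $\delta$ and vanishes in the limit.
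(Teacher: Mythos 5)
Your plan mirrors the paper's proof quite closely in structure: lower semicontinuity from the rate-function property, reduction via monotonicity to a one-parameter bound of the form $\FdT[\intervalleff01^d]\p{g+\frac\eta p\norme\cdot}\le\FdT[\intervalleff01^d](g)+\frac Cp$, and a ``slow-slab'' cheap-modification argument to prove that bound. But you explicitly flag, and do not resolve, the key geometric estimate — that a grid of slow slabs of density $\sim1/p$ delays \emph{every} Lipschitz path from $x$ to $y$ by at least $\approx\frac1p\norme{x-y}$. That estimate is the whole content of the paper's Lemma~\ref{lem : FdT_elem/continuite/main_inequality}, and supplying it is not a routine verification: a naive slab or tile construction does not obviously lower-bound the delay for paths that zigzag or travel diagonally, and your proposal, as written, stops short of this step.

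The paper fills the gap by exploiting the coordinatewise structure of the $\ell^1$ norm. The ``hard'' region is the grid
\[
  H(k,p) \dpe \bigcup_{\substack{v\in\intint0{kp-1}^d:\\ \exists i,\; p\mid v_i}}\Tile(v,kp),
\]
so it contains slabs in each of the $d$ axis directions, and the auxiliary metric $D_k^{(p)}$ is prescribed to have gradient $(b-\eta)\norme\cdot$ on $H(k,p)$ and $g$ elsewhere. For any Lipschitz path $\gamma$ joining $x$ to $y$ one bounds
\[
  \int_0^{T_\gamma}\ind{H(k,p)}(\gamma(t))\norme{\gamma'(t)}\,\d t
    \ge \sum_{i=1}^d\left|\int_0^{T_\gamma}\ind{p\Z}\bigl(\floor{kp\gamma_i(t)}\bigr)\gamma_i'(t)\,\d t\right|
    = \sum_{i=1}^d\left|\int_{x_i}^{y_i}\ind{p\Z}\bigl(\floor{kps}\bigr)\,\d s\right|,
\]
and each one-dimensional integral is the Lebesgue measure of an explicit union of intervals, at least $\frac{\floor{k\module{x_i-y_i}}}{pk}$; summing over $i$ and sending $k\to\infty$ yields the required $\frac1p\norme{x-y}$, uniformly over all paths. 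This works precisely because $\norme\cdot=\norme[1]\cdot$ decomposes coordinatewise and the hard region is a product grid; a single family of parallel slabs, or a different ambient norm, would not give a path-independent lower bound. A further small remark: the paper works with the one-sided event $\LD_n^+$ (Corollary~\ref{cor : mon/LD+}) together with monotonicity, so the forced slabs are allowed to overshoot the target $g+\frac\eta p\norme\cdot$ by a constant factor; your two-sided formulation asks for a matching upper bound on the delay, which is unnecessary and would require additional work to control.
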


Section~\ref{sec : PGD} concludes the proof, which essentially amounts to showing that for all $D\in \AdmDistances$,
\begin{equation}
\FdTsup[X](D) \le \int_X\FdT[\intervalleff01^d]\p{\grad D}_z\d z \le \FdTinf[X](D).  	
\end{equation}
The general idea is to approximate $D$ by a metric whose gradient is constant on each element of a tile partition $X$ then use Theorem~\ref{thm : intro/sketch/FdT_elem} on each tile.

Section~\ref{sec : app} is devoted to proving results of Subsection~\ref{subsec : intro/applications}.

\subsection{Open questions and related works}
\label{subsec : intro/oq_and_rw}

\paragraph{Lower tail large deviations.}
Theorem~\ref{thm : MAIN} does not provide a satisfying estimate for $\Pb{\BoxSPT \simeq D}$ for metrics $D\in\AdmDistances$ such that $D\le \TimeConstant$. Given~\eqref{eqn : intro/framework/monotone_rate_function_n}, we conjecture that the appropriate speed for the study of such events is $n$.

\paragraph{Upper tail large deviations under milder assumptions.}
Chow and Zhang proved in 2003 \cite{Cho03} that under a finite exponential moment condition, the probability of the upper tail large deviation event for the face-face passage time, i.e. $\acc{\BoxSPT[n,\intervalleff01^d](H_1, H_1') \ge \zeta}$ with $\zeta > \TimeConstant(\base 1)$ and the notations of Corollary~\ref{cor : intro/applications/crossing}, is of order $\exp\p{ -\Theta(n^d)}$ and the existence of the rate function. Note that their result is neither a special case nor an extension of Corollary~\ref{cor : intro/applications/crossing} because while the former is valid in a larger framework, the latter provide the LDP for all directions simultaneously.

Cranston, Gauthier and Mountford proved in 2009 \cite{Cra09} a criterion for $\Pb{\frac1n\PT(0,n\base 1)\ge \zeta}$ with $\zeta>\TimeConstant(\base 1)$ to be of order $\exp\p{-\Theta(n^d)}$, for a certain family of distributions. On the other hand Cosco and Nakajima recently showed \cite{Cos23} that if $\nu\p{\intervallefo{t}{\infty}} \simeq \exp\p{-\alpha t^r}$, with $\alpha>0$ and $0<r\le d$, then
\begin{equation*}
	\Pb{\frac1n\PT(0,n\base 1)\ge \zeta}=%
		\begin{cases}
			\exp\p{-\Theta(n^r)} &\text{ if } 0<r<d,\\
			\exp\p{-\Theta\p{\frac{n^d}{(\log n)^{d-1} }}} &\text{ if } r=d.
		\end{cases}
\end{equation*}
Moreover, they provide a description of the associated rate functions with a variational formula. 

In particular, outside the bounded support assumption, for some distributions, the probability of different upper tail large deviation events may be of different order. Hence, for these distributions, a single LDP for the random metric as Theorem~\ref{thm : MAIN} cannot give appropriate estimates for the probability of all upper tail large deviation behaviours; rather, several LDPs, at different speeds, would be needed.

\paragraph{The subadditivity argument.}
The main idea of Chow and Zhang \cite{Cho03} to prove the existence of the limit
\[
\lim_{n\to\infty} -\frac1{n^d}\log \Pb{\BoxSPT[\intervalleff01^d](H_1, H_1') \ge \zeta},
\]
with $\zeta > \TimeConstant(\base 1)$, is to assemble $k^d$ independent configurations on $\intint0n^d$ satisfying $\acc{\BoxSPT[n,\intervalleff01^d](H_1, H_1') \ge \zeta}$ to create a configuration on $\intint0{kn}^d$ satisfying $\acc{\BoxSPT[kn,\intervalleff01^d](H_1, H_1') \ge \zeta}$, leading to
\[
\Pb{\BoxSPT[n,\intervalleff01^d](H_1, H_1') \ge \zeta}^{k^d} \le \Pb{\BoxSPT[kn,\intervalleff01^d](H_1, H_1') \ge \zeta},
\]
and the end is standard. However, this simple plan fails because there is no general link between the crossing times of the small boxes and the crossing time of the large one. The solution proposed by the authors is to consider a subset $Z_n$ of $\acc{\BoxSPT[n,\intervalleff01^d](H_1, H_1') \ge \zeta}$ with equivalent $\log$-probability, such that configurations on $\intint0n^d$ satisfying $Z_n$ are compatible, in the sense that assembling them creates no "shortcuts". 
 
To prove the existence of the limit
\[
\lim_{n\to\infty} -\frac1{n^d}\log \Pb{\frac1n\PT(0, n\base 1) \ge \zeta},
\]
Basu, Ganguly and Sly \cite{Bas21} adopt a similar approach, with an added subtlety regarding the way configurations on the small boxes are assembled: each small box is fragmented into smaller tiles and the configuration on the large box is constructed so that, for example, its top left corner is made of the top left corner tiles of all the small boxes. To ensure that corresponding tiles are compatible, they essentially proceed as follows:
\begin{enumerate}
	\item Prove that for a well-chosen (random but within a deterministic range) tile size, most of the tiles are \emph{stable}, in the sense that the restriction of the random metric on each tile is similar to some (random, tile-dependent) norm.
	\item Define $Z_n$ as a subset of $\acc{\frac1n\PT(0, n\base 1) \ge \zeta}$ for which the good tile size and the norms on each tile are prescribed, chosen so that its $\log$-probability is equivalent to $\Pb{\frac1n\PT(0, n\base 1) \ge \zeta}$.
\end{enumerate}

In this article, we adopt a somewhat different approach, as the subadditivity argument is only used in the proof of Theorem~\ref{thm : intro/sketch/FdT_elem}. The fact that we are, at this step, only interested in uniform environments makes our version of the argument less sophisticated than the one in \cite{Bas21}, as tile compatibility is free. This enables us to avoid the technical difficulty of proving tile stability properties for the random metric that are uniform in the realization; this work is done in a rather simpler deterministic setting here.  

\paragraph{Streams and maximal flows.}
The general philosophy of our proof is inspired by the recent work of Dembin and Théret \cite{Dem21+} on large deviations of the streams and the maximal flows: in order to estimate the probability that there exists an admissible stream resembling a target stream, they first study the easier case where the target stream is uniform. They then use the estimate in the elementary case as a building block to get an estimate in the general case.

\subsection{Notations}
\label{subsec : intro/notations}
\paragraph{Elements of $\R^d$.}
We denote by $\p{\base i}_{i=1}^d$ the canonical basis of $\R^d$ and $\ps{\cdot}{\cdot}$ the standard inner product on $\R^d$. We denote by $\le$ the coordinatewise order on $\R^d$.
For all $x=(x_1,\dots, x_d)\in \R^d$, we define
\begin{equation}
	\floor{x} \dpe \p{\floor{x_1},\dots , \floor{x_d}} \text{  and  } \ceil{x} \dpe \p{\ceil{x_1},\dots , \ceil{x_d}}.
\end{equation}
A sequence $(x_n)_{n\ge 1}$ of elements of $\R^d$ is said to have \emph{monotone coordinates} if for all $i\in\intint1d$, the sequence $\p{\ps{x_n}{\base i}}_{n\ge 1}$ is monotone.

\paragraph{Exponents.}
Whenever several families of edge passage times are considered and distinguished by exponents, the associated metrics $\PT$, $\SPT$, etc. will be marked with the same exponents. For example, $\PT^{(1)}$ is the process defined as~\eqref{eqn : intro/main_thm/def_PT}, with $\EPT[e]^{(1)}$ instead of $\EPT[e]$.

\paragraph{Cardinal and volume.}
We denote by $\#A$ the cardinal of the set $A$. If $A$ is a Borel subset of $\R^d$ we denote by $\Leb(A)$ its Lebesgue measure.

\paragraph{Edges.}
Given an edge $e$, the sentence "$z\in e$" will mean "$z$ belongs to the segment between endpoints of $e$". For all $A\subseteq \R^d$ we will denote $\edges{A}$ the set of all edges $e\in\bbE^d$ included in $A$. We will denote $\exedges{A}$ the set of all edges $e\in\bbE^d$, seen as segments, intersecting $A$. For all discrete path $\alpha= (\alpha_j)_{j=0}^r$, we define
\begin{equation}
	\Pathedges{\alpha} \dpe \acc{(\alpha_j, \alpha_{j+1}), j\in\intint{0}{r-1}},
\end{equation}
which is consistent with the previous definition of $\edges{\cdot}$.

\paragraph{Paths.}
Given a polygonal path $\gamma = (x_0, \dots, x_r)$, we will write "$x\Path{\gamma}y$" as a shorthand for "$x_0=x$ and $x_r=y$".  We will write "$x\ResPath{X}{\gamma}y$" to further indicate that for all $0\le i \le r$, $x_i\in X$. For all $0\le i_1 \le i_2 \le r$, we define
\begin{equation}
	\restriction{\gamma}{\intervalleff{i_1}{i_2}} \dpe (x_i)_{i_1\le i \le i_2}.
\end{equation}

We call \emph{continuous path} a continuous function $\gamma : \intervalleff0T \rightarrow \R^d$. The length of the segment on which a continuous path $\gamma$ is defined will be denoted by $T_\gamma$. Given a continuous path $\gamma$, we will write "$x\Path{\gamma}y$" as a shorthand for "$\gamma(0)=x$ and $\gamma(T_\gamma)=y$".  We will write "$x\ResPath{X}{\gamma}y$" to further indicate that for all $t\in\intervalleff{0}{T_\gamma}$, $\gamma(t)\in X$.

\paragraph{Tiles.}
For all $v\in\Z^d$ and $k\ge1$, we define the set
\begin{equation}
\label{eqn : intro/notations/def_tiles}
	\Tile(v,k) \dpe \frac1k\p{v + \intervalleff01^d}.
\end{equation}
For all $k\ge 1$ and $X\in \Windows$, we define
\begin{align}
	\label{eqn : intro/notations/intiles}
					\InTiles(X) &\dpe \acc{v\in \Z^d \Bigm| \Tile(v,k)\subseteq X }\\
	\label{eqn : intro/notations/extiles}
	\text{and  } 	\ExTiles(X) &\dpe \acc{v\in \Z^d \Bigm| \Tile(v,k)\cap X \neq \emptyset }.
\end{align}
They satisfy (see Lemma~\ref{lem : windows/tiles})
\begin{equation}
	\label{eqn : intro/notations/X_quarrable}
	\lim_{k\to\infty} \frac{\# \ExTiles(X)}{k^d} = \lim_{k\to\infty} \frac{\# \InTiles(X)}{k^d} = \Leb\p{X}.
\end{equation}

\paragraph{Metrics.}
Given $X\in\Windows$ and $D\in\AdmDistances$, it will be useful to extend $D$ to $\R^d$ by defining
\begin{equation}
\label{eqn : intro/notations/extension}
	\Extension D(x,y) \dpe \min\p{\min_{x',y'\in X}\p{b\norme{x-x'}+D(x',y')+b\norme{y-y'} } , b\norme{x-y} }.
\end{equation}
It is a metric that verifies~\eqref{eqn : intro/main_thm/equivalence_distances}. Given $X\in \Windows \cup\acc{\R^d}$, a metric $D$ on $X$ and two subsets $A,B\subseteq X$, we define
\begin{equation}
	D(A,B) \dpe \inf_{\substack{x\in A \\ y\in B} }D(x,y).
\end{equation}
When $D$ is the metric associated with the norm $\norme{\cdot}$, we denote by $\d(A,B)$ this quantity. If $\gamma : \intervalleff0T\rightarrow X$ is a continuous path, we define the $D$-length of $\gamma$ as
\begin{equation}
\label{eqn : intro/notations/length_curve}
	D(\gamma) \dpe \sup_{0\le t_0< \dots < t_r\le T} \sum_{i=0}^{r-1} D\p{\gamma(t_i), \gamma(t_{i+1})}.
\end{equation}
By the triangle inequality, for all continuous paths $x\Path{\gamma}y$, $D(\gamma) \ge D(x,y)$. 
If $D\in \AdmDistances[X]$, then for all $x,y\in X$, any geodesic $x\Path{\sigma} y$ satisfies $D(\sigma)=D(x,y)$. Since $\sigma$ is $1$-Lipschitz for $D$, it is Lipschitz for $\norme{\cdot}$ by~\eqref{eqn : intro/main_thm/equivalence_distances}. Consequently, if $D\in \AdmDistances[X]$, then for all $x,y\in X$,
\begin{equation}
\label{eqn : intro/notations/length_curve_geo_metric}
	D(x,y) = \min \acc{ D(\gamma) \Biggm| x\ResPath{X}\gamma y, \gamma \text{ Lipschitz}}.
\end{equation}

We call \emph{diameter} of a subset $X\subseteq \R^d$ the quantity
\begin{equation}
	\diam(X) \dpe \sup_{x,y\in X} \norme{x-y}.
\end{equation}

\paragraph{Absolutely homogeneous functions.}
Let $\ContHom$ denote the set of all continuous and absolutely homogeneous functions $f:\R^d\rightarrow \R$, i.e. satisfying $f(\lambda x) = \module\lambda f(x)$ for all $x\in \R^d$, $\lambda\in \R$. We endow this space with the norm defined as
\begin{equation}
	\label{eqn : intro/notations/normes_ContHom}
	\normeHom{f} \dpe \sup_{u\in \S} \module{f(u)}.
\end{equation}
We endow $\ContHom$ with the cylinder $\sigma$-algebra, which is also its Borel $\sigma$-algbera.

\section{The space $\AdmDistances$}
\label{sec : limit_space}
In this section we work under Assumption~\ref{ass : intro/main_thm/support}. We gather deterministic results about metrics in $\AdmDistances[X]$ used throughout the article.

\subsection{Compactness}
  \label{subsec : limit_space/compactness}

  This subsection aims to show Proposition~\ref{prop : limit_space/compactness/compactness}.
  \begin{Proposition}
  \label{prop : limit_space/compactness/compactness}
  	For all $X\in \Windows$, the space $\p{\AdmDistances, \UnifDistance}$ is compact.
  \end{Proposition}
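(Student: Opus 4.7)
The plan is to apply Arzelà-Ascoli in two stages: first to the metrics themselves to get relative compactness, then to the associated geodesics to verify that the limit metric is itself geodesic.

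First I would show that every $D \in \AdmDistances[X]$ is $2b$-Lipschitz on $X \times X$ with respect to the $\ell^1$ metric: by the triangle inequality for $D$ and the upper bound in~\eqref{eqn : intro/main_thm/equivalence_distances},
\begin{equation*}
\module{D(x,y) - D(x',y')} \le D(x,x') + D(y,y') \le b\p{\norme{x-x'} + \norme{y-y'}}.
\end{equation*}
Together with the uniform bound $D(x,y) \le b\diam(X)$, this gives equicontinuity and uniform boundedness of $\AdmDistances[X]$ inside $C(X\times X, \R)$. Since $X\times X$ is compact, Arzelà-Ascoli yields that $\AdmDistances[X]$ is relatively compact for $\UnifDistance$.

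Next I would check that $\AdmDistances[X]$ is closed: let $(D_n)_{n\ge1}$ in $\AdmDistances[X]$ with $\UnifDistance(D_n, D) \to 0$. Pointwise limits preserve symmetry, the vanishing on the diagonal, and the triangle inequality, and the two-sided bound~\eqref{eqn : intro/main_thm/equivalence_distances} also passes to the limit (in particular the lower bound ensures $D$ separates points). The only nontrivial property is geodesicity.

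The main obstacle is therefore showing that $D$ is geodesic. Fix $x,y\in X$, and for each $n$ pick a $D_n$-geodesic $\sigma_n:\intervalleff0{D_n(x,y)}\rightarrow X$. Reparametrize on $\intervalleff 01$ by setting $\tilde\sigma_n(t)\dpe\sigma_n(tD_n(x,y))$. Each $\sigma_n$ is $1$-Lipschitz for $D_n$, hence $\frac1a$-Lipschitz for $\norme{\cdot}$ by the lower bound of~\eqref{eqn : intro/main_thm/equivalence_distances}, so $\tilde\sigma_n$ is $\frac{b\diam(X)}a$-Lipschitz for $\norme{\cdot}$ uniformly in $n$. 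Since $X$ is compact, Arzelà-Ascoli applied in $C(\intervalleff01, X)$ yields a subsequence converging uniformly to some $\tilde\sigma:\intervalleff01\rightarrow X$ with $\tilde\sigma(0)=x$, $\tilde\sigma(1)=y$. For the extracted subsequence, the isometry identity
\begin{equation*}
D_n\p{\tilde\sigma_n(s), \tilde\sigma_n(t)} = \module{s-t}\, D_n(x,y), \qquad s,t\in\intervalleff01,
\end{equation*}
passes to the limit: the left-hand side converges to $D(\tilde\sigma(s), \tilde\sigma(t))$ because $\UnifDistance(D_n, D)\to0$, $\tilde\sigma_n\to\tilde\sigma$ uniformly, and $D$ is $2b$-Lipschitz; the right-hand side converges to $\module{s-t}D(x,y)$. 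Defining $\sigma(s)\dpe\tilde\sigma(s/D(x,y))$ for $s\in\intervalleff0{D(x,y)}$ (with the obvious convention when $x=y$) thus produces a $D$-geodesic from $x$ to $y$, so $D\in\AdmDistances[X]$. This closes the proof.
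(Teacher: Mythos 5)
Your proof is correct, and it takes a genuinely different route from the paper's. Both arguments start the same way (the $2b$-Lipschitz estimate on $D$, uniform boundedness, Arzelà–Ascoli in $\cC\p{X^2,\R}$ for relative compactness, and closedness of the elementary metric properties and of the two-sided bound~\eqref{eqn : intro/main_thm/equivalence_distances}), but they diverge on the one nontrivial point, namely that geodesicity is closed. The paper invokes Lemma~\ref{lem : limit_space/compactness/criterion}, a Hopf–Rinow-type criterion: since $(X,D)$ is complete and the bound~\eqref{eqn : intro/main_thm/equivalence_distances} guarantees local compactness, it suffices to verify the approximate midpoint property~\eqref{eqn : limit_space/compactness/approx_MPP}, and this passes to uniform limits in one line by picking exact $D_n$-midpoints $z_n$ and estimating $D(x,z_n)$, $D(z_n,y)$ against $\frac12 D(x,y)$. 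You instead produce the limit geodesic explicitly: you reparametrize $D_n$-geodesics on $\intervalleff01$, observe the uniform $\frac{b\diam(X)}{a}$-Lipschitz bound coming from the lower bound in~\eqref{eqn : intro/main_thm/equivalence_distances}, apply Arzelà–Ascoli a second time in $\cC\p{\intervalleff01,X}$, and pass the isometry identity to the limit using uniform convergence of both $D_n$ and $\tilde\sigma_n$. Your version is more self-contained (no appeal to Hopf–Rinow) and constructive, at the cost of a second compactness argument and the reparametrization bookkeeping; the paper's version is shorter because the midpoint criterion abstracts away the geodesics entirely. Both are sound.
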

  We first state a criterion for elements of $\AdmDistances$ which is a direct application of the Hopf-Rinow theorem (see e.g. \cite[Proposition~3.7]{Bridson1999}).
  \begin{Lemma}
  \label{lem : limit_space/compactness/criterion}
    Let $X\in \Windows\cup\acc{\R^d}$. Let $D$ be a metric on $X$ that satisfies~\eqref{eqn : intro/main_thm/equivalence_distances}. Then $D\in\AdmDistances$ (i.e. $(X,D)$ is geodesic) if and only if for all $x,y\in X$ and $\eps>0$, there exists $z\in X$ such that
  	\begin{align}
  	\label{eqn : limit_space/compactness/approx_MPP}
  		\max\p{ D(x,z), D(z,y) }&\le \frac12 D(x,y) + \eps.
  	\intertext{Moreover, in this case, for all $x,y\in X$, there exists $z\in X$ such that}
  	\label{eqn : limit_space/compactness/exact_MPP}
  		D(x,z)= D(z,y) &= \frac12 D(x,y).
  	\end{align}
  \end{Lemma}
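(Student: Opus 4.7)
The plan is to recognize this as the standard characterization of geodesic spaces via approximate midpoints and invoke the Hopf--Rinow theorem. The necessity is immediate: if $D\in\AdmDistances$, any $D$-geodesic $\sigma:\intervalleff{0}{D(x,y)}\to X$ between $x$ and $y$ yields $z\dpe\sigma\p{D(x,y)/2}$ satisfying~\eqref{eqn : limit_space/compactness/exact_MPP}, hence a fortiori~\eqref{eqn : limit_space/compactness/approx_MPP} for every $\eps>0$. This same observation establishes the ``moreover'' statement.

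For the converse, assume the approximate midpoint property. Since $D$ is bi-Lipschitz equivalent to $\norme{\cdot}$ by~\eqref{eqn : intro/main_thm/equivalence_distances}, the $D$-topology coincides with the Euclidean one on $X$. Hence $(X,D)$ is complete (because $X$ is closed in $\R^d$) and proper: any closed $D$-ball $\clball[D]{x,r}$ lies in $\clball{x,r/a}\cap X$, which is compact whether $X\in\Windows$ or $X=\R^d$. It remains to show that $(X,D)$ is a length space, after which the Hopf--Rinow theorem (\cite[Proposition~3.7]{Bridson1999}) will give that it is geodesic.

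Fix $x,y\in X$ and $\eps>0$. I iteratively build $z_{j/2^k}\in X$ for $0\le j\le 2^k$, $k\ge 0$, by setting $z_0\dpe x$, $z_1\dpe y$, and choosing $z_{(2j-1)/2^{k+1}}$ to be an $\eta_k$-approximate midpoint of $z_{(j-1)/2^k}$ and $z_{j/2^k}$, with $\eta_k \dpe \eps\cdot 4^{-k-2}$. An easy induction shows that the total polygonal length $L_k \dpe \sum_{j=1}^{2^k} D(z_{(j-1)/2^k}, z_{j/2^k})$ satisfies $L_{k+1}\le L_k+2^{k+1}\eta_k$, so $L_k\le D(x,y)+\eps$ uniformly in $k$, while consecutive dyadic points at level $k$ are at $D$-distance (and therefore Euclidean distance) $O(2^{-k})$. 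Completeness then lets me extend the dyadic map $j/2^k\mapsto z_{j/2^k}$ to a continuous path $x\ResPath{X}{\gamma}y$ of $D$-length at most $D(x,y)+\eps$. Since $\eps>0$ is arbitrary, $D$ agrees with its intrinsic length metric, which is the required length-space property. The only point that requires a bit of care is the dyadic accounting, i.e. choosing $\eta_k$ to decay fast enough that $\sum_k 2^{k+1}\eta_k$ converges; this is entirely routine.
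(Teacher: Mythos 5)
Your proof is correct, and it takes exactly the route the paper indicates: it invokes the Hopf--Rinow theorem from Bridson--Haefliger (Proposition~3.7), while filling in the standard details the paper leaves to that reference — completeness and properness of $(X,D)$ from the bi-Lipschitz bound~\eqref{eqn : intro/main_thm/equivalence_distances}, the length-space property via the dyadic approximate-midpoint construction, and exact midpoints from a geodesic for the ``moreover'' clause. Since the paper's proof is essentially just that citation, there is nothing further to compare; your dyadic bookkeeping (the choice $\eta_k = \eps\cdot 4^{-k-2}$ making $\sum_k 2^{k+1}\eta_k$ summable and the level-$k$ mesh $O(2^{-k})$) is sound.
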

  \begin{proof}[Proof of Proposition~\ref{prop : limit_space/compactness/compactness}]
  	We first prove that $\AdmDistances$ is closed in the space $\cC\p{X^2, \R}$ of continuous functions from $X^2$ to $\R$ for the uniform convergence. Let $(D_n)_{n\ge 1}$ be a sequence of elements of $\AdmDistances$ converging to $D$. It is clear that $D$ is a metric satisfying~\eqref{eqn : intro/main_thm/equivalence_distances}. In light of Lemma~\ref{lem : limit_space/compactness/criterion}, it is therefore sufficient to prove~\eqref{eqn : limit_space/compactness/approx_MPP}. Let $x,y\in X$. For all $n\ge 1$, there exists $z_n \in X$ such that 
    \begin{equation*}
      D_n(x,z_n) =D_n(z_n,y) =  \frac12D_n(x,y).
    \end{equation*}
  Hence for all $n$,
   \begin{align*}
     D(x, z_n) &\le D_n(x,z_n) + \mathfrak{d}_\infty\p{D, D_n} \\
      	&=\frac12D_n(x,y)+ \mathfrak{d}_\infty\p{D, D_n} \\
      	&\le \frac12D(x,y)+ 2\mathfrak{d}_\infty\p{D, D_n},
   \end{align*}
   and the same goes for $D(z_n,y)$. Thus $D$ satisfies~\eqref{eqn : limit_space/compactness/approx_MPP} so $D\in\AdmDistances$. 

   Note that for all $D\in\AdmDistances, x,x',y,y'\in X,$
    \begin{equation*}
      \module{D(x,y) - D(x',y')} \le D(x,x') + D(y,y') \le b \p{ \norme{x'-x} +\norme{y'-y}}.
    \end{equation*}
   Consequently, $\AdmDistances$ is a uniformally bounded, closed and equicontinuous subset of $\cC\p{X^2, \R}$, therefore it is compact thanks to the Arzelà-Ascoli theorem. 
  \end{proof}


\subsection{Length of the geodesics}
\label{subsec : limit_space/geodesics}
Lemma~\ref{lem : limit_space/geodesics/localisation} will be of constant use, as it essentially entails that if $D\in \AdmDistances$ and $x,y\in X$, $D(x,y)$ only depends on a local environment around $x$, of radius $\grando\p{\norme{x-y}}$. 
  \begin{Lemma}
    \label{lem : limit_space/geodesics/localisation}
  	Let $X\in \Windows\cup \acc{\R^d}$ and $D\in \AdmDistances$. Let $\sigma$ be a $D$-geodesic between $x$ and $y$. Then its $\norme\cdot$-length satisfies
  	\begin{align}
    \label{eqn : limit_space/geodesics/localisation_forte}
    \norme{\sigma} &\le \frac ba\norme{x-y}.
    \intertext{In particular, for all $0\le t \le D(x,y)$,}
  	\label{eqn : limit_space/geodesics/localisation}
  		\norme{x - \sigma(t)} &\le \frac ba\norme{x-y}.
  	\end{align}
    Likewise, for all $x,y\in X$ the infimums in~\eqref{eqn : intro/main_thm/def_PT} and~\eqref{eqn : intro/main_thm/def_BoxPT} may be restricted to finite sequences included in $\clball{x, \frac ba\norme{x-y}}$ only.
  \end{Lemma}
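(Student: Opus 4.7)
My plan is to exploit the isometry property of $D$-geodesics together with the two-sided bound $a\norme{\cdot} \le D(\cdot,\cdot) \le b\norme{\cdot}$ built into Definition~\ref{def : intro/main_thm/adm_metrics}, and the two-sided bound $a \le \EPT[e] \le b$ from Assumption~\ref{ass : intro/main_thm/support}. Both halves of the statement reduce to observing that anything that travels ``too far'' in the $\norme\cdot$-sense must be expensive, whereas the direct connection between $x$ and $y$ is cheap.

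For the geodesic part, I would first argue that $\sigma$ is $(1/a)$-Lipschitz for $\norme\cdot$. Indeed $\sigma : \intervalleff0{D(x,y)}\to X$ is an isometry, so $D(\sigma(s),\sigma(t)) = \module{t-s}$, and the lower bound $a\norme\cdot \le D$ yields $\norme{\sigma(s) - \sigma(t)} \le \module{t-s}/a$. Bounding the defining supremum in~\eqref{eqn : intro/notations/length_curve} (applied with the norm in place of $D$) by $(1/a)$ times the length of the parameter interval then gives $\norme{\sigma} \le D(x,y)/a \le (b/a)\norme{x-y}$, which is~\eqref{eqn : limit_space/geodesics/localisation_forte}. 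Assertion~\eqref{eqn : limit_space/geodesics/localisation} follows directly by applying the Lipschitz bound to the pair $(0,t)$ together with $D(x,y)\le b\norme{x-y}$.

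For the restriction of the infimums in~\eqref{eqn : intro/main_thm/def_PT} and~\eqref{eqn : intro/main_thm/def_BoxPT}, my strategy is to compare against the degenerate polygonal path reduced to the single segment $(x,y)$. Both branches of~\eqref{eqn : intro/main_thm/generalized_PathPT2} give $\PathPT{x,y}\le b\norme{x-y}$ (using $\EPT[e]\le b$), and this path lies in $X$ by convexity of $X$, so it is admissible for $\BoxPT[X]$ as well. Conversely, for any polygonal path $\pi=(x_0,\dots,x_r)$ from $x$ to $y$, the lower bound $\EPT[e]\ge a$ together with~\eqref{eqn : intro/main_thm/generalized_PathPT2} shows $\PathPT{x_i,x_{i+1}}\ge a\norme{x_{i+1}-x_i}$, hence $\PathPT{\pi}\ge a\sum_i \norme{x_{i+1}-x_i}$. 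If some $x_i$ lies outside $\clball{x,(b/a)\norme{x-y}}$, the triangle inequality forces this sum to exceed $(b/a)\norme{x-y}$, so $\PathPT{\pi} > b\norme{x-y}$. Such a $\pi$ therefore cannot approach the infimum, which can be restricted to sequences contained in that ball.

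I do not expect a real obstacle. The only point that deserves a line of care is verifying that the $\norme\cdot$-length of a Lipschitz curve is indeed controlled by its Lipschitz constant times the length of the parameter interval, which is immediate from~\eqref{eqn : intro/notations/length_curve}; everything else is bookkeeping with the two-sided bound on $D$ and the linear-in-$\norme\cdot$ lower bound on $\PathPT$ coming from $\EPT[e]\ge a > 0$.
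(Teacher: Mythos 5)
Your proof is correct and follows essentially the same route as the paper: the bound $\norme{\sigma}\le \frac ba\norme{x-y}$ comes from combining the lower bound $a\norme\cdot\le D$ on each subdivision interval (equivalently, your $(1/a)$-Lipschitz observation) with the upper bound $D(x,y)\le b\norme{x-y}$, and the infimum-restriction part is the same comparison of a detouring path's $a$-lower-bounded cost against the direct segment's $b$-upper-bounded cost. The paper leaves that second part to ``the rest is analogous''; you have simply spelled it out.
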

  \begin{proof}
  	Let $0= t_0< t_1 < \dots t_r= D(x,y)$. By definition of $\sigma$, for all $0\le i \le r-1$,
  	\begin{align*}
  		D\p{\sigma(t_i),\sigma(t_{i+1}) } &= t_{i+1}- t_i.
  		\intertext{The lower bound in~\eqref{eqn : intro/main_thm/equivalence_distances} implies}
  		a\norme{\sigma(t_i)-\sigma(t_{i+1}) } &\le  t_{i+1}- t_i.  \notag
      \intertext{Summing over $i$ and applying the upper bound in~\eqref{eqn : intro/main_thm/equivalence_distances}, we get}
      a \sum_{i=0}^{r-1} \norme{\sigma(t_i)-\sigma(t_{i+1}) } &\le D(\sigma) = D(x,y) \le b\norme{x-y},
  	\end{align*}
  	hence~\eqref{eqn : limit_space/geodesics/localisation_forte}. The rest is analoguous.
  \end{proof}
\subsection{Properties of the passage time}
  In this subsection we show that the metric $\BoxPT$ belongs to $\AdmDistances[X]$ (Proposition~\ref{prop : limit_space/geodesics/BoxSPT_espace}), and that it is an extension of the usual passage time (Proposition~\ref{prop : limit_space/geodesics/extension_tranquille}).
  \begin{Proposition}
  \label{prop : limit_space/geodesics/BoxSPT_espace}
  	For all $X\in \Windows$, $\BoxPT \in \AdmDistances$. For all $n\ge 1$, $\PT\in \AdmDistances[\R^d]$.
  \end{Proposition}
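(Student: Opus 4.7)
The plan is to verify the two conditions defining $\AdmDistances[X]$ directly from the definition of $\BoxPT[X]$. The norm-equivalence~\eqref{eqn : intro/main_thm/equivalence_distances} is elementary, and for the geodesic property I would invoke the approximate-midpoint criterion of Lemma~\ref{lem : limit_space/compactness/criterion}.

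For the equivalence bounds, note that since $a\le \EPT[e]\le b$, both cases of~\eqref{eqn : intro/main_thm/generalized_PathPT2} yield $a\norme{z-z'}\le \PathPT{z,z'}\le b\norme{z-z'}$. The upper bound $\BoxPT[X](x,y)\le b\norme{x-y}$ is then provided by the single-segment path $(x,y)$, which lies in $X$ by convexity. For the lower bound, summing $\PathPT{x_i,x_{i+1}}\ge a\norme{x_i-x_{i+1}}$ along any polygonal path $\pi=(x_0,\dots,x_r)$ and using the triangle inequality for $\norme{\cdot}$ gives $\PathPT{\pi}\ge a\norme{x-y}$. The remaining metric axioms (symmetry, nullity on the diagonal, triangle inequality) follow immediately from the symmetry of $\PathPT$, the constant path, and concatenation of polygonal paths, which remain in the convex set $X$ when both halves are.

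The core step is the approximate midpoint. Given $x,y\in X$ and $\eps>0$, I would take a polygonal path $\pi=(x_0,\dots,x_r)\subseteq X$ from $x$ to $y$ with $\PathPT{\pi}\le \BoxPT[X](x,y)+\eps$, set $S_k\dpe\sum_{i<k}\PathPT{x_i,x_{i+1}}$, and pick $k^*$ and $\lambda\in[0,1]$ such that $S_{k^*}+\lambda\PathPT{x_{k^*},x_{k^*+1}}=\tfrac12\PathPT{\pi}$. Letting $z\dpe x_{k^*}+\lambda(x_{k^*+1}-x_{k^*})$, which is in $X$ by convexity, the decisive subfact is the segment-splitting inequality
\[
\PathPT{x_{k^*},z}\le \lambda\PathPT{x_{k^*},x_{k^*+1}}
\quad \text{and} \quad
\PathPT{z,x_{k^*+1}}\le (1-\lambda)\PathPT{x_{k^*},x_{k^*+1}}.
\]
This is checked by a short case analysis on~\eqref{eqn : intro/main_thm/generalized_PathPT2}: in the first case each side is an equality (the coefficient $\EPT[e]$ is inherited by the sub-segments), while in the second case the inserted point $z$ can only promote a sub-segment to the first case, replacing the coefficient $b$ by some $\EPT[e]\le b$. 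Consequently the truncated polygonal paths $(x_0,\dots,x_{k^*},z)$ and $(z,x_{k^*+1},\dots,x_r)$ each have $\PathPT$-length at most $\tfrac12\PathPT{\pi}\le \tfrac12\BoxPT[X](x,y)+\tfrac\eps2$, and Lemma~\ref{lem : limit_space/compactness/criterion} concludes that $\BoxPT[X]\in\AdmDistances[X]$. The statement $\PT\in\AdmDistances[\R^d]$ is obtained from the same construction, since $\R^d$ is itself convex and Lemma~\ref{lem : limit_space/compactness/criterion} explicitly covers that case; the only truly delicate point of the whole argument is the segment-splitting inequality just discussed.
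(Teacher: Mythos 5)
Your proof is correct and follows essentially the same route as the paper's: take a near-optimal polygonal path, locate the $\PathPT$-midpoint (possibly interior to a segment), and invoke the scaling property of the generalized edge weight $\PathPT{\cdot,\cdot}$ on sub-segments to bound both halves by $\tfrac12\PathPT{\pi}$ before applying Lemma~\ref{lem : limit_space/compactness/criterion}. The segment-splitting inequality you isolate is precisely the "straightforward consequence of~\eqref{eqn : intro/main_thm/generalized_PathPT2}" the paper uses in the same step.
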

  \begin{proof}
  	Let $X\in \Windows$. Then $\BoxPT$ is clearly a metric that satisfies~\eqref{eqn : intro/main_thm/equivalence_distances}, so it suffices to prove that it satisfies~\eqref{eqn : limit_space/compactness/approx_MPP}. Let $x,y\in X$ and $\eps>0$. Without loss of generality, we can assume $x\neq y$.
  	Let ${\pi=(x=x_0,}\dots, x_r=y)$ be a finite sequence in $X$ such that
  \begin{equation*}
    \PathPT{\pi} \le\BoxPT[X](x,y) +\eps.
  \end{equation*}
  There exists $r_0\in \intint{0}{r-1}$ such that 
  \begin{equation*}
    \sum_{j=0}^{r_0-1}\PathPT{x_j, x_{j+1}}\le \frac12\tau(\pi) \text{  et  } \sum_{j=0}^{r_0}\PathPT{x_j, x_{j+1}}> \frac12\tau(\pi).
  \end{equation*}
  A straightforward consequence of the definition of $\EPT$ on $\R^d\times \R^d$ (see~\eqref{eqn : intro/main_thm/generalized_PathPT2}) is that for all $z,z'\in\intervalleff{x_{r_0}}{x_{r_0+1}}$,
  \begin{equation*}
    \PathPT{z ,z' }  \le \frac{\norme{z-z'}}{\norme{x_{r_0} -x_{r_0+1}}}\PathPT{x_{r_0}, x_{r_0+1}};
  \end{equation*}
  hence, denoting by $z$ the point on the segment $\intervalleff{x_{r_0}}{x_{r_0}+1 }$ such that
  \begin{equation*}
      \sum_{j=0}^{r_0-1} \PathPT{x_j, x_{j+1}} + \frac{\norme{x_{r_0} - z} }{\norme{x_{r_0} - x_{r_0+1} } } \PathPT{x_{r_0}, x_{r_0+1}}%
      =\frac12 \PathPT{\pi},
  \end{equation*}
  we have
  \begin{equation*}
    \sum_{j=0}^{r_0-1}\PathPT{x_j, x_{j+1}} + \tau\p{x_{r_0}, z} \le \frac12\PathPT{\pi} \text{  et  } \PathPT{z,x_{r_0+1}} + \sum_{j=r_0+1}^{r}\PathPT{x_j, x_{j+1}}\le\frac12\tau(\pi).
  \end{equation*}
  Consequently,
  \begin{equation*}
    \max\p{ \BoxPT(x,z), \BoxPT(z,y) }\le \frac12 \BoxPT(x,y) + \frac1{2n} \eps,
  \end{equation*}
  which concludes the proof of the first part. The second part is analoguous.
  \end{proof}
  \begin{Proposition}
  \label{prop : limit_space/geodesics/extension_tranquille}\leavevmode\vspace{-\baselineskip}
  \begin{enumerate}[(i)]
  	\item The maps defined by~\eqref{eqn : intro/framework/def_PT} and~\eqref{eqn : intro/main_thm/def_PT} coincide on $\Z^d\times \Z^d$.
  	\item Assume that $X=\intervalleff{t_1}{t_1'}\times \dots \times \intervalleff{t_d}{t_d'}$ and $x,y\in X\cap\Z^d$. Then the infimum in~\eqref{eqn : intro/main_thm/def_BoxPT} is attained on a discrete path.
  \end{enumerate}
  \end{Proposition}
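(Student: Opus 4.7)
The inequality $\PT^{\eqref{eqn : intro/main_thm/def_PT}}(x,y) \le \PT^{\eqref{eqn : intro/framework/def_PT}}(x,y)$ for $x, y \in \Z^d$ is immediate by specialization, since every discrete path is a polygonal path and the generalized passage time~\eqref{eqn : intro/main_thm/generalized_PathPT1}--\eqref{eqn : intro/main_thm/generalized_PathPT2} reduces to~\eqref{eqn : intro/framework/def_PathPT} when consecutive breakpoints are adjacent lattice vertices. My plan is therefore to establish the reverse inequality for (i) and the attainment in (ii) via a single rounding construction: given any polygonal path $x \Path{\pi} y$ (inside $X$ for~(ii)), produce a \emph{discrete} path $x \Path{\pi'} y$ (inside $X$) with $\PathPT{\pi'} \le \PathPT{\pi}$.

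The construction proceeds in two stages. First I would refine $\pi$ by inserting a breakpoint at every intersection of one of its segments with a coordinate hyperplane $\acc{u\in\R^d : u_i = k}$ for $i \in \intint{1}{d}$ and $k \in \Z$. This refinement cannot increase $\PathPT{\pi}$: on a segment already contained in a single edge of $\bbE^d$ the cost is additive along subsegments, while on a segment that crosses the boundary of an edge, splitting replaces a portion of the rate $b$ by the smaller rate $\EPT[e] \le b$ on the subpiece that becomes aligned with $e$. After refinement every segment of $\pi$ lies in the closure of a single unit cube of $\Z^d$. Second, I would walk through the refined breakpoints $x_0=x, x_1, \dots, x_r=y$ in order while maintaining a current lattice vertex $\tilde x_i \in \Z^d$ (with $\tilde x_0 = x$ and $\tilde x_r = y$): at each step I choose $\tilde x_{i+1}$ as a corner of the cube containing $[x_i, x_{i+1}]$ that is compatible with the cube containing $[x_{i+1}, x_{i+2}]$, and append to $\pi'$ a shortest axis-aligned lattice-edge path from $\tilde x_i$ to $\tilde x_{i+1}$ inside that cube.

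The core of the argument is showing $\PathPT{\pi'} \le \PathPT{\pi}$. For segments of the refined $\pi$ lying on an edge $e$, a careful choice of $\tilde x_i$ ensures the inserted lattice-edge path is bounded by $\EPT[e]\,\norme{x_{i+1}-x_i}$; for segments crossing the interior of a cube (cost $b\,\norme{x_{i+1}-x_i}$), the inserted corner walk has $\norme\cdot$-length comparable to $\norme{x_{i+1}-x_i}$ and its cost is controlled using $\EPT \le b$. The technical obstacle I expect here is the \emph{global} bookkeeping: the round-off at each breakpoint must be charged to the surrounding segments in a telescoping way, so that local discrepancies do not accumulate across the path.

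For (ii), the containment $\pi' \subseteq X$ is preserved because each lattice-edge detour stays in the same cube as the segment it replaces, and this cube lies in $X$ provided one first refines $\pi$ also at the boundary of the box $X$. Attainment of the infimum in~\eqref{eqn : intro/main_thm/def_BoxPT} then follows from Assumption~\ref{ass : intro/main_thm/support}: every discrete path from $x$ to $y$ in the bounded set $X \cap \Z^d$ with cost at most $\BoxPT(x,y) + 1$ uses at most $(\BoxPT(x,y)+1)/a$ edges and thus belongs to a finite set, so the infimum is attained.
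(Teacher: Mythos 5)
Your high-level plan matches the paper's — reduce an arbitrary polygonal path to a discrete one of no greater passage time — but the "global bookkeeping" you flag as a technical obstacle is the entire substance of the argument, and the per-segment rounding you propose does not survive it. A per-segment comparison (cost of the lattice-edge walk from $\tilde x_i$ to $\tilde x_{i+1}$) $\le \PathPT{x_i,x_{i+1}}$ fails outright: for a refined segment that merely clips a corner of a unit cube, $\PathPT{x_i,x_{i+1}}=b\norme{x_{i+1}-x_i}$ can be arbitrarily small while any nontrivial lattice step costs at least $a$; and for a segment strictly interior to an edge $e$, $\PathPT{x_i,x_{i+1}}=\EPT[e]\norme{x_{i+1}-x_i}<\EPT[e]$ while the only available lattice points are the two endpoints of $e$, forcing a step of cost $0$ or the full $\EPT[e]$. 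These local overshoots must be amortized across the whole path, but you give no potential function or global choice rule, and it is far from clear one exists within the per-segment framework. The paper resolves this with a genuinely different decomposition: after removing off-skeleton points and inserting integer points where possible (Step~2), every maximal run of remaining non-integer points is shown to lie inside a single strip $\edges[i,k]{X}$, and each such run is replaced \emph{in one stroke} (Step~3), distinguishing whether its endpoints lie on the same or opposite sides of the strip and, crucially, routing the replacement through the \emph{cheapest} edge $(v_0,v_0+\base i)$ actually visited by the run. That minimum over visited edges is precisely the ingredient a local argument cannot reproduce.

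A second gap: the containment claim "each lattice-edge detour stays in the same cube as the segment it replaces, and this cube lies in $X$" is false when the box endpoints $t_i,t_i'$ are not integers — a unit lattice cube meeting $X$ need not be a subset of $X$, and refining $\pi$ at $\partial X$ does not change which lattice cubes its segments lie in. The paper instead shows that each replacement sequence has monotone coordinates with both endpoints in $X$, so that property~\eqref{eqn : limit_space/geodesics/monotone_paths_in_X} keeps it inside $X$; you would need an analogous argument. Your finiteness argument for attainment, and the reduction of~(i) to~(ii) which the paper carries out via Lemma~\ref{lem : limit_space/geodesics/localisation}, are compatible with your plan once the rounding step is repaired.
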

  \begin{proof}
  	We first show that the second part implies the first. By Lemma~\ref{lem : limit_space/geodesics/localisation}, for all $x,y\in \Z^d$,
    \begin{equation*}
          \PT(x,y) = \inf \acc{ \PathPT{\pi} \Biggm| x\Path{\pi}y, \pi\subseteq x+\intervalleff{-\frac ba\norme{x-y}}{\frac ba\norme{x-y}}^d } = \BoxPT[x+\intervalleff{-\frac ba\norme{x-y}}{\frac ba\norme{x-y}}^d](x,y),
      \end{equation*}
    thus the discrete path $\gamma$ given by the second part with $X \dpe x+\intervalleff{-\frac ba\norme{x-y}}{\frac ba\norme{x-y}}^d$ satisfies $\PathPT{\gamma}=\PT(x,y)$.

    Let us show the second part. The key property of $X$ used here is that for all $z_1,z_2\in X$,
  	\begin{equation}
  	\label{eqn : limit_space/geodesics/monotone_paths_in_X}
  		\text{any finite sequence of points with monotone coordinates and endpoints $z_1$ and $z_2$ is included in }X.
  	\end{equation}
  	Let $\gamma = \p{x=x_0,x_1,\dots,x_r=y}$ be a finite sequence in $X$. It is sufficient to show that there exists a discrete path from $x$ to $y$ with passage time at most $\PathPT{\gamma}$.

  	\paragraph{Step 1: Removing points outside $\bbE^d$.} For all $1\le j \le r-1$ such that $x_j\notin \bigcup_{e\in\bbE^d} e$,
  	\begin{align}
     		\tau(x_{j-1}, x_{j+1})%
     			&\le b\norme{x_{j-1} - x_{j+1}} \eol
     			&\le b\norme{x_{j-1} - x_{j}} + b\norme{x_{j} - x_{j+1}}\eol
     			&= \PathPT{x_{j-1} , x_{j}} + \PathPT{x_{j} , x_{j+1}},\notag
   	\end{align}
   	thus removing such points from $\gamma$ can only decrease the passage time. From now on we may therefore assume that for all $0\le j \le r$, $x_j \in X \cap \p{ \bigcup_{e\in \mathbb E^d}e }$.

   	\paragraph{Step 2: Inserting integer points.} Let $0\le j \le r-1$ such that $x_j$ and $x_{j+1}$ are not both integer points. Without loss of generality we may assume that $x_j \le x_{j+1}$ for coordinate-wise order. All coordinates of $x_j$ except possibly one are integer, so $x_j$ and $\floor{x_j}$ belong to a common edge. The same goes for $x_{j+1}$ and $\ceil{x_{j+1}}$.

    \emph{Case 2.1: Assume that $\ceil{x_j}\le \floor{x_{j+1} }$.} Property \eqref{eqn : limit_space/geodesics/monotone_paths_in_X} implies that $\ceil{x_j}, \floor{x_{j+1} }\in X \cap \Z^d$. Moreover the sequence $\gamma_j\dpe\p{x_j, \ceil{x_j}, \floor{x_{j+1}} , x_{j+1} }$ satisfies
    	\begin{equation*}
    		\PathPT{\gamma_j} \le b\norme{x_j - x_{j+1}} = \PathPT{x_j, x_{j+1}}.
    	\end{equation*}

    \emph{Case 2.2: Assume that $\ceil{x_j}\nleq \floor{x_{j+1}}$.} Then there exists $1\le i \le d$ and $k\in \Z$ such that 
    \begin{equation*}
      k < \ps{x_j}{\mathrm e_i}, \ps{x_{j+1}}{\mathrm e_i} < k+1,
    \end{equation*}
    and all other coordinates of $x_j$ and $x_{j+1}$ are integers. Thus $x_j$ and $x_{j+1}$ belong to the strip
    \begin{equation*}
    \edges[i,k]{X}\dpe \bigcup\limits_{\substack{v=(v_1,\dots, v_d)\in X \cap \Z^d\\ v_i=k}} \intervalleff{v}{v+\mathrm e_i }.
  \end{equation*}
  Consequently, by inserting the sequence $\gamma_j$ between $x_j$ and $x_{j+1}$ for all $j$ satisfying Case~2.1, one constructs a sequence $\gamma'\dpe \p{x=y_0, \dots, y_s=y}$ such that for all $0\le j \le s-1$, at least one of the following assertions is true:
  \begin{align}
  	&\text{Both $y_j$ and $y_{j+1}$ are integer points}.\\
  	\label{eqn : limit_space/geodesics/same_strip}
  	&\text{Both $y_j$ and $y_{j+1}$ belong to a common strip $\edges[i,k]{X}$}.
  \end{align}

  \paragraph{Step 3: Deleting non integer points.} Let $0\le j_1 \le j_1+2 \le j_2 \le s$ be indices such that $y_{j_1}$ and $y_{j_2}$ are integer points but for all $j_1<j<j_2$, $y_j$ is not. We claim that the subsequence of $\gamma'$ between $j_1$ and $j_2$ may be replaced by a sequence of points in $X\cap\Z^d$ with a lesser or equal passage time. For all $j_1<j<j_2$, there exists a unique pair $\p{i(j), k(j)}$ such that $y_j \in \edges[i(j),k(j)]{X}$. Hence,~\eqref{eqn : limit_space/geodesics/same_strip} yields the existence of a pair $(i,k)$ such that for all $j_1\le j \le j_2$, $y_j\in\edges[i,k]{X}$.

  \emph{Case 3.1: Assume that $\ps{y_{j_1}- y_{j_2}}{\mathrm{e}_i}=0$.} Then 
  	\begin{align*}
   		 b\norme{y_{j_1}- y_{j_2}} %
   		 	&\le \sum_{i'\neq i } \sum_{j=j_1}^{j_2-1} b\module{ \ps{y_j - y_{j+1}}{\mathrm e_{i'}} } \\
      		&\le \sum_{j=j_1}^{j_2-1} \PathPT{y_j , y_{j+1}},
  	\end{align*}
  therefore $\p{y_{j_1}, y_{j_2}}$ is a suitable sequence (see Figure~\ref{fig : limit_space/geodesics/strip}, top).

  \emph{Case 3.2: Assume that $\ps{y_{j_1}- y_{j_2}}{\mathrm{e_i}}=\pm 1$.} We only treat the subcase ${\ps{y_{j_1}- y_{j_2}}{\mathrm{e_i}}= 1}$. The other one is similar. We have
  \begin{align*}
    \sum_{j=j_1}^{j_2-1} \PathPT{y_j, y_{j+1}} %
    	&\ge \sum_{j=j_1}^{j_2-1} \p{\module{\ps{y_j - y_{j+1}}{\mathrm e_{i}}} \min_{v\in V(j_1, j_2)} \EPT{\p{v, v+\mathrm e_i}} + \sum_{i'\neq i}b\module{\ps{y_j - y_{j+1}}{\mathrm e_{i'}}}  },
    \intertext{where $V(j_1, j_2)\dpe \acc{\floor{y_j},\quad j_1\le j \le j_2  }$. Thus, by fixing $v_0 \in \argmin\limits_{v\in V(j_1, j_2)}\EPT{\p{v, v+\mathrm e_i}}, $}
    \sum_{j=j_1}^{j_2-1} \tau\p{y_j, y_{j+1}} %
    	&\ge b\norme{y_j - v_0} + \p{\sum_{j=j_1}^{j_2-1} \module{\ps{y_j - y_{j+1}}{\mathrm e_{i}} } }\EPT{\p{v_0, v_0 + \mathrm e_i}}+ b\norme{v_0 + \mathrm e_i - y_{j+1}}  \\
      &\ge b\norme{y_j - v_0} +  \EPT{\p{v_0, v_0 + \mathrm e_i}} + b\norme{v_0 + \mathrm e_i - y_{j+1}}.
  \end{align*}
  Moreover the assumption on $X$ implies that $v_0$ and $v_0 + \mathrm e_i$ are elements of $X$. Thus $\p{y_{j_1}, v_0, v_0 + \mathrm e_i, y_{j_2}}$ is a suitable sequence (see Figure~\ref{fig : limit_space/geodesics/strip}, bottom).
  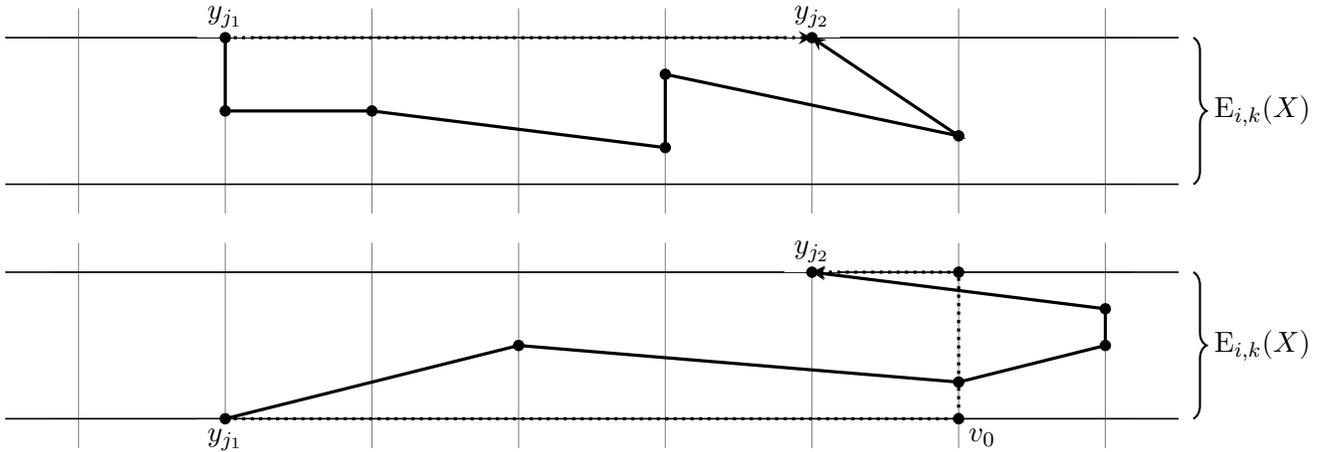
\begin{figure}
  \begin{tikzpicture}[scale =1.95]
    \draw[step= 1cm,gray,thin] (-0.2,-0.2) grid (7.2,1.2);
    \draw[semithick] (-0.5, 0) -- (7.5,0)
    (-0.5, 1) -- (7.5,1);
    \draw[anchor=south]%
  (1,1) node[fill=white] {$y_{j_1}$}
  (5,1) node[fill=white] {$y_{j_2}$};
  \filldraw (1,1) circle (1pt)
            (5,1) circle (1pt)
            (1,0.5) circle (1pt)
            (2,0.5) circle (1pt)
            (4,0.25) circle (1pt)
            (4,0.75) circle (1pt)
            (6,0.33) circle (1pt);
    \draw[-stealth, very thick] (1,1)--(1,0.5)--(2,0.5)--(4,0.25)--(4,0.75)--(6,0.33)-- (5,1);
    \draw[-stealth, dotted, very thick] (1,1)--(5,1);
    \draw[decorate, decoration={brace, amplitude=5pt},thick ] (7.6,1)--(7.6,0) node[pos=0.5, right=4pt] {$\edges[i,k]{X}$};
    \begin{scope}[yshift = -1.6cm]
      \draw[step= 1cm,gray,thin] (-0.2,-0.2) grid (7.2,1.2);
    \draw[semithick] (-0.5, 0) -- (7.5,0)
    (-0.5, 1) -- (7.5,1);
    \draw[anchor=south]%
  (1,0) node[fill=white, anchor = north] {$y_{j_1}$}
  (5,1) node[fill=white] {$y_{j_2}$};
  \filldraw (1,0) circle (1pt)
            (5,1) circle (1pt)
            (3,0.5) circle (1pt)
            (6,0.25) circle (1pt)
            (7,0.5) circle (1pt)
            (7,0.75) circle (1pt)
            (6,0) circle (1pt)
            (6,1) circle (1pt);%
    \draw[-stealth, very thick] (1,0)--(3,0.5)--(6,0.25)--(7,0.5)--(7,0.75)-- (5,1);
    \draw[-stealth, dotted, very thick] (1,0)--(6,0)--(6,1)--(5,1);    
    \draw (6,0) node[anchor= north west] {$v_0$};
    \draw[decorate, decoration={brace, amplitude=5pt},thick ] (7.6,1)--(7.6,0) node[pos=0.5, right=4pt] {$\edges[i,k]{X}$};
    \end{scope}
  \end{tikzpicture}
  \caption{The subsequence of $\gamma'$ between $y_{j_1}$ and $y_{j_2}$ (solid line) can be replaced by a quicker sequence (dotted lines) using only integer points. }
  \label{fig : limit_space/geodesics/strip}
  \end{figure}
  Consequently, by considering every pair of indices $(j_1,j_2)$ corresponding to consecutive integer points in $\gamma'$, one constructs a sequence $\gamma''$ of points in $X\cap \Z^d$, with passage time at most $\PathPT{\gamma'}$, whose endpoints are $x$ and $y$. Finally, due to property~\eqref{eqn : limit_space/geodesics/monotone_paths_in_X}, $\gamma''$ can be assumed to be a discrete path. 
  \end{proof}

\subsection{Gradient of a metric}
  \label{subsec : limit_space/gradient}
  In this subsection $X\in \Windows \cup \acc{\R^d}$ and $D\in \AdmDistances$ are fixed. We define the gradient of $D$ at a point $z\in X$. Proposition~\ref{prop : limit_space/gradient/courbe} and Proposition~\ref{prop : limit_space/gradient/derivee} essentially state that $D$ can be reconstructed from its gradient and locally approximated by it, respectively. These two propositions are mere applications and reformulations of well known results around the so-called \emph{metric derivative}, a tool initially introduced by Kircheim~\cite{Kir94} to extend the area formula to functions taking values in a general metric space.

  Lemma~\ref{lem : limit_space/gradient/Rademacher_kindof} will be of constant use in this subsection, as well as in Subsection~\ref{subsec : limit_space/constructions}. It is a consequence of the fact that \emph{absolutely continuous} functions satisfy the fundamental theorem of calculus (see e.g. \cite[Theorem~7.18]{Rudin}). The first part is a special case of Rademacher's theorem (see e.g. \cite[Theorem 3.1.6]{Fed69}).
  \begin{Lemma}
  \label{lem : limit_space/gradient/Rademacher_kindof}
  Let $\gamma : \intervalleff0T \rightarrow \R^d$ be a Lipschitz path. Then $\gamma$ is differentiable almost everywhere and for all $0\le t_1 \le t_2 \le T$,
  \begin{equation}
  \label{eqn : limit_space/gradient/Rademacher_kindof}
    \gamma({t_2})-\gamma({t_1}) = \int_{t_1}^{t_2} \gamma'(t)\d t.
  \end{equation}
  \end{Lemma}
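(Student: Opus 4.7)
The plan is to reduce the claim to the one-dimensional case and then invoke classical facts about absolutely continuous functions.

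First, I would write $\gamma = (\gamma_1, \dots, \gamma_d)$ componentwise. Each coordinate $\gamma_i : \intervalleff{0}{T} \to \R$ is Lipschitz with the same constant as $\gamma$, since $\module{\gamma_i(t) - \gamma_i(s)} \le \norme{\gamma(t) - \gamma(s)}$. Differentiability of $\gamma$ at a point $t$ is equivalent to simultaneous differentiability of each $\gamma_i$ at $t$, and the vector identity~\eqref{eqn : limit_space/gradient/Rademacher_kindof} holds in $\R^d$ if and only if it holds coordinate by coordinate. Hence it suffices to prove both assertions for a Lipschitz scalar function $f : \intervalleff{0}{T} \to \R$.

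Second, I would verify that such an $f$ is absolutely continuous: if $L$ is its Lipschitz constant, then for any finite family of pairwise disjoint subintervals $\p{a_k, b_k} \subseteq \intervalleff{0}{T}$,
\[ \sum_k \module{f(b_k) - f(a_k)} \le L \sum_k (b_k - a_k), \]
so the defining condition of absolute continuity is satisfied with $\delta = \eps/L$. At this point I would invoke the classical theorem (e.g.\ \cite[Theorem 7.18]{Rudin}) that every absolutely continuous function on a compact interval is differentiable almost everywhere, has an $L^1$ derivative, and satisfies the fundamental theorem of calculus. Putting this together with the coordinatewise reduction yields both the a.e.\ differentiability of $\gamma$ and the integral formula.

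There is essentially no substantive obstacle here: the lemma is a textbook consequence of the theory of absolutely continuous functions, and the coordinatewise reduction is routine. An alternative, more self-contained route would be to note that a Lipschitz function has bounded variation, decompose it as a difference of monotone functions, apply Lebesgue's differentiation theorem for monotone functions, and then use absolute continuity to rule out a singular part in the Lebesgue decomposition of $\d f$; but this only adds bookkeeping without introducing new ideas, so citing the classical result directly is the cleanest path.
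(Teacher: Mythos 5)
Your argument is correct and is essentially the same as the paper's: the paper does not give a proof but simply cites Rademacher's theorem for the a.e.\ differentiability and Rudin's Theorem~7.18 (the fundamental theorem of calculus for absolutely continuous functions) for the integral identity. Your coordinatewise reduction and the observation that Lipschitz implies absolutely continuous are exactly the routine steps the paper is implicitly delegating to those references.
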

  \begin{Definition}\label{def : limit_space/gradient/gradient}
    The \emph{gradient} of $D$ at $z\in X$ is defined as the function
    \begin{align}
    	(\grad D)_z : \R^d &\longrightarrow \intervallefo0\infty \notag \\
    		 u &\longmapsto \liminf_{h \to 0} \frac{\Extension D(z,z+hu)}{\module h},%
    	\label{eqn : limit_space/gradient/gradient}  
     \end{align}
     with $\Extension D$ defined by~\eqref{eqn : intro/notations/extension}. If $z\in \mathring X$, then $\Extension D$ can be replaced by $D$ in~\eqref{eqn : limit_space/gradient/gradient}.
  \end{Definition}
  \begin{Proposition}[Elementary properties of the gradient]
  \label{prop : limit_space/gradient/ptes_elem}
  \leavevmode\vspace{-\baselineskip}
    \begin{enumerate}[(i)]
      \item Let $z\in X$. Then $(\grad D)_z$ is $b$-Lipschitz and absolutely homogeneous. In particular it belongs to the space $\ContHom$ defined in Subsection~\ref{subsec : intro/notations}. 
      \item Let $z \in X$. Then for all $u\in \R^d,$ 
      \begin{equation}
      \label{eqn : limit_space/gradient/encadrement}
        a\norme{u} \le (\grad D)_z(u) \le b\norme{u}.
      \end{equation}
      \item The map \begin{align*} X&\longmapsto \ContHom \\ z&\longmapsto (\grad D)_z \end{align*} is measurable.
  \end{enumerate}
  \end{Proposition}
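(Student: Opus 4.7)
The three items each rely on transferring properties of the extended metric $\Extension D$ to the $\liminf$ that defines $(\grad D)_z$. As a preliminary observation, I would first verify that $\Extension D$ inherits the bilateral bound $a\norme{x-y}\le \Extension D(x,y) \le b\norme{x-y}$ on all of $\R^d\times\R^d$: the upper bound is immediate from the outer $\min$ in~\eqref{eqn : intro/notations/extension}, while the lower bound comes from applying $D(x',y')\ge a\norme{x'-y'}$ inside $X$ (via~\eqref{eqn : intro/main_thm/equivalence_distances}) and then two triangle inequalities on $\norme\cdot$. Continuity of $\Extension D$ is also useful below; it follows because the inner infimum over the compact set $X^2$ of a continuous function is continuous, and the outer min of continuous functions is continuous.

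For (i), absolute homogeneity is the change of variable $h\mapsto h/\lambda$ in~\eqref{eqn : limit_space/gradient/gradient}: for $\lambda\ne 0$,
\[
(\grad D)_z(\lambda u) = \liminf_{h\to 0}\frac{\Extension D(z,z+h\lambda u)}{|h|} = |\lambda|\liminf_{h'\to 0}\frac{\Extension D(z,z+h'u)}{|h'|} = |\lambda|(\grad D)_z(u),
\]
and the case $\lambda=0$ is trivial. For the $b$-Lipschitz property, I use the triangle inequality together with $\Extension D(z+hu, z+hv)\le b|h|\norme{u-v}$ (the preliminary upper bound applied on $\R^d$): dividing $\Extension D(z,z+hu) \le \Extension D(z,z+hv) + b|h|\norme{u-v}$ by $|h|$ and taking $\liminf$ yields $(\grad D)_z(u)\le (\grad D)_z(v) + b\norme{u-v}$. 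Lipschitz continuity plus absolute homogeneity places $(\grad D)_z$ in $\ContHom$.

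Item (ii) is then direct: dividing the preliminary bounds $a|h|\norme{u} \le \Extension D(z,z+hu) \le b|h|\norme{u}$ by $|h|$ and taking $\liminf_{h\to 0}$ gives~\eqref{eqn : limit_space/gradient/encadrement}.

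The main obstacle is (iii), since the $\liminf$ over the continuum of real $h$'s is not a priori measurable. Since $\ContHom$ carries the cylinder $\sigma$-algebra generated by the evaluation maps $f\mapsto f(u)$, it suffices to show that for each fixed $u\in\R^d$, the scalar map $\Phi_u:z\mapsto (\grad D)_z(u)$ is Borel on $X$. I would reduce the $\liminf$ to a countable operation by exploiting joint continuity: the function $(z,h)\mapsto \Extension D(z,z+hu)/|h|$ is continuous on $X\times(\R\setminus\{0\})$, so by continuity in $h$,
\[
\Phi_u(z) = \sup_{n\ge 1}\inf_{\substack{h\in\Q \\ 0<|h|<1/n}}\frac{\Extension D(z,z+hu)}{|h|}.
\]
For each $h$ in this countable set the map $z\mapsto \Extension D(z,z+hu)/|h|$ is continuous in $z$, hence the countable inf-sup is Borel measurable, proving $\Phi_u$ and thus $z\mapsto (\grad D)_z$ is measurable. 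The assertion in Definition~\ref{def : limit_space/gradient/gradient} that $\Extension D$ can be replaced by $D$ when $z\in\mathring X$ is a direct consequence of $\Extension D=D$ on $X^2$ plus the fact that $z+hu\in X$ for $|h|$ small enough when $z$ is in the interior.
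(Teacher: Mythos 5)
Your proposal is correct and essentially the same as the paper's: the paper only spells out the $b$-Lipschitz bound (using the same triangle-inequality computation $|\Extension D(z,z+hu_1)-\Extension D(z,z+hu_2)|\le \Extension D(z+hu_1,z+hu_2)\le b|h|\norme{u_1-u_2}$) and then declares "the rest is clear." Your reduction of the $\liminf$ to a countable inf–sup over rational $h$ via continuity of $\Extension D$ is a sound and standard way to supply the measurability in (iii) that the paper leaves implicit, and your preliminary verification of the bilateral bound for $\Extension D$ on all of $\R^d$ is exactly the property the paper asserts when it introduces~\eqref{eqn : intro/notations/extension}.
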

  \begin{proof}
  	Notice that for all $u_1,u_2 \in \R^d, h \neq 0$,
    \begin{equation*}
      \module{ D(z,z+hu_1) -D(z,z+hu_2) } \le D(z+hu_1 , z+hu_2) \le b\module{h}\norme{u_2-u_1}.
    \end{equation*}
    Hence $(\grad D)_z$ is $b$-Lipschitz. The rest is clear.
  \end{proof}

  \begin{Proposition}
  \label{prop : limit_space/gradient/courbe}
  	For any Lipschitz path $\gamma : \intervalleff0{T} \rightarrow X$, 
  	\begin{align}
  	\label{eqn : limit_space/gradient/length_gamma}
  	    D(\gamma) &= \int_0^{T} (\grad D)_{\gamma(t)}(\gamma'(t)) \d t,
  	\intertext{where $D(\gamma)$ is defined by~\eqref{eqn : intro/notations/length_curve}. In particular, for all $x,y\in X$,}
  	\label{eqn : limit_space/gradient/D_avec_gradient}
  	    D(x,y) &= \min \acc{ \int_0^{T_\gamma} (\grad D)_{\gamma(t)}(\gamma'(t)) \d t \Biggm|  x\ResPath{X}{\gamma}y, \gamma \text{ Lipschitz} }
  	\end{align}
  	    and any geodesic from $x$ to $y$ attains the minimum.
  \end{Proposition}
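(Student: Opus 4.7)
The plan is to deduce both formulas from the classical theory of the \emph{metric derivative}, as introduced by Kirchheim~\cite{Kir94}: for any Lipschitz path $\gamma : \intervalleff 0 T \to (X,D)$, the limit
\begin{equation*}
  \module{\gamma'}_D(t) \dpe \lim_{h\to 0}\frac{D(\gamma(t+h),\gamma(t))}{\module h}
\end{equation*}
exists for almost every $t\in\intervalleff 0 T$, and the $D$-length defined in~\eqref{eqn : intro/notations/length_curve} can be recovered by $D(\gamma)=\int_0^T \module{\gamma'}_D(t)\,\d t$. Granted this, the content of~\eqref{eqn : limit_space/gradient/length_gamma} reduces to identifying $\module{\gamma'}_D(t)$ with $(\grad D)_{\gamma(t)}(\gamma'(t))$ for a.e. $t$.

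For the identification, I would fix $t$ at which both $\module{\gamma'}_D(t)$ and the classical derivative $\gamma'(t)$ exist; the latter is granted a.e. by Lemma~\ref{lem : limit_space/gradient/Rademacher_kindof}. For small $h$ the point $\gamma(t)+h\gamma'(t)$ need not lie in $X$, but using the extension $\Extension D$ from~\eqref{eqn : intro/notations/extension}, the triangle inequality together with the upper bound $\Extension D(u,v)\le b\norme{u-v}$ yields
\begin{equation*}
  \module{\Extension D\p{\gamma(t),\gamma(t)+h\gamma'(t)}-D(\gamma(t),\gamma(t+h))} \le b\norme{\gamma(t+h)-\gamma(t)-h\gamma'(t)} = \petito(h).
\end{equation*}
Dividing by $\module h$ and letting $h\to 0$ shows that the $\liminf$ defining $(\grad D)_{\gamma(t)}(\gamma'(t))$ in~\eqref{eqn : limit_space/gradient/gradient} is in fact a genuine limit equal to $\module{\gamma'}_D(t)$. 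Integrating over $\intervalleff 0 T$ and invoking Kirchheim's formula gives~\eqref{eqn : limit_space/gradient/length_gamma}.

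For the variational formula~\eqref{eqn : limit_space/gradient/D_avec_gradient}, every Lipschitz $\gamma$ with $x\ResPath{X}{\gamma}y$ satisfies $D(\gamma)\ge D(x,y)$ by the triangle inequality, so~\eqref{eqn : limit_space/gradient/length_gamma} gives the lower bound. The lower bound is attained: by~\eqref{eqn : intro/notations/length_curve_geo_metric} any $D$-geodesic $\sigma$ from $x$ to $y$ realises $D(\sigma)=D(x,y)$, and by~\eqref{eqn : limit_space/geodesics/localisation_forte} such a $\sigma$ is Lipschitz for $\norme{\cdot}$, hence admissible in the right-hand side of~\eqref{eqn : limit_space/gradient/D_avec_gradient}. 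The main subtlety is the boundary handling in the identification step—i.e.\ replacing $D$ by $\Extension D$ so that $\gamma(t)+h\gamma'(t)$ is always a legitimate argument—but this is precisely the role of the extension introduced in~\eqref{eqn : intro/notations/extension}, and it is transparent once set up.
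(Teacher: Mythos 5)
Your proof is correct and follows essentially the same route as the paper: both deduce the formula from the existence of the metric derivative (the paper cites Ambrosio--Tilli, you cite Kirchheim, but these are the same result) and then identify $\module{\gamma'}_D(t)$ with $(\grad D)_{\gamma(t)}(\gamma'(t))$ via the triangle inequality together with differentiability of $\gamma$, exactly as in the paper. Your remark that $\gamma(t)+h\gamma'(t)$ may leave $X$ and that this is handled by the extension $\Extension D$ is a slightly more explicit treatment of a point the paper passes over silently, but it does not change the argument.
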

  \begin{proof}
  	Let $\gamma : \intervalleff0T \longrightarrow X$ be a Lipschitz path from $x$ to $y$. Theorem~4.1.6 in~\cite{Amb04} implies that the limit
  	\begin{equation*}
  		\dot{\gamma}(t) \dpe \lim_{h\to 0}\frac{D(\gamma(t),\gamma(t+h))}{\module h}
  	\end{equation*}
  	exists for almost every $0\le t\le T$, and 
  	\begin{equation}
  	\label{eqn : limit_space/gradient/proof_curve1}
  	    D(\gamma) = \int_0^T \dot{\gamma}(t)\d t.
  	\end{equation}
  	Let $0\le t \le T$ be such that both $\dot{\gamma}(t)$ and $\gamma'(t)$ exist. We have
  	\begin{align}
  	  	\module{D(\gamma(t), \gamma(t+h)) - D(\gamma(t), \gamma(t) + \gamma'(t)h)}%
  	  		&\le D\p{\gamma(t+h) , \gamma(t) + \gamma'(t)h} \eol
  	    	&\le b\norme{ {\gamma(t+h) - \gamma(t) - \gamma'(t)h} }= \petito(h).\notag
  	\intertext{Dividing by $h$ and letting $h\to0 $ yields}
  	\label{eqn : limit_space/gradient/proof_curve2}
  		(\grad D)_{\gamma(t)}(\gamma'(t)) &= \dot{\gamma}(t).
  	\end{align}
  	The proposition follows from Equations~\eqref{eqn : limit_space/gradient/proof_curve1} and \eqref{eqn : limit_space/gradient/proof_curve2}.
  \end{proof}
  \begin{Proposition}
  \label{prop : limit_space/gradient/derivee}
  	For almost every $z\in X$, $(\grad D)_z \in \AdmNorms$, and  there exists a function $\eps : \intervallefo0\infty \rightarrow \intervallefo0\infty$ such that $\eps(0^+)=0$, for all $x,y\in X$,
  	\begin{equation}
  		\label{eqn : limit_space/gradient/derivee}
  	    \module{ D(x,y) - (\grad D)_z(y-x) } \le \eps\p{\module{x-z} + \module{y-z}}\cdot \p{\module{x-z} + \module{y-z}}.
  	\end{equation}
  \end{Proposition}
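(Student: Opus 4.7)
The strategy is to invoke Kirchheim's metric differentiation theorem~\cite{Kir94}, applied to the identity map $\id : (\mathring{X}, \norme{\cdot}) \rightarrow (X, D)$, which is $b$-Lipschitz by~\eqref{eqn : intro/main_thm/equivalence_distances}. Kirchheim's theorem yields, for almost every $z \in \mathring X$, a seminorm $p_z$ on $\R^d$ together with a modulus $\delta_z : \intervallefo0\infty \rightarrow \intervallefo0\infty$ satisfying $\delta_z(0^+)=0$, such that
\begin{equation*}
\module{D(x,y) - p_z(y-x)} \le \delta_z\p{\norme{x-z}+\norme{y-z}}\cdot\p{\norme{x-z}+\norme{y-z}}
\end{equation*}
for all $x,y$ in a neighborhood of $z$ inside $\mathring X$.

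First I would identify $p_z$ with $(\grad D)_z$. Specializing the two-point estimate to $x=z$, $y=z+hu$ gives $D(z,z+hu)/\module h \rightarrow p_z(u)$ as $h\to 0$, so the liminf in Definition~\ref{def : limit_space/gradient/gradient} is actually a limit equal to $p_z(u)$; hence $(\grad D)_z = p_z$ at such points $z$. Being a seminorm (from Kirchheim) and satisfying $a\norme{u} \le (\grad D)_z(u) \le b\norme{u}$ by Proposition~\ref{prop : limit_space/gradient/ptes_elem}, positive definiteness is automatic, so $(\grad D)_z \in \AdmNorms$.

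Second, I would globalize the estimate to all $x,y \in X$. The trivial bound $\module{D(x,y)-(\grad D)_z(y-x)} \le 2b\p{\norme{x-z}+\norme{y-z}}$ follows from $D(x,y) \le b\norme{y-x}$ and $(\grad D)_z(y-x) \le b\norme{y-x}$. Combining this with Kirchheim's local estimate, one defines a nondecreasing function $\eps_z$ by taking the supremum of $\delta_z$ over $\intervalleff0r$ for small $r$ and capping it at $2b$ otherwise, yielding $\eps_z(0^+)=0$ and~\eqref{eqn : limit_space/gradient/derivee} for all $x,y\in X$. Since $\partial X$ has Lebesgue measure zero, the conclusion holds for almost every $z\in X$.

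The main technical obstacle, were one to reprove Kirchheim rather than cite it, is the passage from the one-point statement $D(z, z+hu) = \module h\, (\grad D)_z(u) + \petito(h)$---obtainable from Lemma~\ref{lem : limit_space/gradient/Rademacher_kindof} applied to the Lipschitz functions $h \mapsto D(z, z+hu)$ along a countable dense family of directions $u$, together with Fubini to find good $z$---to the uniform two-point statement above. This upgrade relies on the equi-$b$-Lipschitz regularity of both $D(\cdot,\cdot)$ and the candidate norms $p_z$ on $\S$, which allows one to pass from pointwise convergence in $u$ to uniform convergence in $u$, and then to deduce the two-variable estimate by writing $D(x,y) \approx D(z, z+(y-x))$ up to a $D(x,z)$-controlled error.
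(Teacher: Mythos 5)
Your proposal is correct and takes essentially the same approach as the paper: both cite Kirchheim's metric differentiation theorem~\cite{Kir94}, identify the resulting seminorm with $(\grad D)_z$ via the one-sided specialization $x=z$, $y=z+hu$, conclude membership in $\AdmNorms$ from Proposition~\ref{prop : limit_space/gradient/ptes_elem}, and discard $\partial X$ as Lebesgue-negligible. The only divergence is that the paper first pushes $\Extension D$ through the Kuratowski embedding $x\mapsto \Extension D(x,\cdot)-\Extension D(0,\cdot)$ into $\ell^\infty(\R^d)$ so as to apply the Banach-space form of Kirchheim's theorem, whereas you apply the metric-target version directly to $\id : (\mathring X, \norme\cdot)\to(X,D)$; since Kirchheim's Theorem~2 is indeed stated for metric targets, this shortcut is legitimate, and your explicit capping of $\eps$ at $2b$ for large arguments is a harmless redundancy (the $o(\cdot)$ form of Kirchheim's estimate already gives a global modulus).
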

  \begin{proof}
  It is a special case of Theorem~2 in \cite{Kir94} for the identity map. However some minor adaptations need to be made, because their result is stated for maps taking values in a Banach space. The map
  \begin{align*}
  \iota : (\R^d, \Extension{D} ) &\longrightarrow \ell^{\infty}(\R^d)\\
  x &\longmapsto \Extension D\p{x , \cdot} - \Extension D\p{0, \cdot}
  \end{align*}
  is an isometry from $\R^d$ to the space $\ell^{\infty}(\R^d)$ of bounded real functions on $\R^d$ endowed with the sup norm denoted by $\norme[\ell^{\infty}(\R^d)]{\cdot}$, therefore
  \begin{align*}
  f : (\R^d, \norme{\cdot}) &\longrightarrow \ell^{\infty}(\R^d)\\
  x &\longmapsto \iota(x)
  \end{align*}
  is Lipschitz and takes values in a Banach space. Theorem~2 in \cite{Kir94} hence implies that almost every $z\in \mathring{X}$ satisfies the following.
  \begin{enumerate}[(i)]
     	\item For all $u\in\R^d$, the limit 
  	\begin{equation}
  	\label{eqn : limit_space/gradient/MD}
  	    \mathrm{MD}(f,z)(u) \dpe \lim_{\substack{h\to 0 \\ h>0}} \frac1h \norme[\ell^\infty(\R^d)]{f(z+hu) - f(z)}
  	\end{equation}
  	exists (the notation $\mathrm{MD}$, for \emph{metric derivative} is from \cite{Kir94}).
      \item The function $\mathrm{MD}(f,z)(\cdot)$ is a seminorm.
      \item There exists a function $\eps : \intervallefo0\infty \rightarrow \intervallefo0\infty$ such that $\eps(0^+)=0$ and for all $x,y\in X$,
  	\begin{equation}
  	\label{eqn : limit_space/gradient/version_MD}
  		\module{ \norme[\ell^\infty(\R^d)]{f(x) - f(y)} - \mathrm{MD}(f,z)(y-x)  } \le \eps\p{\module{x-z} + \module{y-z}}\cdot \p{\module{x-z} + \module{y-z}}.
  	\end{equation}
  \end{enumerate}
  Let us fix such a $z$. As $\mathrm{MD}(f,z)(\cdot)$ is a seminorm, Equation~\eqref{eqn : limit_space/gradient/MD} can be rewritten as
  \begin{equation}
  	\mathrm{MD}(f,z)(u) = \lim_{h\to 0 } \frac1{ \module{h}} \norme[\ell^\infty(\R^d)]{f(z+hu) - f(z)}.
  \end{equation}
  Since $\iota$ is an isometry,
  \begin{align*}
      \mathrm{MD}(f,z)(u) = \lim_{h\to 0} \frac1{\module{h}} \Extension D\p{z+hu, z} &= \lim_{h\to 0} \frac1{\module{h}} D\p{z+hu, z}= (\grad D)_z(u),
      \intertext{and \eqref{eqn : limit_space/gradient/version_MD} can be rewritten as}
      \module{ D\p{x,y} - \mathrm{MD}(f,z)(y-x)  } &\le \eps\p{\module{x-z} + \module{y-z}}\cdot \p{\module{x-z} + \module{y-z}}.
  \end{align*} 
  Hence~\eqref{eqn : limit_space/gradient/derivee} and $(\grad D)_z$ is a seminorm. Finally, $(\grad D)_z$ satisfies~\eqref{eqn : limit_space/gradient/encadrement} therefore $(\grad D)_z \in \AdmNorms$. 
  \end{proof}

\subsection{Building metrics}
  \label{subsec : limit_space/constructions}
  In this subsection we give some tools to manipulate and build admissible metrics: Lemma~\ref{lem : limit_space/constructions/inverse} essentially states that a metric $D\in \AdmDistances$ can be defined by prescribing its gradient. Lemmas~\ref{lem : limit_space/constructions/rescaling}, \ref{lem : limit_space/constructions/translation} and~\ref{lem : limit_space/constructions/restriction} state that rescaling, translating or restricting an admissible metric yields an admissible metric. Lemma~\ref{lem : limit_space/constructions/blocs} states that stitching admissible metrics on different subsets of $\R^d$ yields an admissible metric on their reunion.

  \begin{Lemma}[Prescribing the gradient of a metric]
  \label{lem : limit_space/constructions/inverse}
   Recall the definition of $\ContHom$ as the space of absolutely homogeneous functions from $\R^d$ to $\R$ in Subsection~\ref{subsec : intro/notations}. Let \begin{align*}
    g: X &\longrightarrow \ContHom \\ z &\mapsto g_z
  \end{align*}
  be a measurable map such that for all $z\in X$, $a\norme{\cdot} \le g_z \le b\norme{\cdot}$. Consider
  \begin{align}
       D : X^2 &\longrightarrow \intervallefo0\infty \eol 
       (x,y) &\longmapsto \inf%
       	 \acc{ \int_0^{T_\gamma} g_{\gamma(t)}(\gamma'(t)) \d t \Biggm| x \ResPath{X}{\gamma} y, \gamma\text{ Lipschitz} }. \label{eqn : limit_space/constructions/inverse_def}
     \end{align}
  Then 
  \begin{enumerate}[(i)]
  \item $D\in\AdmDistances$.
  \item \label{item : limit_space/constructions/inverse_def/localisation} The infimum in~\eqref{eqn : limit_space/constructions/inverse_def} may be restricted to Lipschitz paths $x\Path{\gamma} y$ such that $\norme{\gamma} \le \frac{b}{a}\norme{x-y}$. 
  \item For almost every $z\in X$,
  \begin{equation}
  \label{eqn : eqn : limit_space/constructions/inegalite_gz_grad}
     (\grad D)_z \le g_z.
   \end{equation} 
  \item \label{item : limit_space/constructions/inverse_def/cas_sympa} If $z_0 \in \mathring{X}$ is such that $g_{z_0}$ is a norm and $z\mapsto g_z$ is continuous at $z_0$ for $\normeHom{\cdot}$, then $g_{z_0} = (\grad D)_{z_0}$.
   \end{enumerate}
  \end{Lemma}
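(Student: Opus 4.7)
My overall plan is to prove the four parts in order, with parts (i)--(iii) being largely routine and (iv) requiring a convexity/Jensen-type argument.

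For part (i), I would first check that $D$ defines a metric: symmetry comes from reversing paths and using that $g_z$ is absolutely homogeneous so $g_z(-v)=g_z(v)$, the triangle inequality from concatenating Lipschitz paths, and $D(x,x)=0$ from constant paths. The upper bound $D(x,y)\le b\norme{x-y}$ is witnessed by the straight segment, which lies in $X$ by convexity, while the lower bound $D(x,y)\ge a\norme{x-y}$ follows from $g\ge a\norme{\cdot}$ together with Lemma~\ref{lem : limit_space/gradient/Rademacher_kindof} and the ordinary triangle inequality applied to $\gamma(T_\gamma)-\gamma(0)=\int_0^{T_\gamma}\gamma'(t)\d t$. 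To confirm $D\in\AdmDistances$ via the Hopf--Rinow criterion of Lemma~\ref{lem : limit_space/compactness/criterion}, I take a near-optimal Lipschitz path $\gamma$, note that $t\mapsto \int_0^t g_{\gamma(s)}(\gamma'(s))\d s$ is continuous, and pick a midway time $t^*$ whose image provides an approximate midpoint.

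For part (ii), the argument is a one-line speed comparison: any Lipschitz path $\gamma$ satisfies $\int g_{\gamma(s)}(\gamma'(s))\d s \ge a\int \norme{\gamma'(s)}\d s=a\norme{\gamma}$. The straight segment already achieves $D(x,y)\le b\norme{x-y}$, so any $\gamma$ with $\norme{\gamma}>\frac{b}{a}\norme{x-y}$ has integral strictly greater than $b\norme{x-y}$ and may be discarded from the infimum.

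For part (iii), I would use the straight-line path as a test. Fixing $u\in\R^d$ and $z\in \mathring X$ with $h$ small enough that $z+hu\in X$, one has $D(z,z+hu)\le \int_0^h g_{z+tu}(u)\d t$. The remaining task is to show that for almost every $z$, $h^{-1}\int_0^h g_{z+tu}(u)\d t \to g_z(u)$ as $h\to 0^+$. This is the 1D Lebesgue differentiation theorem applied along lines parallel to $u$, and its validity for almost every $z\in\R^d$ follows from Fubini (the map $z\mapsto g_z(u)$ is bounded and measurable since $g$ is measurable with values in $\ContHom$ and evaluation at $u$ is continuous on $\ContHom$). Taking a countable dense set $\acc{u_k}\subset \R^d$ and a countable union of null sets yields a full-measure set on which $(\grad D)_z(u_k)\le g_z(u_k)$ for every $k$; since both sides are $b$-Lipschitz in $u$ by Proposition~\ref{prop : limit_space/gradient/ptes_elem}, the inequality extends by continuity to all $u\in\R^d$.

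Part (iv) is the substantive one. The upper bound $(\grad D)_{z_0}\le g_{z_0}$ follows from the straight-line argument above combined with continuity of $g$ at $z_0$: for $h$ small, $g_{z_0+tu}(u)\le g_{z_0}(u)+\eps\norme{u}$ uniformly in $t\in \intervalleff0h$, so $D(z_0,z_0+hu)\le h(g_{z_0}(u)+\eps\norme{u})$, and I let $\eps\to 0$. For the reverse inequality, I would take any Lipschitz path $z_0\Path\gamma z_0+hu$, use part (ii) to restrict to $\norme\gamma\le \frac{b}{a}h\norme{u}$, and for $h$ small confine the image of $\gamma$ to a neighbourhood of $z_0$ on which $g_z(v)\ge g_{z_0}(v)-\eps\norme{v}$ uniformly in $v$, giving
\begin{equation*}
  \int_0^{T_\gamma} g_{\gamma(t)}(\gamma'(t))\d t
  \ge \int_0^{T_\gamma} g_{z_0}(\gamma'(t))\d t -\eps\norme\gamma.
\end{equation*}
The key step is then Jensen's inequality applied to the convex function $g_{z_0}$ (convexity being automatic from subadditivity plus positive homogeneity once $g_{z_0}$ is a norm): Lemma~\ref{lem : limit_space/gradient/Rademacher_kindof} gives $\int_0^{T_\gamma}\gamma'(t)\d t=hu$, hence
\begin{equation*}
  \int_0^{T_\gamma} g_{z_0}(\gamma'(t))\d t \ge g_{z_0}\p{\int_0^{T_\gamma}\gamma'(t)\d t}=h\,g_{z_0}(u).
\end{equation*}
Combined with the length bound, this yields $D(z_0,z_0+hu)/h\ge g_{z_0}(u)-\eps \frac{b}{a}\norme{u}$, and letting $\eps\to 0$ concludes. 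The main obstacle is precisely the Jensen step, which requires $g_{z_0}$ to be a genuine norm rather than merely absolutely homogeneous; without convexity one cannot collapse the pointwise integrand into a single evaluation at the displacement $hu$. The continuity hypothesis is equally essential, since it is what allows the local comparison $g_{\gamma(t)}\approx g_{z_0}$ to hold uniformly along the near-optimal path that part (ii) has confined near $z_0$.
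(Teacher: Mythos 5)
Your proposal is correct and follows the same overall route as the paper's proof. Parts (i), (ii), and (iv) coincide with the paper: the approximate-midpoint criterion and intermediate value theorem for (i); the speed bound coming from $g_z\ge a\norme{\cdot}$ for (ii); and, for (iv), the modulus-of-continuity estimate that replaces $g_{\gamma(t)}$ by $g_{z_0}$ along paths confined near $z_0$ by part (ii), followed by Jensen's inequality applied to the convex function $g_{z_0}$ (convexity is exactly where the norm hypothesis is used). For part (iii) you take a slightly cleaner route: you apply the one-dimensional Lebesgue differentiation theorem directly to the line average $h^{-1}\int_0^h g_{z+tu}(u)\,\d t$, justified slice-by-slice by Fubini in the direction $u$, and compare with the defining $\liminf$ for $(\grad D)_z(u)$. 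The paper instead averages over a $d$-dimensional cube around $z$, uses the $d$-dimensional Lebesgue differentiation theorem on the $g$-side, and invokes Proposition~\ref{prop : limit_space/gradient/derivee} to identify the limit of the averaged $D$-side. Your variant avoids Proposition~\ref{prop : limit_space/gradient/derivee} at this step, a genuine though modest simplification.

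One small slip worth noting: you invoke Proposition~\ref{prop : limit_space/gradient/ptes_elem} to claim that both $(\grad D)_z$ and $g_z$ are $b$-Lipschitz, but that proposition only concerns $(\grad D)_z$; an absolutely homogeneous continuous function dominated by $b\norme{\cdot}$ need not be Lipschitz. Fortunately, continuity of $g_z$ (which holds because $g_z\in\ContHom$) already suffices to extend the inequality $(\grad D)_z(u_k)\le g_z(u_k)$ from a countable dense family of directions to all $u$, so the conclusion is unaffected.
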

    \begin{proof}
    The fact that the integral in~\eqref{eqn : limit_space/constructions/inverse_def} is well-defined is a consequence of the measurability of $(z,u)\mapsto g_z(u)$, $\gamma$ and $\gamma'$.

    \emph{Proof of (i).}
    The map $D$ is clearly a metric on $X$. Let us show~\eqref{eqn : intro/main_thm/equivalence_distances}. Let $x,y\in X$. As $X$ is convex, the affine path defined on $\intervalleff01$ by $\gamma(t) \dpe (1-t)x + ty$ is Lipschitz and takes values in $X$, thus
    \begin{equation*}
      D(x,y)\le \int_0^{1} g_{\gamma(t)}(\gamma'(t)) \d t \le \int_0^{1} b\norme{\gamma'(t)}\d t = b\norme{x-y}.
    \end{equation*}
    For the other inequality, Lemma~\ref{lem : limit_space/gradient/Rademacher_kindof} implies that any Lipschitz path $x\ResPath{X}{\gamma}y$ satisfies 
    \begin{equation*}
      \int_0^{T_\gamma} g_{\gamma(t)}(\gamma'(t)) \d t %
        \ge \int_0^{T_\gamma} a\norme{\gamma'(t)} \d t %
        \ge a\norme{\int_0^{T_\gamma} \gamma'(t) \d t}%
         = a\norme{x-y}.
    \end{equation*}
    In light of Lemma~\ref{lem : limit_space/compactness/criterion}, it is sufficient to show that $D$ satisfies~\eqref{eqn : limit_space/compactness/approx_MPP}. Let $x,y\in X$ and $\eps>0$. By definition of $D$ there exists a Lipschitz path $x\ResPath{X}{\gamma}y$ such that 
    \begin{align}
      \int_0^{T_\gamma} g_{\gamma(t)}(\gamma'(t)) \d t &\le D(x,y) +  \eps. \notag \intertext{The intermediate value theorem implies the existence of $0\le t_0 \le T_\gamma$ such that }
      \int_0^{t_0} g_{\gamma(t)}(\gamma'(t)) \d t &\le \frac12D(x,y) + \frac\eps{2} \eol \text{and   }
      \int_{t_0}^{T_\gamma} g_{\gamma(t)}(\gamma'(t)) \d t &\le \frac12D(x,y) + \frac\eps{2}.\notag
    \end{align}
    Thus
    \begin{equation}
      \max\p{ D(x, \gamma(t_0)) , D( \gamma(t_0),y) }\le \frac12D(x,y) + \frac\eps{2},
    \end{equation}
    therefore $D\in \AdmDistances$.

  \emph{Proof of (ii).} It is a straightforward adaptation of the argument used in the proof of Lemma~\ref{lem : limit_space/geodesics/localisation}.
  
  \emph{Proof of (iii).} As the $g_z$ and the $(\grad D)_z$ are continuous and absolutely homogeneous, it is sufficient to prove that for all $u$ in a countable dense subset of $\S$,
    \begin{equation}
    \label{eqn : limit_space/constructions/grad_vs_gz}
      \text{for almost every $z\in X$,}\quad (\grad D)_z(u) \le g_z(u).
    \end{equation}
    For the sake of simplicity, we only treat the case $u=\base 1$. The proof may be adapted to any $u\in \R^d$. The boundary of $X$ has zero Lebesgue measure (see Lemma~\ref{lem : windows/boundary}), thus it is sufficient to prove~\eqref{eqn : limit_space/constructions/grad_vs_gz} for almost every $z\in \mathring{X}$. Let $z\in \mathring{X}$ and $\eps>0$ such that $z+\intervalleff0\eps^d \subseteq X$. Fubini's theorem yields
    \begin{equation*}
      \int_{\intervalleff0\eps^d} g_{z+w}(\base 1)\d w %
        = \int_{\intervalleff0\eps^{d-1}}\p{\int_0^\eps g_{z+t\base 1+ w_{2:d}}(\base 1)\d t }\d(w_2,\dots,w_d),
    \end{equation*}
    with the notation $w_{2:d}\dpe \sum_{i=2}^dw_i \base i$. Moreover, it follows from~\eqref{eqn : limit_space/constructions/inverse_def} that for all $(w_2,\dots,w_d) \in \intervalleff{0}{\eps}^{d-1}$,
    \begin{equation*}
      \int_0^\eps g_{z+t\base 1+ w_{2:d}}(\base 1)\d t \ge D\p{z+w_{2:d},z+\eps \base 1+ w_{2:d}},
    \end{equation*}
    hence
    \begin{equation}
      \frac{1}{\eps^d}\int_{\intervalleff0\eps^d} g_{z+w}(\base 1)\d w \ge \frac{1}{\eps^{d-1}} \int_{\intervalleff0\eps^{d-1}} \frac1\eps D\p{z+w_{2:d},z+\eps \base 1+ w_{2:d}} \d(w_2,\dots,w_d).
      \label{eqn : limit_space/constructions/presciption_vs_resultat}
    \end{equation}
    Lebesgue's differentiation theorem (see e.g. \cite[Theorem~7.10]{Rudin}) implies that as $\eps\to 0$, the left-hand side of \eqref{eqn : limit_space/constructions/presciption_vs_resultat} converges to $g_z(\base 1)$ for almost every $z\in \mathring X$. Besides, Proposition~\ref{prop : limit_space/gradient/derivee} implies that the right-hand side converges to $(\grad D)_z$ for almost every $z\in \mathring X$. This proves~\eqref{eqn : limit_space/constructions/grad_vs_gz}.

  \emph{Proof of (iv).} Let $z_0$ be a point of $X$ as described in item~(\ref{item : limit_space/constructions/inverse_def/cas_sympa}) of the lemma. Then $z\mapsto g_z$ admits a modulus of continuity $\omega$ at $z_0$, i.e. for all $z\in X$,
  \begin{equation*}
     \normeHom{g_{z} - g_{z_0}} \le \omega\p{\norme{z-z_0}},
   \end{equation*}
   and $\lim_{\eps \to 0} \omega(\eps)=0$. Let $u\in\S$, $h\neq 0$. Let $z_0 \ResPath{X}{\gamma} z_0 +hu $ be a Lipschitz path such that $\norme\gamma \le \frac{b\module h}{a}$. In particular, for all $0\le T \le T_\gamma$,
    \begin{equation*}
      \norme{ \gamma(T) - z_0} \le \frac{b \module{h} }{a}.
    \end{equation*}
    Consequently,
    \begin{align}
      \module{ \int_0^{T_\gamma} g_{\gamma(t)}(\gamma'(t)) \d t - \int_0^{T_\gamma} g_{z_0}(\gamma'(t)) \d t } &\le \int_0^{T_\gamma} \module{g_{\gamma(t)}(\gamma'(t)) -   g_{z_0}(\gamma'(t))} \d t \eol
        &\le \omega\p{\frac{b\module{h} }{a}} \int_0^{T_\gamma}\norme{\gamma'(t)} \d t \eol
        &\le  \omega\p{\frac{b\module{h} }{a}} \cdot \frac{b\module{h} }{a}.\notag
    \end{align}
    Hence
    \begin{equation}
      \label{eqn : limit_space/constructions/cas_gz_continu1}
      \module{ D(z_0,z_0 + hu) - \inf \acc{ \int_0^{T_\gamma} g_{z_0}(\gamma'(t)) \d t \Biggm| z_0 \ResPath{X}{\gamma} z_0+hu, \gamma\text{ Lipschitz} } } %
      \le \omega\p{\frac{b\module{h} }{a}} \cdot \frac{b\module{h} }{a}.
    \end{equation}
    Besides, $g_{z_0}$ is a norm so Jensen's inequality yields, for all Lipschitz paths $\gamma$,
    \begin{align}
      \int_0^{T_\gamma} g_{z_0}(\gamma'(t)) \d t &= T_\gamma\cdot \frac{1}{T_\gamma} \int_0^{T_\gamma}  g_{z_0}(\gamma'(t)) \d t\eol
        &\ge T_\gamma \cdot g_{z_0}\p{ \frac{1}{T_\gamma}\int_0^{T_\gamma} \gamma'(t) \d t}.\notag
      \intertext{A final application of Lemma~\ref{lem : limit_space/gradient/Rademacher_kindof} yields}
      \label{eqn : limit_space/constructions/cas_gz_continu}
      \int_0^{T_\gamma} g_{z_0}(\gamma'(t)) \d t &\ge T_\gamma g_{z_0}\p{\frac{hu}{T_\gamma}} = \module{h} g_{z_0}(u).
    \end{align}
    Moreover any affine path from $z_0$ to $z_0+hu$ is an equality case in Equation~\eqref{eqn : limit_space/constructions/cas_gz_continu}. Thus Equation~\eqref{eqn : limit_space/constructions/cas_gz_continu1} may be rewritten
    \begin{equation}
      \module{\frac{D(z_0, z_0+hu)}{\module{h} } - g_{z_0}(u)} \le  \omega\p{ \frac{b\module{h} }{a} } \cdot \frac{b}{a},
    \end{equation}
    and letting $h\to 0$ concludes the proof.
  \end{proof}
  \begin{Remark}
    Even in the case where $g_z$ is a norm for every $z\in X$, equality in~\eqref{eqn : eqn : limit_space/constructions/inegalite_gz_grad} needs not to occur on a positive measure subset: take $X\dpe \intervalleff01^d$ and $g_z \dpe a\norme{\cdot}$ if at least one coordinate of $z$ is rational, $b\norme\cdot$ otherwise. In this example $D = a\norme{\cdot}$ but $g_z=b\norme\cdot$ for almost every $z\in X$.
  \end{Remark}

  \begin{Lemma}[Scaling a metric]
  \label{lem : limit_space/constructions/rescaling}
    Let $D\in \AdmDistances$ and $\lambda>0$. Consider the metric on $\lambda X$ defined for all $x,y\in \lambda X$ by   
    \begin{equation}
    \label{eqn : limit_space/constructions/def_rescaling}
        \Scaling{D}{\lambda}(x,y) \dpe \lambda D\p{\frac{x}{\lambda} ,\frac{y}{\lambda}}.
    \end{equation}  
    Then $\Scaling{D}{\lambda} \in \AdmDistances[\lambda X]$ and for all $z\in \lambda X$,
    \begin{equation}
    \label{eqn : limit_space/constructions/rescaling}
      \p{\grad \Scaling{D}{\lambda} }_z = \p{\grad D}_{\frac{z}{\lambda}}.
    \end{equation}
  \end{Lemma}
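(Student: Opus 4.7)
The plan is to check the two defining conditions for $\Scaling{D}{\lambda} \in \AdmDistances[\lambda X]$ directly from Definition~\ref{def : intro/main_thm/adm_metrics}, then derive the gradient identity via a change of variables after checking that the extension operator commutes with scaling.

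For admissibility, the bounds~\eqref{eqn : intro/main_thm/equivalence_distances} applied to $D$ at $\p{x/\lambda, y/\lambda} \in X^2$ give, after multiplication by $\lambda$, the corresponding bounds for $\Scaling{D}{\lambda}$ on $\lambda X$. For the geodesic property, given $x,y\in \lambda X$, we pick a $D$-geodesic $\sigma : \intervalleff{0}{D\p{x/\lambda, y/\lambda}} \to X$ between $x/\lambda$ and $y/\lambda$ and rescale it into $\tilde\sigma(t) \dpe \lambda \sigma(t/\lambda)$, defined on $\intervalleff{0}{\Scaling{D}{\lambda}(x,y)}$. A direct computation from~\eqref{eqn : limit_space/constructions/def_rescaling} shows that $\tilde\sigma$ is a $\Scaling{D}{\lambda}$-isometry from $x$ to $y$, so $\Scaling{D}{\lambda} \in \AdmDistances[\lambda X]$.

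For the gradient identity, the first step is to verify that the extension defined by~\eqref{eqn : intro/notations/extension} is covariant under scaling, namely
\begin{equation*}
\Extension{\Scaling{D}{\lambda}}(x,y) = \lambda\, \Extension{D}\p{x/\lambda, y/\lambda}
\end{equation*}
for every $x,y\in \R^d$; this follows from the change of variables $x' = \lambda x''$, $y' = \lambda y''$ in the minimum appearing in the definition of $\Extension{\Scaling{D}{\lambda}}$, using that $\lambda X = \acc{\lambda w : w\in X}$. Consequently, for $z\in \lambda X$, $u\in \R^d$ and $h\neq 0$,
\begin{equation*}
\frac{\Extension{\Scaling{D}{\lambda}}(z, z+hu)}{\module h} = \frac{\Extension{D}\p{z/\lambda, z/\lambda + (h/\lambda) u}}{\module{h/\lambda}}.
\end{equation*}
Setting $h' \dpe h/\lambda$ and taking the $\liminf$ as $h\to 0$ yields $\p{\grad \Scaling{D}{\lambda}}_z(u) = \p{\grad D}_{z/\lambda}(u)$.

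No serious difficulty arises; the whole statement is a routine unraveling of definitions, with the only mildly technical point being the covariance of the extension operator under dilation.
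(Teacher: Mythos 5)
Your proof is correct and follows essentially the same route as the paper: rescaling a $D$-geodesic by $t\mapsto \lambda\sigma(t/\lambda)$ to verify the geodesic property, and deducing the gradient identity from the observation that $\Extension{\Scaling{D}{\lambda}}(z,z+hu)/\module{h} = \Extension{D}\p{z/\lambda, z/\lambda + (h/\lambda)u}/\module{h/\lambda}$. The only difference is that you spell out the change-of-variables argument showing $\Extension{\Scaling{D}{\lambda}} = \lambda\,\Extension{D}\p{\cdot/\lambda,\cdot/\lambda}$, which the paper treats as immediate.
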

  \begin{proof}
    The metric $\Scaling{D}{\lambda}$ clearly satisfies~\eqref{eqn : intro/main_thm/equivalence_distances}. To show that it is a geodesic metric, consider $x,y \in \lambda X$. Let $\frac x\lambda \ResPath{X}{\sigma}\frac y\lambda$ a $D$-geodesic in the sense of Definition~\ref{def : intro/main_thm/adm_metrics}. Define
    \begin{align*}
      \tilde \sigma : \intervalleff{0}{\Scaling{D}{\lambda}(x,y)} &\longrightarrow \lambda X\\
        t &\longmapsto \lambda \sigma\p{\frac{t}{\lambda}}.
    \end{align*}
    Then for all $0\le s \le t \le \Scaling{D}{\lambda}(x,y)$,
    \begin{align*}
      \Scaling{D}{\lambda}\p{\tilde \sigma(s), \tilde \sigma(t)}%
      	&= \lambda D\p{ \frac{\tilde \sigma(s)}{\lambda}, \frac{\tilde \sigma(t)}{\lambda} }\\
      	&= \lambda D\p{ \sigma\p{\frac s \lambda}, \sigma\p{\frac t \lambda} }.
      \intertext{Since $\sigma$ is an isometry from $\intervalleff{0}{D\p{\frac x\lambda,\frac y\lambda} }$ to $(X,D)$,}
      \Scaling{D}{\lambda}\p{\tilde \sigma(s), \tilde \sigma(t)}%
      	&= t-s,
    \end{align*}
    thus $\tilde \sigma$ is an isometry from $\intervalleff{0}{\Scaling{D}{\lambda}(x,y)}$ to $\p{ \lambda X, \Scaling{D}{\lambda} }$. Moreover, $\tilde \sigma(0) = x$ and $\tilde \sigma\p{\Scaling{D}{\lambda}(x,y)}=y$. In other words, $\tilde \sigma$ is a $\Scaling{D}{\lambda}$-geodesic from $x$ to $y$, thus $\Scaling{D}{\lambda} \in \AdmDistances[\lambda X]$.

  	The equality~\eqref{eqn : limit_space/constructions/rescaling} comes from the fact that for all $z\in \R^d, u\in \S$ and $h>0$,
    \begin{equation}
      \frac{\Extension{\Scaling{D}{\lambda}}\p{z, z+hu}}{h} = \frac{\Extension{D}\p{\frac{z}{\lambda}, \frac{z}{\lambda}+ \frac{hu}{\lambda} }}{\frac{h}{\lambda} }.
    \end{equation}
  \end{proof}
  \begin{Lemma}[Translating a metric]
  \label{lem : limit_space/constructions/translation}
  Let $X\in\Windows$, $D\in\AdmDistances[X]$ and $z_0\in X$. Consider the metric on $X+z_0$ defined for all $x,y \in X + z_0$ as
  \begin{equation}
    \label{eqn : limit_space/constructions/def_translation}
        \Translation{D}{z_0}(x,y) \dpe D\p{x-z_0, y-z_0}.
    \end{equation}
  Then $\Translation{D}{z_0} \in \AdmDistances[X+z_0]$ and for all $z\in X+z_0$,
  \begin{equation}
    \label{eqn : limit_space/constructions/translation}
      \p{\grad \Translation{D}{z_0} }_z = \p{\grad D}_{z-z_0}.
  \end{equation}
  \end{Lemma}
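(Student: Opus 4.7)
The plan is to mirror the structure of the proof of Lemma~\ref{lem : limit_space/constructions/rescaling}, since translation is even better-behaved than scaling: it preserves the ambient norm exactly, so distances under $\norme{\cdot}$ pass through untouched. The argument reduces to verifying the three ingredients of Definition~\ref{def : intro/main_thm/adm_metrics} for $\Translation{D}{z_0}$, then computing the gradient via a direct change of variables.

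First I would check that $\Translation{D}{z_0}$ is a metric on $X+z_0$ satisfying~\eqref{eqn : intro/main_thm/equivalence_distances}: for $x,y \in X+z_0$, the points $x-z_0$ and $y-z_0$ lie in $X$, and since $\norme{x-y}=\norme{(x-z_0)-(y-z_0)}$, the bounds for $D$ transfer immediately to $\Translation{D}{z_0}$. For the geodesic property, given $x,y\in X+z_0$, I would take a $D$-geodesic $x-z_0 \ResPath{X}{\sigma} y-z_0$ on the interval $\intervalleff{0}{\Translation{D}{z_0}(x,y)}$ (which equals $\intervalleff{0}{D(x-z_0,y-z_0)}$) and define $\tilde\sigma(t) \dpe \sigma(t)+z_0$. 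Then $\tilde\sigma$ is valued in $X+z_0$, and for $0\le s \le t \le \Translation{D}{z_0}(x,y)$,
\[
\Translation{D}{z_0}\p{\tilde\sigma(s),\tilde\sigma(t)} = D\p{\sigma(s),\sigma(t)} = t-s,
\]
so $\tilde\sigma$ is a $\Translation{D}{z_0}$-geodesic from $x$ to $y$, showing $\Translation{D}{z_0} \in \AdmDistances[X+z_0]$.

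For the gradient identity, the key observation is that translation commutes with the extension operator of~\eqref{eqn : intro/notations/extension}. Performing the change of variables $x''=x'-z_0$, $y''=y'-z_0$ in the infimum defining $\Extension{\Translation{D}{z_0}}(x,y)$ establishes
\[
\Extension{\Translation{D}{z_0}}(x,y) = \Extension{D}(x-z_0, y-z_0) \qquad \text{for all } x,y\in \R^d.
\]
Consequently, for every $z\in X+z_0$, $u\in\R^d$ and $h\neq 0$,
\[
\frac{\Extension{\Translation{D}{z_0}}(z,z+hu)}{\module h} = \frac{\Extension{D}\p{z-z_0,(z-z_0)+hu}}{\module h},
\]
and passing to the liminf as $h\to 0$ yields $\p{\grad \Translation{D}{z_0}}_z(u) = (\grad D)_{z-z_0}(u)$, which is~\eqref{eqn : limit_space/constructions/translation}.

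I do not expect any real obstacle here: every step is a transparent change of variables, and unlike the scaling case there is no Jacobian factor to track. The only point worth being explicit about is the commutation of translation with the extension $\Extension{\cdot}$, since the gradient is defined in terms of $\Extension{D}$ rather than $D$ itself to accommodate boundary points of $X$; once that identity is in hand, the gradient formula drops out immediately.
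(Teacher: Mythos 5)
Your proposal is correct and follows essentially the same route as the paper's proof: verify the bounds by norm-invariance under translation, translate a $D$-geodesic to obtain a $\Translation{D}{z_0}$-geodesic, and derive the gradient identity from the observation that $\Extension{\Translation{D}{z_0}}(z,z+hu)=\Extension{D}(z-z_0,z-z_0+hu)$. The only difference is that you spell out the change-of-variables step justifying the commutation of translation with $\Extension{\cdot}$, which the paper leaves implicit.
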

  \begin{proof}
    The metric $\Translation{D}{z_0}$ clearly satisfies~\eqref{eqn : intro/main_thm/equivalence_distances}. Moreover the image of a $D$-geodesic under $x\mapsto x + z_0$ is a $\Translation{D}{z_0}$-geodesic, so $\Translation{D}{z_0} \in\AdmDistances[X+z_0]$. The equality~\eqref{eqn : limit_space/constructions/translation} comes from the fact that for all $z\in \R^d, u\in \S$ and $h>0$,
    \begin{equation}
      \frac{\Extension{\Translation{D}{z_0}}\p{z, z+hu}}{h} = \frac{\Extension{D}\p{z-z_0, z- z_0+ hu }}{h}.
    \end{equation}
  \end{proof}
  \begin{Lemma}[Stitching metrics]
  \label{lem : limit_space/constructions/blocs}
  Let $(X_v)_{v\in V}$ be a finite family of subsets in $\Windows$, each included in $X\in \Windows$. Let $(D_v)_{v\in V}$ be a family of metrics such that for all $v$, $D_v \in \AdmDistances[X_v]$. Consider the metric $D\in \AdmDistances[X]$ defined on $X^2$ by~\eqref{eqn : limit_space/constructions/inverse_def}, with
  \begin{equation*}
      g_z(u) \dpe \p{ \min_{\substack{ v\in V :\\ z\in X_v} } (\grad D_v)_z(u) }\vee \p{ b\norme{u} }.
  \end{equation*}
  Then for all $z\in X_v \cap \p{ \bigcup_{v' \neq v} X_{v'}}^\mathrm{c} $,
  \begin{equation}
  \label{eqn : limit_space/constructions/blocs_gradient}
       (\grad D)_z = (\grad D_v)_z.
  \end{equation}
  Besides, for all $z\in X \cap \p{ \bigcup_{v\in V} X_{v}}^\mathrm{c} $,
  \begin{equation}
  \label{eqn : limit_space/constructions/blocs_gradient2}
       (\grad D)_z = b\norme\cdot.
  \end{equation}
  \end{Lemma}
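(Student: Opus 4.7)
The plan is to invoke Lemma~\ref{lem : limit_space/constructions/inverse} for the global statement and then localize around each point $z_0$ of interest. First, I would check that the definition of $D$ via~\eqref{eqn : limit_space/constructions/inverse_def} is meaningful: the map $z \mapsto g_z$ is measurable because each $z\mapsto (\grad D_v)_z$ is (Proposition~\ref{prop : limit_space/gradient/ptes_elem}), each $X_v$ is Borel, and $V$ is finite; moreover, $a\norme{\cdot} \le g_z \le b\norme{\cdot}$ everywhere, using the convention $g_z = b\norme{\cdot}$ on $X\setminus\bigcup_v X_v$ and~\eqref{eqn : limit_space/gradient/encadrement} elsewhere. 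Thus $D \in \AdmDistances[X]$ by Lemma~\ref{lem : limit_space/constructions/inverse}(i).

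For the two gradient identities, I would combine a \emph{geodesic confinement} argument with a \emph{local simplification} of $g$. Fix $z_0$ in one of the two sets; set $V_0 \dpe \acc{v'\in V : z_0 \in X_{v'}}$, which is either empty (case~\eqref{eqn : limit_space/constructions/blocs_gradient2}) or equal to $\acc{v}$ (case~\eqref{eqn : limit_space/constructions/blocs_gradient}). Since each $X_{v'}$ is compact and $V$ finite, some $\delta>0$ yields $\ball{z_0,\delta}\cap X_{v'} = \emptyset$ for all $v'\notin V_0$, which simplifies $g_z$ inside $\ball{z_0,\delta}$. Next, applying item~\eqref{item : limit_space/constructions/inverse_def/localisation} of Lemma~\ref{lem : limit_space/constructions/inverse}, the infimum defining $D(z_0, z_0+hu)$ is attained by paths with $\norme\cdot$-length at most $\frac{b}{a}\module h \norme u$, so for $\module h < \frac{a\delta}{b\norme u}$ these minimizing paths remain confined to $\ball{z_0,\delta}$.

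It then remains to compute on $\ball{z_0,\delta}$. In case~\eqref{eqn : limit_space/constructions/blocs_gradient2}, $g_z = b\norme\cdot$ throughout, so the straight line from $z_0$ to $z_0+hu$ is optimal and $D(z_0, z_0+hu) = b\module h \norme u$, giving $(\grad D)_{z_0} = b\norme\cdot$. In case~\eqref{eqn : limit_space/constructions/blocs_gradient}, shrinking $\delta$ so that $\ball{z_0,\delta}\subseteq X_v$ when $z_0 \in \mathring{X_v}$, we have $g_z = (\grad D_v)_z$ on $\ball{z_0,\delta}$; hence the infimum formula for $D$ and Proposition~\ref{prop : limit_space/gradient/courbe} applied to $D_v$ yield $D(z_0, z_0+hu) = D_v(z_0, z_0+hu)$, and dividing by $\module h$ and passing to the liminf gives $(\grad D)_{z_0} = (\grad D_v)_{z_0}$.

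The main obstacle is the treatment of boundary points: $z_0 \in \partial X_v \setminus \mathring{X_v}$ in case~\eqref{eqn : limit_space/constructions/blocs_gradient}, or $z_0 \in \partial X$ in either case, where arbitrarily close points $z_0+hu$ may fall outside $X_v$ or outside $X$, so that the extension $\Extension{D}$ of~\eqref{eqn : intro/notations/extension} must be handled. I expect this to be dealt with either by restricting the conclusion to almost every $z_0$ (which is sufficient for later integration arguments such as~\eqref{eqn : MAIN/integral}, since $\partial X$ and $\partial X_v$ have zero Lebesgue measure by Lemma~\ref{lem : windows/boundary}) or by an explicit check that the boundary contributions, of $\norme\cdot$-length $\petito(\module h)$ relative to the straight segment, do not affect the limit.
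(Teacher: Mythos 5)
Your strategy---localize via item~(\ref{item : limit_space/constructions/inverse_def/localisation}) of Lemma~\ref{lem : limit_space/constructions/inverse}, shrink $\delta$ so that $\ball{z_0,\delta}$ misses every $X_{v'}$ with $v'\notin V_0$, then match the local integrand---is the same as the paper's, and the interior case you work out is sound. But the boundary case you flag is a genuine gap, not a detail: for $z_0\in\partial X_v$ (or $z_0\in\partial X$) the quantity $D_v(z_0,z_0+hu)$ that your argument compares $D(z_0,z_0+hu)$ against is not even defined for those small $h$ with $z_0+hu\notin X_v$, which occur for arbitrarily small $h$ precisely because $z_0$ is on the boundary. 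Neither of your two proposed fixes is what is needed: restricting to almost every $z_0$ weakens the statement as the paper uses it (it appeals to the pointwise equality on interiors of tiles and, in the proof of Corollary~\ref{cor : intro/applications/point_point}, on the full sets $X_A'$), and ``boundary contributions of $\norme\cdot$-length $\petito(\module h)$'' does not identify the actual mechanism.

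The missing idea is to compare $\Extension D$ and $\Extension{D_v}$ directly, which is what Definition~\ref{def : limit_space/gradient/gradient} in fact asks for. For any $Y\in\Windows$ and $D'\in\AdmDistances[Y]$, the extension~\eqref{eqn : intro/notations/extension} admits the alternative description
\begin{equation*}
\Extension{D'}(x,y) = \inf\acc{\int_0^{T_\gamma}\hat g_{\gamma(t)}\p{\gamma'(t)}\d t \Biggm| x\Path\gamma y,\ \gamma\text{ Lipschitz in }\R^d},
\end{equation*}
where $\hat g_z = (\grad D')_z$ on $Y$ and $\hat g_z = b\norme\cdot$ off $Y$. (The $\ge$ direction follows by decomposing a path at its first entry and last exit of $Y$ and using~\eqref{eqn : limit_space/gradient/D_avec_gradient}; the $\le$ direction by convexity of $Y$, replacing any excursion outside $Y$ by the segment between its endpoints, which stays in $Y$ and has cost at most $b$ times its $\norme\cdot$-length, hence at most the cost of the excursion.) Since $a\norme\cdot\le\hat g\le b\norme\cdot$ everywhere on $\R^d$, the confinement argument of Lemma~\ref{lem : limit_space/geodesics/localisation} applies verbatim to these unconstrained paths, so for $x,y$ near $z_0$ the infimum may be restricted to paths inside $\ball{z_0,\delta}$. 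On $\ball{z_0,\delta}$ the two integrands---that defining $\Extension D$ and that defining $\Extension{D_v}$---coincide: both equal $(\grad D_v)_\cdot$ on $X_v$ and $b\norme\cdot$ off $X_v$. Hence $\Extension D$ and $\Extension{D_v}$ agree on a neighbourhood of $z_0$, and~\eqref{eqn : limit_space/constructions/blocs_gradient} follows immediately from the definition of the gradient, with no case distinction between interior and boundary. The proof of~\eqref{eqn : limit_space/constructions/blocs_gradient2} is identical with the integrand constantly $b\norme\cdot$ near $z_0$.
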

  \begin{proof}
  Recall that the infimum in Equation~\eqref{eqn : limit_space/constructions/inverse_def} can be restricted to paths included in $\clball{x, \frac ba\norme{x-y}}$. In particular, by~\eqref{eqn : limit_space/gradient/D_avec_gradient}, for all $v\in V$ and $z \in X_v \cap \p{ \bigcup_{v' \neq v} X_{v'}}^\mathrm{c}$, $\Extension D$ and $\Extension{D_v}$ coincide on a neighbourhood of $z$, hence~\eqref{eqn : limit_space/constructions/blocs_gradient}. We prove~\eqref{eqn : limit_space/constructions/blocs_gradient2} similarly.
  \end{proof}

  \begin{Lemma}[Restricting a metric]
  \label{lem : limit_space/constructions/restriction}
    Let $Y,X\in \Windows$ be such that $Y\subseteq X$, and $D\in \AdmDistances[X]$. Define
    \begin{equation}
    \label{eqn : limit_space/constructions/restriction}
    \restriction{D}{Y}(x,y) \dpe \inf\acc{D(\gamma)\Biggm| x\ResPath{Y}{\gamma} y, \gamma\text{ Lipschitz} }. 
    \end{equation}
    Then $\restriction{D}{Y} \in \AdmDistances[Y]$, and it is the minimum of the set of metrics $D'\in \AdmDistances[Y]$ such that for all $x,y\in Y$, $D'(x,y)\ge D(x,y)$. Moreover the gradients of $D$ and $\restriction{D}{Y}$ coincide on $\mathring Y$. Finally, we have 
    \begin{equation}
    \label{eqn : limit_space/constructions/restriction_comp_T}
    \restriction{\BoxPT[X]}{Y} = \BoxPT[Y].
    \end{equation}
  \end{Lemma}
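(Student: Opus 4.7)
The plan is to dispatch the four assertions in order, using Lemma~\ref{lem : limit_space/constructions/inverse} and Proposition~\ref{prop : limit_space/gradient/courbe} as the main tools.

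For $\restriction{D}{Y}\in\AdmDistances[Y]$, the bounds~\eqref{eqn : intro/main_thm/equivalence_distances} come from evaluating the affine path $x\to y$ (for the upper bound, using the convexity of $Y$) and from $D(\gamma)\ge D(x,y)\ge a\norme{x-y}$ for every competitor $\gamma$ (for the lower). The geodesic property is then obtained by rewriting, via Proposition~\ref{prop : limit_space/gradient/courbe},
\[
	\restriction{D}{Y}(x,y) = \inf\acc{\int_0^{T_\gamma}(\grad D)_{\gamma(t)}(\gamma'(t))\d t \Biggm| x\ResPath{Y}{\gamma} y,\ \gamma\text{ Lipschitz}},
\]
which exhibits $\restriction{D}{Y}$ as the output of Lemma~\ref{lem : limit_space/constructions/inverse} applied on $Y$ with $g_z\dpe (\grad D)_z$ (the hypotheses being verified by Proposition~\ref{prop : limit_space/gradient/ptes_elem}). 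For the minimum property, the triangle inequality gives $\restriction{D}{Y}\ge D$ on $Y^2$; conversely, given any $D'\in\AdmDistances[Y]$ with $D'\ge D$ on $Y^2$ and any $x,y\in Y$, I take a $D'$-geodesic $\sigma$, which is Lipschitz by~\eqref{eqn : intro/notations/length_curve_geo_metric}, and use that~\eqref{eqn : intro/notations/length_curve} as a supremum of sums is pointwise monotone in the underlying metric to conclude $D(\sigma)\le D'(\sigma)=D'(x,y)$, whence $\restriction{D}{Y}(x,y)\le D'(x,y)$. Gradient coincidence on $\mathring Y$ is a localization: for $z\in\mathring Y$, choose $\eta>0$ with $\clball{z,(1+b/a)\eta}\subseteq Y$; then for $x,y\in\clball{z,\eta}$, Lemma~\ref{lem : limit_space/geodesics/localisation} confines any $D$-geodesic from $x$ to $y$ to $Y$, so $\restriction{D}{Y}(x,y)=D(x,y)$ on a neighbourhood of $z$ and the two gradients at $z$ coincide.

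For the final identity $\restriction{\BoxPT[X]}{Y}=\BoxPT[Y]$, one direction is immediate: $\BoxPT[Y]\in\AdmDistances[Y]$ by Proposition~\ref{prop : limit_space/geodesics/BoxSPT_espace} and $\BoxPT[Y]\ge\BoxPT[X]$ on $Y^2$ by comparison of the defining infima, so the minimum property already proved yields $\restriction{\BoxPT[X]}{Y}\le\BoxPT[Y]$. For the reverse, Proposition~\ref{prop : limit_space/gradient/courbe} gives, for every Lipschitz $x\ResPath{Y}{\gamma}y$,
\[
	\BoxPT[X](\gamma)=\int_0^{T_\gamma}(\grad \BoxPT[X])_{\gamma(t)}(\gamma'(t))\d t,\qquad \BoxPT[Y](\gamma)=\int_0^{T_\gamma}(\grad \BoxPT[Y])_{\gamma(t)}(\gamma'(t))\d t,
\]
so it suffices to show the two integrands agree at almost every $t$, as then $\BoxPT[X](\gamma)=\BoxPT[Y](\gamma)$ and passing to the infimum (and invoking~\eqref{eqn : intro/notations/length_curve_geo_metric} for $\BoxPT[Y]$) gives the desired equality. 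When $\gamma(t)\in\mathring Y$ this is the previous step; when $\gamma(t)\in\partial Y$ I split on the position of $\gamma(t)$ relative to the one-skeleton of $\Z^d$. Off the skeleton, both gradients equal $b\norme{\cdot}$, since Lemma~\ref{lem : limit_space/geodesics/localisation} confines near-optimal finite sequences at scale $h$ to a ball containing no $\Z^d$-vertex and no full edge, so the only competitors are $b\norme{\cdot}$-segments. On the relative interior of a $\Z^d$-edge contained in $\partial Y$, the locally available edge is the same for $X$ and $Y$. Finally, the $\Z^d$-vertices of $\partial Y$ form a discrete subset which $\gamma$ visits on a Lebesgue-negligible set of $t$'s. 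The main technical obstacle I anticipate is making this last case analysis fully rigorous: the critical point is that for $v$ in the tangent cone of $Y$ at a boundary point, no detour through $X\setminus Y$ creates an $O(h)$ shortcut, since any such detour must traverse a full unit-length $\Z^d$-edge and hence incur $\Omega(1)$ overhead.
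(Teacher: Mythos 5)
Your proofs of the first three assertions match the paper's in all essentials; the small variation of obtaining $\restriction{D}{Y}\in\AdmDistances[Y]$ by recognizing it as the output of Lemma~\ref{lem : limit_space/constructions/inverse} (rather than verifying the midpoint criterion of Lemma~\ref{lem : limit_space/compactness/criterion} directly, as the paper does implicitly) is valid, and your arguments for the minimum property and the gradient coincidence on $\mathring Y$ are the same as the paper's.

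For the identity $\restriction{\BoxPT[X]}{Y}=\BoxPT[Y]$ you take a genuinely different route, and it has a real gap. The paper never descends to the gradient level: it picks a $\restriction{\BoxPT[X]}{Y}$-geodesic $\sigma$, uses Lemma~\ref{lem : limit_space/geodesics/localisation} to write $\BoxPT[X](\sigma)=\BoxPT[{Y+\clball{0,\eps}}](\sigma)$, and then bounds the cost of pruning the excursions of a polygonal path out of $Y$ by $b\,\Hau\bigl(\bbE^d\cap\bigl((Y+\clball{0,\eps})\setminus Y\bigr)\bigr)$, which vanishes as $\eps\to0$ by continuity from above. Your argument instead needs $(\grad\BoxPT[X])_{\gamma(t)}(\gamma'(t))=(\grad\BoxPT[Y])_{\gamma(t)}(\gamma'(t))$ for a.e.\ $t$, and the boundary case is where it breaks. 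The reason you offer for there being no $O(h)$ shortcut — that a detour through $X\setminus Y$ "must traverse a full unit-length $\Z^d$-edge and hence incur $\Omega(1)$ overhead" — is simply false: detours may use partial edges at cost $\Theta(h)$. What actually controls the non-vertex boundary case is localization at scale $O(h)$ (Lemma~\ref{lem : limit_space/geodesics/localisation} again): a ball of radius $O(h)$ about a non-vertex $z\in\partial Y$ meets at most the single edge through $z$, which is common to $\BoxPT[X]$ and $\BoxPT[Y]$, and everything else in the ball is charged $b\norme\cdot$ by both; you do not carry this out. More seriously, at $\Z^d$-vertices lying on $\partial Y$ the two gradients can \emph{genuinely differ}: if an edge issuing from such a vertex lies in $X\setminus Y$ and has a smaller passage time than every $Y$-edge at the vertex, then $(\grad\BoxPT[X])$ is strictly smaller in that direction. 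The vertex case is rescued only because $\gamma'(t)=0$ a.e.\ on $\acc{t:\gamma(t)=v}$ for each fixed vertex $v$, not, as you write, because the set of such $t$'s is Lebesgue-null: a Lipschitz curve can sit at a vertex on a set of positive measure. The paper's Hausdorff-measure argument sidesteps all of this pointwise bookkeeping; if you want to keep the gradient route, each of these cases must be treated explicitly.
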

  \begin{proof}
   Indeed, $\restriction{D}{Y}$ is a metric satisfying~\eqref{eqn : intro/main_thm/equivalence_distances}, and a straightforward application of Lemma~\ref{lem : limit_space/compactness/criterion} yields $\restriction{D}{Y}\in \AdmDistances[Y]$.

    Let $D'\in \AdmDistances[Y]$ be such that for all $x,y \in Y$, $D'(x,y)\ge D(x,y)$. Then for all Lipschitz paths $\gamma$ included in $Y$, ${D'(\gamma)\ge D(\gamma)}$. Therefore~\eqref{eqn : intro/notations/length_curve_geo_metric} applied for the metric $D'$ implies ${D'\ge \restriction{D}{Y}}$.

    Let $z\in \mathring{Y}$. Lemma~\ref{lem : limit_space/geodesics/localisation} implies the existence of a neighbourhood $V$ of $z$ such that for all $x\in X$, the $D$-geodesics from $z$ to $x$ are included in $Y$, therefore $\restriction{D}{Y}(z,x) = D(z,x)$, hence the equality of gradients.

    Equation~\eqref{eqn : limit_space/constructions/restriction_comp_T} remains to be shown. The inequality
    \begin{equation}
        \restriction{\BoxPT[X]}{Y} \le \BoxPT[Y]
    \end{equation}
    is clear. Let us prove the converse inequality. Let $\sigma$ be a $\restriction{\BoxPT[X]}{Y}$-geodesic. It suffices to show that
    \begin{equation}
      \label{eqn : limit_space/constructions/restriction_comp_T/main_arg}
      \BoxPT[Y]\p{\sigma(0), \sigma(T_\sigma)} \le \BoxPT[X](\sigma),
    \end{equation}
    since~\eqref{eqn : limit_space/constructions/restriction_comp_T/main_arg} implies
    \begin{equation*}
      \BoxPT[Y]\p{\sigma(0), \sigma(T_\sigma)} \le \restriction{\BoxPT[X]}{Y}(\sigma) = \restriction{\BoxPT[X]}{Y}\p{\sigma(0), \sigma(T_\sigma)}.
    \end{equation*}
    By Lemma~\ref{lem : limit_space/geodesics/localisation}, for all $x,y\in \sigma$,
    \begin{equation*}
      \BoxPT[X]\p{x, y} = \BoxPT[Y+ \clball{0, \frac ba\norme{x-y}} ]\p{x, y}.
    \end{equation*}
    In particular, for all $\eps>0$,
    \begin{equation}
    \label{eqn : limit_space/constructions/restriction_comp_T/almost_there}
      \BoxPT[X]\p{\sigma} = \BoxPT[Y+ \clball{0, \eps} ]\p{\sigma} \ge \BoxPT[Y+ \clball{0, \eps}]\p{\sigma(0), \sigma(T_\sigma)}.
    \end{equation}
    Besides, by continuity from above for the $1$-dimensional Hausdorff measure $\Hau$, 
    \begin{equation}
    \label{eqn : limit_space/constructions/limite_petite_bande}
    \lim_{\eps\to0}\Hau\p{ \bbE^d \cap \p{ (Y+ \clball{0, \eps})\setminus Y } } = 0.
    \end{equation}
    Let $\eps>0$ and $\pi=(x_0, \dots, x_r)$ be a polygonal path from $\sigma(0)$ to $\sigma(T_\sigma)$ included in $Y+ \clball{0, \eps}$. We lower bound $\PathPT{\pi}$. Since for all $0\le i \le r-1$ and $z\in \intervalleff{x_i}{x_{i+1}}$, 
    \begin{equation*}
        \PathPT{x_i, z} + \PathPT{z, x_{i+1}} \le \PathPT{x_i, x_{i+1}},
    \end{equation*}
    we may assume that $\pi$ is self-avoiding and for all  $0\le i \le r-1$, $\intervalleoo{x_i}{x_{i+1}}$ is included in either $Y$ or $(Y+ \clball{0, \eps})\setminus Y$. Let $\hat \pi$ denote the path obtained from $\pi$ by deleting points outside $Y$, i.e. informally by replacing excursions outside $Y$ by segments. Then 
    \begin{align}
        \PathPT{\hat \pi}%
            &\le \PathPT{\pi} +  b \Hau\p{ \bbE^d \cap \p{ (Y+ \clball{0, \eps})\setminus Y } },\nonumber
        \intertext{thus}
        \BoxPT[Y]\p{\sigma(0), \sigma(T_\sigma)}%
            &\le \BoxPT[Y+ \clball{0, \eps}]\p{\sigma(0), \sigma(T_\sigma)} +  b \Hau\p{ \bbE^d \cap \p{ (Y+ \clball{0, \eps})\setminus Y } }.\nonumber
    \end{align}
    Applying~\eqref{eqn : limit_space/constructions/limite_petite_bande} gives
    \begin{equation}
        \label{eqn : limit_space/constructions/limite_petite_bande_metric}
        \BoxPT[Y]\p{\sigma(0), \sigma(T_\sigma)}%
            = \lim_{\eps \to 0} \BoxPT[Y+ \clball{0, \eps}]\p{\sigma(0), \sigma(T_\sigma)}.
    \end{equation}
    Combining~\eqref{eqn : limit_space/constructions/restriction_comp_T/almost_there} and~\eqref{eqn : limit_space/constructions/limite_petite_bande_metric} yields~\eqref{eqn : limit_space/constructions/restriction_comp_T/main_arg}.
  \end{proof}

\subsection{The corridor lemma}
  \label{subsec : limit_space/corridor}
  Lemma~\ref{lem : limit_space/corridor/corridor} will be our main tool to get lower bounds for $\BoxSPT$. It is an adaptation of ideas used in the proof of Lemma~4.1 in Basu, Ganguly and Sly \cite{Bas21}.

  \begin{Lemma}
  \label{lem : limit_space/corridor/corridor}
      Let $X, X_1,\dots, X_K\in \Windows$ be such that for all $1\le k \le K$, $X_k\subseteq X$. Let $D,D'\in \AdmDistances[X]$ and ${X_0 \dpe X \setminus \bigcup_{k=1}^KX_k}$. Let $0<\delta_1 \le \diam(X)$ and $\delta_2>0$. Let $0<\eps < \frac b2$. Assume
      \begin{align}
      \label{eqn : limit_space/corridor/largeur_couloir}
          \forall 1\le k_1 < k_2 \le K, \quad \inf_{\substack{ x\in X_{k_1}\\ y \in X_{k_2}} } \norme{x-y} &\ge \delta_1, \\
      \label{eqn : limit_space/corridor/controle_tuile}
          \forall k\ge 1, \quad \forall x,y \in X_k,\quad  \restriction{D'}{X_k}(x,y) &\ge  D(x,y) - \delta_2,\\
      \label{eqn : limit_space/corridor/intensite_couloir}
    \forall \gamma : \intervalleoo{0}{T} \rightarrow X_0\quad \text{Lipschitz},\quad D'(\gamma) &\ge (b-\eps)\norme{\gamma}. 
  \end{align} 
    Then for all $x,y\in X$, 
    \begin{equation}
      D'(x,y) \ge D(x,y) - 3\diam (X)\p{ \eps + \frac{\delta_2}{\delta_1} }.
    \end{equation}
  \end{Lemma}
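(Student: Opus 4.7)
The plan is to fix a $D'$-geodesic $\sigma : \intervalleff{0}{T_\sigma} \to X$ from $x$ to $y$ (which exists since $D' \in \AdmDistances[X]$) and to bound $D(x,y)$ by $D'(x,y) + (\text{error})$ by decomposing $\sigma$ into pieces running inside individual tiles and pieces running in the corridor $X_0$.

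First I decompose $\intervalleff{0}{T_\sigma}$ into maximal \emph{tile pieces} (intervals on which $\sigma$ stays in $\bigcup_{k\ge 1}X_k$) and maximal \emph{corridor pieces} (intervals on which $\sigma \in X_0$). Continuity of $\sigma$ together with the Euclidean gap~\eqref{eqn : limit_space/corridor/largeur_couloir} forces each tile piece to sit inside a single tile $X_k$. I then merge consecutive tile pieces sharing the same tile index (together with any corridor pieces between them) into \emph{blocks}: a block is a maximal interval $\intervalleff{A}{B}\subseteq\intervalleff{0}{T_\sigma}$ such that $\sigma(A),\sigma(B)\in X_k$ and $\sigma(\intervalleff{A}{B})\subseteq X_k\cup X_0$ for some fixed $k$. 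Two consecutive blocks are separated by a corridor piece whose endpoints lie in different tiles, so~\eqref{eqn : limit_space/corridor/largeur_couloir} and~\eqref{eqn : limit_space/corridor/intensite_couloir} force its $D'$-length to be at least $(b-\eps)\delta_1$. Since $T_\sigma = D'(x,y)\le b\diam(X)$, the number $M$ of blocks satisfies
\[
M \le \tfrac{b\diam(X)}{(b-\eps)\delta_1}+1 \le \tfrac{3\diam(X)}{\delta_1},
\]
using $\eps<b/2$ and $\delta_1\le\diam(X)$.

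The key step is to bound $D(\sigma(A),\sigma(B))$ block by block. Inside a block $\intervalleff{A}{B}$ with tile index $k$, write the alternating sequence of tile sub-pieces $\intervalleff{a_i}{b_i}\subseteq X_k$ and corridor sub-pieces $\intervalleoo{b_i}{a_{i+1}}\subseteq X_0$. I construct a repaired path $\tilde\sigma : \intervalleff{A}{B}\to X_k$ that coincides with $\sigma$ on the tile sub-pieces and replaces each corridor sub-piece by the straight segment from $\sigma(b_i)$ to $\sigma(a_{i+1})$, which lies in $X_k$ by convexity. Bounding the $D'$-length of each such segment by $b\norme{\sigma(b_i)-\sigma(a_{i+1})}$ (from $D'\le b\norme{\cdot}$) and then by $\tfrac{b}{b-\eps}(a_{i+1}-b_i)$ (via~\eqref{eqn : limit_space/corridor/intensite_couloir} applied to $\sigma|_{\intervalleoo{b_i}{a_{i+1}}}$) yields $D'(\tilde\sigma)\le \tfrac{b}{b-\eps}(B-A)$. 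Since $\tilde\sigma$ is a Lipschitz path in $X_k$ from $\sigma(A)$ to $\sigma(B)$, we obtain $\restriction{D'}{X_k}(\sigma(A),\sigma(B))\le D'(\tilde\sigma)$, and~\eqref{eqn : limit_space/corridor/controle_tuile} then gives
\[
D(\sigma(A),\sigma(B)) \le \tfrac{b}{b-\eps}(B-A) + \delta_2.
\]

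The last step is to sum these contributions together with the analogous bound $D(\sigma(B),\sigma(A'))\le \tfrac{b}{b-\eps}(A'-B)$ on each between-block corridor piece $\intervalleoo{B}{A'}$ (from $D\le b\norme{\cdot}$ and~\eqref{eqn : limit_space/corridor/intensite_couloir}). The triangle inequality delivers $D(x,y)\le \tfrac{b}{b-\eps}D'(x,y)+M\delta_2$, and $\eps<b/2$ combined with $D'(x,y)\le b\diam(X)$ and the bound on $M$ above converts this into $D(x,y)-D'(x,y)\le 2\eps\diam(X)+\tfrac{3\diam(X)\delta_2}{\delta_1}\le 3\diam(X)(\eps+\delta_2/\delta_1)$. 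The main obstacle is the repair step: $\sigma$ may oscillate in and out of $X_k$ arbitrarily often within a single block, and only the convexity of $X_k$ makes it possible to replace these excursions by segments that remain in $X_k$ at a $D'$-cost only a factor $\tfrac{b}{b-\eps}$ larger than $B-A$; this consolidation is exactly what lets~\eqref{eqn : limit_space/corridor/controle_tuile} be paid once per block, so that the $\delta_2$ penalty is multiplied by $M \lesssim \diam(X)/\delta_1$ rather than by the (possibly much larger) total number of tile sub-pieces.
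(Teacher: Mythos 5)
Your proof is correct and follows essentially the same strategy as the paper's: pick a $D'$-geodesic $\sigma$, identify the excursions of $\sigma$ into $X_0$ whose endpoints lie in the same tile, replace those by straight segments (using convexity of the tile) so that the price $\delta_2$ from~\eqref{eqn : limit_space/corridor/controle_tuile} is paid once per ``block'' rather than once per individual tile sub-piece, and count the blocks by observing that each tile-to-tile transition forces a corridor traversal of $D'$-length at least $(b-\eps)\delta_1$. The only difference is organizational: the paper first produces a single globally regularized path $\gamma$ (replacing all same-tile excursions at once), proves $D'(\gamma)\le\tfrac{b}{b-\eps}D'(x,y)$ by a subdivision argument, and then \emph{lower-bounds} $D'(\gamma)$ in terms of $D(x,y)$; you instead build the repaired path $\tilde\sigma$ block by block and \emph{upper-bound} $D(x,y)$ directly by summing per-block and per-corridor contributions. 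Both routes compute the same quantities, and your presentation is arguably slightly more streamlined since it avoids the intermediate comparison $D'(\gamma)\le\tfrac{b}{b-\eps}D'(x,y)$ as a separate step. The algebra at the end ($\eps<b/2$, $\delta_1\le\diam(X)$, $D'(x,y)\le b\diam(X)$) matches the paper's.
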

  \begin{proof}
      Let $x,y\in X$ and $x\Path{\sigma}y$ be a $D'$-geodesic. The naive lower bound on $D'(x,y)$ obtained by decomposing $\sigma$ in subpaths included in only one $X_k$, $0\le k \le K$, and plugging in~\eqref{eqn : limit_space/corridor/controle_tuile} or~\eqref{eqn : limit_space/corridor/intensite_couloir} to control each subpath is useless. Indeed, each application of~\eqref{eqn : limit_space/corridor/controle_tuile} results in an additive error $\delta_2$, but we have no bound on the number of subpaths as $\sigma$ may for example go back and forth between $X_0$ and $X_1$ a large number of times. However a simple way to circumvent this obstacle is to replace $\sigma$ with a slightly longer path that does not have such a pathological behaviour using~\eqref{eqn : limit_space/corridor/largeur_couloir} and~\eqref{eqn : limit_space/corridor/intensite_couloir}. 

      \paragraph{Step 1: Regularizing $\sigma$.} We call \emph{excursion} any interval $\intervalleoo{s}{s'}$ such that ${0\le s < s'\le D'(x,y)}$ and
      \begin{equation*}
          \sigma(s), \sigma(s') \in \bigcup_{k=1}^K X_k,\quad \text{and}\quad \sigma\p{\intervalleoo{s}{s'}} \subseteq X_0.
      \end{equation*}
      We will denote by $\Ex(\sigma)$ the set of all excursions. Note that excursions are pairwise disjoint. We class them into two types, whether their endpoints belong to the same $X_k$ or not:
      \begin{align*}
          \Ex_1(\sigma) &\dpe \acc{\intervalleoo{s}{s'}\in \Ex(\sigma) \Bigm| \exists k\in\intint1K,\quad \sigma(s),\sigma(s') \in X_k} \\
          \Ex_2(\sigma) &\dpe \Ex(\sigma) \setminus \Ex_1(\sigma).
      \end{align*}
      Let us define the Lipschitz path
      \begin{align}
          \gamma : \intervalleff{0}{D'(x,y)} &\longrightarrow X \nonumber \\
          t &\longmapsto%
              \begin{cases}\label{eqn : limit_space/corridor/segments}
                  \sigma(s) + \frac{t-s}{s'-s}\p{\sigma(s') - \sigma(s)} \quad &\text{if } s<t<s'\text{ with } \intervalleoo{s}{s'}\in \Ex_1(\sigma),\\
                  \sigma(t) &\text{otherwise.}
                          \end{cases}
      \end{align}
      In other words, $\gamma$ is the path obtained from $\sigma$ by replacing first-type excursions by segments (see Figure~\ref{fig : limit_space/corridor}, left).
      \begin{figure}
      \minipage{0.47\linewidth}
      \def\svgwidth{\textwidth}
	\begingroup%
	  \makeatletter%
	  \providecommand\color[2][]{%
	    \errmessage{(Inkscape) Color is used for the text in Inkscape, but the package 'color.sty' is not loaded}%
	    \renewcommand\color[2][]{}%
	  }%
	  \providecommand\transparent[1]{%
	    \errmessage{(Inkscape) Transparency is used (non-zero) for the text in Inkscape, but the package 'transparent.sty' is not loaded}%
	    \renewcommand\transparent[1]{}%
	  }%
	  \providecommand\rotatebox[2]{#2}%
	  \newcommand*\fsize{\dimexpr\f@size pt\relax}%
	  \newcommand*\lineheight[1]{\fontsize{\fsize}{#1\fsize}\selectfont}%
	  \ifx\svgwidth\undefined%
	    \setlength{\unitlength}{231.3533849bp}%
	    \ifx\svgscale\undefined%
	      \relax%
	    \else%
	      \setlength{\unitlength}{\unitlength * \real{\svgscale}}%
	    \fi%
	  \else%
	    \setlength{\unitlength}{\svgwidth}%
	  \fi%
	  \global\let\svgwidth\undefined%
	  \global\let\svgscale\undefined%
	  \makeatother%
	  \begin{picture}(1,1.02841063)%
	    \lineheight{1}%
	    \setlength\tabcolsep{0pt}%
	    \put(0,0){\includegraphics[width=\unitlength,page=1]{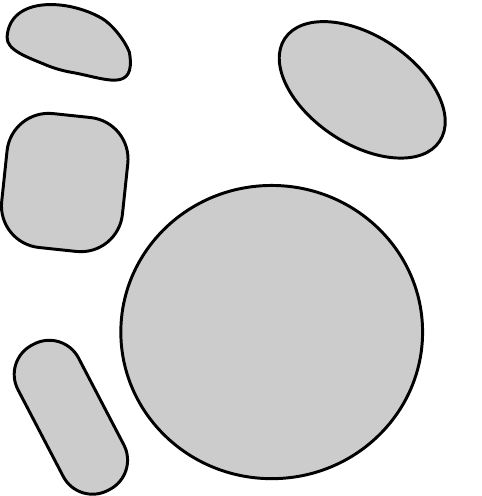}}%
	    \put(0.47208911,0.07311504){\color[rgb]{0,0,0}\makebox(0,0)[lt]{\lineheight{1.25}\smash{\begin{tabular}[t]{l}\textit{$X_5$}\end{tabular}}}}%
	    \put(0.14093229,0.0424435){\color[rgb]{0,0,0}\makebox(0,0)[lt]{\lineheight{1.25}\smash{\begin{tabular}[t]{l}\textit{$X_3$}\end{tabular}}}}%
	    \put(0.06377419,0.53222969){\color[rgb]{0,0,0}\makebox(0,0)[lt]{\lineheight{1.25}\smash{\begin{tabular}[t]{l}\textit{$X_2$}\end{tabular}}}}%
	    \put(0.115173,0.96474637){\color[rgb]{0,0,0}\makebox(0,0)[lt]{\lineheight{1.25}\smash{\begin{tabular}[t]{l}\textit{$X_1$}\end{tabular}}}}%
	    \put(0.7682612,0.73159472){\color[rgb]{0,0,0}\makebox(0,0)[lt]{\lineheight{1.25}\smash{\begin{tabular}[t]{l}\textit{$X_4$}\end{tabular}}}}%
	    \put(0,0){\includegraphics[width=\unitlength,page=2]{FIG_Corridor_Unreg.pdf}}%
	    \put(0.35891758,0.98907377){\color[rgb]{0,0,0}\makebox(0,0)[lt]{\lineheight{1.25}\smash{\begin{tabular}[t]{l}\textit{$\sigma$}\end{tabular}}}}%
	  \end{picture}%
	\endgroup%
      \endminipage
      \hfill%
      \minipage{0.47\linewidth}
      \def\svgwidth{\textwidth}
      \begingroup%
        \makeatletter%
        \providecommand\color[2][]{%
          \errmessage{(Inkscape) Color is used for the text in Inkscape, but the package 'color.sty' is not loaded}%
          \renewcommand\color[2][]{}%
        }%
        \providecommand\transparent[1]{%
          \errmessage{(Inkscape) Transparency is used (non-zero) for the text in Inkscape, but the package 'transparent.sty' is not loaded}%
          \renewcommand\transparent[1]{}%
        }%
        \providecommand\rotatebox[2]{#2}%
        \newcommand*\fsize{\dimexpr\f@size pt\relax}%
        \newcommand*\lineheight[1]{\fontsize{\fsize}{#1\fsize}\selectfont}%
        \ifx\svgwidth\undefined%
          \setlength{\unitlength}{255.11811024bp}%
          \ifx\svgscale\undefined%
            \relax%
          \else%
            \setlength{\unitlength}{\unitlength * \real{\svgscale}}%
          \fi%
        \else%
          \setlength{\unitlength}{\svgwidth}%
        \fi%
        \global\let\svgwidth\undefined%
        \global\let\svgscale\undefined%
        \makeatother%
        \begin{picture}(1,1)%
          \lineheight{1}%
          \setlength\tabcolsep{0pt}%
          \put(0,0){\includegraphics[width=\unitlength,page=1]{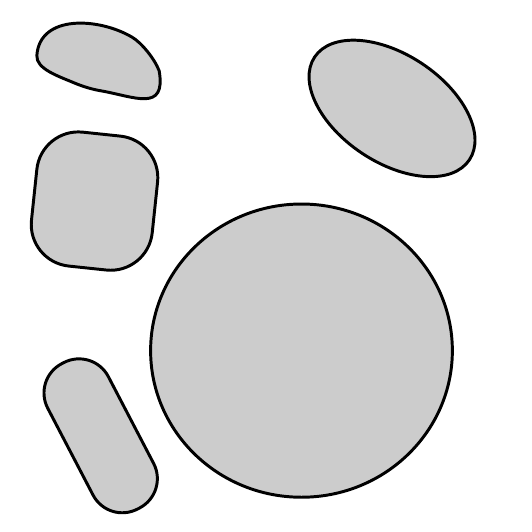}}%
          \put(0.484145,0.09876304){\color[rgb]{0,0,0}\makebox(0,0)[lt]{\lineheight{1.25}\smash{\begin{tabular}[t]{l}\textit{$X_5$}\end{tabular}}}}%
          \put(0.18383606,0.07094861){\color[rgb]{0,0,0}\makebox(0,0)[lt]{\lineheight{1.25}\smash{\begin{tabular}[t]{l}\textit{$X_3$}\end{tabular}}}}%
          \put(0.11386538,0.51511031){\color[rgb]{0,0,0}\makebox(0,0)[lt]{\lineheight{1.25}\smash{\begin{tabular}[t]{l}\textit{$X_2$}\end{tabular}}}}%
          \put(0.16047629,0.90733725){\color[rgb]{0,0,0}\makebox(0,0)[lt]{\lineheight{1.25}\smash{\begin{tabular}[t]{l}\textit{$X_1$}\end{tabular}}}}%
          \put(0.75272812,0.69590412){\color[rgb]{0,0,0}\makebox(0,0)[lt]{\lineheight{1.25}\smash{\begin{tabular}[t]{l}\textit{$X_4$}\end{tabular}}}}%
          \put(0.86753708,0.0600602){\color[rgb]{1,0,0}\makebox(0,0)[lt]{\lineheight{1.25}\smash{\begin{tabular}[t]{l}\textit{$\gamma^{\mathrm{ext}}_0$}\end{tabular}}}}%
          \put(0.47433199,0.72285385){\color[rgb]{1,0,0}\makebox(0,0)[lt]{\lineheight{1.25}\smash{\begin{tabular}[t]{l}\textit{$\gamma^{\mathrm{ext}}_1$}\end{tabular}}}}%
          \put(0.42202105,0.91698574){\color[rgb]{1,0,0}\makebox(0,0)[lt]{\lineheight{1.25}\smash{\begin{tabular}[t]{l}\textit{$\gamma^{\mathrm{ext}}_2$}\end{tabular}}}}%
          \put(0.26424111,0.7766997){\color[rgb]{1,0,0}\makebox(0,0)[lt]{\lineheight{1.25}\smash{\begin{tabular}[t]{l}\textit{$\gamma^{\mathrm{ext}}_3$}\end{tabular}}}}%
          \put(0.11913307,0.78887664){\color[rgb]{1,0,0}\makebox(0,0)[lt]{\lineheight{1.25}\smash{\begin{tabular}[t]{l}\textit{$\gamma^{\mathrm{ext}}_4$}\end{tabular}}}}%
          \put(0.04239494,0.81211581){\color[rgb]{1,0,0}\makebox(0,0)[lt]{\lineheight{1.25}\smash{\begin{tabular}[t]{l}\textit{$\gamma^{\mathrm{ext}}_5$}\end{tabular}}}}%
          \put(0.33879053,0.61517509){\color[rgb]{1,0,0}\makebox(0,0)[lt]{\lineheight{1.25}\smash{\begin{tabular}[t]{l}\textit{$\gamma^{\mathrm{ext}}_6$}\end{tabular}}}}%
          \put(0.17376855,0.39405844){\color[rgb]{1,0,0}\makebox(0,0)[lt]{\lineheight{1.25}\smash{\begin{tabular}[t]{l}\textit{$\gamma^{\mathrm{ext}}_7$}\end{tabular}}}}%
          \put(0.73907685,0.31820581){\color[rgb]{0,0.50196078,0}\makebox(0,0)[lt]{\lineheight{1.25}\smash{\begin{tabular}[t]{l}\textit{$\gamma^{\mathrm{int}}_1$}\end{tabular}}}}%
          \put(0.67512317,0.84053201){\color[rgb]{0,0.50196078,0}\makebox(0,0)[lt]{\lineheight{1.25}\smash{\begin{tabular}[t]{l}\textit{$\gamma^{\mathrm{int}}_2$}\end{tabular}}}}%
          \put(0.30418628,0.85779084){\color[rgb]{0,0.50196078,0}\makebox(0,0)[lt]{\lineheight{1.25}\smash{\begin{tabular}[t]{l}\textit{$\gamma^{\mathrm{int}}_3$}\end{tabular}}}}%
          \put(0.13495128,0.70074034){\color[rgb]{0,0.50196078,0}\makebox(0,0)[lt]{\lineheight{1.25}\smash{\begin{tabular}[t]{l}\textit{$\gamma^{\mathrm{int}}_4$}\end{tabular}}}}%
          \put(0.09392976,0.89099709){\color[rgb]{0,0.50196078,0}\makebox(0,0)[lt]{\lineheight{1.25}\smash{\begin{tabular}[t]{l}\textit{$\gamma^{\mathrm{int}}_5$}\end{tabular}}}}%
          \put(0.07460502,0.56078001){\color[rgb]{0,0.50196078,0}\makebox(0,0)[lt]{\lineheight{1.25}\smash{\begin{tabular}[t]{l}\textit{$\gamma^{\mathrm{int}}_6$}\end{tabular}}}}%
          \put(0.45689736,0.38868698){\color[rgb]{0,0.50196078,0}\makebox(0,0)[lt]{\lineheight{1.25}\smash{\begin{tabular}[t]{l}\textit{$\gamma^{\mathrm{int}}_7$}\end{tabular}}}}%
          \put(0.16003691,0.17664379){\color[rgb]{0,0.50196078,0}\makebox(0,0)[lt]{\lineheight{1.25}\smash{\begin{tabular}[t]{l}\textit{$\gamma^{\mathrm{int}}_8$}\end{tabular}}}}%
          \put(0.16003691,0.17664379){\color[rgb]{0,0.50196078,0}\makebox(0,0)[lt]{\lineheight{1.25}\smash{\begin{tabular}[t]{l}\textit{$\gamma^{\mathrm{int}}_8$}\end{tabular}}}}%
          \put(0,0){\includegraphics[width=\unitlength,page=2]{FIG_Corridor_Reg.pdf}}%
        \end{picture}%
      \endgroup%
      \endminipage
      \caption{Left: in this example, $\sigma$ has three first-type excursions (blue, dashed lines). The path $\gamma$ is obtained by replacing $\sigma$ by an affine path on each first-type excursion (blue, dotted lines). Right: The path $\gamma$ contains only second-type excursions (red, solid lines) and subpaths included in a $X_k$ for $k\in\intint1K$ (green, dashed lines).}
      \label{fig : limit_space/corridor}
      \end{figure}

      The path $\gamma$ satisfies
      \begin{align}
      \label{eqn : limit_space/corridor/gamma_vs_sigma}
          D'(\gamma) &\le \frac{b}{b-\eps} D'(x,y).
          \intertext{Indeed, consider a subdivision $\p{0=t_0 <t_1 <\dots <t_N=D'(x,y)}$. It is sufficient to show that}
          \sum_{n=0}^{N-1}D'(\gamma(t_n),\gamma(t_{n+1})) &\le \frac{b}{b-\eps} D'(x,y).
      \end{align}
      Without loss of generality, one may assume that $(t_k)_{k=0}^K$ can be written
      \begin{equation*}
      \begin{split}
         &\Big(u_{0,0}< \dots< u_{0,i_0}= s_1 = t_{1,0} < t_{1,1}< \dots < t_{1,j_1} = s_1' \\
         &\quad = u_{1,0} < \dots < u_{1,i_1} =s_2 <\dots <s_L' \\
         &\quad = u_{L,0} < \dots < u_{L, i_L} \Big),
      \end{split}
      \end{equation*}
      where the $u_{\ell, i}$ don't belong to any first-type excursion and for all $1 \le \ell \le L$, $\intervalleoo{s_\ell}{s_\ell'}$ is a first-type excursion. For all $0\le \ell \le L, 0\le i\le i_\ell-1$,
      \begin{align}
          \label{eqn : limit_space/corridor/controle_ex1_1}
          D'\p{\gamma\p{u_{\ell, i}}, \gamma\p{u_{\ell, i+1}} } &= D'\p{\sigma\p{u_{\ell, i}}, \sigma\p{u_{\ell, i+1}} }.
          \intertext{Besides, for all $1\le \ell \le L$, $\restriction{\gamma}{\intervalleff{s_\ell}{s_\ell'}}$ is an affine path, therefore~\eqref{eqn : intro/main_thm/equivalence_distances} and~\eqref{eqn : limit_space/corridor/intensite_couloir} imply}
          \label{eqn : limit_space/corridor/controle_ex1_2}        
          \sum_{j=0}^{j_\ell -1} D'\p{\gamma\p{t_{\ell, j} }, \gamma\p{t_{\ell, j+1} }  } &\le b\norme{\gamma\p{s_\ell} - \gamma\p{s_\ell'} }\notag\\
              &\le \frac{b}{b-\eps} D'\p{\gamma\p{s_\ell} , \gamma\p{s_\ell'} }.
      \end{align}
      Equations~\eqref{eqn : limit_space/corridor/controle_ex1_1} and~\eqref{eqn : limit_space/corridor/controle_ex1_2} yield~\eqref{eqn : limit_space/corridor/gamma_vs_sigma}.

      Moreover $\gamma$ can be decomposed as
    \begin{equation*}
      x=x_0' \Path{\gamma^\mathrm{ext}_0 } x_1 \Path{\gamma^\mathrm{int}_1 } x_1' \Path{\gamma^\mathrm{ext}_1} x_2 \Path{\gamma^\mathrm{int}_2 }\dots \Path{\gamma^\mathrm{ext}_r} x_{r+1}= y,
    \end{equation*}
    where each path $\gamma^\mathrm{int}_i$ is included in one of the subsets $X_k$, whereas the paths $\gamma^\mathrm{ext}_i$ do not intersect any of the $X_k$ except possibly at their endpoints, and for all $1\le i \le r-1$, $x_i'$ and $x_{i+1}$ belong to different $X_k$ (see Figure~\ref{fig : limit_space/corridor}, right). The paths $\gamma^{\mathrm{ext}}_0$ and $\gamma^{\mathrm{ext}}_r$ may be empty. 

    Besides,~\eqref{eqn : limit_space/corridor/largeur_couloir} and~\eqref{eqn : limit_space/corridor/intensite_couloir} imply that for all $i\in \intint1{r-1}$, 
    \begin{equation*}
    (b-\eps)\delta_1\le (b-\eps)\norme{\gamma^\mathrm{ext}_i} \le D'\p{\gamma^\mathrm{ext}_i}.
    \end{equation*}
    Summing over $i$ and applying~\eqref{eqn : intro/main_thm/equivalence_distances}, we get
    \begin{equation*}
      (r-1)(b-\eps)\delta_1 \le \sum_{i=1}^{r-1}  D'\p{\gamma^\mathrm{ext}_i} \le D'(\sigma) \le b\norme{x-y} \le b\diam(X).
    \end{equation*}
    Consequently,
    \begin{equation}
    \label{eqn : limit_space/corridor/majoration_r}
      r \le \frac{b\diam(X)}{(b-\eps)\delta_1} +1 \le \frac{3\diam(X)}{\delta_1}.
    \end{equation}

    \paragraph{Step 2: Lower bounding $D'(\gamma)$.} Hypothesis~\eqref{eqn : limit_space/corridor/controle_tuile} and Inequality~\eqref{eqn : limit_space/corridor/majoration_r} yield
    \begin{align}
      \sum_{i=1}^r D'\p{\gamma^\mathrm{int}_i } %
          &\ge \sum_{i=1}^r D\p{x_i, x_{i+1}' } - r\delta_2 \eol
      \label{eqn : limit_space/corridor/minoration_gamma1}
          &\ge  \sum_{i=1}^r D\p{x_i, x_{i+1}' } - \frac{3\diam(X)\delta_2}{\delta_1}.
      \intertext{Besides,~\eqref{eqn : limit_space/corridor/intensite_couloir} and~\eqref{eqn : intro/main_thm/equivalence_distances} yield }
      \sum_{i=0}^r D'\p{\gamma^\mathrm{ext}_i }%
          &\ge \sum_{i=0}^r (b-\eps)\norme{x_i'- x_{i+1} }\eol
      \label{eqn : limit_space/corridor/minoration_gamma2}
          &\ge \frac{b-\eps}{b}\sum_{i=0}^r D\p{x_i', x_{i+1} }.
      \intertext{Inequalities~\eqref{eqn : limit_space/corridor/minoration_gamma1} and~\eqref{eqn : limit_space/corridor/minoration_gamma2} imply}
      D'(\gamma) &\ge \frac{b-\eps}{b} \p{\sum_{i=1}^r D\p{x_i, x_{i+1}' } + \sum_{i=0}^r D\p{x_i', x_{i+1} } } - \frac{3\diam(X)\delta_2}{\delta_1}. \notag
      \intertext{Thus, by triangle inequality,}
      D'(\gamma) &\ge \frac{b-\eps}{b} D(x,y) - \frac{3\diam(X)\delta_2}{\delta_1}.\notag
      \intertext{Therefore, by~\eqref{eqn : limit_space/corridor/gamma_vs_sigma}, }
      D'(x,y) &\ge \frac{(b-\eps)^2}{b^2}D(x,y) - \frac{3(b-\eps)\diam(X)\delta_2}{b\delta_1},\notag \\
          &= D(x,y) + \frac{-2\eps b + \eps^2}{b^2} D(x,y) - \frac{3(b-\eps)\diam(X)\delta_2}{b\delta_1}.\notag \\
          &\ge D(x,y) - \frac{2\eps  }{b} D(x,y) - \frac{3\diam(X)\delta_2}{\delta_1}.
      \intertext{It follows by~\eqref{eqn : intro/main_thm/equivalence_distances} that }
      D'(x,y) &\ge D(x,y) -2\eps \diam(X) - \frac{3\diam(X)\delta_2}{\delta_1},
  \end{align}
      which concludes the proof.
  \end{proof}
\section{Monotonicity of the lower and upper rate functions}
\label{sec : mon}
In this section $X\in \Windows$ is fixed and $\nu$ satisfies Assumption~\ref{ass : intro/main_thm/support}. The main result of the section is Proposition~\ref{prop : mon/mon}, which states the monotonicity of $\FdTinf$ and $\FdTsup$ (see~\eqref{eqn : intro/sketch/FdTinf} and~\eqref{eqn : intro/sketch/FdTsup}). It is proven in Subsections~\ref{prop : mon/mon} and~\ref{subsec : mon/HW_is_free}.
\begin{Proposition}
\label{prop : mon/mon}
	The maps $\FdTinf$ and $\FdTsup$ are nondecreasing on $\AdmDistances[X]$.
\end{Proposition}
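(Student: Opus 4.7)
It suffices to show that, for every $\eps>0$, there exists $\eps'>0$ such that
\begin{equation*}
\Pb{\LD_{n,X}(D_1,\eps)}\ge e^{-o(n^d)}\,\Pb{\LD_{n,X}(D_2,\eps')}.
\end{equation*}
Indeed, taking $-n^{-d}\log$, passing to $\liminf$ (resp.\ $\limsup$), and letting $\eps\to 0$ yields $\FdTinf[X](D_1)\le\FdTinf[X](D_2)$ (resp.\ $\FdTsup[X](D_1)\le\FdTsup[X](D_2)$). This is the content of Subsection~\ref{subsec : mon/HW_is_free}: the cost of altering a subvolumic family of edges is invisible at speed $n^d$ --- in slogan form, ``highways are free''.

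\paragraph{Skeleton construction.} Partition $X$ into cubes $T_j$ of rescaled side $\sim 1/k$, with $k=k(n)\to\infty$ sublinearly, and use Proposition~\ref{prop : limit_space/gradient/derivee} to approximate $D_1$ locally by the norms $g_j\dpe(\grad D_1)_{z_j}\in\AdmNorms$ at the tile centres. Inside each scaled cube $nT_j$, install a sparse \emph{highway network} $E_n^{\mathrm H}$ of edges whose assigned weights in $[a,a+\delta]$ realize internal crossing times emulating $g_j$ up to $\eps/4$; around the tile boundaries, install a thin buffer of \emph{wall edges} $E_n^{\mathrm W}$ to be assigned weights in $[b-\delta,b]$. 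Both families have cardinality $o(n^d)$. Write $E_n\dpe E_n^{\mathrm H}\cup E_n^{\mathrm W}$ and denote by $\Phi$ the map that resets $\omega|_{E_n}$ to the prescribed nominal values.

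\paragraph{Deterministic and probabilistic estimates.} Let $B_n$ be the event that every $E_n$-edge of $\omega$ is within $\delta$ of its prescribed value, and $N_n$ the event that, for every $j$ and every $x,y\in T_j$, the rescaled passage time from $nx$ to $ny$ under $\omega$, when restricted to paths avoiding $E_n$, exceeds $g_j(y-x)-\eps'$. On $N_n\cap B_n$, the upper bound $\BoxSPT\le D_1+\eps/4$ under $\Phi(\omega)$ comes from concatenating highway pieces along a $D_1$-geodesic via Proposition~\ref{prop : limit_space/gradient/courbe}; the lower bound $\BoxSPT\ge D_1-\eps/4$ under $\Phi(\omega)$ comes from Lemma~\ref{lem : limit_space/corridor/corridor} applied with $D\dpe D_1$, $D'\dpe$ the rescaled passage time of $\Phi(\omega)$, $X_k\dpe nT_k$: hypothesis~\eqref{eqn : limit_space/corridor/largeur_couloir} comes from the wall thickness, hypothesis~\eqref{eqn : limit_space/corridor/intensite_couloir} from wall weights being close to $b$, and hypothesis~\eqref{eqn : limit_space/corridor/controle_tuile} from combining the highway design ($\approx g_j$ tile crossings) with $N_n$ (non-skeleton shortcuts are slow) and Proposition~\ref{prop : limit_space/gradient/derivee} ($D_1\approx g_j$ locally). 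A $\delta$-perturbation on the $O(n)$ edges of any geodesic shifts the rescaled passage time by $O(\delta)$, so for $\delta$ small, $\Phi(\omega)\in\LD_{n,X}(D_1,\eps/2)$ implies $\omega\in\LD_{n,X}(D_1,\eps)$ on $B_n$. Finally, $\LD_{n,X}(D_2,\eps')\subseteq N_n$ (removing edges only increases passage times, and the $D_2$-bound localizes to $g_j$ in each tile thanks again to Proposition~\ref{prop : limit_space/gradient/derivee}), while $N_n$ and $B_n$ are independent since they depend on disjoint families of i.i.d.\ edges. Therefore
\begin{equation*}
\Pb{\LD_{n,X}(D_1,\eps)}\ge \Pb{N_n}\,\Pb{B_n}\ge \Pb{\LD_{n,X}(D_2,\eps')}\cdot e^{-o(n^d)},
\end{equation*}
using $\Pb{B_n}\ge \nu([a,a+\delta])^{|E_n^{\mathrm H}|}\,\nu([b-\delta,b])^{|E_n^{\mathrm W}|}=e^{-o(n^d)}$ from Assumption~\ref{ass : intro/main_thm/support}.

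\paragraph{Main obstacle.} The delicate part is a deterministic construction problem: realizing an arbitrary $g_j\in\AdmNorms$ as the internal metric of $nT_j$ using a sublattice of $o((n/k)^d)$ edges at weights in $[a,a+\delta]$, and doing so uniformly across the $\Theta(k^d)$ tiles. Once this is available, the tile scale $1/k$, the wall thickness, the weight tolerance $\delta$, the highway-design tolerance and the target $\eps'$ must be balanced so that the error $3\diam(X)(\eps+\delta_2/\delta_1)$ from Lemma~\ref{lem : limit_space/corridor/corridor} stays below $\eps/4$ while $|E_n|$ remains $o(n^d)$; this joint tuning is the technical core of Subsection~\ref{subsec : mon/HW_is_free}.
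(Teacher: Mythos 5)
Your reduction is sound, but you take a genuinely different route from the paper's, and it rests on an unresolved construction that the paper never needs. The paper does not attempt a one-shot modification that installs a tile-wise skeleton realizing $D_1$ everywhere at once. Instead it defines intermediate metrics $D_p$ by inserting one $D_1$-geodesic $\sigma_{p+1}$ at a time (Subsection~\ref{subsec : mon/HW_method}), shows that $D_p\in\AdmDistances[X]$ and $D_p\to D_1$ (Lemma~\ref{lem : mon/HW_method/HW}), and proves that adding a \emph{single} highway costs only $e^{-O(n)}$ at speed $n^d$ (Lemma~\ref{lem : mon/HW_method/HW_is_free}): a discretization of one geodesic has $O(n)$ edges (Lemma~\ref{lem : mon/HW_is_free/approx_path}), so the ``highways are free'' slogan is applied to a single path per step, and the conclusion $\FdTinf(D_1)\le\FdTinf(D_2)$ follows by letting $p\to\infty$ and invoking lower semicontinuity of $\FdTinf$.

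The payoff of this incremental design is precisely that it sidesteps your self-declared ``main obstacle.'' Your skeleton requires, in each of $\Theta(k^d)$ tiles, a sparse (sublinear-in-volume) highway network whose induced internal metric at weight $\approx a$ emulates an arbitrary $g_j\in\AdmNorms$, remains robust to hybrid paths mixing highway and ambient edges, and does so uniformly in $j$. You offer no construction, and it is far from clear one exists: for $g_j$ close to $b\norme\cdot$, any stray network edge at weight $\approx a$ threatens shortcuts that collapse the induced metric well below $g_j$, so the network would have to be both sparse and carefully ``windy'' in a direction-dependent way. The paper never has to realize a norm by a designed network --- each highway is simply a discretized $D_1$-geodesic, whose $\norme\cdot$-length automatically tracks $D_1$ by Lemma~\ref{lem : mon/HW_is_free/approx_path} --- and the milestone bookkeeping of Lemma~\ref{lem : mon/HW_is_free/modification} keeps the modified metric from overshooting without ever invoking the corridor lemma. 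As written, your proposal leaves its central deterministic step unproved, so it does not constitute a proof of the proposition.
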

A useful consequence of Proposition~\ref{prop : mon/mon} is that $\LD_n(D,\eps)$ may be replaced in~\eqref{eqn : intro/sketch/FdTinf} by 
\begin{equation}
\label{eqn : mon/def_LD+}
	\LD_{n,X}^+(D,\eps)\dpe \acc{\forall x,y \in X, \quad \BoxSPT(x,y) \ge D(x,y) - \eps}.
\end{equation}
\begin{Corollary}
\label{cor : mon/LD+}
	For all $D\in \AdmDistances[X]$,
	\begin{equation}
		  \inclim{\eps \to 0} \liminf_{n\to\infty} -\frac{1}{n^d} \log \Pb{\LD_{n,X}^+(D,\eps) } = \FdTinf[X](D) .
	\end{equation}
\end{Corollary}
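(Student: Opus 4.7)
The inclusion $\LD_{n,X}(D,\eps) \subseteq \LD_{n,X}^+(D,\eps)$ is immediate from the definitions, which yields
\[
\inclim{\eps \to 0} \liminf_{n\to\infty} -\frac{1}{n^d}\log \Pb{\LD_{n,X}^+(D,\eps)} \le \FdTinf[X](D).
\]
For the reverse inequality, the plan is to use the compactness of $\AdmDistances[X]$ (Proposition~\ref{prop : limit_space/compactness/compactness}) to cover the event $\LD_{n,X}^+(D,\eps)$ by finitely many $\LD_{n,X}$-events, then to extract a convergent subsequence of optimal centers and apply the monotonicity of $\FdTinf[X]$ (Proposition~\ref{prop : mon/mon}).

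Since $\BoxSPT \in \AdmDistances[X]$ by Proposition~\ref{prop : limit_space/geodesics/BoxSPT_espace}, the event $\LD_{n,X}^+(D,\eps)$ coincides with $\acc{\BoxSPT \in F^+_\eps}$, where
\[
F^+_\eps \dpe \acc{D' \in \AdmDistances[X] \Bigm| D' \ge D - \eps}
\]
is $\UnifDistance$-compact. For each $\eps > 0$, cover $F^+_\eps$ by finitely many balls $B\p{D_1^{(\eps)}, \eps}, \dots, B\p{D_{N(\eps)}^{(\eps)}, \eps}$ with centers in $F^+_\eps$. The union bound yields $\Pb{\LD_{n,X}^+(D,\eps)} \le \sum_i \Pb{\LD_{n,X}\p{D_i^{(\eps)}, \eps}}$. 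Because $N(\eps)$ is finite, a standard uniform-threshold argument (for any $\eta > 0$, each of the finitely many probabilities is eventually bounded by $\exp(-(L_i - \eta)n^d)$ where $L_i$ is the corresponding $\liminf_n$ rate, so the sum is eventually bounded by $N(\eps)\exp(-(\min_i L_i - \eta)n^d)$) gives
\[
\liminf_{n\to\infty} -\frac{1}{n^d}\log \Pb{\LD_{n,X}^+(D,\eps)} \ge \min_{1 \le i \le N(\eps)} \liminf_{n\to\infty} -\frac{1}{n^d}\log \Pb{\LD_{n,X}\p{D_i^{(\eps)}, \eps}}.
\]

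Let $D^*_\eps$ be a center achieving this minimum, so $D^*_\eps \ge D - \eps$. Given a sequence $\eps_k \to 0$, compactness of $\AdmDistances[X]$ furnishes a subsequence along which $D^*_{\eps_k} \to D^\infty$ in $\UnifDistance$, and passing to the limit in $D^*_{\eps_k} \ge D - \eps_k$ gives $D^\infty \ge D$. The triangle inequality yields the inclusion $\LD_{n,X}(D^*_{\eps_k}, \eps_k) \subseteq \LD_{n,X}\p{D^\infty, \delta_k}$ with $\delta_k \dpe \eps_k + \UnifDistance(D^*_{\eps_k}, D^\infty) \to 0$, hence $\liminf_n -\frac{1}{n^d}\log \Pb{\LD_{n,X}(D^*_{\eps_k}, \eps_k)} \ge \liminf_n -\frac{1}{n^d}\log \Pb{\LD_{n,X}(D^\infty, \delta_k)}$. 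The latter increases to $\FdTinf[X](D^\infty)$ as $\delta_k \downarrow 0$, and $\FdTinf[X](D^\infty) \ge \FdTinf[X](D)$ by monotonicity. Combining these observations and using that $\liminf_n -\frac{1}{n^d}\log \Pb{\LD_{n,X}^+(D,\eps)}$ is itself nondecreasing in $\eps \downarrow 0$, the reverse inequality follows.

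The main obstacle is the passage from the union bound to the $\min$-$\liminf$ inequality: $\liminf_n \min_i$ and $\min_i \liminf_n$ are not interchangeable in general, but the finiteness of $N(\eps)$ combined with a uniform threshold argument bridges the gap. The rest is a soft compactness-and-monotonicity extraction.
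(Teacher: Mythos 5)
Your argument is correct and follows essentially the same outline as the paper's: both reduce the problem to the compactness of $\acc{D'\in\AdmDistances[X]\mid D'\ge D-\eps}$, pass to a lower bound over that set, extract a convergent subsequence of optimizers as $\eps\to0$, and finish with the monotonicity from Proposition~\ref{prop : mon/mon}. The only cosmetic difference is that the paper simply invokes Lemma~\ref{lem : intro/sketch/UB_LB}~(iii) for the compact lower bound and then uses lower semicontinuity of $\FdTinf[X]$ to pass to the limit, whereas you inline the finite-cover/union-bound proof of that lemma and replace the semicontinuity step by the direct triangle-inequality inclusion $\LD_{n,X}(D^*_{\eps_k},\eps_k)\subseteq\LD_{n,X}(D^\infty,\delta_k)$; the two routes rely on the same underlying facts.
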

\begin{proof}[Proof of Corollary~\ref{cor : mon/LD+}]
	Let $D\in \AdmDistances[X]$. For all $\eps>0$ the inclusion $\LD_n(D,\eps)\subseteq \LD_n^+(D,\eps)$ is clear, thus
	\begin{equation}
		\inclim{\eps \to 0} \liminf_{n\to\infty} -\frac{1}{n^d} \log \Pb{\LD_{n,X}^+(D,\eps) } \le \FdTinf[X](D).
	\end{equation}

	Consider the set $K_\eps\dpe \acc{D'\in \AdmDistances \bigm| \forall x,y\in X, D'(x,y) \ge D(x,y)-\eps }$. As $K_\eps$ is compact, Lemma~\ref{lem : intro/sketch/UB_LB} implies
	\begin{align}
		\liminf_{n\to\infty} -\frac{1}{n^d} \log \Pb{\LD_{n,X}^+(D,\eps) } %
			&= \liminf_{n\to\infty} -\frac{1}{n^d} \log \Pb{\BoxSPT\in K_\eps }\eol
			&\ge \min_{D'\in K_\eps} \FdTinf(D').\notag
		\intertext{For all $\eps>0$, let $D'_\eps$ be a minimizer of $\FdTinf$ on $K_\eps$. By compactness, there exists a decreasing sequence $(\eps_k)$, converging to $0$, such that $D'_{\eps_k}$ converges to a metric $D'_0$ in $\AdmDistances[X]$. Since $\FdTinf$ is lower semicontinuous, letting $k\to\infty$ yields}
		\inclim{\eps \to 0}\liminf_{n\to\infty} -\frac{1}{n^d} \log \Pb{\LD_{n,X}^+(D,\eps) }%
			&\ge \liminf_{k\to \infty} \FdTinf\p{D'_{\eps_k}}\eol
			&\ge \FdTinf(D'_0)\eol
			&\ge \min_{D'\ge D} \FdTinf(D').\notag
		\intertext{Applying Proposition~\ref{prop : mon/mon} to simplify the right-hand side, we get}
		\inclim{\eps \to 0}\liminf_{n\to\infty} -\frac{1}{n^d} \log \Pb{\LD_{n,X}^+(D,\eps) }%
			&\ge \FdTinf(D).
	\end{align}
\end{proof}
\subsection{The highway method}
\label{subsec : mon/HW_method}
Let $D, D_0\in \AdmDistances[X]$ be such that $D_0\ge D$. Let $\p{ (x_p, y_p)}_{p\ge 1}$ be a dense sequence in $X^2$ and, for all $p\ge 1$, a $D$-geodesic $x_p \Path{\sigma_p} y_p$. We consider the sequence of maps $D_p : X^2\rightarrow \R^+$ defined recursively by the relation
\begin{equation}
\label{eqn : mon/HW_method/def_Dp}
	D_{p+1}(x,y) \dpe \min_{x',y' \in \sigma_{p+1}} \p{ D_p(x,x') + D(x',y') + D_p(y',y)  } \wedge D_p(x,y).
\end{equation}
Informally, $D_{p}$ is the metric obtained from $D_0$ by adding the network of highways $\acc{\sigma_1, \dots, \sigma_p}$, thus reducing the passage time. The key arguments in the proof of Proposition~\ref{prop : mon/mon} are:
\begin{enumerate}
	\item The limiting metric for a large number of highways is $D$ (Lemma~\ref{lem : mon/HW_method/HW}).
	\item The cost of adding one highway is negligible (Lemma~\ref{lem : mon/HW_method/HW_is_free}).
\end{enumerate}
\begin{Lemma}
\label{lem : mon/HW_method/HW}
	For all $p\ge 0$, $D_p\in \AdmDistances[X]$. Moreover the sequence $(D_p)_{p\ge 0}$ is nonincreasing and converges to $D$.
\end{Lemma}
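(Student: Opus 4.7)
The plan is to handle the three claims in order. Monotonicity $D_{p+1} \le D_p$ is immediate from~\eqref{eqn : mon/HW_method/def_Dp}, since one option in the minimum is $D_p(x,y)$ itself.

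The main work is to show by induction that each $D_p \in \AdmDistances[X]$ (the base $p=0$ being given). In parallel I will maintain the useful invariant $D_p \ge D$: indeed $D_p(x,x') + D(x',y') + D_p(y',y) \ge D(x,x') + D(x',y') + D(y',y) \ge D(x,y)$ by the inductive bound and the triangle inequality for $D$, while $D_p(x,y) \ge D(x,y)$ by induction, so the minimum remains $\ge D(x,y)$. The bounds~\eqref{eqn : intro/main_thm/equivalence_distances} then follow at once from $D \le D_{p+1} \le D_p$, and symmetry is obvious from the symmetric form of~\eqref{eqn : mon/HW_method/def_Dp}. For the triangle inequality I will do a case analysis on which option realizes $D_{p+1}(x,y)$ and $D_{p+1}(y,z)$; note that the minimum is attained, by compactness of $\sigma_{p+1}$ and continuity of $D_p$ and $D$. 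The only nontrivial case is when both use shortcuts $(x_1',y_1')$ and $(x_2',y_2')$: then the pair $(x_1',y_2')$ is a valid candidate for $D_{p+1}(x,z)$, and combining $D(x_1',y_2') \le D(x_1',y_1') + D(y_1',y_2')$ with $D(y_1',y_2') \le D(y_1',y) + D(y,y_2') \le D_p(y_1',y) + D_p(y,x_2') + D(x_2',y_2')$ (using $D \le D_p$) gives exactly the inequality needed. The other cases are routine applications of the triangle inequality for $D_p$.

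To conclude that $D_{p+1} \in \AdmDistances[X]$, I apply the midpoint criterion of Lemma~\ref{lem : limit_space/compactness/criterion}. Let $L \dpe D_{p+1}(x,y)$, attained by some option. If the realizing option is $D_p(x,y)$, the $D_p$-midpoint $m$ (existing by induction) satisfies $D_{p+1}(x,m) \le D_p(x,m) = L/2$ and similarly for $D_{p+1}(m,y)$. Otherwise $L = L_1 + L_2 + L_3$ with $L_1 = D_p(x,x')$, $L_2 = D(x',y')$, $L_3 = D_p(y',y)$, and I choose $m$ according to which of the three segments contains the midpoint: if $L/2 \le L_1$, place $m$ at arclength $L/2$ along a $D_p$-geodesic from $x$ to $x'$; if $L_1 < L/2 \le L_1 + L_2$, place $m$ on the sub-arc of $\sigma_{p+1}$ between $x'$ and $y'$ (which is itself a $D$-geodesic, since $\sigma_{p+1}$ is one) at the appropriate $D$-arclength; the last case is symmetric. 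A direct computation gives exact equality $D_{p+1}(x,m) = D_{p+1}(m,y) = L/2$ in each sub-case. This splitting according to the location of the midpoint is really the only part requiring genuine care.

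For the convergence $D_p \to D$: plugging $(x',y') = (x_p,y_p)$ into~\eqref{eqn : mon/HW_method/def_Dp} yields $D_p(x_p,y_p) \le D(x_p,y_p)$, and the invariant $D_p \ge D$ forces equality; by monotonicity, $D_q(x_p,y_p) = D(x_p,y_p)$ for all $q \ge p$. The nonincreasing bounded sequence $(D_p)$ therefore converges pointwise to some limit $D_\infty \ge D$; each $D_p$ is $b$-Lipschitz (uniformly in $p$), hence so is $D_\infty$, and the equality on the dense subsequence $\{(x_p,y_p)\}_{p\ge1}$ extends by continuity to all of $X^2$, giving $D_\infty = D$. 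Monotone pointwise convergence of continuous functions to a continuous limit on the compact space $X^2$ then upgrades to uniform convergence by Dini's theorem (alternatively, one may invoke compactness of $(\AdmDistances[X], \UnifDistance)$ from Proposition~\ref{prop : limit_space/compactness/compactness} to argue along subsequences).
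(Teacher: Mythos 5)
Your proof is correct and follows essentially the same approach as the paper's: trivial monotonicity, an induction establishing $D_p\ge D$ and the triangle inequality by the same case analysis on which options realize the minima, the midpoint criterion of Lemma~\ref{lem : limit_space/compactness/criterion}, and convergence via the fact that $D_q(x_p,y_p)=D(x_p,y_p)$ for $q\ge p$ on the dense sequence. The only cosmetic differences are that you spell out the midpoint construction in detail where the paper merely says ``a direct application of Lemma~\ref{lem : limit_space/compactness/criterion},'' and that you offer Dini's theorem as an alternative to the compactness-of-$\AdmDistances[X]$ argument the paper uses to pass from pointwise to uniform convergence.
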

\begin{proof}
	It is straightforward to see that for all $p\ge0$, $b\norme{\cdot}\ge D_{p}\ge D_{p+1} \ge D$.

	We show by induction that for all $p\ge 0$, $D_p\in\AdmDistances[X]$. Let $p\ge0$. Assume that $D_p\in\AdmDistances[X]$. Let $x,y,z\in X$. The hardest case to consider when proving the triangle inequality for $D_{p+1}$ between $x,y$ and $z$ is when there exists $x',y',y'',z'' \in \sigma_{p+1}$ such that
	\begin{equation}
		D_{p+1}(x,y) = D_p(x,x')+D(x',y')+D_p(y',y)  \text{ and } D_{p+1}(y,z) = D_p(y,y'')+D(y'',z'')+D_p(z'',z).
	\end{equation}
	Thanks to the definition of $D_{p+1}$, the triangle inequality for $D$ and $D_p\ge D$,
  \begin{align*}
    D_{p+1}(x,z) &\le  D_p(x,x')+ D(x',z') + D_p(z',z) \eol
    	&\le D_p(x,x')+D(x',y')+D(y',y) + D(y,y'')+D(y'',z') +D_p(z',z) \eol
    	&\le D_p(x,x')+D(x',y')+D_p(y',y) + D_p(y,y'')+D(y'',z')+D_p(z',z) \eol
    	&=  D_{p+1}(x,y)+ D_{p+1}(y,z).
  \end{align*}
  The other cases are analogous. Hence $D_{p+1}$ is a metric. Moreover a direct application of Lemma~\ref{lem : limit_space/compactness/criterion} shows that $D_{p+1}\in \AdmDistances[X]$.

  As $\AdmDistances[X]$ is compact (see Proposition~\ref{prop : limit_space/compactness/compactness}), to prove the convergence, it is sufficient to show that $D$ is the unique adherence value of $(D_p)_{p\ge 0}$. For all $k\ge 1$ and $p\ge k$, $D_p(x_k, y_k) = D(x_k, y_k)$, thus any adherence value of $(D_p)_{p\ge 0}$ must coincide with $D$ on a dense subset of $X^2$, therefore must be equal to $D$ by continuity.
\end{proof}

\begin{Lemma}
\label{lem : mon/HW_method/HW_is_free}
There exists a constant $\Cl{HW}$, depending only on $a,b$ and $X$ such that for all $p\ge0, 0< \delta \le 1$, for large enough $n$,
\begin{equation}
\label{eqn : mon/HW_method/HW_is_free}
    -\frac{1}{n^d} \log \Pb{ \LD_{n,X}\p{D_{p+1}, \Cr{HW} \delta}} \le -\frac{1}{n^d}\log \Pb{\LD_{n,X}(D_p, \delta^2)} - \frac{ \Cr{HW} }{n^{d-1}}\log\p{ \frac{\nu\p{\intervalleff{a}{a+\delta}} }{2} }.
  \end{equation}
\end{Lemma}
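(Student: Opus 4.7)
My approach is to construct an explicit lattice ``highway'' $\Pi$ in $nX$ along $n\sigma_{p+1}$, calibrated so that if every edge of $\Pi$ carries a passage time in $[a,a+\delta]$, then the resulting environment's random metric closely approximates $D_{p+1}$. Specifically, I would discretize $\sigma_{p+1}$ into $K\dpe\lceil D(x_{p+1},y_{p+1})/\delta\rceil$ consecutive pieces of $D$-length $\approx\delta$, with endpoints $z_k\dpe\sigma_{p+1}(k\delta)$. Since $D\ge a\norme\cdot$, $\norme{z_k-z_{k-1}}\le\delta/a$, so for lattice approximations $v_k$ of $nz_k$ in $nX$ one can find a self-avoiding lattice path $\pi_k$ from $v_{k-1}$ to $v_k$ of $\ell^1$-length exactly $L_k\dpe\lfloor n\delta/a\rfloor$. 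The concatenation $\Pi\dpe\pi_1\pi_2\cdots\pi_K$ has total length $N=\sum L_k\le\Cr{HW}\,n$, with $\Cr{HW}$ depending only on $a,b,X$. The calibration $aL_k\approx n\delta$ is the crucial feature: when every $\tau_e$ for $e\in\pi_k$ lies in $[a,a+\delta]$, the $\tau$-cost of $\pi_k$ lies in $[n\delta,n\delta+n\delta^2/a]$, matching $n$ times the $D$-length of the $k$-th piece up to $O(n\delta)$; summing, any sub-arc of $\Pi$ between $u,u'\in\Pi$ carries passage time approximately $nD(u/n,u'/n)$, with additive error $O(n\delta)$.

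Next, let $B_n\dpe\acc{\tau_e\in[a,a+\delta]\text{ for every }e\in\Pi}$ and let $\tau'$ be obtained from $\tau$ by independently resampling each $\tau_e$, $e\in\Pi$, from $\nu(\cdot\mid[a,a+\delta])$. The Radon--Nikodym derivative of the law of $\tau'$ with respect to that of $\tau$ equals $\ind{B_n}/\nu([a,a+\delta])^N$, which yields $\Pb{F}\ge\nu([a,a+\delta])^N\Pb{\tau'\in F}$ for every event $F$. With $F=\LD_{n,X}(D_{p+1},\Cr{HW}\delta)$, the lemma will follow (after taking $-n^{-d}\log$, using $N\le\Cr{HW}\,n$, and absorbing the $\log 2$ into $\Cr{HW}$) provided I establish the inclusion
\begin{equation*}
	\acc{\tau\in \LD_{n,X}(D_p,\delta^2)}\subseteq \acc{\tau'\in \LD_{n,X}(D_{p+1},\Cr{HW}\delta)}.
\end{equation*}

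For the upper bound on $\BoxSPT^{\tau'}$ on the event $A\dpe\LD_{n,X}(D_p,\delta^2)$: for each $x,y\in X$, pick $x',y'\in\sigma_{p+1}$ realizing $D_{p+1}(x,y)=D_p(x,x')+D(x',y')+D_p(y',y)$, and select lattice approximations $v_{x'},v_{y'}\in\Pi$ of $nx',ny'$. Concatenate in $\tau'$ a near-$\BoxPT^\tau$-optimal path from $nx$ to $v_{x'}$ (cost $\le nD_p(x,x')+n\delta^2$ by $A$), the sub-highway of $\Pi$ from $v_{x'}$ to $v_{y'}$ (cost $\le nD(x',y')+\Cr{HW}\,n\delta$ by the first paragraph), and a near-optimal $\tau$-path from $v_{y'}$ to $ny$; this gives $\BoxSPT^{\tau'}(x,y)\le D_{p+1}(x,y)+\Cr{HW}\delta$.

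The matching lower bound $\BoxSPT^{\tau'}\ge D_{p+1}-\Cr{HW}\delta$ is the technical heart of the proof. For any $\tau'$-path $\pi$ from $nx$ to $ny$ I would let $x'',y''$ be its first and last visits to $\Pi$ (the case $\pi\cap\Pi=\emptyset$ being immediate from $A$). The pre-$x''$ and post-$y''$ sub-paths contain no $\Pi$-edge, so their passage times are at least $nD_p(x,x''/n)-n\delta^2$ and $nD_p(y''/n,y)-n\delta^2$ respectively by $A$. What remains is the crucial estimate
\begin{equation*}
	\BoxPT^{\tau'}(x'',y'')\ge nD(x''/n,y''/n)-\Cr{HW}\,n\delta,
\end{equation*}
which I would approach by decomposing any $\tau'$-path between $x''$ and $y''$ into maximal alternating on-$\Pi$ and off-$\Pi$ sub-arcs: each on-$\Pi$ sub-arc of $\ell^1$-length $L$ contributes at least $aL$, which by the calibration dominates $n$ times the $D$-span of its endpoints, while each off-$\Pi$ excursion between two vertices $w,w'\in\Pi$ contributes at least $nD_p(w/n,w'/n)-n\delta^2\ge nD(w/n,w'/n)-n\delta^2$ by $A$, and a telescoping triangle inequality for $D$ would yield the bound. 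The main obstacle is that such a $\tau'$-path may a priori alternate many times, inflating the accumulated $n\delta^2$ error; I expect this to be handled either by a corridor-type argument in the spirit of Lemma~\ref{lem : limit_space/corridor/corridor} applied to a tubular neighborhood of $\Pi$ (showing that a near-optimal $\tau'$-geodesic makes only $\grando(1)$ excursions), or by iteratively replacing each off-$\Pi$ excursion by the corresponding on-$\Pi$ arc at an extra cost $\Cr{HW}\,n\delta$ per replacement and bounding the total number of replacements. Combined with the previous paragraph and minimization over $x'',y''\in\Pi$, this gives $\BoxSPT^{\tau'}(x,y)\ge D_{p+1}(x,y)-\Cr{HW}\delta$, completing the proof.
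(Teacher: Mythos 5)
Your overall architecture (build a lattice highway along $n\sigma_{p+1}$, tilt the measure to force the highway edges into $[a,a+\delta]$, and argue an inclusion of events) is a natural starting point and the upper bound half of your inclusion is sound. The Radon--Nikodym tilting is also a legitimate alternative to the paper's mechanism (the paper instead introduces an auxiliary i.i.d.\ configuration $\tau^{\rG}$ together with Bernoulli selector variables $X_e$, which in effect performs the same tilt but with a $\log 2$ cost per edge; the two devices are interchangeable). However, the calibration of the highway is different in spirit: you inflate the $\ell^1$-length of each piece to $\lfloor n\delta/a\rfloor$ so that a uniformly slow highway gives the right traversal time, whereas the paper keeps the highway's $\ell^1$-length minimal and controls the target time by choosing \emph{which} edges to set to $a$.

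The genuine gap is exactly where you flag it: the lower bound $\BoxPT^{\tau'}(x'',y'')\ge nD(x''/n,y''/n)-\Cr{HW}n\delta$. Your alternating on/off-$\Pi$ decomposition accumulates an error of order $n\delta^2$ per off-$\Pi$ excursion, and neither of your two proposed repairs actually bounds the number of excursions. The corridor lemma (Lemma~\ref{lem : limit_space/corridor/corridor}) cannot be applied to a tubular neighbourhood of $\Pi$: its hypothesis~\eqref{eqn : limit_space/corridor/intensite_couloir} requires the off-tile region to be uniformly slow (of intensity $\ge b-\eps$ per unit length), but in your setting the off-$\Pi$ region carries the unmodified environment $\tau$, whose passage times are anywhere in $[a,b]$, so a near-geodesic may enter and exit $\Pi$ an unbounded number of times at negligible cost. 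The iterative replacement idea fails for the same reason: each replacement costs $\grando(n\delta^2)$ (not $\grando(n\delta)$), but you have no control over the number of replacements, so the accumulated error is not $\grando(n\delta)$. The missing idea is precisely the adaptive milestone construction of Lemma~\ref{lem : mon/HW_is_free/modification}: the set $\FastE$ of modified edges is built \emph{recursively as a function of the realization $\EPTB$}, setting a block of $\approx n\delta^2/a$ edges fast, then checking whether the modified metric has dropped more than a factor $(1-\delta)$ below the target~\eqref{eqn : mon/HW_is_free/rec2}; if so, the next milestone is postponed until the metric catches up. This guarantees the invariant~\eqref{eqn : mon/HW_is_free/rec2}, which is what makes the recursion in Step~2 terminate in at most $K\le D(x_1,y_1)/\delta^2$ steps, each contributing an error $\grando(\delta^3)$, hence a total error $\grando(\delta)$. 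A non-adaptive highway (as in your proposal) does not enjoy this invariant: in an environment that already happens to be fast near $\sigma_{p+1}$, making the entire highway fast can push the passage time along the highway strictly below $nD$, and no local replacement argument recovers the loss.
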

We prove this lemma in Subsection~\ref{subsec : mon/HW_is_free}.
\begin{proof}[Proof of Proposition~\ref{prop : mon/mon}]
	Let $p \ge 0$ and $0<\delta \le 1$. Taking the inferior limit in $n$ in~\eqref{eqn : mon/HW_method/HW_is_free}, we get
	\begin{align}
		\liminf_{n\to\infty} -\frac{1}{n^d} \log \Pb{ \LD_{n,X}\p{D_{p+1}, \Cr{HW} \delta}} &\le \liminf_{n\to\infty} -\frac{1}{n^d}\log \Pb{\LD_{n,X}(D_p, \delta^2)}.\notag
		\intertext{Letting $\delta\to 0$ gives}
		\label{eqn : mon/mon/inegalit_Dp+1_Dp}
		\FdTinf(D_{p+1}) &\le \FdTinf(D_{p}).
		\intertext{Thanks to Lemma~\ref{lem : mon/HW_method/HW} and the lower semicontinuity of $\FdTinf$, we get}
		\FdTinf(D) = \FdTinf\p{\lim_{p\to\infty}D_p } &\le \liminf_{p\to\infty} \FdTinf(D_p).\nonumber
		\intertext{Applying~\eqref{eqn : mon/mon/inegalit_Dp+1_Dp}, we obtain}
		\FdTinf(D) &\le \FdTinf(D_0).
	\end{align}
	We proceed analogously for $\FdTsup$.
\end{proof}


\subsection{Proof of Lemma~\ref{lem : mon/HW_method/HW_is_free}}
\label{subsec : mon/HW_is_free}
Without loss of generality it is sufficient to treat the case $p=0$. Let $0<\delta< 1$. The idea is to build a configuration satisfying $\LD_{n,X}\p{D_1,\Cr{HW} \delta }$ by modifying certain edge passage times in a realization of $\LD_{n,X}\p{D_0,\delta^2 }$ in order to "create" the geodesic $\sigma_1$. We first prove a technical lemma which essentially states that an homothety of a geodesic can be well approximated by a discrete path with a linearly upper bounded number of edges. In the whole subsection, the sentence "for large enough $n,\mathcal P_n$" will mean "there exists $n_0 \ge 1$, depending on $a,b,X$ and $\delta$ such that $\forall n\ge n_0,\mathcal P_n$".
\begin{Lemma}
\label{lem : mon/HW_is_free/approx_path}
	For every $D\in \AdmDistances[X]$, every $D$-geodesic $x\Path{\sigma}y$ and every $\lambda >0$, there exists a discrete path $\alpha = (\alpha_j)_{j=0}^p$ and a surjective, nondecreasing, right-continuous map ${\bj : \intervalleff{0}{D(x,y)}\rightarrow \intint0p }$ such that
	\begin{align}
     	\label{eqn : mon/HW_is_free/approx_path1}\forall 0 \le t \le D(x,y), \quad \norme{\alpha_{\bj(t)} - \lambda\sigma(t)} &\le d+1 \\
     	\label{eqn : mon/HW_is_free/approx_path2}\forall 0 \le t_1<t_2 \le D(x,y), \quad  \bj(t_2)-\bj(t_1) &\le  \lambda\int_{t_1}^{t_2} \norme{\sigma'(u)}\d u +d.
   \end{align}
\end{Lemma}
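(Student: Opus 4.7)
\emph{Plan.} The plan is to construct $\alpha$ as a lattice walk tracking the rescaled geodesic $\tilde\sigma \dpe \lambda\sigma$ within $\ell^\infty$-distance $1$, updating a single coordinate each time $\tilde\sigma$ is about to escape the unit $\ell^\infty$-cube around the current lattice point. This hysteresis is essential: the naive definition $\alpha_{\bj(t)} \dpe \floor{\tilde\sigma(t)}$ would fail because $\floor{\tilde\sigma_i}$ can jump arbitrarily many times when $\tilde\sigma_i$ oscillates with small amplitude near an integer level, whereas the proposed rule guarantees that each update of coordinate $i$ accounts for a full unit of variation of $\tilde\sigma_i$.

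\emph{Construction.} Set $\alpha_0 \dpe \floor{\tilde\sigma(0)}$ and $\tau_0 \dpe 0$. Given $\alpha_k$ and $\tau_k$ with $\norme[\infty]{\alpha_k - \tilde\sigma(\tau_k)} \le 1$, define
\[
	\tau_{k+1} \dpe \inf\acc{t > \tau_k \Bigm| \norme[\infty]{\alpha_k - \tilde\sigma(t)} = 1}
\]
(taken as $D(x,y)$ if the set is empty), pick a coordinate $i$ attaining the $\ell^\infty$-maximum at $\tau_{k+1}$, and set $\alpha_{k+1} \dpe \alpha_k + \sgn\p{\tilde\sigma_i(\tau_{k+1}) - \alpha_{k,i}}\base i$, together with $\bj(t)\dpe k$ on $[\tau_k, \tau_{k+1})$. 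The invariant $\norme[\infty]{\alpha_{k+1} - \tilde\sigma(\tau_{k+1})} \le 1$ is preserved, since the updated coordinate now equals $\tilde\sigma_i(\tau_{k+1})$ while the others moved by neither $\alpha$ nor $\tilde\sigma$. If several coordinates attain the threshold simultaneously at $\tau_{k+1}$, the resulting cascade is spread at distinct times within an arbitrarily short interval after $\tau_{k+1}$, chosen small enough so that $\norme{\alpha_{\bj(t)} - \tilde\sigma(t)} \le d+1$ is preserved throughout by Lipschitz continuity of $\tilde\sigma$ and no overlap with the next genuine threshold hit occurs; this makes $\tau_0 < \tau_1 < \dots < \tau_p$ strictly increasing, so $\bj$ is nondecreasing, right-continuous, and surjective.

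\emph{Verification.} Bound~\eqref{eqn : mon/HW_is_free/approx_path1} follows from $\norme[\infty]{\alpha_{\bj(t)} - \tilde\sigma(t)} \le 1$ and $\norme{\cdot} \le d\norme[\infty]{\cdot}$. For~\eqref{eqn : mon/HW_is_free/approx_path2}, fix a coordinate $i$ and two consecutive update times $s < s'$ of coordinate $i$. Since $\alpha_{\bj(\cdot),i}$ is constant on $(s,s')$, the invariant gives $\tilde\sigma_i(s) = \alpha_{\bj(s),i}$ just after the update at $s$ and $|\tilde\sigma_i(s') - \alpha_{\bj(s),i}| = 1$ just before the update at $s'$; hence $|\tilde\sigma_i(s') - \tilde\sigma_i(s)| = 1$ and the total variation of $\tilde\sigma_i$ on $[s,s']$ is at least $1$. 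As $\sigma$ is Lipschitz (Lemma~\ref{lem : limit_space/geodesics/localisation}), this total variation equals $\lambda\int_s^{s'}|\sigma_i'(u)|\d u$, so the number $n_i$ of coordinate-$i$ updates in $(t_1, t_2]$ satisfies $n_i \le \lambda\int_{t_1}^{t_2}|\sigma_i'(u)|\d u + 1$. Summing over $i$ gives $\bj(t_2) - \bj(t_1) = \sum_i n_i \le \lambda\int_{t_1}^{t_2}\norme{\sigma'(u)}\d u + d$, which is~\eqref{eqn : mon/HW_is_free/approx_path2}; applying this with $t_1 = 0$, $t_2 = D(x,y)$ simultaneously shows that the construction terminates in finitely many steps.

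\emph{Main obstacle.} The delicate point will be the bookkeeping around simultaneous threshold hits: they must be processed in series in such a way that $\bj$ remains surjective and right-continuous without breaking~\eqref{eqn : mon/HW_is_free/approx_path1}. The slack $+1$ in the $\ell^1$-bound and $+d$ in the step-count bound are precisely what accommodate the short spreading intervals and the partial contributions at the endpoints $t_1, t_2$; apart from this, the proof reduces to the hysteresis-based update rule combined with the standard identity $\mathrm{TV}(\tilde\sigma_i) = \lambda\int|\sigma_i'|$ for Lipschitz paths.
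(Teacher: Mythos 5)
Your construction is the same hysteresis idea as the paper's: tracking $\lambda\sigma$ by a lattice walk that updates a coordinate only once its target has drifted a full unit away, so that the step count is bounded by the total variation. The paper phrases it coordinate-by-coordinate (``last integer visited by $\lambda\sigma_i + z_i$''), which is functionally the same as your $\ell^\infty$-cube-escape rule, and the argument for \eqref{eqn : mon/HW_is_free/approx_path2} is the same total-variation computation. The genuine difference is how the two proofs treat simultaneous threshold hits, which would otherwise make the jump sequence fail to be a discrete path. You propose to spread a cascade of simultaneous updates over a short time interval and absorb the drift in the $+1$ and $+d$ slacks; this is plausible but, as you flag yourself, delicate — and one step of your verification of \eqref{eqn : mon/HW_is_free/approx_path2} degrades under it: the equality $\tilde\sigma_i(s)=\alpha_{\bj(s),i}$ holds exactly at the \emph{genuine} hit time $\tau$, not at an artificial spread time $s=\tau+j\epsilon$, so the ``one unit of variation per coordinate-$i$ update'' bookkeeping must be referenced to the genuine hits (where consecutive coordinate-$i$ updates correspond to $\tilde\sigma_i$ visiting consecutive integers) rather than the spread times. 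The paper sidesteps all of this by a preliminary generic translation $z\in\ball{0,1}$: the set of $z$ for which two coordinates of $\lambda\sigma(t)+z$ are simultaneously integers for some $t$ is Lebesgue-null (each two-coordinate projection of $\lambda\sigma$ is Lipschitz and so has measure-zero image), so for a good $z$ the jump instants are automatically isolated and no cascade ever occurs; the same translation neatly explains the $+1$ slack in \eqref{eqn : mon/HW_is_free/approx_path1} via $\norme{\hat\sigma(t)-\lambda\sigma(t)} < d + \norme{z} < d+1$. Both routes work; the paper's avoids the cascade bookkeeping entirely, whereas yours is more elementary but requires the care you identified.
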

Lemma~\ref{lem : limit_space/gradient/Rademacher_kindof} guarantees that the integral in~\eqref{eqn : mon/HW_is_free/approx_path2} is well-defined. Note that $\alpha$ needs not to be included in $\lambda X$. The inequality~\eqref{eqn : mon/HW_is_free/approx_path2}, along with~\eqref{eqn : intro/main_thm/equivalence_distances}, implies
\begin{align}
	\label{eqn : mon/HW_is_free/approx_path_csq1}
	\bj(t_2)-\bj(t_1) &\le  \frac{ \lambda }{a}(t_2-t_1) +d.
	\intertext{In particular,}
	\label{eqn : mon/HW_is_free/approx_path_csq2}
	p &\le \frac{ \lambda b}{a}\diam(X) +d.
\end{align}
\begin{proof}
	For all $0\le t \le D(x,y)$, we write ${\sigma(t)\dpe \p{\sigma_1(t), \sigma_2(t),\dots, \sigma_d(t)} }$. The idea of the approximation is to replace $\lambda \sigma_i(t)$ by the last visited integer. On instants $t$ such that several coordinates of $\sigma(t)$ are integer, this approximation may have a jump between two non adjacent vertices of $\Z^d$. Since we aim to build a discrete path, the existence of such $t$ causes a minor problem. Applying a small translation neutralizes this obstacle.

	We claim that there exists $z\in \ball{0,1}$ such that for all $0\le t \le D(x,y)$,
	\begin{equation}
	\label{eqn : mon/HW_is_free/approx_path/translation}
		\#\acc{i\in\intint1d \Bigm| \lambda\sigma_i(t)+z_i\in \Z  }\le 1.
	\end{equation}
	Indeed, let $1\le i_1 < i_2 \le d$ and $v \in \Z^2$. Consider
	\begin{align*}
		A(i_1, i_2,v) &\dpe \acc{z\in \ball{0,1} \Bigm| \exists 0\le t \le D(x,y), \quad \lambda\sigma_{i_1}(t)+z_{i_1}=v_1\text{ and } \lambda\sigma_{i_2}(t)+z_{i_2}= v_2 }\\
		&\subseteq \acc{v - \p{ \lambda \sigma_{i_1}(t), \lambda \sigma_{i_2}(t)}, \quad 0\le t \le D(x,y) }.		
	\end{align*}
	Besides, $t\mapsto \p{ \lambda \sigma_{i_1}(t), \lambda \sigma_{i_2}(t)}$ is Lipschitz therefore its image has Lebesgue measure zero (by Lipschitz property it may be covered by a family of $K$ balls of radius $\grando(1/K)$). Hence the set $\displaystyle \bigcup_{\substack{2\le i_1<i_2\le d \\ v\in \Z^2} } A(i_1, i_2,v)$ of points $z\in \ball{0,1}$ such that \eqref{eqn : mon/HW_is_free/approx_path/translation} fails has measure zero.

	Consider the map
	\begin{align*}
		\hat \sigma : \intervalleff{0}{D(x,y)} &\longrightarrow \Z^d\\
		t &\longmapsto \p{\hat \sigma_1(t), \dots, \hat \sigma_d(t)},
	\end{align*}
	where
	\begin{equation*}
		\hat\sigma_i(t) =%
			\begin{cases}
				\floor{\lambda \sigma_i(0) + z_i} %
					&\text{ if } \p{\lambda\sigma_i\p{\intervalleff{0}{t}} + z_i } \cap \Z = \emptyset,\\
				\lambda \sigma_i\p{\sup\acc{s \in \intervalleff0t \mid \lambda \sigma_i(s) + z_i \in \Z } e} +z_i%
					&\text{ otherwise.}
			\end{cases}
	\end{equation*}
	In other words, up to a translation, $\hat\sigma_i(t)$ is an approximation of $\lambda \sigma(t)$ by the last visited integer, except before the first visit. The map $\hat\sigma \dpe (\hat\sigma_1,\dots, \hat\sigma_d)$ is right-continuous. Moreover, \eqref{eqn : mon/HW_is_free/approx_path/translation} implies that its jumps have norm $1$. Let $p$ be the total number of jumps of $\hat\sigma$ and  $\bj(t)$ be the number of jumps before the instant $t$. It is straightforward to see that $\bj$ is nondecreasing, right-continuous and surjective on $\intint0p$. For all $j\in \intint0p$, define
	\begin{equation*}
	 	\alpha_j \dpe \hat \sigma(t_j),
	\end{equation*}
	where $t_j$ is any element of $\bj^{-1}\p{j}$ (e.g. the instant of the $j$\textsuperscript{th} jump). The inequality~\eqref{eqn : mon/HW_is_free/approx_path1} is clear.

	Let $1\le i \le d$. For all pairs $u_1 <u_2$ of consecutive jump instants of $\hat\sigma_i$, Lemma~\ref{lem : limit_space/gradient/Rademacher_kindof} yields
	\begin{align*}
      1  &= \module{\hat \sigma_i(u_1) - \hat \sigma_i(u_2) } \\
      &= \lambda \module{\sigma_i(u_1)  - \sigma_i(u_2)} \\
      &= \lambda \module{ \int_{u_1}^{u_2}\sigma_i'(u) \d u }\\
      &\le \lambda \int_{u_1}^{u_2}\module{\sigma_i'(u)} \d u.
    \end{align*}
    Thus, for all $0< t_1 < t_2 \le D(x,y)$, by denoting $t_1 \le u_1 < \dots u_r \le t_2$ the jump instants of $\hat \sigma_i$ in $\intervalleff{t_1}{t_2}$, we have
    \begin{align*}
      \# \acc{ u \in\intervalleff{t_1}{t_2} \mid \hat \sigma_i(u^-)\neq \hat \sigma_i(u)   } &= r-1 +1 \\%
      	&\le \lambda \int_{u_1}^{u_r} \module{\sigma_{i}'(u)} \d u +1\\
      	&\le \lambda\int_{t_1}^{t_2} \module{\sigma_{i}'(u)} \d u +1.
    \end{align*}
    Summing over $i$ we obtain~\eqref{eqn : mon/HW_is_free/approx_path2}.
\end{proof}
Let $z_0 \in \mathring X$, $n\ge \frac1{\delta^3}$ and $(\alpha, \bj)$ the pair given by Lemma~\ref{lem : mon/HW_is_free/approx_path} with parameters $\sigma=\sigma_1$, $x=x_1$, $y=y_1$ and $\lambda = n(1-\delta^2)$. For all $0\le j \le p$, define
\begin{equation*}
 	\hal_j \dpe \alpha_j + \floor{n\delta^2 z_0}.
\end{equation*} 
Since $\alpha$ is a discrete path, $\hal$ is also a discrete path. Lemma~\ref{lem : windows/safety_strip} implies the existence a constant $\Cl{SCALING_X}>0$, depending only on $X$ and $z_0$, such that
\begin{align*}
	\d \p{ (1-\delta^2)X + \delta^2 z_0, \R^d \setminus X} &\ge \Cr{SCALING_X}\delta^2,
	\intertext{therefore}
	\d \p{ n(1-\delta^2)X + n\delta^2 z_0, \R^d \setminus nX} &\ge \Cr{SCALING_X}n\delta^2.
\end{align*}
In particular $\hal \subseteq nX$ for large enough $n$. Consider independent random variables $\EPTB$ and $\EPTG$ (B for Box and G for Geodesic) following the distribution $\nu^{\otimes \bbE^d}$, along with a i.i.d. family $(X_e)_{e\in \Pathedges\hal}$ of Bernoulli variables with parameter $1/2$. We assume $(X_e)_{e\in \Pathedges\hal}$ to be independent of $(\EPTB, \EPTG)$. For all $e\in \bbE^d$, we define
\begin{equation}
	\tau_e \dpe%
	\begin{cases}
     	X_e\EPTG[e] + (1-X_e)\EPTB[e]  &\text{ if } e\in \Pathedges{\hal},\\
     	\EPTB[e]                     &\text{ otherwise.}
   \end{cases}
\end{equation}
The family $\EPT = \p{\EPT[e]}_{e\in \bbE^d}$ also follows the distribution $\nu^{\otimes \bbE^d}$. The core of the proof is Lemma~\ref{lem : mon/HW_is_free/modification}.
\begin{Lemma}
\label{lem : mon/HW_is_free/modification}
	There exists a constant $\Cl{MODIFCATION_GEO}$, depending only on $a,b$ and $X$ such that for all large enough $n$, there exists a random $\EPTB$-measurable set $\FastE \subseteq \Pathedges\hal$ such that
	\begin{equation}
	\label{eqn : mon/HW_is_free/modification}
    	\acc{\EPTB \in \LD_{n,X}(D_0,\delta^2)}%
    		\cap \p{ \bigcap_{e\in\Pathedges\hal} \!\!\!\acc{X_e =\ind{\FastE}(e) } }%
    		\cap \p{ \bigcap_{e \in \Pathedges\hal} \!\!\acc{\EPTG[e] \le a+\delta} } %
    		\subseteq \acc{\tau \in \LD_{n,X}(D_1, \Cr{MODIFCATION_GEO}\delta )}.
  \end{equation}
\end{Lemma}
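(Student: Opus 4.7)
I will construct $\FastE \subseteq \Pathedges{\hal}$ measurably in $\EPTB$ so that traversing $\hal$ under $\tau$ takes time close to $nD(x_1, y_1)$, matching the highway-use term in $D_1$. Both bounds in $\LD_{n,X}(D_1, \Cr{MODIFCATION_GEO}\delta)$ will then follow by decomposing any $\tau$-path into at most three pieces: an off-highway prefix, an $\hal$-segment, and an off-highway suffix, where the off-highway pieces are controlled by the hypothesis $\EPTB \in \LD_{n,X}(D_0, \delta^2)$.

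\textbf{Construction of $\FastE$.} Let $S_j$ denote the cumulative $\tau$-cost along $\hal$ up to vertex $\hal_j$, and set the target $T_j \dpe n\bj^{-1}(j)$, so that $T_p = nD(x_1, y_1)$. I process $e_0, \ldots, e_{p-1}$ in order and declare $e_j \in \FastE$ iff $S_j + \EPTB[e_j] > T_{j+1}$. The target increments $\Delta T_j \dpe T_{j+1} - T_j$ lie in $\intervalleff{a(1-\delta^2)^{-1}}{b(1-\delta^2)^{-1}}$ because $\sigma_1$ is parameterized by $D$-length and hence satisfies $\norme{\sigma_1'(t)} \in \intervalleff{1/b}{1/a}$ almost everywhere (from $a\norme{\cdot} \le D \le b\norme{\cdot}$), so the time between consecutive $\bj$-jumps equals $1/(\lambda\norme{\sigma_1'(t)})\in \intervalleff{a/\lambda}{b/\lambda}$. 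Combining this with the per-step $\tau$-increment (which lies in $\intervalleff{a}{a+\delta}$ if fast and $\intervalleff{a}{b}$ if slow), a telescoping analysis of the greedy rule gives $|S_j - T_j| = \grando(1)$ uniformly in $j$, hence $S_{j_2} - S_{j_1} = n\p{\bj^{-1}(j_2) - \bj^{-1}(j_1)} + \grando(1)$ for every $j_1 < j_2$.

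\textbf{Upper bound.} Fix $x, y \in X$. If $D_1(x,y) = D_0(x,y)$, take an $\EPTB$-geodesic $\pi$ from $nx$ to $ny$ and apply the pointwise inequality $\EPT[e] \le \EPTB[e] + \delta$ on $\FastE$ (from $\EPTG[e] \le a+\delta$ and $\EPTB[e] \ge a$) together with $\module{\pi} = \grando(n)$ from Lemma~\ref{lem : limit_space/geodesics/localisation}, yielding $\PathPT{\pi} \le nD_0(x,y) + \grando(n\delta)$. When $D_1(x,y)$ is attained at some $x', y' \in \sigma_1$ with $D$-parameters $t', t''$, I take the concatenation $nx \rightsquigarrow \hal_{\bj(t')} \rightsquigarrow \hal_{\bj(t'')} \rightsquigarrow ny$, whose outer legs are $\EPTB$-geodesics and whose middle leg is $\hal$ restricted to $\intint{\bj(t')}{\bj(t'')}$. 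Its $\tau$-cost is bounded by $nD_0(x, x') + nD(x', y') + nD_0(y', y) + \grando(n\delta)$ via the greedy bound $S_{\bj(t'')} - S_{\bj(t')} = nD(x', y') + \grando(1)$ for the middle leg together with the estimate $\hal_{\bj(t)}/n = (1-\delta^2)\sigma_1(t) + \delta^2 z_0 + \grando(1/n)$ and the Lipschitz continuity of $D_0$ for the outer legs.

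\textbf{Lower bound, and main obstacle.} Let $\pi$ be a $\tau$-path from $nx$ to $ny$. If $\pi$ avoids $\Pathedges{\hal}$, then $\PathPT{\pi}$ equals the $\EPTB$-cost of $\pi$ and is $\ge nD_0(x,y) - n\delta^2 \ge nD_1(x,y) - n\delta^2$. Otherwise, with $j^* \dpe \min\acc{j : \hal_j \in \pi}$ and $j^{**} \dpe \max\acc{j : \hal_j \in \pi}$, I split $\pi = \pi_1 \cdot \pi_2 \cdot \pi_3$ so that $\pi_1$ and $\pi_3$ avoid $\Pathedges{\hal}$; these pieces contribute $\PathPT{\pi_1} \ge nD_0(x, \hal_{j^*}/n) - \grando(n\delta^2)$ and $\PathPT{\pi_3} \ge nD_0(\hal_{j^{**}}/n, y) - \grando(n\delta^2)$. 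The delicate step is $\PathPT{\pi_2} \ge nD(\hal_{j^*}/n, \hal_{j^{**}}/n) - \grando(n\delta)$: I combine the edgewise comparison $\PathPT{\pi_2} \ge (\EPTB\text{-cost of }\pi_2) - (b-a)\module{\pi_2 \cap \FastE}$ with the greedy tracking, splitting $\pi_2$ into maximal $\hal$-portions and off-highway excursions. On each $\hal$-portion, the local greedy bound $S_{j_2} - S_{j_1} \ge n\p{\bj^{-1}(j_2) - \bj^{-1}(j_1)} - \grando(1)$ supplies the target $D$-rate; on each off-highway excursion between $\hal_{i_1}$ and $\hal_{i_2}$, the $\EPTB$-cost is at least $nD_0(\hal_{i_1}/n, \hal_{i_2}/n) - \grando(n\delta^2) \ge nD(\hal_{i_1}/n, \hal_{i_2}/n) - \grando(n\delta^2)$ thanks to $D_0 \ge D$. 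The main obstacle is ensuring that these local pieces telescope without losing more than $\grando(\delta)$, which is exactly what the uniform-in-$j$ greedy tracking guarantees. Summing over $\pi_1, \pi_2, \pi_3$ and applying the definition of $D_1$ at $x' = \hal_{j^*}/n$, $y' = \hal_{j^{**}}/n$ concludes the lower bound.
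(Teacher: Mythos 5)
Your construction of $\FastE$ by a greedy per-edge rule is a genuinely different strategy from the paper's, which builds $\FastE$ as a union of $\delta^2$-long blocks (in $D$-time) with adaptive skips in between. Unfortunately, the greedy version as you present it has several gaps, one of which is serious.

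First, a measurability issue: you define $S_j$ as the cumulative $\tau$-cost along $\hal$, but $\tau$ depends on $(X_e)$ and $\EPTG$, so the resulting $\FastE$ is not $\EPTB$-measurable as required. This is fixable (run the recursion with the deterministic proxy $a$ on fast edges), but you need to say so.

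Second, your bound $\Delta T_j \in \intervalleff{a(1-\delta^2)^{-1}}{b(1-\delta^2)^{-1}}$ is false. Consecutive jumps of $\hat\sigma$ in \emph{different} coordinates can occur arbitrarily close in time; the translation $z$ only rules out simultaneous jumps. What Lemma~\ref{lem : mon/HW_is_free/approx_path} actually gives is the windowed inequality $\bj(t_2)-\bj(t_1) \le \lambda\int_{t_1}^{t_2}\norme{\sigma_1'}+d$, whence over any $k$ consecutive target increments $\sum \Delta T_j \ge a(k-d)(1-\delta^2)^{-1}$, but individual $\Delta T_j$ may be close to $0$. The claim $\module{S_j - T_j} = \grando(1)$ is therefore not established by the telescoping you describe; the undershoot is controlled only through the $\LD_{n,X}(D_0,\delta^2)$ hypothesis applied along $\hal$, giving a weaker $\grando(n\delta^2)$.

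Third, and most seriously: in your lower bound you split $\pi_2$ into maximal $\hal$-portions and off-highway excursions and bound each piece separately — each off-highway excursion from $\hal_{i_1}$ to $\hal_{i_2}$ by $nD_0(\hal_{i_1}/n,\hal_{i_2}/n)-\grando(n\delta^2)$, each $\hal$-portion by the greedy tracking with error $\grando(n\delta^2)$. The per-piece errors do not vanish, and the number $N$ of pieces (i.e.\ of times $\pi_2$ crosses $\Pathedges{\hal}$) is not bounded — it can be of order $n$. Summing gives a total error of order $Nn\delta^2$, which for $N = \Theta(n)$ is $n^2\delta^2 \gg n\delta$. Asserting that "greedy tracking guarantees" this telescopes to $\grando(n\delta)$ does not close the gap, since the undershoot bound applies per piece, not globally. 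This is exactly why the paper does \emph{not} decompose $\Gam$ into excursions: instead it establishes in Step~2 a uniform \emph{metric} estimate~\eqref{eqn : mon/HW_is_free/encadrement_jalons_final} between any two milestones, via a recursion that reduces the $D$-time by at least $\delta^2$ at each step and therefore terminates after at most $K\le D(x_1,y_1)/\delta^2$ iterations, accumulating error $\grando(K\delta^3)=\grando(\delta)$. Your argument has no analogous control on the number of iterations, and I do not see how to obtain it from the greedy rule alone.
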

Indeed, \eqref{eqn : mon/HW_is_free/approx_path_csq2} yields
\begin{align}
	\# \Pathedges\hal = p &\le \frac{ n(1-\delta^2) b}{a}\diam(X) +d\eol
		&\le \frac{ n b}{a}\diam(X) +d \eol
		&\le \Cl{EdgesHal} n,
\end{align}
where $\Cr{EdgesHal}$ only depends on $a,b$ and $\diam(X)$. Consequently,
\begin{align}
	\Pb{ \bigcap_{e \in \Pathedges\hal} \acc{\EPTG \le a+\delta} } &\ge \nu\p{\intervalleff{a}{a+\delta}}^{\Cr{EdgesHal}n} \eol
	\text{and a.s,}\quad%
	\Pbcond{ \bigcap_{e\in\Pathedges\hal} \acc{X_e =\ind{\FastE}(e) } \, }{\, \EPTB} &= \p{\frac12}^{\# \Pathedges\hal}\nonumber \\
		&\ge \p{\frac12}^{\Cr{EdgesHal}n}\notag.
\end{align}
Thus taking probabilities on both sides of \eqref{eqn : mon/HW_is_free/modification} leads to
\begin{equation*}
	\Pb{\LD_{n,X}(D_0,\delta^2)}\cdot \p{ \frac{\nu\p{\intervalleff{a}{a+\delta}}}2 }^{\Cr{EdgesHal}n} \le \Pb{ \LD_{n,X}(D_1, \Cr{MODIFCATION_GEO}\delta )}.
\end{equation*}
As $\LD_{n,X}(D_1, \eps)$ is nondecreasing in $\eps$,  we have proven~\eqref{eqn : mon/HW_method/HW_is_free} with $\Cr{HW} \dpe \Cr{MODIFCATION_GEO}\vee\Cr{EdgesHal}$. This concludes the proof of Lemma~\ref{lem : mon/HW_method/HW_is_free}. \qed

\begin{proof}[Proof of Lemma~\ref{lem : mon/HW_is_free/modification} ]
	Let us consider a realization of the event $\acc{\EPTB \in \LD_{n,X}(D_0,\delta^2)}$. The general idea behind the construction of $\FastE$ is to set the passage time of all edges along $\hal$ between $\hal_{\bj(0)}$ to $\hal_{\bj(\delta^2)}$ to $a$, choose $t_1\ge \delta^2$ adequately, set the passage time of all edges along $\hal$ between $\hal_{\bj(t_1)}$ to $\hal_{\bj(t_1+\delta^2)}$ to $a$ and so on. The $t_k$ are chosen as small as possible, but such that the modified random metric never gets too far below the target time. We will denote by $\FastE$ the set of modified edges (see Figure~\ref{fig : monotonicity/HW_is_free/modification/GeneralPlan}).
	\begin{figure}
	\center
	\def\svgwidth{0.6\textwidth}
	\begingroup%
	  \makeatletter%
	  \providecommand\color[2][]{%
	    \errmessage{(Inkscape) Color is used for the text in Inkscape, but the package 'color.sty' is not loaded}%
	    \renewcommand\color[2][]{}%
	  }%
	  \providecommand\transparent[1]{%
	    \errmessage{(Inkscape) Transparency is used (non-zero) for the text in Inkscape, but the package 'transparent.sty' is not loaded}%
	    \renewcommand\transparent[1]{}%
	  }%
	  \providecommand\rotatebox[2]{#2}%
	  \newcommand*\fsize{\dimexpr\f@size pt\relax}%
	  \newcommand*\lineheight[1]{\fontsize{\fsize}{#1\fsize}\selectfont}%
	  \ifx\svgwidth\undefined%
	    \setlength{\unitlength}{426.61417323bp}%
	    \ifx\svgscale\undefined%
	      \relax%
	    \else%
	      \setlength{\unitlength}{\unitlength * \real{\svgscale}}%
	    \fi%
	  \else%
	    \setlength{\unitlength}{\svgwidth}%
	  \fi%
	  \global\let\svgwidth\undefined%
	  \global\let\svgscale\undefined%
	  \makeatother%
	  \begin{picture}(1,0.53488377)%
	    \lineheight{1}%
	    \setlength\tabcolsep{0pt}%
	    \put(0,0){\includegraphics[width=\unitlength,page=1]{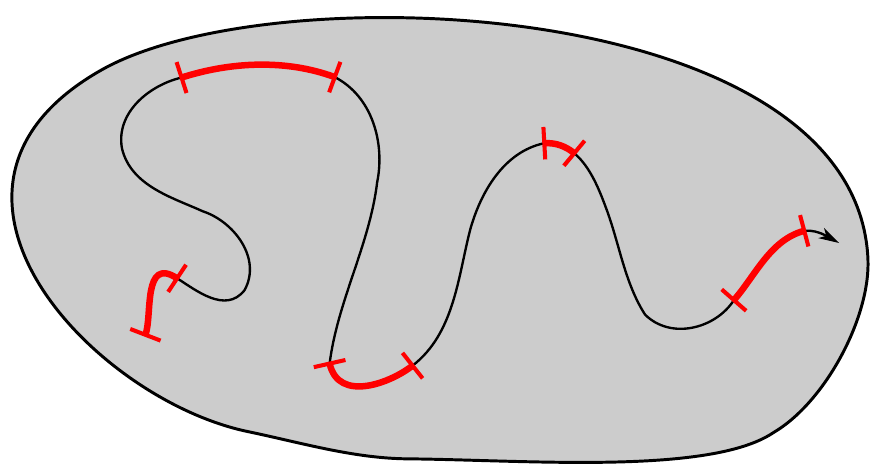}}%
	    \put(0.17994117,0.12802573){\color[rgb]{1,0,0}\makebox(0,0)[lt]{\lineheight{1.25}\smash{\begin{tabular}[t]{l}$\hal_{\bj(t_0)}$\end{tabular}}}}%
	    \put(0.1213999,0.25634968){\color[rgb]{1,0,0}\makebox(0,0)[lt]{\lineheight{1.25}\smash{\begin{tabular}[t]{l}$\hal_{\bj(t_0+ \delta^2)}$\end{tabular}}}}%
	    \put(0.17994117,0.41162911){\color[rgb]{1,0,0}\makebox(0,0)[lt]{\lineheight{1.25}\smash{\begin{tabular}[t]{l}$\hal_{\bj(t_1)}$\end{tabular}}}}%
	    \put(0.36689199,0.47803874){\color[rgb]{1,0,0}\makebox(0,0)[lt]{\lineheight{1.25}\smash{\begin{tabular}[t]{l}$\hal_{\bj(t_1+\delta^2)}$\end{tabular}}}}%
	    \put(0.32149587,0.0695622){\color[rgb]{1,0,0}\makebox(0,0)[lt]{\lineheight{1.25}\smash{\begin{tabular}[t]{l}$\hal_{\bj(t_2)}$\end{tabular}}}}%
	    \put(0.48560335,0.11959345){\color[rgb]{1,0,0}\makebox(0,0)[lt]{\lineheight{1.25}\smash{\begin{tabular}[t]{l}$\hal_{\bj(t_2+\delta^2)}$\end{tabular}}}}%
	    \put(0.54990278,0.40991467){\color[rgb]{1,0,0}\makebox(0,0)[lt]{\lineheight{1.25}\smash{\begin{tabular}[t]{l}$\hal_{\bj(t_3)}$\end{tabular}}}}%
	    \put(0.67370316,0.35807713){\color[rgb]{1,0,0}\makebox(0,0)[lt]{\lineheight{1.25}\smash{\begin{tabular}[t]{l}$\hal_{\bj(t_3+\delta^2)}$\end{tabular}}}}%
	    \put(0.81890373,0.15903731){\color[rgb]{1,0,0}\makebox(0,0)[lt]{\lineheight{1.25}\smash{\begin{tabular}[t]{l}$\hal_{\bj(t_4)}$\end{tabular}}}}%
	    \put(0.88915418,0.23907521){\color[rgb]{1,0,0}\makebox(0,0)[lt]{\lineheight{1.25}\smash{\begin{tabular}[t]{l}$\hal_{\bj(t_4 + \delta^2)}$\end{tabular}}}}%
	    \put(0.09670034,0.37594568){\color[rgb]{0,0,0}\makebox(0,0)[lt]{\lineheight{1.25}\smash{\begin{tabular}[t]{l}$\alpha$\end{tabular}}}}%
	    \put(0.73802818,0.05402654){\color[rgb]{0,0,0}\makebox(0,0)[lt]{\lineheight{1.25}\smash{\begin{tabular}[t]{l}$nX$\end{tabular}}}}%
	  \end{picture}%
	\endgroup%
	\caption{Edges in $\FastE$ are represented by the thick red lines.}
	\label{fig : monotonicity/HW_is_free/modification/GeneralPlan}
	\end{figure}

	\paragraph{Preliminary step: $\hal$ well approaches $\sigma_1$.} There exists a constant $\Cl{hal_sigma}$, depending only on $X$ and $z_0$, such that for large enough $n$, for all $0\le t \le D(x,y)$,
	\begin{equation}
		\label{eqn : mon/HW_is_free/approx_hal}
		\norme{\frac{\hal_{\bj(t)}}{n} - \sigma_1(t) } \le \Cr{hal_sigma}\delta^2.
	\end{equation}
	Indeed, for all $0\le t \le D(x,y)$, triangle inequality and~\eqref{eqn : mon/HW_is_free/approx_path1} yield
	\begin{align}
		\norme{\frac{\hal_{\bj(t)}}{n} - \sigma_1(t) } %
			&= \frac1n \norme{\hal_{\bj(t)} - n\sigma_1(t)} \eol
			&\le \frac 1n\cro{%
				\norme{\hal_{\bj(t)} - \alpha_{\bj(t)}} %
				+ \norme{\alpha_{\bj(t)} - n(1-\delta^2)\sigma_1(t)} %
				+ \norme{n(1-\delta^2)\sigma_1(t) - n\sigma_1(t)} %
				}\eol
			&\le \frac 1n\cro{%
				\p{ n\delta^2\norme{z_0}+1 } %
				+ \p{d+1} %
				+ {n\delta^2\norme{ \sigma_1(t)}} %
				}.\eol
		\intertext{Thus for large enough $n$, for all $0\le t \le D(x,y)$,}
		\norme{\frac{\hal_{\bj(t)}}{n} - \sigma_1(t) } &\le \p{2\norme{z_0} + \diam(X) + 1 }\delta^2,\notag
	\end{align}
	hence~\eqref{eqn : mon/HW_is_free/approx_hal}. We define
	\begin{equation}
		\Cl{TFC} \dpe  2b\Cr{hal_sigma} + 2 + \frac{b}{a} .
	\end{equation}
	\paragraph{Step 1: Building $\FastE$.} We claim that there exists a $\EPTB$-measurable finite sequence $(0=t_0<t_1 <\dots <t_K\le D(x,y))$ such that $D(x,y) - t_K \le \Cr{TFC}\delta$, and 
	\begin{alignat}{2}
		\label{eqn : mon/HW_is_free/rec1}
		  &\forall 1 \le k \le K,\qquad  && %
		  \delta^2 \le t_{k}-t_{k-1} \le \Cr{TFC}\delta, \\
		\label{eqn : mon/HW_is_free/rec2}
		  &\forall 1 \le k \le K,\quad   \forall t\ge  t_{k},\qquad  &&%
		   \BoxSPT^{(k)}\p{ \frac{\hal_{\bj(t_{k-1})}}{n}, \frac{\hal_{\bj(t)}}{n} } \ge (t-t_{k-1})(1-\delta),\\
		\label{eqn : mon/HW_is_free/rec3}
		  &\forall 1 \le k \le K,\qquad && %
		  \BoxSPT^{(k)}\p{ \frac{\hal_{\bj(t_{k-1})}}{n}, \frac{\hal_{\bj(t_{k})}}{n} } \le t_{k} - t_{k-1},
    \end{alignat}
	where
  	\begin{equation}
  		\EPT[e]^{(k)} \dpe 
	    \begin{cases}
	        a &\text{ if } e \in \displaystyle \bigcup_{l=0}^{k-1}\Pathedges{\restriction\hal{\intervalleff{\bj(t_{l})}{\bj(t_{l}+\delta^2) } } } \\
	        \EPTB[e] &\text{ otherwise.}    
	    \end{cases}
  	\end{equation}
  	Indeed let us assume that $t_0,\dots, t_q$ have already been defined and satisfy~\eqref{eqn : mon/HW_is_free/rec1}, \eqref{eqn : mon/HW_is_free/rec2} and \eqref{eqn : mon/HW_is_free/rec3} for all $1\le k \le q$. Notice that if $q=0$, we do not assume anything. If $D(x,y)-t_q \le \Cr{TFC}\delta$ then the construction is over with $K\dpe q$. Otherwise two cases need to be considered: either the latest modification has made the passage time far below the target time, and we need to skip an appropriate number of edges along $\hal$ before the next modification, or the passage time is not far below the target time, and we resume modifying the edge passage times without any skip. 

  	\emph{Case 1: there exists $t\ge t_q + \delta^2$ such that}
  	\begin{equation}
  	\label{eqn : mon/HW_is_free/condition_choix_jalon}
  		\BoxSPT^{(q+1)}\p{\frac{\hal_{\bj(t_q)}}{n} , \frac{\hal_{\bj(t)}}{n}} < (t-t_q)(1-\delta).
  	\end{equation}

  	Let
  	\begin{equation*}
  		t_{q+1} \dpe \sup \acc{t\ge t_q + \delta^2 \Biggm| \BoxSPT^{(q+1)}\p{\frac{\hal_{\bj(t_q)}}{n} , \frac{\hal_{\bj(t)}}{n}} < (t-t_q)(1-\delta) }.
  	\end{equation*}
  	We first show that~\eqref{eqn : mon/HW_is_free/rec1} is true for $k=q+1$, i.e. $t_{q+1} \le t_q + \Cr{TFC}\delta < D(x,y)$. Indeed let $t>t_q + \Cr{TFC}\delta$. Consider a $\BoxSPT^{(q+1)}$-geodesic $\frac{\hal_{\bj(t_q)}}{n} \Path{\gamma}\frac{\hal_{\bj(t)}}{n}$. Let $z$ its last point belonging to $\displaystyle\frac1n \bigcup_{e\in \Pathedges{\restriction\hal{ \intervalleff{0}{\bj(t_q)} } } } e$, with the convention on edges given in Subsection~\ref{subsec : intro/notations} (see Figure~\ref{fig : mon/HW_is_free/BuildingEfastCase1}). There exists $0\le s \le t_q$ such that ${z\in \intervalleff{\frac{\hal_{\bj(s)}}{n}}{\frac{\hal_{\bj(s)+1}}{n}} }$.
  	\begin{figure}
  	\def\svgwidth{0.95\textwidth}
	\begingroup%
	  \makeatletter%
	  \providecommand\color[2][]{%
	    \errmessage{(Inkscape) Color is used for the text in Inkscape, but the package 'color.sty' is not loaded}%
	    \renewcommand\color[2][]{}%
	  }%
	  \providecommand\transparent[1]{%
	    \errmessage{(Inkscape) Transparency is used (non-zero) for the text in Inkscape, but the package 'transparent.sty' is not loaded}%
	    \renewcommand\transparent[1]{}%
	  }%
	  \providecommand\rotatebox[2]{#2}%
	  \newcommand*\fsize{\dimexpr\f@size pt\relax}%
	  \newcommand*\lineheight[1]{\fontsize{\fsize}{#1\fsize}\selectfont}%
	  \ifx\svgwidth\undefined%
	    \setlength{\unitlength}{341.95210134bp}%
	    \ifx\svgscale\undefined%
	      \relax%
	    \else%
	      \setlength{\unitlength}{\unitlength * \real{\svgscale}}%
	    \fi%
	  \else%
	    \setlength{\unitlength}{\svgwidth}%
	  \fi%
	  \global\let\svgwidth\undefined%
	  \global\let\svgscale\undefined%
	  \makeatother%
	  \begin{picture}(1,0.20084103)%
	    \lineheight{1}%
	    \setlength\tabcolsep{0pt}%
	    \put(0,0){\includegraphics[width=\unitlength,page=1]{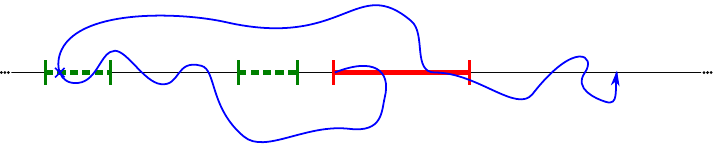}}%
	    \put(0.09252908,0.11383934){\color[rgb]{0,0,1}\makebox(0,0)[lt]{\lineheight{1.25}\smash{\begin{tabular}[t]{l}$z$\end{tabular}}}}%
	    \put(0.35503426,0.19169338){\color[rgb]{0,0,1}\makebox(0,0)[lt]{\lineheight{1.25}\smash{\begin{tabular}[t]{l}$\gamma$\\\end{tabular}}}}%
	    \put(0.95336526,0.07291732){\color[rgb]{0,0,1}\makebox(0,0)[lt]{\lineheight{1.25}\smash{\begin{tabular}[t]{l}$\hal$\end{tabular}}}}%
	    \put(0.87038171,0.12989947){\color[rgb]{0,0,1}\makebox(0,0)[lt]{\lineheight{1.25}\smash{\begin{tabular}[t]{l}\textit{$\hal_{\bj(t) }$}\end{tabular}}}}%
	    \put(0.14496248,0.0607555){\color[rgb]{0,0.50196078,0}\makebox(0,0)[lt]{\lineheight{1.25}\smash{\begin{tabular}[t]{l}$\hal_{\bj(t_{q-2}+\delta^2) }$\end{tabular}}}}%
	    \put(0.32276124,0.0607555){\color[rgb]{0,0.50196078,0}\makebox(0,0)[lt]{\lineheight{1.25}\smash{\begin{tabular}[t]{l}$\hal_{\bj(t_{q-1}) }$\end{tabular}}}}%
	    \put(0.40507549,0.0607555){\color[rgb]{0,0.50196078,0}\makebox(0,0)[lt]{\lineheight{1.25}\smash{\begin{tabular}[t]{l}$\hal_{\bj(t_{q-1} + \delta^2 ) }$\end{tabular}}}}%
	    \put(0.05217187,0.0607555){\color[rgb]{0,0.50196078,0}\makebox(0,0)[lt]{\lineheight{1.25}\smash{\begin{tabular}[t]{l}$\hal_{\bj(t_{q-2}) }$\end{tabular}}}}%
	    \put(0.45835525,0.13678392){\color[rgb]{1,0,0}\makebox(0,0)[lt]{\lineheight{1.25}\smash{\begin{tabular}[t]{l}$\hal_{\bj(t_{q}) }$\end{tabular}}}}%
	    \put(0.64962362,0.13678392){\color[rgb]{1,0,0}\makebox(0,0)[lt]{\lineheight{1.25}\smash{\begin{tabular}[t]{l}$\hal_{\bj(t_{q}+\delta^2) }$\end{tabular}}}}%
	  \end{picture}%
	\endgroup%
  	\caption{For the sake of simplicity, the set $nX$ is not represented and $\hal$ is represented as a straight line. The newly modified edges are represented by the thick, red, solid line while the other modified edges are represented by the thick, green, dashed lines. Edges used by the portion of $\gamma$ (blue curved line) lying between $z$ and $\hal_{\bj(t)}$ are either unmodified or newly modified.}
  	\label{fig : mon/HW_is_free/BuildingEfastCase1}	
  	\end{figure}
  	Inequality~\eqref{eqn : mon/HW_is_free/approx_path_csq1} implies that for large enough $n$,
  	\begin{equation}
  		\bj(t_q + \delta^2) - \bj(t_q) \le \frac{n(1-\delta^2)\delta^2}{a} + d \le \frac{n\delta^2}{a},
  	\end{equation}
  	therefore the subpath of $\gamma$ whose endpoints are $z$ and $\frac{\hal_{\bj(t)}}{n}$ use at most $\frac{n\delta^2}{a}$ edges $e$ such that $\EPTB[e]\neq \EPT[e]^{(q+1)}$. Consequently, for large enough $n$,
  	\begin{align}
  		\BoxSPT^{(q+1)}\p{\frac{\hal_{\bj(t_q)}}{n} , \frac{\hal_{\bj(t)}}{n}} %
  			&\ge \BoxSPT^{(q+1)}\p{z, \frac{\hal_{\bj(t)}}{n}}\eol
  			&\ge \BoxSPTB\p{z, \frac{\hal_{\bj(t)}}{n}} - \frac{b\delta^2}{a}\eol
  			&\ge \BoxSPTB\p{\frac{\hal_{\bj(s)}}{n}, \frac{\hal_{\bj(t)}}{n}} - \frac bn - \frac{b\delta^2}{a}.
  		\label{eqn : mon/HW_is_free/borne_tp+1_1}
  		\intertext{Besides, since we work with a realization of $\acc{\EPTB \in \LD_{n,X}(D_0,\delta^2)}$ and $D_0 \ge D$, }
  		\BoxSPTB\p{\frac{\hal_{\bj(s)}}{n}, \frac{\hal_{\bj(t)}}{n}} %
  			&\ge  D\p{\frac{\hal_{\bj(s)}}{n}, \frac{\hal_{\bj(t)}}{n}} -\delta^2\notag.
  		\intertext{The triangle inequality then yields}
  		\BoxSPTB\p{\frac{\hal_{\bj(s)}}{n}, \frac{\hal_{\bj(t)}}{n}}%
  			&\ge D\p{\sigma_1(s), \sigma_1(t)} - D\p{\sigma_1(s), \frac{\hal_{\bj(s)}}{n}} - D\p{\sigma_1(t), \frac{\hal_{\bj(t)}}{n}} - \delta^2.\notag
  		\intertext{Applying~\eqref{eqn : mon/HW_is_free/approx_hal} and~\eqref{eqn : intro/main_thm/equivalence_distances} on the second and third terms we obtain}
  		\label{eqn : mon/HW_is_free/borne_tp+1_2}
  		\BoxSPTB\p{\frac{\hal_{\bj(s)}}{n}, \frac{\hal_{\bj(t)}}{n}} %
  			&\ge D\p{\sigma_1(s), \sigma_1(t)} - (2b\Cr{hal_sigma}+1)\delta^2.
  		\intertext{Combining~\eqref{eqn : mon/HW_is_free/borne_tp+1_1} and~\eqref{eqn : mon/HW_is_free/borne_tp+1_2} gives}
  		\BoxSPT^{(q+1)}\p{\frac{\hal_{\bj(t_q)}}{n} , \frac{\hal_{\bj(t)}}{n}}%
  			&\ge D\p{\sigma_1(s), \sigma_1(t)}- \frac bn - \frac{b\delta^2}{a} - (2b\Cr{hal_sigma}+1)\delta^2 \eol
  			&\ge D\p{\sigma_1(s), \sigma_1(t)} - \Cr{TFC}\delta^2\notag
  		\intertext{for large enough $n$. Moreover, $\sigma_1$ is a $D$-geodesic therefore $D\p{\sigma_1(s), \sigma_1(t)}=t-s$. Consequently,}
  		\BoxSPT^{(q+1)}\p{\frac{\hal_{\bj(t_q)}}{n} , \frac{\hal_{\bj(t)}}{n}}%
  			&\ge (t-t_q) - \Cr{TFC}\delta^2.\notag
  		\intertext{Hence}
  		\BoxSPT^{(q+1)}\p{\frac{\hal_{\bj(t_p)}}{n} , \frac{\hal_{\bj(t)}}{n}} - (t-t_q)(1-\delta)%
  			&\ge (t-t_q) - \Cr{TFC}\delta^2 - (t-t_q)(1-\delta)\eol
  			&\ge \delta\p{t-t_q - \Cr{TFC}\delta } \ge0.
  	\end{align}
  	In other words, for all $t\ge t_q + \Cr{TFC}\delta$, inequality~\eqref{eqn : mon/HW_is_free/condition_choix_jalon} fails, thus~\eqref{eqn : mon/HW_is_free/rec1} holds.

  	Inequality~\eqref{eqn : mon/HW_is_free/rec2} for $k=q+1$ is a direct consequence of $\bj$'s right-continuity and the fact that for all $t\in\intervalleof{t_{q+1}}{D(x,y)}$,
  	\begin{equation*}
  		\BoxSPT^{(q+1)}\p{\frac{\hal_{\bj(t_q)}}{n} , \frac{\hal_{\bj(t)}}{n}} \ge (t-t_q)(1-\delta).
  	\end{equation*} To show inequality~\eqref{eqn : mon/HW_is_free/rec3} for $k=q+1$, it suffices to notice that
  	\begin{align*}
  	 	\BoxSPT^{(q+1)}\p{\frac{\hal_{\bj(t_{q}) }}{n} , \frac{\hal_{\bj(t_{q+1}) }}{n}} %
  	 		&\le \BoxSPT^{(q+1)}\p{\frac{\hal_{\bj(t_{q}) }}{n} , \frac{\hal_{\bj(t_{q+1}^-) }}{n}} + \frac bn\\
  	 		&\le (t_{q+1} - t_q)(1-\delta) + \frac{b}{n} \\
  	 		&\le t_{q+1}-t_q
  	\end{align*}
  	for large enough $n$.

  	\emph{Case 2: for all $t\ge t_q + \delta^2$, inequality~\eqref{eqn : mon/HW_is_free/condition_choix_jalon} fails.} We define ${t_{q+1}\dpe t_q+ \delta^2}$. Only inequality~\eqref{eqn : mon/HW_is_free/rec3} is non trivial. Edges along $\hal\intervalleff{\bj(t_{q})}{\bj(t_{q+1})}$ have passage time $a$ in the configuration $\EPT^{(q+1)}$, therefore
  	\begin{align*}
  		\BoxSPT^{(q+1)}\p{ \frac{\hal_{\bj(t_{q})}}{n}, \frac{\hal_{\bj(t_{q+1})}}{n} }%
  			&\le \frac an\p{\bj(t_{q+1}) - \bj(t_{q})}.
  		\intertext{Applying~\eqref{eqn : mon/HW_is_free/approx_path_csq1} we get}
  		\BoxSPT^{(q+1)}\p{ \frac{\hal_{\bj(t_{q})}}{n}, \frac{\hal_{\bj(t_{q+1})}}{n} }%
  			&\le \frac an \cdot \p{ \frac{n(1-\delta^2)}a(t_{q+1} - t_q) +d } \\
  			&= (1-\delta^2)(t_{q+1} - t_q) + \frac{ad}{n}\\
  			&\le t_{q+1}-t_q
  	\end{align*}
  	for large enough $n$, thus~\eqref{eqn : mon/HW_is_free/rec3} is proved for $k=q+1$.

  	We define
  	\begin{equation}
  	\label{eqn : mon/HW_is_free/def_FastE}
  		\FastE \dpe \bigcup_{k=0}^{K-1} \Pathedges{\restriction\hal{\intervalleff{\bj(t_k)}{\bj(t_k + \delta^2)} } }.
  	\end{equation}
  	The integer $K$ is the number of steps in the construction. By the lower bound in~\eqref{eqn : mon/HW_is_free/rec1},
  	\begin{equation}
  		\label{eqn : mon/HW_is_free/UB_K}
  		K \le \frac{D(x_1, y_1)}{\delta^2} < \infty.
  	\end{equation}
  	\paragraph{Step 2: Estimating the passage time between two milestones.}

  	We claim that for all $0\le k_1 < k_2 \le K$,
  	\begin{equation}
  	\label{eqn : mon/HW_is_free/encadrement_jalons_final}
  		t_{k_2} - t_{k_1} - \Cl{LB_jalons}\delta%
  			\le \BoxSPT^{(K)}\p{ \frac{\hal_{\bj(t_{k_1})}}{n}, \frac{\hal_{\bj(t_{k_2})}}{n} }%
  			\le t_{k_2} - t_{k_1},
  	\end{equation}
  	where $\Cr{LB_jalons}$ is a constant depending only on $a,b$ and $X$.

  	Let $0\le k_1 < k_2 \le K$. The triangle inequality for $\BoxSPT^{(K)}$ and the inequality $\BoxSPT^{(K)} \le \BoxSPT^{(k)}$ for all $0\le k \le K$ yield
  	\begin{align}
  		\BoxSPT^{(K)}\p{ \frac{\hal_{\bj(t_{k_1})}}{n}, \frac{\hal_{\bj(t_{k_2})}}{n} }%
  			&\le \sum_{k=k_1 +1}^{k_2} \BoxSPT^{(K)}\p{ \frac{\hal_{\bj(t_{k-1})}}{n}, \frac{\hal_{\bj(t_{k})}}{n} },\eol
  			&\le \sum_{k=k_1 +1}^{k_2} \BoxSPT^{(k)}\p{ \frac{\hal_{\bj(t_{k-1})}}{n}, \frac{\hal_{\bj(t_{k})}}{n} }.\notag
  	\end{align}
  	Applying inequality~\eqref{eqn : mon/HW_is_free/rec3} gives the upper bound in~\eqref{eqn : mon/HW_is_free/encadrement_jalons_final}.

  	We now turn to the lower bound. Consider a $\BoxSPT^{(K)}$-geodesic $\frac{\hal_{\bj{(t_{k_1})} } }{n} \Path{\gamma} \frac{\hal_{\bj{(t_{k_2})} } }{n}$. Let $z$ denote the first point of $\gamma$ belonging to %
  	\[\frac1n\bigcup_{l=k_1+1}^{K-1} \Pathedges{\restriction\hal{\intervalleff{\bj(t_l)}{\bj(t_l + \delta^2)} }}. \]%
  	Note that $z$ always exists, since $\gamma$ takes the value $\frac{\hal_{\bj{(t_{k_2})} } }{n}$. There exists $t_{k_1 +1} \le t \le D(x_1,y_1)$ such that
  	\begin{equation*}
  		z\in \intervalleff{\frac{\hal_{\bj(t)}}{n} }{\frac{\hal_{\bj(t)+1}}{n}}.
  	\end{equation*}
  	The edges used by $\gamma$ before reaching $z$ have the same passage time in configurations $\EPT^{(k_1+1)}$ and $\EPT^{(K)}$, therefore
  	\begin{align}
  		\BoxSPT^{(K)}\p{\frac{\hal_{\bj{(t_{k_1})} } }{n} , z} %
  			&= \BoxSPT^{(k_1+1)}\p{\frac{\hal_{\bj{(t_{k_1})} } }{n} , z}.\notag
  		\intertext{Since $\norme{z- \frac{\hal_{\bj(t)}}{n}}\le \frac1n$, }
  		\BoxSPT^{(K)}\p{\frac{\hal_{\bj{(t_{k_1})} } }{n} , z}%
  			&\ge \BoxSPT^{(k_1+1)}\p{\frac{\hal_{\bj{(t_{k_1})} } }{n} , \frac{\hal_{\bj{(t)} } }{n}} - \frac bn.\notag
  		\intertext{Applying~\eqref{eqn : mon/HW_is_free/rec2}, we get}
  		\label{eqn : mon/HW_is_free/lower_bound_k1_z}
  		\BoxSPT^{(K)}\p{\frac{\hal_{\bj{(t_{k_1})} } }{n} , z}%
  			&\ge (t-t_{k_1})(1-\delta) - \frac bn.
  	\end{align}
  	In the easier case where $t\ge t_{k_2}$, inequality~\eqref{eqn : mon/HW_is_free/lower_bound_k1_z} implies
  	\begin{equation}
  	\label{eqn : mon/HW_is_free/LB_jalons_cas_facile}
  		\BoxSPT^{(K)}\p{ \frac{\hal_{\bj(t_{k_1})}}{n}, \frac{\hal_{\bj(t_{k_2})}}{n} }%
  			\ge ( t_{k_2} - t_{k_1})(1-\delta) -\delta^3
  	\end{equation}
  	for large enough $n$, which is stronger than the desired bound. Otherwise there exists $k_1 +1 \le k \le k_2-1$ such that $t_k \le t \le t_k + \delta^2$. Every edge along $\restriction\hal{\intervalleff{\bj(t_k)}{\bj(t)} }$ has a passage time $a$ in configuration $\EPT^{(K)}$, thus inequality~\eqref{eqn : mon/HW_is_free/approx_path_csq1} yields
  	\begin{align}
  		\BoxSPT^{(K)}\p{\frac{\hal_{\bj(t_k)} }{n} , \frac{\hal_{\bj(t)} }{n}}%
  			&\le \frac an \p{\bj(t) - \bj(t_k)}\eol
  			&\le (1-\delta^2)(t-t_k) + \frac{ad}{n}\eol
  			&\le t -t_k + \delta^3
  		\label{eqn : mon/HW_is_free/LB_jalons_cas_dur1}
  	\end{align}
  	for large enough $n$. Since $\gamma$ is a geodesic,
  	\begin{align}
  		\BoxSPT^{(K)}\p{\frac{\hal_{\bj(t_{k_1})} }{n} , \frac{\hal_{\bj(t_{k_2})} }{n}}%
  			&= \BoxSPT^{(K)}\p{\frac{\hal_{\bj(t_{k_1})} }{n} , z} + \BoxSPT^{(K)}\p{z , \frac{\hal_{\bj(t_{k_2})} }{n}}.
  		\intertext{Since $\norme{z- \frac{\hal_{\bj(t)}}{n}}\le \frac1n$, the triangle inequality yields}
  		\BoxSPT^{(K)}\p{\frac{\hal_{\bj(t_{k_1})} }{n} , \frac{\hal_{\bj(t_{k_2})} }{n}}%
  			&\ge \BoxSPT^{(K)}\p{\frac{\hal_{\bj(t_{k_1})} }{n} , \frac{\hal_{\bj(t)} }{n}} +\BoxSPT^{(K)}\p{\frac{\hal_{\bj(t)} }{n} , \frac{\hal_{\bj(t_{k_2})} }{n}} - \frac{2b}{n}.
  		\intertext{Applying again the triangle inequality yields}
  		\label{eqn : mon/HW_is_free/LB_jalons_cas_dur2}
  		\begin{split}
  		\BoxSPT^{(K)}\p{\frac{\hal_{\bj(t_{k_1})} }{n} , \frac{\hal_{\bj(t_{k_2})} }{n}}%
  			&\ge \BoxSPT^{(K)}\p{\frac{\hal_{\bj(t_{k_1})} }{n} , \frac{\hal_{\bj(t)} }{n}}%
  				+\BoxSPT^{(K)}\p{\frac{\hal_{\bj(t_k)} }{n} , \frac{\hal_{\bj(t_{k_2})} }{n}}\\ %
  				&\qquad- \BoxSPT^{(K)}\p{\frac{\hal_{\bj(t_k)} }{n} , \frac{\hal_{\bj(t)} }{n}}
  				- \frac{2b}{n}.
  		\end{split}
  		\intertext{In the right-hand side of~\eqref{eqn : mon/HW_is_free/LB_jalons_cas_dur2}, lower bounding the first term with~\eqref{eqn : mon/HW_is_free/rec2} and the third with~\eqref{eqn : mon/HW_is_free/LB_jalons_cas_dur1} gives}
  		\BoxSPT^{(K)}\p{\frac{\hal_{\bj(t_{k_1})} }{n} , \frac{\hal_{\bj(t_{k_2})} }{n}}%
  			&\ge (1-\delta)(t-t_{k_1})%
  				+\BoxSPT^{(K)}\p{\frac{\hal_{\bj(t_k)} }{n} , \frac{\hal_{\bj(t_{k_2})} }{n}}%
  				+(t_k-t)-\delta^3 - \frac{2b}{n}\eol
  			&= \BoxSPT^{(K)}\p{\frac{\hal_{\bj(t_k)} }{n} , \frac{\hal_{\bj(t_{k_2})} }{n}} +%
  				(1-\delta)(t_k - t_{k_1}) - \delta(t-t_k) - \delta^3 - \frac{2b}{n}.\notag
  		\intertext{As $t-t_k \le \delta^2$, for large enough $n$,}
  		\BoxSPT^{(K)}\p{\frac{\hal_{\bj(t_{k_1})} }{n} , \frac{\hal_{\bj(t_{k_2})} }{n}}%
  			&\ge \BoxSPT^{(K)}\p{\frac{\hal_{\bj(t_k)} }{n} , \frac{\hal_{\bj(t_{k_2})} }{n}} +%
  				(1-\delta)(t_k - t_{k_1}) - 3\delta^3.
  		\label{eqn : mon/HW_is_free/LB_jalons_cas_dur3}
  	\end{align}
  	It remains to get a lower bound on $\BoxSPT^{(K)}\p{\frac{\hal_{\bj(t_k)} }{n} , \frac{\hal_{\bj(t_{k_2})} }{n}}$. Using recursively the same arguments one gets a lower bound analogous to~\eqref{eqn : mon/HW_is_free/LB_jalons_cas_dur3} or (in at most $K$ steps) finally analogous to~\eqref{eqn : mon/HW_is_free/LB_jalons_cas_facile}. This leads to
  	\begin{equation*}
  		\BoxSPT^{(K)}\p{\frac{\hal_{\bj(t_{k_1})} }{n} , \frac{\hal_{\bj(t_{k_2})} }{n}}%
  			\ge (1-\delta)(t_{k_2} - t_{k_1}) - 3K\delta^3.
  	\end{equation*}
  	Applying~\eqref{eqn : mon/HW_is_free/UB_K} and $t_{k_2} - t_{k_1} \le D(x_1, y_1)$, hence the lower bound in~\eqref{eqn : mon/HW_is_free/encadrement_jalons_final} is proven.

  	\paragraph{Step 3: Extending~\eqref{eqn : mon/HW_is_free/encadrement_jalons_final} beyond milestones.} We show that for all $z_1,z_2 \in X$,
	\begin{equation}
	\label{eqn : mon/HW_is_free/encadrement_final}
   D_1(z_1,z_2) - \Cl{HW_final}\delta \le \BoxSPT^{(K)}(z_1,z_2) \le D_1(z_1,z_2)+ \Cr{HW_final} \delta,
	\end{equation}
	where $\Cr{HW_final}$ may only depend on $a,b$ and $X$.
	
	Let $z_1,z_2 \in X$. We start with the lower bound. Let $z_1\Path{\gamma}z_2$ be a $\BoxSPT^{(K)}$-geodesic. If $\gamma$ does not intersect $\frac1n\FastE$, then as $\EPTB \in \LD_{n,X}(D_0,\delta^2)$,
	\begin{equation*}
		\BoxSPT^{(K)}(\gamma) = \BoxSPTB(\gamma) \ge D_0(z_1, z_2) - \delta^2 \ge D_1(z_1, z_2) - \delta^2,
	\end{equation*}
	thus the lower bound in~\eqref{eqn : mon/HW_is_free/encadrement_final} is proven. Otherwise $\gamma$ can be decomposed as
	\begin{equation*}
		z_1 \Path{\Gag} z_\rmg \Path{\Gam} z_\rd \Path{\Gad} z_2,
	\end{equation*}
	where $z_\rmg$ and $z_\rd$ are respectively the first and last visit of $\gamma$ in $\frac1n\FastE$. We have
	\begin{align}
		\BoxSPT^{(K)}(\gamma) &= \BoxSPT^{(K)}(\Gag)+\BoxSPT^{(K)}(\Gam)+\BoxSPT^{(K)}(\Gad) \eol
			&= \BoxSPTB(\Gag)+\BoxSPT^{(K)}(\Gam)+\BoxSPTB(\Gad),\notag
		\intertext{thus}
		\label{eqn : mon/HW_is_free/LB_gamma_decompose}
		\BoxSPT^{(K)}(\gamma) &= \BoxSPTB(z_1, z_\rmg)+\BoxSPT^{(K)}(z_\rmg, z_\rd)+\BoxSPTB(z_\rd, z_2).
	\end{align}
	As $\EPTB\in \LD_{n,x}(D_0, \delta^2)$,
	\begin{align}
	\label{eqn : mon/HW_is_free/LB_gamma_1}
		\BoxSPTB(z_1, z_\rmg) \ge D_0(z_1, z_\rmg) - \delta^2 &\ge D_1(z_1,z_\rmg) - \delta^2%
		\intertext{ and }
	\label{eqn : mon/HW_is_free/LB_gamma_3}
		\BoxSPTB(z_\rd, z_2) \ge D_0(z_\rd, z_2) - \delta^2 &\ge D_1(z_\rd,z_2) - \delta^2.
	\end{align}
	By definition of $z_\rmg$ there exists $t_\rmg\in \acc{t_k, 1\le k \le K-1 }$ and $t_\rmg \le s_\rmg \le t_\rmg +\delta^2$ such that $z_\rmg \in \intervalleff{\frac{\hal_{\bj(s_\rmg)} }{n} }{ \frac{\hal_{\bj(s_\rmg)+1} }{n} }$. Therefore~\eqref{eqn : mon/HW_is_free/approx_path_csq1} yields
	\begin{align}
	\label{eqn : mon/HW_is_free/zg_VS_tg1}
		\norme{z_\rmg - \frac{\hal_{\bj(t_\rmg)} }{n} } &\le \p{\frac1a +1}\delta^2.
	\intertext{Applying~\eqref{eqn : mon/HW_is_free/approx_hal} gives }
	\label{eqn : mon/HW_is_free/zg_VS_tg2}
		\norme{z_\rmg -\sigma_1(t_\rmg) } &\le \p{\Cr{hal_sigma} + \frac1a +1}\delta^2.
	\end{align}
	We define $t_\rd$ similarly and $z_\rd$ satisfies analogous inequalities. Triangle inequality,~\eqref{eqn : mon/HW_is_free/zg_VS_tg1} and~\eqref{eqn : intro/main_thm/equivalence_distances} yield
	\begin{align}
		\BoxSPT^{(K)}(z_\rmg, z_\rd) %
			&\ge \BoxSPT^{(K)}\p{\frac{\hal_{\bj(t_\rmg)} }{n} , \frac{\hal_{\bj(t_\rd)} }{n} }%
					-2b\p{\frac1a +1}\delta^2.\notag
		\intertext{Applying~\eqref{eqn : mon/HW_is_free/encadrement_jalons_final}, we get}
		\label{eqn : mon/HW_is_free/LB_gamma_2.1}
		\BoxSPT^{(K)}(z_\rmg, z_\rd) %
			&\ge \module{t_\rmg  - t_\rd} -2b\p{\frac1a +1}\delta^2 - \Cr{LB_jalons}\delta.
		\intertext{Besides, the triangle inequality,~\eqref{eqn : mon/HW_is_free/zg_VS_tg2} and~\eqref{eqn : intro/main_thm/equivalence_distances} yield}
		D_1(z_\rmg, z_\rd) &\le D_1\p{\sigma_1(t_\rmg) , \sigma_1(t_\rd)} + 2b\p{\Cr{hal_sigma} + \frac1a +1 }\delta^2.\notag
		\intertext{By definition of $D_1$ (see \eqref{eqn : mon/HW_method/def_Dp}), $D_1\p{\sigma_1(t_\rmg) , \sigma_1(t_\rd)} = D\p{\sigma_1(t_\rmg) , \sigma_1(t_\rd)}$. Moreover, $\sigma_1$ is a $D$-geodesic, therefore}
		\label{eqn : mon/HW_is_free/LB_gamma_2.2}
		D_1(z_\rmg, z_\rd) &\le \module{t_\rmg  - t_\rd} + 2b\p{\Cr{hal_sigma} + \frac1a +1 }\delta^2.
		\intertext{Combining~\eqref{eqn : mon/HW_is_free/LB_gamma_2.1} and~\eqref{eqn : mon/HW_is_free/LB_gamma_2.2}, we get}
		\label{eqn : mon/HW_is_free/LB_gamma_2}
		\BoxSPT^{(K)}(z_\rmg, z_\rd) %
			&\ge D_1(z_\rmg, z_\rd) - 2b\p{\Cr{hal_sigma} + \frac2a +2 }\delta^2 - \Cr{LB_jalons}\delta.
	\end{align}
	Inequalities~\eqref{eqn : mon/HW_is_free/LB_gamma_1},~\eqref{eqn : mon/HW_is_free/LB_gamma_3} and~\eqref{eqn : mon/HW_is_free/LB_gamma_2} give a lower bound for each term in~\eqref{eqn : mon/HW_is_free/LB_gamma_decompose}, leading to the lower bound in~\eqref{eqn : mon/HW_is_free/encadrement_final}.

	We now turn to the upper bound in~\eqref{eqn : mon/HW_is_free/encadrement_final}. Recall the definition of $D_1$ given in~\eqref{eqn : mon/HW_method/def_Dp}. If $D_1(z_1, z_2)= D_0(z_1, z_2)$ then
	\begin{equation}
		\BoxSPT^{(K)}(z_1,z_2) \le \BoxSPTB(z_1,z_2) \le D_0(z_1, z_2) + \delta^2 = D_1(z_1, z_2) + \delta^2.
	\end{equation}
	Otherwise, there exists $z_\rmg, z_\rd \in \sigma_1$ such that $D_1(z_1, z_2) = D_0(z_1, z_\rmg) + D(z_\rmg, z_\rd) + D_0(z_\rd, z_2),$ therefore
	\begin{align}
		\BoxSPT^{(K)}(z_1, z_2) &\le \BoxSPT^{(K)}(z_1, z_\rmg)+\BoxSPT^{(K)}(z_\rmg, z_\rd) +\BoxSPT^{(K)}(z_\rd,z_2) \eol
			&\le \BoxSPTB(z_1, z_\rmg)+\BoxSPT^{(K)}(z_\rmg, z_\rd) +\BoxSPTB(z_\rd,z_2) \eol
		\label{eqn : mon/HW_is_free/UB_decompose}
			&\le D_0(z_1,z_\rmg) +\BoxSPT^{(K)}(z_\rmg, z_\rd) + D_0(z_\rd,z_2) + 2\delta^2.
	\end{align}
	Besides, there exist $0\le s_\rmg, s_\rd \le D(x_1,y_1)$ such that $z_\rmg = \sigma_1(s_\rmg)$ and $z_\rd = \sigma_1(s_\rd)$. The upper bound in~\eqref{eqn : mon/HW_is_free/rec1}, along with $D(x_1,y_1) - t_K \le \Cr{TFC}\delta$, imply the existence of $t_\rmg, t_\rd \in \acc{t_k, k\in\intint1K} $ such that $\module{t_\rmg - s_\rmg}, \module{t_\rd - s_\rd} \le \Cr{TFC}\delta$. Proceeding as for~\eqref{eqn : mon/HW_is_free/LB_gamma_2}, we show that the second term in~\eqref{eqn : mon/HW_is_free/UB_decompose} is upper bounded by $D(z_\rmg, z_\rd) + \grando(\delta) = D_1(z_\rmg, z_\rd) + \grando(\delta)$, hence the upper bound in~\eqref{eqn : mon/HW_is_free/encadrement_final} is proven.

	\paragraph{Step 4: Conclusion.} On the event \[\acc{\EPTB \in \LD_{n,X}(D_0,\delta^2)}%
	\cap \p{ \bigcap_{e\in\Pathedges\hal} \acc{X_e =\ind{\FastE}(e) } }%
	\cap \p{ \bigcap_{e \in \Pathedges\hal} \acc{\EPTG \le a+\delta} },\] configuration $\EPT$ and $\EPT^{(K)}$ agree on all edges except those in $\FastE$ where they may differ up to $\delta$. Consequently, for large enough $n$,
	\begin{align}
		\UnifDistance\p{\BoxSPT^{(K)}, \BoxSPT } &\le \frac{\delta \#\FastE }{n}.\notag
		\intertext{Moreover, $\#\FastE$ is bounded by $\#\hal$, thus by~\eqref{eqn : mon/HW_is_free/approx_path_csq2}, for large enough $n$,}
		\#\FastE &\le   \frac{ n(1-\delta^2) b}{a}\diam(X) +d \le \frac{ n b}{a}\diam(X).\notag
		\intertext{Consequently, for large enough $n$,}
		\label{eqn : mon/HW_is_free/erreur_tauG}
		\UnifDistance\p{\BoxSPT^{(K)}, \BoxSPT } &\le \frac{ b\delta}{a}\diam(X).
	\end{align}
	Combining inequalities~\eqref{eqn : mon/HW_is_free/encadrement_final} and~\eqref{eqn : mon/HW_is_free/erreur_tauG} concludes the proof.  
\end{proof}

\section{The elementary rate function}
\label{sec : FdT_elem}
In this section we prove Theorem~\ref{thm : intro/sketch/FdT_elem}, Proposition~\ref{prop : intro/sketch/ordre_grandeur} and Proposition~\ref{prop : intro/sketch/continuite}. We work under Assumption~\ref{ass : intro/main_thm/support}.

\subsection{Existence}
In this subsection we fix $g\in \AdmNorms$ and prove Theorem~\ref{thm : intro/sketch/FdT_elem}, which is a consequence of Lemma~\ref{lem : FdT_elem/existence/main_lemma}.
\begin{Lemma}
\label{lem : FdT_elem/existence/main_lemma}
	There exists a constant $\Cl{FdT_elem/main_lemma}>0$, depending only on $a,b$ and $d$, such that for all $\eps>0$ and $0<\delta\le 1$, for large enough $n$, for large enough $m$,
	\begin{equation}
	\label{eqn : FdT_elem/existence/main_lemma}
		-\frac{1}{m^d}\log\Pb{\LD_{m,\intervalleff01^d }\p{g,\Cr{FdT_elem/main_lemma}( \eps + \delta)} } %
			\le -\frac{1}{n^d}\log\Pb{\LD_{n,\intervalleff01^d }(g,\delta^2)} - \Cr{FdT_elem/main_lemma}\delta \log\nu\p{\intervalleff{b-\eps}{b} }.
	\end{equation}
\end{Lemma}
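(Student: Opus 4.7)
The idea is a multidimensional subadditivity argument: tile $\intervalleff{0}{m}^d$ by $(m/n)^d$ translates of $\intervalleff{0}{n}^d$ separated by thin barriers of thickness of order $\delta n$, impose the good event $\LD_{n,\intervalleff{0}{1}^d}(g,\delta^2)$ independently inside each tile, and force every edge crossing a barrier to have weight at least $b-\eps$. Concretely, fix $\ell \dpe \lceil \delta n \rceil$ and $K \dpe \lfloor (m-n)/(n+\ell) \rfloor$, and define the disjoint small boxes $B_v \dpe v(n+\ell) + \intervalleff{0}{n}^d$ for $v \in \intint{0}{K-1}^d$; all lie inside $\intervalleff{0}{m}^d$. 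Let $X_0 \dpe \intervalleff{0}{m}^d \setminus \bigcup_v B_v$ be the barrier region. A direct volume computation gives $\#\exedges{X_0} \le C_1 \delta\, m^d$ for a constant $C_1$ depending only on $d$, while $K^d \le (m/n)^d$. Let $\cE$ be the event that (i) for each $v$, the weights in $\Pathedges{B_v}$ form a configuration satisfying $\LD_{n,\intervalleff{0}{1}^d}(g,\delta^2)$ (after translating $B_v$ back to $\intervalleff{0}{n}^d$ and rescaling), and (ii) every edge in $\exedges{X_0}$ has weight in $\intervalleff{b-\eps}{b}$. With a careful choice of boundary conventions the two sub-events depend on disjoint edge sets, so independence and the i.i.d.\ nature of the weights give
\[
    \Pb{\cE} \ge \Pb{\LD_{n,\intervalleff{0}{1}^d}(g,\delta^2)}^{(m/n)^d}\cdot \nu\p{\intervalleff{b-\eps}{b}}^{C_1\delta m^d}.
\]
It then suffices to show that $\cE \subseteq \LD_{m,\intervalleff{0}{1}^d}(g,C(\eps+\delta))$ for some constant $C$ depending only on $a,b,d$; taking $-m^{-d}\log$ of the inclusion and combining with the previous display will yield~\eqref{eqn : FdT_elem/existence/main_lemma} with $\Cr{FdT_elem/main_lemma} \dpe \max(C_1, C)$.

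On $\cE$, the upper bound $\BoxSPT[m,\intervalleff{0}{1}^d](x,y) \le g(y-x) + O(\eps+\delta)$ is obtained by an explicit path construction. For $x,y\in\intervalleff{0}{1}^d$ decompose the straight segment $\intervalleff{mx}{my}$ into sub-segments according to whether they sit inside some $B_v$ or inside $X_0$; inside each $B_v$ replace the sub-segment by a $\BoxPT[B_v]$-geodesic (cost at most the $g$-length of the displacement plus the error $n\delta^2$ granted by (i)), and leave affine hops of length $O(\ell)$ across $X_0$ (cost at most $b\ell$ each by the definition of $\PathPT{\cdot,\cdot}$). The straight segment enters $O(m/n)$ boxes, so summing gives $\BoxPT[\intervalleff{0}{m}^d](mx,my) \le m\, g(y-x) + O(\delta\, m)$, hence the claimed upper bound after rescaling.

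The lower bound $\BoxSPT[m,\intervalleff{0}{1}^d](x,y) \ge g(y-x) - O(\eps+\delta)$ is the main step and uses the corridor lemma (Lemma~\ref{lem : limit_space/corridor/corridor}). Apply it on $X\dpe\intervalleff{0}{1}^d$ with regions $X_v \dpe B_v/m$, metrics $D \dpe g$ and $D' \dpe \BoxSPT[m,\intervalleff{0}{1}^d]$. Property (i), rescaled by $m$, gives $\restriction{D'}{X_v}(x,y) \ge g(y-x) - n\delta^2/m$, so we set $\delta_2 \dpe n\delta^2/m$; the pairwise separation between distinct $X_v$'s is at least $\ell/m$, so $\delta_1 \dpe \delta n/m$; property (ii) ensures that any Lipschitz path drawn in the rescaled corridor $X_0/m$ has $D'$-length at least $(b-\eps)$ times its $\norme\cdot$-length. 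The corridor lemma then yields
\[
    D'(x,y) \ge g(y-x) - 3\diam(\intervalleff{0}{1}^d)\p{\eps + \delta_2/\delta_1} = g(y-x) - 3d(\eps + \delta).
\]
The critical quantitative point — and the source of the whole argument's delicacy — is that the barrier thickness $\ell$ is chosen proportional to $\delta n$ precisely so that $\delta_2/\delta_1 = \delta$: this cancels the $m$-dependence of the per-tile error $n\delta^2/m$ while keeping the barrier volume of order $\delta\, m^d$, exactly matching the $\delta$ prefactor on the $\log\nu\p{\intervalleff{b-\eps}{b}}$ term in the conclusion.
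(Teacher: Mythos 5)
Your overall strategy matches the paper's: tile $\intervalleff0m^d$ by $K^d$ translates of $\intervalleff0n^d$ separated by corridors of thickness $\Theta(\delta n)$, impose $\LD_{n,\intervalleff01^d}(g,\delta^2)$ independently on each tile and force corridor edges to have weight $\ge b-\eps$, then show the resulting event is contained in $\LD_{m,\intervalleff01^d}(g,C(\eps+\delta))$. Your probability estimate is fine, and your application of the corridor lemma for the lower bound (with $\delta_1 = \ell/m$, $\delta_2 = n\delta^2/m$, yielding error $3d(\eps+\delta)$) is exactly right — that is also how the paper gets its lower bound.

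The gap is in the upper bound. You claim the straight segment $\intervalleff{mx}{my}$ can be decomposed into sub-segments that sit inside boxes and ``affine hops of length $O(\ell)$ across $X_0$'', and that summing gives $m\,g(y-x) + O(\delta m)$. This fails: the corridor sub-segments need not have length $O(\ell)$. If some coordinate $y_i - x_i$ vanishes (or is small compared with $(n+\ell)/m$) and $mx_i$ happens to lie inside an $i$-direction slab, the \emph{entire} straight segment lies in $X_0$, so your construction only gives the trivial bound $b\norme{m(y-x)}$, which is too large when $g(y-x) < b\norme{y-x}$. More generally the total $\norme\cdot$-length of the corridor pieces of the segment is not $O(\delta m)$ uniformly in $(x,y)$. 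The paper sidesteps this by introducing a translation map $f$ that pushes the gapped boxes together into the contiguous tiling $\intervalleff{0}{nk}^d$, projecting $x,y$ onto the boxes, and decomposing the straight segment from $f(\hat x)$ to $f(\hat y)$ \emph{in the contiguous picture} (where there are no gaps), then pulling back; every inter-box hop then has length $O(\delta n+1)$ and there are at most $kd$ of them. This contiguous-picture decomposition is the missing ingredient; a workable alternative is to perturb $x,y$ by $O(\ell/m)$ in each coordinate so that no direction is trapped inside a slab, at an additional cost $O(b\ell)\le O(b\delta m)$, but either way it is an extra step your plan does not account for.
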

\begin{proof}[Proof of Theorem~\ref{thm : intro/sketch/FdT_elem}]
	Let $\eps>0$ and $0<\delta \le \eps\wedge 1$. For all $m\ge 1$, $\LD_{m,\intervalleff01^d }\p{g,\Cr{FdT_elem/main_lemma}( \eps + \delta)} \subseteq \LD_{m,\intervalleff01^d }\p{g,2\Cr{FdT_elem/main_lemma} \eps}$. Hence~\eqref{eqn : FdT_elem/existence/main_lemma} implies that for large enough $n$, for large enough $m$,
	\begin{align}
		-\frac{1}{m^d}\log\Pb{\LD_{m,\intervalleff01^d }\p{g,2\Cr{FdT_elem/main_lemma} \eps} } %
			&\le -\frac{1}{n^d}\log\Pb{\LD_{n,\intervalleff01^d }(g,\delta^2)} - \Cr{FdT_elem/main_lemma}\delta \log\nu\p{\intervalleff{b-\eps}{b} }.\notag
		\intertext{Considering the superior limit in $m$ then the inferior limit in $n$, we get}
		\limsup_{m\to\infty}-\frac{1}{m^d}\log\Pb{\LD_{m,\intervalleff01^d }\p{g,2\Cr{FdT_elem/main_lemma} \eps} } %
			&\le \liminf_{n\to\infty}-\frac{1}{n^d}\log\Pb{\LD_{n,\intervalleff01^d }(g,\delta^2)} - \Cr{FdT_elem/main_lemma}\delta \log\nu\p{\intervalleff{b-\eps}{b} }.
	\end{align}
	Letting $\delta\to0$ then $\eps\to0$ gives  $\FdTsup[\intervalleff01^d](g) \le \FdTinf[\intervalleff01^d](g)$. This concludes the proof.
\end{proof}
\begin{proof}[Proof of Lemma~\ref{lem : FdT_elem/existence/main_lemma}]
	Fix $\eps>0$ and $0<\delta \le 1$. Let $n,m\ge 1$ and $k$ be the only integer such that $nk(1+\delta)<m \le n(k+1)(1+\delta)$. We will build an event included in $\LD_{m,\intervalleff01^d}\p{g, \Cr{FdT_elem/main_lemma}( \eps + \delta)}$ from $k^d$ independent realizations of $\LD_{n,\intervalleff01^d}(g, \delta^2)$. 

	\stepcounter{tile} 
	For all $v\in\intint0{k-1}^d$, let
	\begin{equation}
	\label{eqn : FdT_elem/existence/tile*}
		\cTile_n(v)\dpe \intervalleff01^d + \frac{\floor{n(1+\delta)}}{n}v
	\end{equation}
	(see Figure~\ref{fig : FdT_elem/existence/tiles}).
	\begin{figure}
	\center
	\def\svgwidth{0.62\textwidth}
	\begingroup%
	  \makeatletter%
	  \providecommand\color[2][]{%
	    \errmessage{(Inkscape) Color is used for the text in Inkscape, but the package 'color.sty' is not loaded}%
	    \renewcommand\color[2][]{}%
	  }%
	  \providecommand\transparent[1]{%
	    \errmessage{(Inkscape) Transparency is used (non-zero) for the text in Inkscape, but the package 'transparent.sty' is not loaded}%
	    \renewcommand\transparent[1]{}%
	  }%
	  \providecommand\rotatebox[2]{#2}%
	  \newcommand*\fsize{\dimexpr\f@size pt\relax}%
	  \newcommand*\lineheight[1]{\fontsize{\fsize}{#1\fsize}\selectfont}%
	  \ifx\svgwidth\undefined%
	    \setlength{\unitlength}{279.27808386bp}%
	    \ifx\svgscale\undefined%
	      \relax%
	    \else%
	      \setlength{\unitlength}{\unitlength * \real{\svgscale}}%
	    \fi%
	  \else%
	    \setlength{\unitlength}{\svgwidth}%
	  \fi%
	  \global\let\svgwidth\undefined%
	  \global\let\svgscale\undefined%
	  \makeatother%
	  \begin{picture}(1,0.82057626)%
	    \lineheight{1}%
	    \setlength\tabcolsep{0pt}%
	    \put(0,0){\includegraphics[width=\unitlength,page=1]{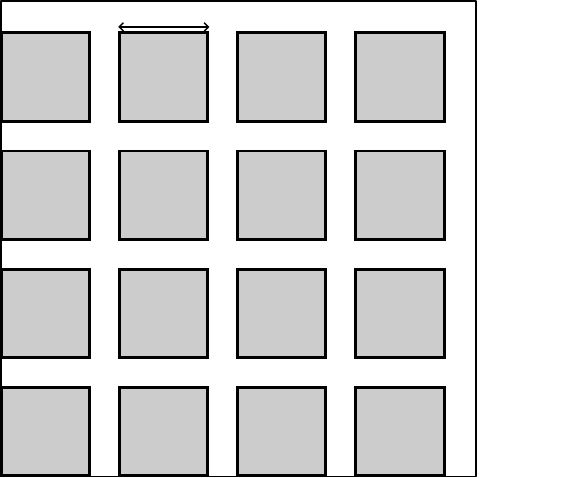}}%
	    \put(0.27647978,0.78754443){\color[rgb]{0,0,0}\makebox(0,0)[lt]{\lineheight{1.25}\smash{\begin{tabular}[t]{l}$1$\end{tabular}}}}%
	    \put(0.56203167,0.78973731){\color[rgb]{0,0,0}\makebox(0,0)[lt]{\lineheight{1.25}\smash{\begin{tabular}[t]{l}$  \simeq \delta$\end{tabular}}}}%
	    \put(0,0){\includegraphics[width=\unitlength,page=2]{FIG_Fav_FdT_elem.pdf}}%
	    \put(0.01372046,0.63267936){\color[rgb]{0,0,0}\makebox(0,0)[lt]{\lineheight{1.25}\smash{\begin{tabular}[t]{l}$\scriptstyle \Tile_n^*(0,3)$\\\end{tabular}}}}%
	    \put(0.21677875,0.63267936){\color[rgb]{0,0,0}\makebox(0,0)[lt]{\lineheight{1.25}\smash{\begin{tabular}[t]{l}$\scriptstyle \Tile_n^*(1,3)$\\\end{tabular}}}}%
	    \put(0.41983704,0.63267936){\color[rgb]{0,0,0}\makebox(0,0)[lt]{\lineheight{1.25}\smash{\begin{tabular}[t]{l}$\scriptstyle \Tile_n^*(2,3)$\\\end{tabular}}}}%
	    \put(0.62289534,0.63267936){\color[rgb]{0,0,0}\makebox(0,0)[lt]{\lineheight{1.25}\smash{\begin{tabular}[t]{l}$\scriptstyle \Tile_n^*(3,3)$\\\end{tabular}}}}%
	    \put(0.01372046,0.4303933){\color[rgb]{0,0,0}\makebox(0,0)[lt]{\lineheight{1.25}\smash{\begin{tabular}[t]{l}$\scriptstyle \Tile_n^*(0,2)$\\\end{tabular}}}}%
	    \put(0.21677875,0.4303933){\color[rgb]{0,0,0}\makebox(0,0)[lt]{\lineheight{1.25}\smash{\begin{tabular}[t]{l}$\scriptstyle \Tile_n^*(1,2)$\\\end{tabular}}}}%
	    \put(0.41983704,0.4303933){\color[rgb]{0,0,0}\makebox(0,0)[lt]{\lineheight{1.25}\smash{\begin{tabular}[t]{l}$\scriptstyle \Tile_n^*(2,2)$\\\end{tabular}}}}%
	    \put(0.62289534,0.4303933){\color[rgb]{0,0,0}\makebox(0,0)[lt]{\lineheight{1.25}\smash{\begin{tabular}[t]{l}$\scriptstyle \Tile_n^*(3,2)$\\\end{tabular}}}}%
	    \put(0.01372046,0.22810724){\color[rgb]{0,0,0}\makebox(0,0)[lt]{\lineheight{1.25}\smash{\begin{tabular}[t]{l}$\scriptstyle \Tile_n^*(0,1)$\\\end{tabular}}}}%
	    \put(0.21677875,0.22810724){\color[rgb]{0,0,0}\makebox(0,0)[lt]{\lineheight{1.25}\smash{\begin{tabular}[t]{l}$\scriptstyle \Tile_n^*(1,1)$\\\end{tabular}}}}%
	    \put(0.01372046,0.02582117){\color[rgb]{0,0,0}\makebox(0,0)[lt]{\lineheight{1.25}\smash{\begin{tabular}[t]{l}$\scriptstyle \Tile_n^*(0,0)$\\\end{tabular}}}}%
	    \put(0.41983704,0.22810724){\color[rgb]{0,0,0}\makebox(0,0)[lt]{\lineheight{1.25}\smash{\begin{tabular}[t]{l}$\scriptstyle \Tile_n^*(2,1)$\\\end{tabular}}}}%
	    \put(0.21677875,0.02582117){\color[rgb]{0,0,0}\makebox(0,0)[lt]{\lineheight{1.25}\smash{\begin{tabular}[t]{l}$\scriptstyle \Tile_n^*(1,0)$\\\end{tabular}}}}%
	    \put(0.41983704,0.02582117){\color[rgb]{0,0,0}\makebox(0,0)[lt]{\lineheight{1.25}\smash{\begin{tabular}[t]{l}$\scriptstyle \Tile_n^*(2,0)$\\\end{tabular}}}}%
	    \put(0.62289534,0.22810724){\color[rgb]{0,0,0}\makebox(0,0)[lt]{\lineheight{1.25}\smash{\begin{tabular}[t]{l}$\scriptstyle \Tile_n^*(3,1)$\\\end{tabular}}}}%
	    \put(0.62289534,0.02587099){\color[rgb]{0,0,0}\makebox(0,0)[lt]{\lineheight{1.25}\smash{\begin{tabular}[t]{l}$\scriptstyle \Tile_n^*(3,0)$\\\end{tabular}}}}%
	    \put(0,0){\includegraphics[width=\unitlength,page=3]{FIG_Fav_FdT_elem.pdf}}%
	    \put(0.86597874,0.38501717){\color[rgb]{0,0,0}\makebox(0,0)[lt]{\lineheight{1.25}\smash{\begin{tabular}[t]{l}$ \displaystyle \frac{m}{n}$\end{tabular}}}}%
	  \end{picture}%
	\endgroup%
  	\caption{Illustration of the tiles defined by~\eqref{eqn : FdT_elem/existence/tile*} in the case $d=2$, $k=4$.}
  	\label{fig : FdT_elem/existence/tiles}	
	\end{figure}The set $n\cTile_n(v)$ is an \emph{integer} translation of $\intervalleff0n^d$ therefore by stationarity of $(\EPT[e])_{e\in \bbE^d}$,
	\begin{equation}
	\label{eqn : FdT_elem/existence/Stationarity_LD}
		\Pb{\LD_{n,\cTile_n(v)}(g,\delta^2) } = \Pb{\LD_{n,\intervalleff01^d}(g,\delta^2) }.
	\end{equation}
	Moreover, for large enough $n$, the variables $\p{\BoxPT[n, \cTile_n(v)]}_{v\in\intint0{k-1}^d }$ live on pairwise disjoint subsets of $\bbE^d$ therefore are independent. Let $\Corridor$ denote the set of edges included in $\intervalleff0m^d$ but not in any $n\cTile_n(v)$. This set satisfies
	\begin{equation}
	\label{eqn : FdT_elem/existence/Card_Corridor}
	 	\#\Corridor = \#\edges{\intint0m^d} - k^d\#\edges{\intint0n^d} \le d(m+1)^d - dk^d(n-1)^d.
	\end{equation} 
	We define the favorable event
	\begin{align}
	\label{eqn : FdT_elem/existence/def_Fav}
		\cFav &\dpe \p{ \bigcap_{v\in\intint0{k-1}^d}\LD_{n,\cTile_n(v)}(g,\delta^2) }%
			\cap \p{\bigcap_{e\in \Corridor} \acc{\EPT[e]\ge b-\eps} }.
	\intertext{Applying~\eqref{eqn : FdT_elem/existence/Stationarity_LD} and~\eqref{eqn : FdT_elem/existence/Card_Corridor} gives}
		- \frac{1}{m^d} \log \Pb{\cFav} %
			&\le  -\p{\frac{k}{m}}^d \log \Pb{\LD_{n, \intervalleff01^d}(g,\delta^2)}%
					-\frac{d(m+1)^d - dk^d(n-1)^d}{m^d} \log \nu\p{\intervalleff{b-\eps}{b}} \eol
  			&\le -\frac{1}{n^d} \log\Pb{\LD_{n, \intervalleff01^d}(g,\delta^2)}  %
  					-\Cl{cst_dim1}\delta \log \nu\p{\intervalleff{b-\eps}{b}},
  	\label{eqn : FdT_elem/existence/Pb_Fav}
  	\end{align}
  	for large enough $n$, large enough $m$, where $\Cr{cst_dim1}$ only depends on $d$.

  	From now on we assume that $\cFav$ occurs. We first lower bound $\BoxSPT[m,\intervalleff01^d]$ using Lemma~\ref{lem : limit_space/corridor/corridor}. For all $v_1\neq v_2$,
  	\begin{equation}
  		\inf_{\substack{ x\in n\cTile_n(v_1)\\ y \in n\cTile_n(v_2)} } \norme{x-y} \ge n\delta  -d.
  	\end{equation}
  	Equation~\eqref{eqn : limit_space/constructions/restriction_comp_T} in Lemma~\ref{lem : limit_space/constructions/restriction} implies that on the event $\LD_{n,\cTile_n(v)}(g,\delta^2)$, for all $v\in \intint{0}{k-1}^d$, $x,y\in n\cTile_n(v)$,
  	\begin{align}
  		\restriction{\BoxPT[\intervalleff0m^d] }{n\cTile_n(v)}(x,y) %
  			&= \BoxPT[n\cTile_n(v)](x,y)\eol
  			&\ge  g(x-y) - n\delta^2.
  	\end{align}
 	On the event $\bigcap_{e\in \Corridor} \acc{\EPT[e]\ge b-\eps}$ for all Lipschitz paths $\gamma$ included in $\intervalleff0m^d \setminus \bigcup_{v\in\intint0{k-1}^d} n\cTile_n(v)$,
 	\begin{equation}
 		 	\BoxPT[\intervalleff0m^d](\gamma)  \ge (b-\eps)\norme\gamma.
 	\end{equation}
 	Thanks to Lemma~\ref{lem : limit_space/corridor/corridor} with $\delta_1\dpe \delta n-d$ and $\delta_2\dpe n\delta^2$, we get, for all $x,y\in\intervalleff0m^d$,
 	\begin{align}
 		\BoxPT[\intervalleff0m^d](x,y) %
 			&\ge g(x-y) - 3\diam\p{\intervalleff0m^d} \p{ \eps + \frac{n\delta^2}{n\delta - d} } \eol
 			&\ge g(x-y) - 4dm(\eps +\delta)\notag
 	\intertext{for large enough $n$, with fixed $\delta>0$. Thus for all $x,y\in\intervalleff01^d$,}
 	\label{eqn : FdT_elem/existence/LB_Fav}
 		\BoxSPT[m,\intervalleff01^d](x,y) &\ge g(x-y) - 4d(\eps +\delta).
 	\end{align}

 	We now turn to the upper bound. For all $v\in\intint{0}{k-1}^d$, for all $z,z'\in n\cTile_n(v)$, on the event $\LD_{n,\cTile_n(v)}(g,\delta^2)$,
 	\begin{equation}
 	\label{eqn : FdT_elem/existence/UB_Fav_HypLD}
 		\BoxPT[\intervalleff0m^d](z,z') \le  \BoxPT[n\cTile_n(v)](z,z') \le g(z-z') + \delta^2n.
 	\end{equation}
 	Let $x,y\in \intervalleff0m^d$ and $\hat x, \hat y$ their respective projections on $\bigcup_{v\in\intint{0}{k-1}^d}n\cTile_n(v)$ (in case of non-unicity the choice does not matter). Consider the map
 	\begin{align*}
		f : \bigcup_{v}n\cTile_n(v) &\longrightarrow \intervalleff{0}{nk}^d \\
		z &\longmapsto z-\p{\floor{n(1+\delta)} - n}v \qquad\text{if } z\in\cTile_n(v),
	\end{align*}
	which translates $n\cTile_n(v)$ onto $n\Tile(v,1)$, defined by~\eqref{eqn : intro/notations/def_tiles}. We claim that with fixed $n$ and $\delta$, for large enough $m$, 
	\begin{equation}
		\label{eqn : FdT_elem/existence/x_f(xhat)}
	 	\norme{x-f(\hat x)} \le 2dk(n\delta +1).
	 \end{equation}
	Indeed,
	\begin{align}
	\label{eqn : FdT_elem/existence/x_xhat}
		\norme{x - \hat x} &\le d\cro{m-n-(k-1)\floor{n(1+\delta)}}\nonumber\\
			&\le d\cro{m-n - (k-1)n(1+\delta) + (k-1)} \nonumber\\
			&\le d \cro{n(1+2\delta) + (k-1)},\nonumber
		\intertext{and}
		\norme{\hat x - f(\hat x)} &\le \p{ \floor{n\p{1+\delta}} -n }(k-1)d\nonumber\\
			&\le n\delta(k-1)d,
		\intertext{therefore by triangle inequality,}
		\norme{x-f(\hat x)}%
			&\le d\cro{n(1+2\delta) + (k-1) + n\delta(k-1) },\nonumber
	\end{align}
	thus~\eqref{eqn : FdT_elem/existence/x_f(xhat)} for large enough $m$.

	The segment from $f(\hat x)$ to $f(\hat y)$ can be decomposed as
	\begin{equation*}
		\intervalleff{f(\hat x)}{f(\hat y)} = \intervalleff{x_0}{x_1} \cup \intervalleff{x_1}{x_2} \cup \dots \cup \intervalleff{x_{r-1}}{x_r},
	\end{equation*}
	where each segment $\intervalleff{x_i}{x_{i+1}}$ is included in a tile $n\Tile(v_i,1)$, and the $(v_i)_{0\le i \le r-1}$ are pairwise distinct. Moreover the sequence $(v_i)_{0\le i \le r-1}$ may be chosen so that it has monotone coordinates. In particular, $r\le kd$. For all $0\le i \le r-1$, let $\intervalleff{z_i}{z_i'}$ denote the unique segment included in $n\cTile_n(v_i)$ whose image under $f$ is $\intervalleff{x_i}{x_{i+1}}$ (see Figure~\ref{fig : FdT_elem/existence/TileTranslation}).
	\begin{figure}
	\center
	\def\svgwidth{0.95\textwidth}
	\begingroup%
	  \makeatletter%
	  \providecommand\color[2][]{%
	    \errmessage{(Inkscape) Color is used for the text in Inkscape, but the package 'color.sty' is not loaded}%
	    \renewcommand\color[2][]{}%
	  }%
	  \providecommand\transparent[1]{%
	    \errmessage{(Inkscape) Transparency is used (non-zero) for the text in Inkscape, but the package 'transparent.sty' is not loaded}%
	    \renewcommand\transparent[1]{}%
	  }%
	  \providecommand\rotatebox[2]{#2}%
	  \newcommand*\fsize{\dimexpr\f@size pt\relax}%
	  \newcommand*\lineheight[1]{\fontsize{\fsize}{#1\fsize}\selectfont}%
	  \ifx\svgwidth\undefined%
	    \setlength{\unitlength}{460.97845603bp}%
	    \ifx\svgscale\undefined%
	      \relax%
	    \else%
	      \setlength{\unitlength}{\unitlength * \real{\svgscale}}%
	    \fi%
	  \else%
	    \setlength{\unitlength}{\svgwidth}%
	  \fi%
	  \global\let\svgwidth\undefined%
	  \global\let\svgscale\undefined%
	  \makeatother%
	  \begin{picture}(1,0.49031635)%
	    \lineheight{1}%
	    \setlength\tabcolsep{0pt}%
	    \put(0,0){\includegraphics[width=\unitlength,page=1]{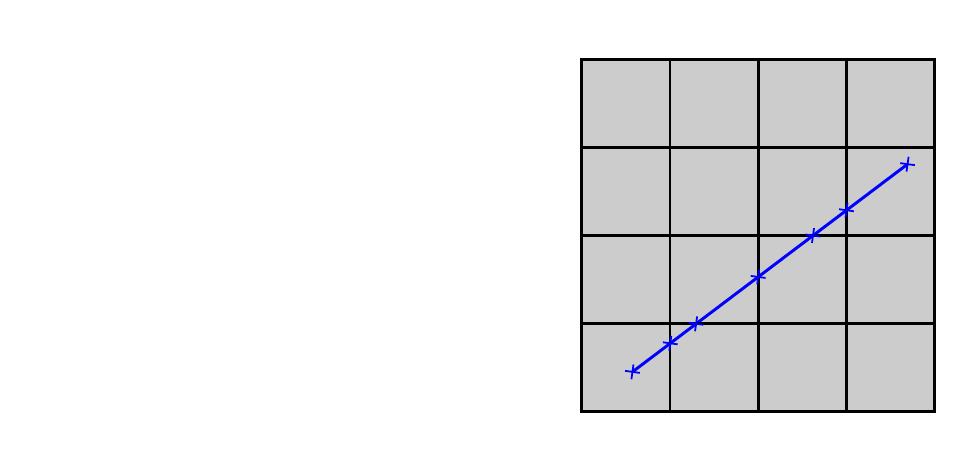}}%
	    \put(0.66205116,0.08275599){\color[rgb]{0,0,1}\makebox(0,0)[lt]{\lineheight{1.25}\smash{\begin{tabular}[t]{l}$x_0=f(\hat x )$\end{tabular}}}}%
	    \put(0.70597946,0.11021118){\color[rgb]{0,0,1}\makebox(0,0)[lt]{\lineheight{1.25}\smash{\begin{tabular}[t]{l}$x_1$\end{tabular}}}}%
	    \put(0.73587512,0.13644614){\color[rgb]{0,0,1}\makebox(0,0)[lt]{\lineheight{1.25}\smash{\begin{tabular}[t]{l}$x_2$\end{tabular}}}}%
	    \put(0.79770013,0.18484862){\color[rgb]{0,0,1}\makebox(0,0)[lt]{\lineheight{1.25}\smash{\begin{tabular}[t]{l}$x_3$\end{tabular}}}}%
	    \put(0.85301725,0.223896){\color[rgb]{0,0,1}\makebox(0,0)[lt]{\lineheight{1.25}\smash{\begin{tabular}[t]{l}$x_4$\end{tabular}}}}%
	    \put(0.88982753,0.25541862){\color[rgb]{0,0,1}\makebox(0,0)[lt]{\lineheight{1.25}\smash{\begin{tabular}[t]{l}$x_5$\end{tabular}}}}%
	    \put(0.9390435,0.29934692){\color[rgb]{0,0,1}\makebox(0,0)[lt]{\lineheight{1.25}\smash{\begin{tabular}[t]{l}$x_6 = f(\hat y)$\end{tabular}}}}%
	    \put(0,0){\includegraphics[width=\unitlength,page=2]{FIG_TileTranslation.pdf}}%
	    \put(0.05348093,0.02577187){\color[rgb]{0,0,1}\makebox(0,0)[lt]{\lineheight{1.25}\smash{\begin{tabular}[t]{l}$z_0 = \hat x$\end{tabular}}}}%
	    \put(0.05533246,0.07243918){\color[rgb]{0,0,1}\makebox(0,0)[lt]{\lineheight{1.25}\smash{\begin{tabular}[t]{l}$z_0'$\end{tabular}}}}%
	    \put(0.13665067,0.05486434){\color[rgb]{0,0,1}\makebox(0,0)[lt]{\lineheight{1.25}\smash{\begin{tabular}[t]{l}$z_1$\end{tabular}}}}%
	    \put(0.17292395,0.07351967){\color[rgb]{0,0,1}\makebox(0,0)[lt]{\lineheight{1.25}\smash{\begin{tabular}[t]{l}$z_1'$\end{tabular}}}}%
	    \put(0.1681826,0.1185291){\color[rgb]{0,0,1}\makebox(0,0)[lt]{\lineheight{1.25}\smash{\begin{tabular}[t]{l}$z_2$\end{tabular}}}}%
	    \put(0.18865168,0.19025771){\color[rgb]{0,0,1}\makebox(0,0)[lt]{\lineheight{1.25}\smash{\begin{tabular}[t]{l}$z_2'$\end{tabular}}}}%
	    \put(0.27434266,0.16874221){\color[rgb]{0,0,1}\makebox(0,0)[lt]{\lineheight{1.25}\smash{\begin{tabular}[t]{l}$z_3$\end{tabular}}}}%
	    \put(0.32640583,0.20078567){\color[rgb]{0,0,1}\makebox(0,0)[lt]{\lineheight{1.25}\smash{\begin{tabular}[t]{l}$z_3'$\end{tabular}}}}%
	    \put(0.3294564,0.2501161){\color[rgb]{0,0,1}\makebox(0,0)[lt]{\lineheight{1.25}\smash{\begin{tabular}[t]{l}$z_4$\end{tabular}}}}%
	    \put(0.32640583,0.29646418){\color[rgb]{0,0,1}\makebox(0,0)[lt]{\lineheight{1.25}\smash{\begin{tabular}[t]{l}$z_4'$\end{tabular}}}}%
	    \put(0.40755116,0.27835959){\color[rgb]{0,0,1}\makebox(0,0)[lt]{\lineheight{1.25}\smash{\begin{tabular}[t]{l}$z_5$\end{tabular}}}}%
	    \put(0.46449525,0.32472835){\color[rgb]{0,0,1}\makebox(0,0)[lt]{\lineheight{1.25}\smash{\begin{tabular}[t]{l}$z_5' = \hat y$\end{tabular}}}}%
	    \put(0,0){\includegraphics[width=\unitlength,page=3]{FIG_TileTranslation.pdf}}%
	    \put(0.53573131,0.2704383){\color[rgb]{0,0,0}\makebox(0,0)[lt]{\lineheight{1.25}\smash{\begin{tabular}[t]{l}$f$\end{tabular}}}}%
	  \end{picture}%
	\endgroup%
  	\caption{Construction of the points $(z_i)_{0\le i \le r}$ and $(z_i')_{0\le i \le r-1}$ in the case $d=2$, $k=4$.}
  	\label{fig : FdT_elem/existence/TileTranslation}	
	\end{figure}
	The triangle inequality yields
	\begin{align}
	\label{eqn : FdT_elem/existence/UB_Fav_IT}
		\BoxPT[\intervalleff0m^d](\hat x, \hat y) %
			&\le \sum_{i=0}^r\BoxPT[\intervalleff0m^d](z_i, z_i') %
				+ \sum_{i=0}^{r-1}\BoxPT[\intervalleff0m^d](z_i', z_{i+1}).
	\intertext{Inequality~\eqref{eqn : FdT_elem/existence/UB_Fav_HypLD} gives}
		\sum_{i=0}^r\BoxPT[\intervalleff0m^d](z_i, z_i')%
			&\le \sum_{i=0}^r \p{ g(z_i- z_i') + \delta^2n} \eol
			&=   \sum_{i=0}^r g(z_i- z_i')  + (r+1)\delta^2n \eol
			&=  \sum_{i=0}^r g(x_i- x_{i+1})  + (r+1)\delta^2n, \eol
	\intertext{thus}
	\label{eqn : FdT_elem/existence/UB_Fav_term1}
		\sum_{i=0}^r\BoxPT[\intervalleff0m^d](z_i, z_i')%
			&\le g\p{f(\hat x) - f(\hat y)}+ (r+1)\delta^2n.
	\intertext{Consequently, using the triangle inequality,~\eqref{eqn : FdT_elem/existence/x_f(xhat)} and $g\le b\norme{\cdot}$ in \eqref{eqn : FdT_elem/existence/UB_Fav_term1}, we get}
	\label{eqn : FdT_elem/existence/UB_Fav_term1_2}
		\sum_{i=0}^r\BoxPT[\intervalleff0m^d](z_i, z_i')%
			&\le g\p{x-y}+ 4bdk\p{ n\delta+1}+ (r+1)\delta^2n.
	\end{align}
	Besides, for all $0\le i \le r-1$,
	\begin{align}
		\BoxPT[\intervalleff0m^d](z_i', z_{i+1}) &\le b\norme{z_i' - z_{i+1}} \le bd\p{\delta n +1},\notag
	\intertext{thus}
	\label{eqn : FdT_elem/existence/UB_Fav_term2}
		\sum_{i=0}^{r-1}\BoxPT[\intervalleff0m^d](z_i', z_{i+1}) &\le r bd\p{\delta n +1}.
	\end{align}
	Combining~\eqref{eqn : FdT_elem/existence/UB_Fav_IT}, \eqref{eqn : FdT_elem/existence/UB_Fav_term1_2} and~\eqref{eqn : FdT_elem/existence/UB_Fav_term2}, we obtain
	\begin{align}
	\BoxPT[\intervalleff0m^d](\hat x, \hat y) %
			&\le g\p{x-y}+ 4bdk\p{ n\delta+1} + (r+1)\delta^2n %
				+  rbd\p{\delta n +1}.\nonumber
	\intertext{Applying $r\le kd$, we get }
		\BoxPT[\intervalleff0m^d](\hat x, \hat y) %
			&\le g\p{x-y}+ 4bdk\p{ n\delta+1} + (kd+1)\delta^2n %
				+ k bd^2\p{\delta n +1}, \notag
	\intertext{thus a final use of the triangle inequality and~\eqref{eqn : FdT_elem/existence/x_xhat} leads to}
		\BoxPT[\intervalleff0m^d]( x,  y) %
			&\le g\p{x-y} + 2bdn\delta (k-1)  + 4bdk\p{ n\delta+1} + (kd+1)\delta^2n %
				+ k bd^2\p{\delta n +1} \notag\\
			&\le g\p{x-y} + 6bdk\p{ n\delta+1} + (kd+1)\delta^2n %
				+ k bd^2\p{\delta n +1}\nonumber
	\intertext{for large enough $m$. Consequently, for all $\delta>0$, for large enough $n$, for large enough $m$, for all $x,y \in \intervalleff01^d$,}
	\label{eqn : FdT_elem/existence/UB_Fav}
		\BoxSPT[m, \intervalleff01^d](x,y) &\le g\p{x-y}+ 2d\p{6b + bd + \delta }\delta.
	\end{align}
	The lemma is a consequence of~\eqref{eqn : FdT_elem/existence/Pb_Fav}, \eqref{eqn : FdT_elem/existence/LB_Fav} and~\eqref{eqn : FdT_elem/existence/UB_Fav}.
\end{proof}
\subsection{Characterizing the cases $\FdT[\intervalleff01^d](g) < \infty$ and $\FdT[\intervalleff01^d](g) =0$ : proof of Proposition~\ref{prop : intro/sketch/ordre_grandeur}}
Thanks to Proposition~\ref{prop : mon/mon}, it is sufficient to prove Lemmas~\ref{lem : FdT_elem/ordre_grandeur/infty} and \ref{lem : FdT_elem/ordre_grandeur/zero}.
\begin{Lemma}
\label{lem : FdT_elem/ordre_grandeur/infty}
\leavevmode \vspace{-\baselineskip}
\begin{enumerate}[(i)]
 	\item For all $a\le \zeta\le b$,
 			\begin{equation}
 			\label{eqn : FdT_elem/ordre_grandeur/infty/UB}
 				\FdT[\intervalleff01^d]\p{\zeta\norme\cdot} \le -d \log \nu\p{\intervalleff{\zeta}{b}}.
 			\end{equation}
	\item If $g\in \AdmNorms$ satisfies $\normeHom{g}\ge b-\eta$, then 
		\begin{equation} 
		\label{eqn : FdT_elem/ordre_grandeur/infty/LB}
		\FdT[\intervalleff01^d](g) \ge -\log 2 - \frac12\log\nu\p{\intervalleff{b-4\eta}{b}}.
		\end{equation}
\end{enumerate}	
\end{Lemma}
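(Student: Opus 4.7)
For part (i), the plan is to exhibit a favorable event $A_n$ of probability at least $\nu\p{\intervalleff{\zeta}{b}}^{dn^d(1+o(1))}$ on which $\UnifDistance(\BoxSPT, \zeta\norme{\cdot})$ is small. I would fix a parameter $\eps' > 0$, pick a finite $\eps'$-net $\acc{x_i}_{1 \le i \le N}$ of $\intervalleff{0}{1}^d$ (with $N = N(\eps')$ independent of $n$), and for each pair $(i, j)$ a canonical stair-step path $\pi_{ij}$ in $\intervalleff{0}{n}^d$ from $\floor{nx_i}$ to $\floor{nx_j}$, of length $O(n)$. Setting $\FastE \dpe \bigcup_{i,j} \pi_{ij}$, one has $\#\FastE = O(n)$ uniformly in $n$. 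The favorable event is
\begin{equation*}
 A_n \dpe \acc{\forall e \in \edges{\intervalleff{0}{n}^d}, \tau_e \in \intervalleff{\zeta}{b}} \cap \acc{\forall e \in \FastE, \tau_e \in \intervalleff{\zeta}{\zeta + \eps'}}.
\end{equation*}
Assuming $\nu\p{\intervalleff{\zeta}{\zeta+\eps'}} > 0$ for all $\eps' > 0$ (otherwise replace $\zeta$ by the smallest support point of $\nu$ in $\intervalleff{\zeta}{b}$ and conclude via the monotonicity of $\FdT$ proved in Proposition~\ref{prop : mon/mon}), independence of edge weights yields
\begin{equation*}
 \Pb{A_n} \ge \nu\p{\intervalleff{\zeta}{\zeta+\eps'}}^{\#\FastE} \cdot \nu\p{\intervalleff{\zeta}{b}}^{\#\edges{\intervalleff{0}{n}^d} - \#\FastE}.
\end{equation*}

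On $A_n$ the lower bound $\BoxSPT(x,y) \ge \zeta\norme{x-y}$ is automatic since every edge weight is at least $\zeta$. For the matching upper bound, I would first estimate $\BoxSPT(x_i, x_j) \le \zeta\norme{x_j - x_i} + O(\eps')$ by using $\pi_{ij}$ (every edge of which has weight in $\intervalleff{\zeta}{\zeta + \eps'}$), then extend to arbitrary $(x,y)$ by the triangle inequality combined with the $b$-Lipschitz property of $\BoxSPT$ granted by \eqref{eqn : intro/main_thm/equivalence_distances}, at cost $O(\eps')$. Hence $A_n \subseteq \LD_{n, \intervalleff{0}{1}^d}(\zeta\norme{\cdot}, C\eps')$ for some constant $C$, and taking $-(1/n^d)\log$ then letting $\eps' \to 0$ yields \eqref{eqn : FdT_elem/ordre_grandeur/infty/UB}, after invoking Theorem~\ref{thm : intro/sketch/FdT_elem} to identify $\FdTsup$ with $\FdT$.

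For part (ii), the first observation is that the assumption $\normeHom{g} \ge b - \eta$ reduces, after relabelling axes, to $g(\base 1) \ge b - \eta$: indeed, for any $u \in \S$, $g(u) \le \sum_i \module{u_i} g(\base i) \le \p{\max_i g(\base i)}\norme{u}$, so $\normeHom{g} = \max_i g(\base i)$. Fix $0 < \eps \le \eta$. On the event $\LD_{n, \intervalleff{0}{1}^d}(g, \eps)$, for every $x' \in \acc{0} \times \intint{0}{n-1}^{d-1}$ one has $\BoxPT[\intervalleff{0}{n}^d](x', x' + n\base 1) \ge n(g(\base 1) - \eps) \ge n(b - 2\eta)$, so the straight axis path $\pi_{x'}$ of length $n$ from $x'$ to $x' + n\base 1$ satisfies $\sum_{e \in \pi_{x'}} (b - \tau_e) \le 2n\eta$. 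Since each summand is nonnegative, pigeonhole forces $\#\acc{e \in \pi_{x'} : \tau_e \ge b - 4\eta} \ge n/2$, an event of probability at most $2^n \nu\p{\intervalleff{b-4\eta}{b}}^{n/2}$. The paths $(\pi_{x'})$ are pairwise edge-disjoint, so these events are independent, whence
\begin{equation*}
 \Pb{\LD_{n, \intervalleff{0}{1}^d}(g, \eps)} \le \p{2^n \nu\p{\intervalleff{b-4\eta}{b}}^{n/2}}^{(n+1)^{d-1}}.
\end{equation*}
Taking $-(1/n^d)\log$, letting $n \to \infty$, and then $\eps \to 0$ delivers \eqref{eqn : FdT_elem/ordre_grandeur/infty/LB}.

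The main technical point in (i) is passing from the path-based favorable event to a uniform control of $\BoxSPT \approx \zeta\norme{\cdot}$; this is resolved by the Lipschitz extension sketched above, provided the $\eps'$-net is chosen before sending $n \to \infty$ so that $\#\FastE$ stays $O(n)$ and hence remains subvolumic. The rest of both arguments is bookkeeping with constants.
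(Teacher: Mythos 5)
Both parts of your proposal are correct. Part~(ii) follows the paper's argument almost verbatim: reduce to $g(\base 1)\ge b-\eta$ by convexity, note that on the large deviation event each axis-parallel column path has passage time $\ge n(b-2\eta)$, pigeonhole to extract at least $n/2$ edges with weight $\ge b-4\eta$ per path, and multiply over the $n^{d-1}$ edge-disjoint columns. (Minor bookkeeping: your index set $\{0\}\times\intint0{n-1}^{d-1}$ has $n^{d-1}$ points, not $(n+1)^{d-1}$, but this is asymptotically irrelevant.)

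For part~(i), your construction works but is substantially heavier than the paper's. You control the full two-sided event $\LD_{n,\intervalleff01^d}(\zeta\norme\cdot,C\eps')$, which forces you to build a subvolumic skeleton $\FastE$ of ``fast'' paths (with weights in $[\zeta,\zeta+\eps']$) so that $\BoxSPT$ is not only $\ge \zeta\norme\cdot$ but also $\le \zeta\norme\cdot+C\eps'$; you also need the side case when $\nu\p{\intervalleff{\zeta}{\zeta+\eps'}}=0$. The paper instead invokes Corollary~\ref{cor : mon/LD+}, which expresses $\FdTinf[\intervalleff01^d]$ through the \emph{one-sided} events $\LD_{n,\intervalleff01^d}^+(D,\eps)=\acc{\BoxSPT\ge D-\eps}$. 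With this in hand, the favorable event collapses to
\begin{equation*}
\bigcap_{e\in\edges{\intint0n^d}}\acc{\tau_e\ge\zeta}\subseteq\acc{\BoxSPT[n,\intervalleff01^d]\ge\zeta\norme\cdot}\subseteq\LD_{n,\intervalleff01^d}^+\p{\zeta\norme\cdot,\eps},
\end{equation*}
whose probability is exactly $\nu\p{\intervalleff\zeta b}^{\#\edges{\intint0n^d}}$, and the bound falls out immediately after identifying $\FdTinf[\intervalleff01^d]$ with $\FdT[\intervalleff01^d]$ via Theorem~\ref{thm : intro/sketch/FdT_elem}. This is precisely what the monotonicity machinery of Section~3 was built to enable: whenever you only need to push the metric upward, the matching upper-bound construction is free. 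Your version is a fine independent check, but worth noting that the tools already in the paper let you avoid the net/fast-path bookkeeping entirely.
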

\begin{proof}
	\emph{Proof of (i).} 
	Let $a\le \zeta\le b$. Note the inclusion
	\begin{equation}
		\bigcap_{e\in\edges{\intint0n^d}}\acc{ \tau_e \ge \zeta } \subseteq \acc{\BoxSPT[n,\intervalleff01^d] \ge \zeta\norme\cdot}.\notag
	\end{equation}
	Consequently, Corollary~\ref{cor : mon/LD+} yields~\eqref{eqn : FdT_elem/ordre_grandeur/infty/UB}.

	\emph{Proof of (ii).} %
	Assume that $\normeHom{g}\ge b-\eta$. Then there exists $u\in \S$ such that $g(u)\ge b-\eta$. By convexity and symmetry there exists $\base i$ such that $g(\base i)\ge b-\eta$; without loss of generality we assume $i=1$. The argument consists in proving that on a large deviation event around $g$, a volumic number of edges must have a passage time close to $b$. For all $v\in \intint0n^{d-1}$, let $\pi_v$ denote the segment from $(v,0)$ to $(v,n)$.  Since $\LD_n(g, \eta) \subseteq \bigcap_v \acc{\tau(\pi_v)\ge n(b-2\eta)}$ and the $\p{\PathPT{\pi_v} }_{v}$ are i.i.d.,
	\begin{equation*}
    	\Pb{ \LD_n(g,\eta)} \le \Pb{\tau(\pi_0) \ge n(b-2\eta)}^{n^{d-1}}.
  	\end{equation*}
    Besides, if at least $n/2$ indices $i\in\intint0{n-1}$ are such that $\PathPT{i\mathrm e_1,(i+1)\mathrm e_1} < b-4\eta$, then
    \begin{equation*}
    	\PathPT{\pi_0} < \frac n2(b-4\eta) + \frac{nb}{2} = n(b-2\eta).
    \end{equation*}
    Consequently,
  	\begin{align}
    	\Pb{\PathPT{\pi_0} \ge n(b-2\eta)} %
    		&\le \Pb{\bigcup_{ \substack{A\subseteq \intint0{n-1}\\ \#A=\floor{n/2} } } \bigcap_{i\in A}\acc {\PathPT{i\mathrm e_1,(i+1)\mathrm e_1} \ge b-4\eta}} \eol
   		 	&\le 2^n\nu\p{\intervalleff {b-4\eta}{b} }^{\floor{n/2} },
   	\end{align}
    therefore
    \begin{equation}
    \FdT[\intervalleff01^d](g)\ge  \liminf_{n\to\infty} -\frac{1}{n^d} \log\Pb{\LD_n(g,\eta)} \ge -\log 2 -\frac12 \log \nu\p{\intervalleff {b-4\eta}{b} }.
  	\end{equation}\end{proof}
\begin{Lemma}
\label{lem : FdT_elem/ordre_grandeur/zero}
\leavevmode \vspace{-\baselineskip}
\begin{enumerate}[(i)]
	\item $\FdT[\intervalleff01^d](\mu)=0$.
	\item Let $g\in \AdmNorms$. If there exists $u\in\R^d$ such that $g(u) > \mu(u)$, then $\FdT[\intervalleff01^d](g)>0$.
\end{enumerate}
\end{Lemma}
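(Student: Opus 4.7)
The plan is to treat (i) and (ii) separately; both rely on results already available in the paper, namely the shape theorem~\eqref{eqn : intro/framework/def_TimeConstant} for (i) and the upper-tail LD estimate~\eqref{eqn : intro/framework/LD_order_n^d} for (ii).

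For (i), my strategy is to show that $\BoxSPT$ converges to $\mu$ uniformly in probability, i.e.\ $\Pb{\LD_{n,\intervalleff01^d}(\mu,\eps)} \to 1$ for every $\eps>0$. Since $\FdTsup[\intervalleff01^d]$ is a nonnegative rate function (Lemma~\ref{lem : intro/sketch/UB_LB}) and $\AdmDistances[\intervalleff01^d]$ is compact (Proposition~\ref{prop : limit_space/compactness/compactness}), this convergence forces $\FdTsup[\intervalleff01^d](\mu)=0$, and then Theorem~\ref{thm : intro/sketch/FdT_elem} yields $\FdT[\intervalleff01^d](\mu)=0$. To obtain the uniform convergence I would combine two ingredients: the pointwise shape theorem gives $\frac1n\PT(nx,ny)\to\mu(y-x)$ almost surely for each pair $(x,y)$, and Lemma~\ref{lem : limit_space/geodesics/localisation} shows that when $(x,y)$ lies in the interior at distance $>\tfrac{b}{a}\norme{y-x}$ from the boundary, the $\PT$-geodesic stays in $n\intervalleff01^d$ and hence $\BoxSPT(x,y)=\SPT(x,y)$. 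For general $(x,y)\in\intervalleff01^d\times\intervalleff01^d$ I would reduce to the interior case by a small perturbation: pick $x',y'$ inside at distance $\eta$ from $x,y$, insert them as intermediate points, and use $\BoxSPT(x,y)\le b\eta+\BoxSPT(x',y')+b\eta$ together with the trivial lower bound $\BoxSPT\ge\SPT$, then send $\eta\to0$. Since $(\BoxSPT)_{n\ge1}$ is equi-Lipschitz with constant $b$ and pointwise convergence occurs on a countable dense subset, Arzelà--Ascoli upgrades this to uniform a.s.\ convergence to $\mu$.

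For (ii), assume $g(u)>\mu(u)$ for some $u\in\R^d$; by absolute homogeneity we may rescale to $\norme u=1$, and set $c\dpe g(u)-\mu(u)>0$. Choose $\lambda>0$ small enough that $x\dpe\frac12\sum_{i=1}^d\base{i}-\frac{\lambda}{2}u$ and $y\dpe x+\lambda u$ both lie in $\intervalleff01^d$ and satisfy $\clball{x,\frac{b}{a}\norme{y-x}}\cup\clball{y,\frac{b}{a}\norme{y-x}}\subseteq\intervalleff01^d$; this is possible as soon as $\frac{b}{a}\lambda\le\frac12$. By Lemma~\ref{lem : limit_space/geodesics/localisation}, every $\PT$-geodesic from $nx$ to $ny$ remains inside $n\intervalleff01^d$, so $\BoxPT[n\intervalleff01^d](nx,ny)=\PT(nx,ny)$.

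Setting $\eps\dpe\lambda c/2>0$, the event $\LD_{n,\intervalleff01^d}(g,\eps)$ forces
\[
\tfrac1n\PT(nx,ny)=\BoxSPT(x,y)\ge g(y-x)-\eps=\lambda g(u)-\eps,
\]
and $\lambda g(u)-\eps-\mu(y-x)=\lambda c/2>0$. The upper-tail LD estimate~\eqref{eqn : intro/framework/LD_order_n^d}, extended to direction $u$ as noted in the paper, yields $\Pb{\PT(nx,ny)\ge n(\lambda g(u)-\eps)}\le\exp(-c'n^d)$ for some $c'>0$ and all large $n$. Hence $\FdTinf[\intervalleff01^d](g)\ge c'>0$, and Theorem~\ref{thm : intro/sketch/FdT_elem} concludes $\FdT[\intervalleff01^d](g)>0$. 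The only nontrivial step is the uniform upgrade of the shape theorem in part~(i); the compactness of $\AdmDistances$ and the localisation lemma reduce it cleanly to a countable pointwise a.s.\ statement, so this is only a mild technical obstacle rather than a genuine one, and part~(ii) is essentially an application of the existing $n^d$-speed upper-tail estimate.
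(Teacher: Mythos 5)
Your overall decomposition — show $\Pb{\LD_{n,\intervalleff01^d}(\mu,\eps)}\to1$ for (i), and leverage the known $n^d$-speed upper-tail estimate~\eqref{eqn : intro/framework/LD_order_n^d} for (ii) — differs from the paper's proof, and the two parts deserve separate assessment.

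\paragraph{Part (i): a genuine gap.}
You aim for the full two-sided event $\LD_{n,\intervalleff01^d}(\mu,\eps)$, which forces you to prove an upper bound $\BoxSPT(x,y)\le\mu(x-y)+\eps$ uniformly in $(x,y)$. Your reduction to the interior case by perturbing $x,y$ to nearby interior points $x',y'$ is incorrect as stated. For $\BoxSPT(x',y')=\SPT(x',y')$ you need the localisation ball $\clball{x',\frac ba\norme{x'-y'}}$ to lie inside $\intervalleff01^d$, i.e.\ $\d(x',\partial\intervalleff01^d)>\frac ba\norme{x'-y'}$. Pushing $x,y$ a distance $\eta$ into the interior does nothing to shrink $\norme{x'-y'}$; if $\norme{x-y}$ is of order $1$, then $\frac ba\norme{x'-y'}>\frac12\ge\d(x',\partial\intervalleff01^d)$ for every point of the cube, regardless of $\eta$, and the ball always protrudes. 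The missing idea is to split $\intervalleff{x'}{y'}$ into $N$ short sub-segments, apply localisation separately to each one after pushing the intermediate points inward, and only then send $\eta\to0$. This is fixable but is a substantively different and heavier argument than what you wrote. The paper avoids the issue entirely by proving only the one-sided bound $\Pb{\LD^+_{n,\intervalleff01^d}(\mu,2\eps)}\to1$ (which needs only $\BoxSPT\ge\SPT\to\mu$, a covering of $\intervalleff01^d$ by a finite grid, and equi-Lipschitzness), and then invoking Corollary~\ref{cor : mon/LD+} — whose proof is where the monotonicity Proposition~\ref{prop : mon/mon} does the work you would otherwise have to do by hand. In other words, the hard direction of the ``box shape theorem'' is replaced by the highway-method machinery of Section~\ref{sec : mon}, which is already in place.

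\paragraph{Part (ii): correct, and a different route.}
Your argument is sound. Choosing a short segment $\intervalleff xy$ centred in $\intervalleff01^d$ with $\frac ba\norme{x-y}$ smaller than the distance to the boundary makes $\BoxPT[n\intervalleff01^d](nx,ny)=\PT(nx,ny)$ deterministically by Lemma~\ref{lem : limit_space/geodesics/localisation} and Proposition~\ref{prop : limit_space/geodesics/BoxSPT_espace}, and then~\eqref{eqn : intro/framework/LD_order_n^d} (generalised to direction $u$ and applied at scale $m=\floor{n\lambda}$ after the usual $\grando(1)$ lattice adjustment) gives the $n^d$-rate bound, hence $\FdTinf[\intervalleff01^d](g)>0$. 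The paper instead re-derives the positivity from scratch: it defines tilted boxes, bounds their mean crossing time using the definition of $\mu$, applies Talagrand's concentration inequality to get a linear-speed tail bound for a single tilted box, and then packs $\Theta(k^{d-1})$ disjoint tilted boxes inside $\intervalleff0{2nk}^d$ to promote this to speed $n^d$. Your version is shorter and more direct, but it leans more heavily on Kesten's Theorem~5.9 as a black box in an arbitrary direction; the paper's version is longer but essentially self-contained modulo Talagrand. Either is legitimate.
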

\begin{proof}
	\emph{Proof of (i).}  Let $x,y\in \intervalleff01^d$ and $\eps>0$. By triangle inequality and~\eqref{eqn : intro/main_thm/equivalence_distances},
	\begin{equation*}
	 	\SPT(x,y) \ge \SPT\p{ \frac{\floor{nx}}{n} ,  \frac{\floor{ny}}{n}} - \frac{2bd}{n}.
	 \end{equation*}
	Consequently, by definition of $\mu$ (see~\eqref{eqn : intro/framework/def_TimeConstant}),
	\begin{align}
	 \label{eqn : FdT_elem/ordre_grandeur/zero/time_constant}
	 	\Pb{\SPT(x,y) \ge \mu(x-y) - \eps} &\xrightarrow[n\to\infty]{} 1.
	 \intertext{Besides, for large enough $k$, for all $n \ge 1$, }
	 \label{eqn : FdT_elem/ordre_grandeur/zero/fav}
	 	\bigcap_{x,y\in\frac1k\intint0k^d} \acc{\SPT(x,y) \ge \mu(x-y) - \eps} &\subseteq \LD_{n,\intervalleff01^d}^+(\mu, 2\eps).
	\end{align}
	By~\eqref{eqn : FdT_elem/ordre_grandeur/zero/time_constant}, with fixed $k$, the probability of the left hand-side of~\eqref{eqn : FdT_elem/ordre_grandeur/zero/fav} converges to $1$ as $n\to\infty$. We conclude with Corollary~\ref{cor : mon/LD+}.

	\emph{Proof of (ii).} We adapt the arguments used by Kesten to prove a bound on the probability of the upper tail large deviation event for the point-point passage time, in the direction $\base 1$ \cite[Equation~(5.13)]{KestenStFlour}. Assume that there exists $u \in\R^d$ such that $g(u) > \mu(u)$. By homogeneity we can assume $u\in \S$. For all $n,k\ge 1$, define the tilted box
	\begin{equation*}
		\TBox(n,k) \dpe \acc{tu + x, x\in\clball{0,n}\cap u^\perp, t\in\intervalleff0{nk} },
	\end{equation*}
	where $u^\perp$ the orthogonal complement of $u$. As in \cite[bottom of page 198]{KestenStFlour}, one shows that there exists $n\ge 1$ such that for all $k\ge 1$,
	\begin{equation*}
		\frac{\E{\BoxPT[\TBox(n,k)]\p{0,nku} }}{nk} < g(u).
	\end{equation*}
	Consequently, by Talagrand's inequality (see e.g. \cite[Theorem 3.13]{50yFPP}), there exists $\eps>0$ such that
	\begin{equation}
	\label{eqn : FdT_elem/ordre_grandeur/zero/Un_couloir} 
		\liminf_{k\to\infty}-\frac1{nk} \log \Pb{ \frac{\BoxPT[\TBox(n,k)]\p{0,nku}}{nk} \ge g(u) - 2\eps }>0
	\end{equation}
	Moreover, there exists $\Cl{cst : corridor_packing}>0$ such that for all $k \ge 1$, there exists $v_1,\dots, v_{\floor{\Cr{cst : corridor_packing} k^{d-1} }} \in \Z^d$, such that the boxes $\p{\TBox(n,k)+v_i}_{1\le i \le\floor{\Cr{cst : corridor_packing} k^{d-1} } }$ are pairwise disjoint and included in $\intervalleff{0}{2nk}^d$. This implies the inclusion
	\begin{align}
	 	\LD_{2nk, \intervalleff01^d}\p{g, \eps} &\subseteq \bigcap_{i=1}^{\floor{\Cr{cst : corridor_packing} k^{d-1} }} \acc{ \frac{\BoxPT[v_i+\TBox(n,k)]\p{v_i,v_i+nku}}{nk} \ge g(u) - 2\eps }, \nonumber
	 	\intertext{and the terms of the intersection are independent. Consequently, by stationarity,}
	 	-\frac{1}{{nk}^d} \log \Pb{ \LD_{2nk, \intervalleff01^d}\p{g, \eps} }%
	 		&\ge -\frac{\floor{\Cr{cst : corridor_packing} k^{d-1} }}{{nk}^d} \log \Pb{ \frac{\BoxPT[\TBox(n,k)]\p{0,nku}}{nk} \ge g(u) - 2\eps }.\nonumber
	\end{align}
	Letting $k\to\infty$ and applying~\eqref{eqn : FdT_elem/ordre_grandeur/zero/Un_couloir}, we get $\FdT[\intervalleff01^d](g)>0$.
\end{proof}
\subsection{Continuity of the rate function on $\NoncritAdmNorms$ : proof of Proposition~\ref{prop : intro/sketch/continuite}}
\label{subsec : FdT_elem/continuite}
We already know by Lemma~\ref{lem : intro/sketch/UB_LB} (\ref{item : intro/sketch/UB_LB/rate_function}) that the restriction of $\FdT[\intervalleff01^d]$ on $\NoncritAdmNorms$ is lower semicontinuous, thus it is sufficient for the first part of the proposition to show that this function is upper semicontinuous. This follows from Lemma~\ref{lem : FdT_elem/continuite/main_inequality}.
\begin{Lemma}
	\label{lem : FdT_elem/continuite/main_inequality}
	Let $g\in\NoncritAdmNorms$ and $0<\eta \le \frac{b-\normeHom g}{2}$. Then there exists a constant $\Cl{FdT_elem/DIM}>0$, depending only on $d$, such that for all $p\ge 1$,
	\begin{equation}
	\label{eqn : FdT_elem/continuite/main_inequality}
		\FdT[\intervalleff01^d]\p{g+ \frac\eta p \norme\cdot} %
			\le \FdT[\intervalleff01^d](g) - \frac{\Cr{FdT_elem/DIM}} p \log \nu\p{\intervalleff{b-\eta}{b}}.
	\end{equation}
\end{Lemma}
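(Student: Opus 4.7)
The plan is to adapt the subadditive construction of Lemma~\ref{lem : FdT_elem/existence/main_lemma} so that the assembled configuration on a large box approximates $g+\tfrac{\eta}{p}\norme\cdot$ rather than $g$. Fix large $n$ and set $m = pn + (p-1)w$, where the corridor width $w \sim m/p^2$ is tuned to produce a slowdown of order $\eta/p$ per unit $\ell^1$-length. Partition $\intervalleff0m^d$ into $p^d$ sub-tiles $(T_v)_{v \in \intint0{p-1}^d}$ of side $n$, separated by axis-aligned corridors of width $w$; let $C$ denote the set of edges lying in the corridor region. Consider the favorable event
\begin{equation*}
\mathcal F \dpe \p{\bigcap_{v} \LD_{n, T_v}(g, \delta)} \cap \p{\bigcap_{e \in C} \acc{\EPT[e] \ge b-\eta}}.
\end{equation*}
By stationarity, independence across sub-tiles (as in~\eqref{eqn : FdT_elem/existence/Stationarity_LD}), and the estimate $\#C = \grando(m^d/p)$,
\begin{equation*}
-\frac{1}{m^d}\log\Pb{\mathcal F} \le -\frac{1}{n^d}\log\Pb{\LD_{n, \intervalleff01^d}(g, \delta)} - \frac{\Cr{FdT_elem/DIM}}{p}\log\nu\p{\intervalleff{b-\eta}{b}} + \petito(1),
\end{equation*}
for a constant depending only on $d$.

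The core step is to show $\mathcal F \subseteq \LD_{m, \intervalleff01^d}\p{g + \tfrac{\eta}{p}\norme\cdot, \eps}$ for small $\eps$ and large $m$. The upper bound $\BoxSPT[m, \intervalleff01^d](x,y) \le g(y-x) + \tfrac{\eta}{p}\norme{y-x} + \petito(1)$ mirrors the computation after~\eqref{eqn : FdT_elem/existence/UB_Fav_IT}: route a path through sub-tiles with quasi-$g$-optimal segments (thanks to $\LD_n(g,\delta)$ on each $T_v$ together with Lemma~\ref{lem : limit_space/constructions/restriction}) plus perpendicular corridor crossings, whose total $\ell^1$-length is $\grando((p-1)w/m) = \grando(1/p)$; with $w$ tuned so that the excess cost accumulated in corridors equals $\eta/p$ per unit $\ell^1$-length, the upper bound follows. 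For the lower bound, apply Lemma~\ref{lem : limit_space/corridor/corridor} with $X_k = n T_v$, $D = g$, $D' = \BoxPT[\intervalleff0m^d]$, $\delta_1 = w$ and $\delta_2 = n\delta$ to obtain $\BoxSPT[m, \intervalleff01^d](x,y) \ge g(y-x) - \petito(1)$; then supplement with a direct count of the mandatory slab crossings any path from $x$ to $y$ in different sub-tiles must perform, each contributing at least $\eta w$ excess passage time compared to a pure $g$-environment, to recover the additional $\tfrac{\eta}{p}\norme{y-x}$ term.

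The conclusion follows by Corollary~\ref{cor : mon/LD+} applied to $g + \tfrac{\eta}{p}\norme\cdot$, combined with Theorem~\ref{thm : intro/sketch/FdT_elem}, after letting $m \to \infty$ and then $\delta, \eps \to 0$. The main obstacle is the lower bound on $\BoxSPT[m, \intervalleff01^d]$: axis-aligned corridors produce a direction-dependent slowdown rate of order $\tfrac{(p-1)w}{m}\sum_i 1/v_i$ along a straight path in a direction $v$ with $v_i > 0$, and Lemma~\ref{lem : limit_space/corridor/corridor} on its own only yields the weaker bound $\BoxSPT \ge g - \eps$. Matching both upper and lower bounds uniformly in direction hinges on a careful accounting of mandatory crossings and on absorbing the direction-dependent factor into the dimension-only constant $\Cr{FdT_elem/DIM}$.
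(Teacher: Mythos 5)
Your overall plan (make a fraction $\sim 1/p$ of the box slow at cost $\sim b-\eta$, and estimate the probability cost accordingly) is the right intuition, but as written the argument has genuine gaps, and the paper's proof is structured quite differently to avoid exactly these difficulties.

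First, your corridor of width $w\sim m/p^2$ plays two incompatible roles at once. If you apply Lemma~\ref{lem : limit_space/corridor/corridor} with the corridor edges only known to satisfy $\EPT[e]\ge b-\eta$, then the $\eps$ in hypothesis~\eqref{eqn : limit_space/corridor/intensite_couloir} is forced to equal $\eta$, and the conclusion degrades to $D'(x,y)\ge D(x,y)-3\diam(X)(\eta + \delta_2/\delta_1)$. That error is $\Theta(\eta)$ and does not vanish, so Lemma~\ref{lem : limit_space/corridor/corridor} alone does not even give $\BoxSPT\ge g-\petito(1)$, contrary to what you write. The paper avoids this with a three-part decomposition of $\intervalleff01^d$ into soft tiles (gradient $g$), hard tiles (gradient $(b-\eta)\norme\cdot$), and a genuinely \emph{thin} corridor of width $\Theta(n\delta)$ with $\EPT[e]\ge b-\eps$. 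When Lemma~\ref{lem : limit_space/corridor/corridor} is applied, both soft and hard tiles count as tiles, so the lemma's $\eps$ can go to $0$ independently of $\eta$.

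Second, and more fundamentally, the paper does not attempt to show that $\BoxSPT$ is directly close to $g+\frac\eta p\norme\cdot$ (your upper bound cannot hold: the corridor's excess cost per unit length is $(b-\eta)-g(\cdot)$, which depends on the direction and can be much larger than $\eta$). Instead it introduces an auxiliary metric $D_k^{(p)}\in\AdmDistances[\intervalleff01^d]$, built by gradient prescription (Lemma~\ref{lem : limit_space/constructions/inverse}) with gradient $g$ on the soft region $S(k,p)$ and $(b-\eta)\norme\cdot$ on the hard region $H(k,p)$, and then applies Lemma~\ref{lem : limit_space/corridor/corridor} with $D=D_k^{(p)}$ — whose restriction to each sub-tile matches the sub-tile's behaviour exactly, so hypothesis~\eqref{eqn : limit_space/corridor/controle_tuile} is satisfied. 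This yields $\cFav\subseteq\LD^+_m(D_k^{(p)},3(\eps+\delta))$ and hence $\FdTinf[\intervalleff01^d](D_k^{(p)})\le\FdT[\intervalleff01^d](g)-\frac{\Cr{FdT_elem/DIM}}{p}\log\nu(\intervalleff{b-\eta}{b})$ via Corollary~\ref{cor : mon/LD+}. Only then, in a separate deterministic step, does the paper show that any subsequential limit $D^{(p)}$ of $(D_k^{(p)})_k$ satisfies $D^{(p)}\ge g+\frac\eta p\norme\cdot$, via the integral inequality~\eqref{eqn : FdT_elem/continuite/integral_path}; the rate-function inequality then follows from Proposition~\ref{prop : mon/mon} and lower semicontinuity.

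Third, your single-scale construction ($p^d$ sub-tiles of side $\Theta(m/p)$) cannot produce the bound $\BoxSPT(x,y)\ge g(y-x)+\frac\eta p\norme{y-x}-\eps$ uniformly in $x,y$: two points in the same sub-tile undergo no mandatory corridor crossing, so the excess is zero even though $\norme{y-x}$ can be of order $1/p$, which is not $\petito(\eps)$ once $\eps\to0$ with $p$ fixed. This is precisely why the paper's soft/hard classification lives at the finer scale $kp$ and the limit $k\to\infty$ is taken \emph{before} concluding; the $-\frac{d}{pk}$ error term in~\eqref{eqn : FdT_elem/continuite/step2} reflects exactly this boundary effect and vanishes only as $k\to\infty$. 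Finally, your ``direct count of mandatory slab crossings'' is not formalized: geodesics may enter and leave a slab many times, and controlling the number of excursions is the whole content of the regularization argument in Lemma~\ref{lem : limit_space/corridor/corridor}; invoking the lemma and separately hand-counting crossings is not a valid combination without a new argument. The auxiliary-metric route packages this cleanly and avoids the issue.
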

Indeed, let $(g_n)$ be a sequence of norms in $\NoncritAdmNorms$ converging to $g\in\NoncritAdmNorms$ and $\eta$ as in Lemma~\ref{lem : FdT_elem/continuite/main_inequality}. Let $p\ge 1$. For large enough $n$,
\begin{align}
	g_n &\le g+ \frac\eta p \norme\cdot,\notag
	\intertext{therefore by \eqref{eqn : FdT_elem/continuite/main_inequality} and Proposition~\ref{prop : mon/mon}, }
	\limsup_{n\to\infty} \FdT[\intervalleff01^d](g_n) %
		&\le \FdT[\intervalleff01^d](g)  - \frac{\Cr{FdT_elem/DIM}} p \log \nu\p{\intervalleff{b-\eta}{b}}.\notag
	\intertext{Letting $p\to\infty$, we get}
	\label{eqn : FdT_elem/continuite/upper_semicontinuity}
	\limsup_{n\to\infty} \FdT[\intervalleff01^d](g_n) %
		&\le \FdT[\intervalleff01^d](g),
\end{align}
i.e. the restriction of $\FdT[\intervalleff01^d]$ on $\NoncritAdmNorms$ is upper semicontinuous.

To prove the second part of Proposition~\ref{prop : intro/sketch/continuite}, note that by Proposition~\ref{prop : intro/sketch/ordre_grandeur}, if $\nu\p{\acc b}=0$ and $g\in\AdmNorms\setminus\NoncritAdmNorms$ then $\FdT[\intervalleff01^d](g)=\infty$, thus upper semicontinuity at $g$ is straightforward. 
 \qed
\begin{proof}[Proof of Lemma~\ref{lem : FdT_elem/continuite/main_inequality}]
Let $p,k\ge 2$. For all $v\in \Z^d$, $v$ is said to be \emph{hard} if at least one of its coordinates is a multiple of $p$, and \emph{soft} otherwise. Recall the definition of $\Tile(\cdot,\cdot)$ given by~\eqref{eqn : intro/notations/def_tiles}. Let
\begin{equation}
		S(k,p) \dpe \bigcup_{\substack{v\in\intint0{kp-1}^d \\ v\text{ soft}}} \Tile(v,kp)%
		\text{ and } H(k,p) \dpe \intervalleff01^d\setminus S(k,p).
\end{equation}
The choice of notations $S$ and $H$ refers to $\emph{soft}$ and $\emph{hard}$ respectively. Note that
\begin{equation}
	\label{eqn : FdT_elem/continuite/UB_nb_hard}
	\#\acc{v \in \intint{0}{kp-1}^d \Bigm| v\text{ hard}}= k^d\p{p^d - (p-1)^d}.
\end{equation}
We define the metric $D_k^{(p)}$ by prescribing its gradient as in Lemma~\ref{lem : limit_space/constructions/inverse}, Equation~\eqref{eqn : limit_space/constructions/inverse_def} with $X=\intervalleff01^d$ and 
\begin{equation}
	g_z \dpe%
		\begin{cases}
			g &\text{ if $z\in S(k,p)$,}\\
			(b-\eta)\norme\cdot &\text{ if $z \in H(k,p)$.}
		\end{cases}
\end{equation}

We first show that
\begin{equation}
\label{eqn : FdT_elem/continuite/step1}
	\FdTinf[\intervalleff01^d]\p{D_k^{(p)}} \le \FdT[\intervalleff01^d](g) - \frac{\Cr{FdT_elem/DIM}}{p} \nu\p{\intervalleff{b-\eta}{b}},
\end{equation}
where $\Cr{FdT_elem/DIM}>0$ only depends on $d$. Our construction is similar to the one performed in the proof of Lemma~\ref{lem : FdT_elem/existence/main_lemma}, except for the soft/hard distinction and the fixed number of tiles involved. Let $0 < \eps, \delta < \frac14$ and $n\ge 1$. Let $m\dpe nkp$. For all $v\in\intint0{kp-1}^d$, we define the tile%
\stepcounter{tile}
\begin{equation}
	\cTile_n(v) \dpe v + \intervalleff{\frac{\floor{n\delta} }{n}}{ \frac{\ceil{n(1-\delta)} }{n} }^d.
\end{equation} 
Note that it is not equal to the tile $\Tile_n^*(v)$ we defined as in~\eqref{eqn : FdT_elem/existence/tile*}. For large enough $n$, the variables $\p{\BoxPT[n, \cTile_n(v)]}_{v\in\intint0{kp-1}^d }$ live on pairwise disjoint subsets of $\bbE^d$ so they are independent. Let $\Corridor$ denote the set of edges included in $\intervalleff0m^d$ but not in any $n\cTile_n(v)$. This set satisfies
\begin{equation}
\label{eqn : FdT_elem/continuite/Card_Corridor}
 	\#\Corridor = \#\edges{\intint0m^d} - k^dp^d\#\edges{n\cTile_n(0)} \le d(m+1)^d - dk^dp^d(n(1-2\delta) -2)^d.
\end{equation}
We define the event
\begin{align}
\begin{split}
	\cFav &\dpe \p{ \bigcap_{\substack{v\in\intint{0}{kp-1}^d \\ v\text{ soft}} } \LD_{n,\cTile_n(v)}(g, \delta^2) }%
		\cap \p{ \bigcap_{\substack{v\in\intint{0}{kp-1}^d \\ v\text{ hard}} }  \bigcap_{e\in\edges{n\cTile_n(v)}} \acc{\EPT[e] \ge b- \eta} } \\%
		&\qquad \cap \p{ \bigcap_{e\in \Corridor}\acc{\EPT[e] \ge b-\eps} }.
\end{split}\nonumber
\end{align}
By stationarity, for all $v\in\intint{0}{kp-1}^d$,
\begin{align}
	\Pb{ \LD_{n,\cTile_n(v)}(g, \delta^2) } &=  \Pb{ \LD_{n,\intervalleff0{\frac{\ceil{n(1-\delta)}}{n}  - \frac{ \floor{n\delta}}{n} }^d }(g, \delta^2) }\nonumber\\
		&= \Pb{ \LD_{\ceil{n(1-\delta)} - \floor{n\delta} ,\intervalleff01^d }\p{ g, \frac{ n \delta^2}{\ceil{n(1-\delta)} - \floor{n\delta}} } }.\nonumber
	\intertext{Consequently, for large enough $n$,}
	\Pb{ \LD_{n,\cTile_n(v)}(g, \delta^2) } &\ge \Pb{ \LD_{\ceil{n(1-\delta)} - \floor{n\delta} ,\intervalleff01^d }\p{ g, \delta^2 }}.
\end{align}
By independence, stationarity,~\eqref{eqn : FdT_elem/continuite/UB_nb_hard} and~\eqref{eqn : FdT_elem/continuite/Card_Corridor}, for large enough $n$,
\begin{align}
\begin{split}
	\Pb{\cFav} &\ge \Pb{ \LD_{\ceil{n(1-\delta)} - \floor{n\delta} , \intervalleff01^d}(g, \delta^2) }^{(pk)^d} %
		\cdot \nu\p{\intervalleff{b-\eta}{b}}^{dk^d\p{p^d - (p-1)^d}(n+1)^d } \\
		&\qquad \cdot \nu\p{\intervalleff{b-\eps}{b}}^{d\p{ (m+1)^d - k^dp^d (n(1-2\delta) -2)^d }  }.
	\label{eqn : FdT_elem/continuite/Pb_Fav}
\end{split}
\end{align}
	Thanks to Lemma~\ref{lem : limit_space/corridor/corridor} with $X=\intervalleff01^d$, $D=D_k^{(p)}$, $D'= \BoxSPT[m,\intervalleff01^d]$, $\delta_1 = \frac{n\delta}{m}$ and $\delta_2 = \frac{n\delta^2}{m}$, on the event $\cFav$, for all $x,y\in\intervalleff01^d$,
	\begin{equation*}
		\BoxSPT[m,\intervalleff01^d](x,y) \ge D_k^{(p)}(x,y) - 3\p{\eps + \frac{n\delta^2}{m}\cdot\frac{m}{n\delta} }= D_k^{(p)}(x,y) - 3\p{\eps +\delta},
	\end{equation*}
	i.e. $\cFav\subseteq \LD_m^+\p{D_k^{(p)}, 3\p{\eps +\delta}}$. Taking the $\log$ in \eqref{eqn : FdT_elem/continuite/Pb_Fav}, multiplying by $-\frac{1}{m^d}$ and letting $n\to\infty$, we therefore get
	\begin{equation}
	\begin{split}
		\liminf_{m\to\infty} -\frac{1}{m^d} \log \Pb{\LD_m^+\p{D_k^{(p)}, 3\p{\eps +\delta}}} %
			&\le (1-2\delta)^d \FdT[\intervalleff01^d](g) %
			- \frac{\Cr{FdT_elem/DIM}}{p} \log\nu\p{\intervalleff{b-\eta}{b}}\\ %
			&\qquad- \Cl{FdT_elem/DIM2}\delta \log \nu\p{\intervalleff{b-\eps}{b}},
	\end{split}
	\end{equation}
	where $\Cr{FdT_elem/DIM}$ and $\Cr{FdT_elem/DIM2}$ only depends on $d$. Choosing $\eps$ small enough then $\delta$ small enough and applying Corollary~\ref{cor : mon/LD+}, we obtain~\eqref{eqn : FdT_elem/continuite/step1}.

	Let $D^{(p)}$ be an adherence value of $\p{D_k^{(p)}}_{k\ge 2}$ and $x,y \in \intervalleff01^d$. We claim that
	\begin{equation}
		\label{eqn : FdT_elem/continuite/step2}
		D^{(p)}(x,y) \ge g(x-y) + \frac\eta p\norme{x-y}.
	\end{equation}
	Indeed, let $x\ResPath{\intervalleff01^d}{\gamma} y$ be a Lipschitz path. In order to lower bound $D^{(p)}(x,y)$, we bound the integral in~\eqref{eqn : limit_space/constructions/inverse_def}. We have
	\begin{align}
		\int_0^{T_\gamma}g_{\gamma(t)}\p{\gamma'(t)}\d t %
			&= \int_0^{T_\gamma }\p{\ind{H(k,p)}(\gamma(t))(b-\eta)\norme{\gamma'(t)} + \ind{S(k,p)}(\gamma(t))g\p{\gamma'(t)} }\d t\eol
			&= \int_0^{T_\gamma }g\p{\gamma'(t)} \d t%
				+ \int_0^{T_\gamma }\ind{H(k,p)} \p{\gamma(t)} \cro{ (b-\eta)\norme{\gamma'(t)} - g\p{\gamma'(t)} } \d t.\notag
	\intertext{Applying Jensen's inequality on the first term and $\normeHom g \le b -2\eta$ on the second gives}
	\label{eqn : FdT_elem/continuite/integral_path}
		\int_0^{T_\gamma}g_{\gamma(t)}\p{\gamma'(t)}\d t%
			&\ge g(x-y) + \eta \int_0^{T_\gamma }\ind{H(k,p)}\p{\gamma(t)} \norme{\gamma'(t)}\d t.
	\end{align}
	Using the notations $x=(x_1,\dots,x_d)$, $y=(y_1,\dots,y_d)$, $\gamma(t)=\p{\gamma_1(t),\dots,\gamma_d(t)}$, standard integral manipulations give
	\begin{align*}
		\int_0^{T_\gamma}\ind{H(k,p)}(\gamma(t)) \norme{\gamma'(t)}\d t%
			&= \sum_{i=1}^d\int_0^{T_\gamma} \ind{H(k,p)}\p{\gamma(t)} \module{\gamma_i'(t)}\d t\\
			&\ge \sum_{i=1}^d\int_0^{T_\gamma} \ind{p\Z}\p{\floor{kp\gamma_i(t)}} \module{\gamma_i'(t)}\d t\\
			&\ge \sum_{i=1}^d\module{ \int_0^{T_\gamma} \ind{p\Z}\p{\floor{kp\gamma_i(t)}} \gamma_i'(t) }\d t \\
			&= \sum_{i=1}^d \module{\int_{x_i}^{y_i} \ind{p\Z}\p{\floor{kp s}} } \d s\\
			&= \sum_{i=1}^d \Leb\p{\intervalleff{x_i}{y_i}\cap \p{\bigcup_{j=0}^{k-1}\intervallefo{\frac{j}{k}}{ \frac{jp+1}{kp} }  } },\nonumber
	\intertext{thus}
		\int_0^{T_\gamma}\ind{H(k,p)}(\gamma(t)) \norme{\gamma'(t)}\d t%
			&\ge \sum_{i=1}^d \frac{ \floor{k\module{x_i-y_i} } }{pk}\\
			&\ge \frac{\norme{x-y} }{p} - \frac{d}{pk}.
	\end{align*}
	Combining this inequality with~\eqref{eqn : FdT_elem/continuite/integral_path}, we get
	\begin{equation*}
		\int_0^{T_\gamma}g_{\gamma(t)}\p{\gamma'(t)}\d t%
			\ge g(x-y) + \frac{\eta \norme{x-y} }{p} - \frac{\eta d}{pk}.
	\end{equation*}
	Taking the infimum with respect to $\gamma$ then letting $k\to\infty$ yields~\eqref{eqn : FdT_elem/continuite/step2}.
\end{proof}

\section{Proof of the main result}
\label{sec : PGD}

In this section $X\in \Windows$ is fixed and $\nu$ satisfies Assumption~\ref{ass : intro/main_thm/support}. We prove Theorem~\ref{thm : MAIN}. Thanks to Lemma~\ref{lem : intro/sketch/UB_LB}, the remark following it and the compactness of $\AdmDistances[X]$, it is sufficient to show that the upper and lower rate functions defined in~\eqref{eqn : intro/sketch/FdTsup} and~\eqref{eqn : intro/sketch/FdTinf} are respectively upper bounded and lower bounded by the integral in~\eqref{eqn : MAIN/integral}, i.e. Propositions~\ref{prop : PGD/Upper} and~\ref{prop : PGD/Lower}. Note that for every $D\in\AdmDistances[X]$, the integrand in~\eqref{eqn : MAIN/integral} is well-defined almost everywhere, since the gradient of $D$ at almost every point belongs to $\AdmNorms$ (Proposition~\ref{prop : limit_space/gradient/derivee}) and the lower and upper rate functions agree on $\AdmNorms$ (Theorem~\ref{thm : intro/sketch/FdT_elem}). Besides $\FdTinf[\intervalleff01^d]$ is measurable as a limit of measurable functions, and $z\mapsto (\grad D)_z$ is also measurable (see Proposition~\ref{prop : limit_space/gradient/ptes_elem}), therefore the integrand in~\eqref{eqn : MAIN/integral} is measurable. Recall the definitions~\eqref{eqn : intro/notations/intiles} and~\eqref{eqn : intro/notations/extiles}.

\subsection{Upper bounding the upper rate function}
\label{subsec : PGD/Upper}

\begin{Proposition}
\label{prop : PGD/Upper}
Let $D\in \AdmDistances$ and $(g_z)_{z\in X}= \p{ (\grad D)_z }_{z\in X}$ its gradient. Then
\begin{equation}
	\label{eqn : PGD/Upper}
	\FdTsup[X](D) \le \int_X \FdT[\intervalleff01^d]\p{ g_z}\d z.
\end{equation}
\end{Proposition}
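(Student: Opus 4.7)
The plan is to use Theorem~\ref{thm : intro/sketch/FdT_elem} as an elementary building block, tile by tile. Given $D\in\AdmDistances[X]$ and $\eps>0$, I would construct, for $k$ large and $n$ large depending on $k$, a favorable event $F_{n,k}$ implying $\UnifDistance(\BoxSPT, D)\le \eps$ whose probability is at least $\exp(-n^d(\int_X\FdT[\intervalleff01^d](g_z)\,\d z + \eps))$. Combined with Lemma~\ref{lem : intro/sketch/UB_LB}, this gives the desired bound on $\FdTsup[X](D)$. Throughout, one may assume the integral is finite, otherwise there is nothing to show.

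\textbf{Piecewise constant gradient approximation.} Partition $X$, up to its measure-zero boundary strip, into tiles $\Tile(v,k)$, $v\in\InTiles(X)$. By Proposition~\ref{prop : limit_space/gradient/derivee}, almost every $z\in X$ is a \emph{good} point where $(\grad D)_z\in \AdmNorms$ and $D$ is locally approximated by $(\grad D)_z(\cdot)$ with an effective modulus; for each $v$, pick a good point $z_v\in \Tile(v,k)$ and set $g_v\dpe (\grad D)_{z_v}$. Define $D^{(k)}\in \AdmDistances[X]$ via Lemma~\ref{lem : limit_space/constructions/inverse} from the piecewise constant prescription $z\mapsto g_{v(z)}$ on the tiles, extended by $b\norme\cdot$ on the outer strip. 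An Egorov-type argument, upgrading the pointwise modulus in Proposition~\ref{prop : limit_space/gradient/derivee} to a uniform one outside a small bad set, together with the $b$-Lipschitz control on both $D$ and $D^{(k)}$, yields $\UnifDistance(D^{(k)},D)\to 0$ along a suitable subsequence of $k$. Moreover the Riemann sums
\begin{equation*}
	R_k\dpe k^{-d}\!\!\sum_{v\in\InTiles(X)} \!\!\FdT[\intervalleff01^d](g_v)
\end{equation*}
converge to $\int_X\FdT[\intervalleff01^d](g_z)\,\d z$ (using Proposition~\ref{prop : intro/sketch/continuite} for continuity on $\NoncritAdmNorms$ and Proposition~\ref{prop : intro/sketch/ordre_grandeur} to handle the critical norms).

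\textbf{Favorable event and $\UnifDistance$-closeness.} Fix $k$ large. Let $\Tile^{\star}(v,k)\subseteq \Tile(v,k)$ denote a slightly shrunk subtile so that the scaled sets $n\Tile^{\star}(v,k)$ use pairwise disjoint subsets of $\bbE^d$, and let $\mathrm{Corr}$ be the set of edges of $nX$ not covered by any $n\Tile^{\star}(v,k)$. For $\delta>0$ small, define
\begin{equation*}
	F_{n,k}\dpe \p{\bigcap_{v\in \InTiles(X)} \LD_{n/k,\Tile^{\star}(v,k)}(g_v,\delta/k)}\cap \p{\bigcap_{e\in \mathrm{Corr}} \acc{\EPT[e]\ge b-\delta}}.
\end{equation*}
Independence between tiles and the corridor, stationarity, Theorem~\ref{thm : intro/sketch/FdT_elem}, and $\#\mathrm{Corr}=\grando(n^d/k)$ give, for $n$ large,
\begin{equation*}
	-\frac1{n^d}\log \Pb{F_{n,k}}\le R_k+ \delta - \frac{C}{k}\log \nu(\intervalleff{b-\delta}{b}).
\end{equation*}
On $F_{n,k}$, one checks $\UnifDistance(\BoxSPT,D^{(k)})\le \grando(\delta+1/k)$: the upper bound $\BoxSPT\le D^{(k)}+\grando(\delta+1/k)$ follows by stitching near-optimal tile-interior paths with short corridor-crossing segments (as in the proof of Lemma~\ref{lem : FdT_elem/existence/main_lemma}), while the lower bound $\BoxSPT\ge D^{(k)}-\grando(\delta+1/k)$ follows from the corridor lemma (Lemma~\ref{lem : limit_space/corridor/corridor}) applied with $X_k\gets n\Tile^{\star}(v,k)$, $\delta_1\sim n/k$, $\delta_2\sim n\delta/k$, using the tile events for hypothesis~\eqref{eqn : limit_space/corridor/controle_tuile} and the corridor event for~\eqref{eqn : limit_space/corridor/intensite_couloir}. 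Combined with $\UnifDistance(D^{(k)},D)\to 0$, this yields $F_{n,k}\subseteq \LD_{n,X}(D,\eps)$ for $k$ large and $n\ge n_0(k)$; taking $\limsup_n -n^{-d}\log\Pb{\cdot}$ and letting first $\delta\to 0$, then $k\to \infty$, concludes.

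\textbf{Main obstacle.} The trickiest step is the uniform convergence $\UnifDistance(D^{(k)},D)\to 0$: Proposition~\ref{prop : limit_space/gradient/derivee} provides only a pointwise modulus of approximation in $z$, so an Egorov-type device, combined with the a priori Lipschitz control, is required to get convergence in $\UnifDistance$. A secondary subtlety is that Lemma~\ref{lem : limit_space/constructions/inverse}\eqref{item : limit_space/constructions/inverse_def/cas_sympa} guarantees $(\grad D^{(k)})_{z_v}=g_v$ only at points of continuity of $z\mapsto g_{v(z)}$, which fails on tile boundaries; but the argument only needs $D^{(k)} \to D$ in $\UnifDistance$, not pointwise control on its gradient, so this is harmless. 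Finally, the tile-wise construction relies on $\FdT[\intervalleff01^d](g_v)$ being defined and reasonably continuous in $g_v$, which is granted by Theorem~\ref{thm : intro/sketch/FdT_elem} and Proposition~\ref{prop : intro/sketch/continuite} (together with Proposition~\ref{prop : intro/sketch/ordre_grandeur} for the boundary case).
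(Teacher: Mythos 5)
Your construction of the favorable event on tiles, together with the use of Lemma~\ref{lem : limit_space/corridor/corridor} for the lower bound and stitching for the upper bound, matches the paper's treatment of the core case (constant gradient on each tile, Case~1 of the paper's proof). The gap is in the step you yourself flag as the main obstacle, and the Egorov device you propose does not close it.

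Concretely, two issues. First, point-sampling a merely measurable gradient field — taking $g_v\dpe (\grad D)_{z_v}$ for a chosen $z_v\in\Tile(v,k)$ — need not produce $g_{v(z)}\to (\grad D)_z$ in $\ContHom$ for almost every $z$, however carefully $z_v$ is chosen; approximate continuity controls averages over shrinking tiles, not point values at nearby points. Second, even granting a Lusin/Egorov set $B$ of measure $\Leb(X\setminus B)<\eta$ on which the approximation in Proposition~\ref{prop : limit_space/gradient/derivee} is uniform, a $D$-geodesic from $x$ to $y$ has $\norme\cdot$-length up to $\frac ba\norme{x-y}$ and may spend almost all of it inside $X\setminus B$: a set of small Lebesgue measure in $\R^d$, $d\ge 2$, can still contain Lipschitz curves of macroscopic length, and there is no control on where $X\setminus B$ sits relative to the geodesics. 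The error $2b\cdot\ell_{\mathrm{bad}}(\gamma)$ therefore does not tend to $0$ uniformly in $(x,y)$ as $\eta\to0$, so $\UnifDistance(D^{(k)},D)\to 0$ fails (or at least does not follow from Egorov plus Lipschitz bounds). Without that convergence you cannot invoke lower semicontinuity of $\FdTsup[X]$ to transport the tile estimate to $D$.

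The paper avoids both problems by not trying to prove $\UnifDistance(D^{(k)},D)\to 0$ in the general case at all. It proceeds by a chain of reductions: (i)~the tile construction as you describe, but only for metrics \emph{defined} by a piecewise-constant noncritical prescription; (ii)~continuous noncritical gradient, where point-sampling in a tile does converge uniformly by compactness, so $D_k\to D$ in $\UnifDistance$ and the Riemann sums converge by Proposition~\ref{prop : intro/sketch/continuite}; (iii)~general noncritical gradient, via convolution $g_z^n = \int g_{z-s}\,\xi^n(s)\,\d s$ — this restores continuity, and crucially the resulting metric satisfies $D^n\ge D$ by Fubini on the path integral in~\eqref{eqn : limit_space/constructions/inverse_def}, so that monotonicity of $\FdTsup[X]$ (Proposition~\ref{prop : mon/mon}) transports the bound on a subsequential limit $D'\ge D$ back to $D$; (iv)~a further interpolation $g_z^n\dpe \frac{n-1}{n}g_z+\frac an\norme\cdot$ to escape the critical cases $\normeHom{g_z}=b$ where $\FdT[\intervalleff01^d]$ may be infinite. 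The pivotal tool your proposal omits is this monotonicity: it is what lets one work with a \emph{dominating} approximation rather than a \emph{uniformly close} one. If you want to salvage your plan in the spirit of the paper, replace the point sample $g_v=(\grad D)_{z_v}$ by the tile average $g_v\dpe \fint_{\Tile(v,k)}(\grad D)_z\,\d z$, observe by Fubini and~\eqref{eqn : limit_space/gradient/D_avec_gradient} that the resulting metric dominates $D$, then use Proposition~\ref{prop : mon/mon} plus lower semicontinuity along a convergent subsequence; you will still need the noncriticality reduction to control the integrand via Propositions~\ref{prop : intro/sketch/ordre_grandeur} and~\ref{prop : intro/sketch/continuite}, and a version of Lemma~\ref{lem : PGD/Upper/curved_space} to handle a general $X\in\Windows$.
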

The general idea is to "build" the metric $D$ by assembling together tiles on which the rescaled passage time resembles $g_z$. We actually use this procedure only in the case where $D$ has a constant gradient on each tile at a certain scale then use the properties of the rate function to extend the result to any metric. We first need Lemma~\ref{lem : PGD/Upper/curved_space} to account for the case where $X$ is not of the form $\prod_{i=1}^d\intervalleff{t_i}{t_i'}$.
\begin{Lemma}
	\label{lem : PGD/Upper/curved_space}
	Assume $X\subseteq\intervalleff0\lambda^d$. Then for all $g\in\AdmNorms$,
	\begin{equation}
		\label{eqn : PGD/Upper/curved_space}
		\FdTsup[X](g) \le \lambda^d\FdT[\intervalleff01^d](g).
	\end{equation}
\end{Lemma}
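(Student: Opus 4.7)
The plan splits into two steps: reduce the case of the full cube $Y\dpe\intervalleff0\lambda^d\in\Windows$ to $\intervalleff01^d$ by scaling, then pass from $Y$ to any convex subset $X\subseteq Y$.

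\textbf{Scaling ($Y$ to $\intervalleff01^d$).} The dilation $x\mapsto x/\lambda$ sends $Y$ onto $\intervalleff01^d$; since $g$ is homogeneous, for any $x,y\in Y$ and $n\ge1$, setting $m\dpe\floor{n\lambda}$,
\[
	\BoxSPT[n,Y](x,y)=\lambda\,\BoxSPT[m,\intervalleff01^d]\p{x/\lambda,\,y/\lambda}+\petito(1).
\]
Hence $\LD_{n,Y}(g,\eps)$ and $\LD_{m,\intervalleff01^d}(g,\eps/\lambda)$ have essentially equal probabilities, and taking $\limsup_n$ followed by $\inclim_{\eps\to0}$ in the definition of $\FdTsup$ gives $\FdTsup[Y](g)\le\lambda^d\FdT[\intervalleff01^d](g)$.

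\textbf{Passing from $Y$ to $X$.} I claim that for every $\eps>0$ there exists $\eps'>0$ such that, for $n$ large, $\LD_{n,Y}(g,\eps')\subseteq\LD_{n,X}(g,\eps)$; together with the previous step this yields the conclusion. The lower bound $\BoxSPT[n,X](x,y)\ge\BoxSPT[n,Y](x,y)\ge g(x-y)-\eps'$ on $X\times X$ is immediate from $X\subseteq Y$. For the matching upper bound, fix $x,y\in X$ and $\alpha>0$ small. Choose $z_0\in\mathring X$ and set $X'\dpe(1-\alpha)X+\alpha z_0$; by Lemma~\ref{lem : windows/safety_strip}, $\d(X',\R^d\setminus X)\ge C\alpha$ for a constant $C$ depending only on $X$ and $z_0$. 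Replacing $x,y$ by their retracted images $\pi(x),\pi(y)\in X'$ costs at most $2b\alpha\diam(X)$ via the triangle inequality and the trivial bound $\BoxSPT[n,X]\le b\norme\cdot$. By convexity of $X$, the segment $\intervalleff{\pi(x)}{\pi(y)}$ lies in $X'$; subdivide it into $k=\grando(1/\alpha)$ consecutive pieces $\intervalleff{z_{i-1}}{z_i}$ of common length $\delta$ satisfying $(b/a)\delta<C\alpha$. For each piece, Lemma~\ref{lem : limit_space/geodesics/localisation} allows one to restrict the infimum defining $\BoxPT[nY](nz_{i-1},nz_i)$ to polygonal paths inside $\clball{nz_{i-1},(b/a)n\delta}\subseteq nX$, forcing $\BoxSPT[n,X](z_{i-1},z_i)=\BoxSPT[n,Y](z_{i-1},z_i)$. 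On $\LD_{n,Y}(g,\eps')$, summing over pieces and using $g(\pi(x)-\pi(y))\le g(x-y)$ gives
\[
	\BoxSPT[n,X](x,y)\le g(x-y)+2b\alpha\diam(X)+k\eps'.
\]
Choosing first $\alpha$ of order $\eps$ and then $\eps'$ small enough (depending on $\alpha$), the right-hand side is at most $g(x-y)+\eps$, as required.

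\textbf{Main obstacle.} The hard part is the uniform upper bound on $\BoxSPT[n,X]$, since geodesics of $\BoxPT[nY]$ may exit $nX$ and one cannot directly transfer a good event on $Y$ to one on $X$. The combination of the convexity of $X$ (keeping affine segments between points of $X'$ inside $X'$), the safety strip of Lemma~\ref{lem : windows/safety_strip} (providing a macroscopic distance from $X'$ to $\partial X$), and the geodesic localization of Lemma~\ref{lem : limit_space/geodesics/localisation} (confining minimizing paths on short segments to small balls around their starting points) together bypass this difficulty.
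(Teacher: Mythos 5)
Your proof is correct and follows essentially the same strategy as the paper: retract to a convex subset at macroscopic distance from $\partial X$ (you use the affine image $(1-\alpha)X+\alpha z_0$ via Lemma~\ref{lem : windows/safety_strip}; the paper uses $X^{-\delta}$ via Lemma~\ref{lem : windows/X-delta}, an equivalent device), subdivide the segment between retracted endpoints into short pieces, and apply Lemma~\ref{lem : limit_space/geodesics/localisation} to each piece to identify $\BoxSPT[n,X]$ with $\BoxSPT[n,Y]$ locally. The paper folds your two steps into one by working directly with $\LD_{\ceil{n\lambda},\intervalleff01^d}(g,\delta^2)$ rather than scaling to $Y=\intervalleff0\lambda^d$ first, but the mathematical content is the same.
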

\begin{proof}
	Let $g\in \AdmNorms, n\ge 1, \delta>0$. Consider the convex, compact set
	\begin{equation}
		\label{eqn : PGD/Upper/X-delta}
		X^{-\delta}\dpe \acc{z\in X \Bigm| \d(z, X^\mathrm{c}) \ge \delta  }.
	\end{equation}
	Lemma~\ref{lem : windows/X-delta} states that
	\begin{equation}
	\label{eqn : PGD/Upper/X-delta_bon_approx}
		\max_{z\in X} \d(z, X^{-\delta}) \xrightarrow[\delta\to0]{} 0.
	\end{equation}
	From now on $\delta$ is chosen small enough so that $X^{-\delta}$ is not empty. 

	Assume that the event $\LD_{\ceil{n\lambda},\intervalleff01^d}(g,\delta^2)$ occurs. Let $x,y\in nX$. We have
	\begin{equation}
	 	\label{eqn : PGD/Upper/curved_space/LB}
	 		\BoxPT[nX](x,y) \ge \BoxPT[\intervalleff0{\ceil {n\lambda}}^d](x,y) \ge g(x-y) - \ceil{n\lambda}\delta^2.
	\end{equation}
	Let $\hat x$ and $\hat y$ be the projections of $x$ and $y$ on $n X^{-\delta}$. For all $0 \le i\le \ceil{\frac{bd \diam(X)}{a\delta}} \eqqcolon r$, let $x_i\dpe \hat x + i(\hat y - \hat x)/r$. By convexity all the $x_i$ belong to $nX^{-\delta}$. Lemma~\ref{lem : limit_space/geodesics/localisation} implies that for all $0\le i \le r-1$, any $\BoxPT[\intervalleff0{\ceil{ n \lambda}}^d]$-geodesic from $x_i$ to $x_{i+1}$ is included in $\clball{x_i, n\delta} \subseteq nX$. Consequently,
	\begin{align}
		\BoxPT[nX](x_i, x_{i+1}) &= \BoxPT[\intervalleff{0}{\ceil{n\lambda}}^d](x_i, x_{i+1})\le \frac{g(\hat x - \hat y)}{r} + \ceil{n\lambda} \delta^2.\notag
		\intertext{The triangle inequality then yields}
		\BoxPT[nX](\hat x, \hat y) &\le g(\hat x - \hat y) + r\ceil{n\lambda} \delta^2,\notag
		\intertext{hence, by~\eqref{eqn : intro/main_thm/equivalence_distances}, }
		\label{eqn : PGD/Upper/curved_space/UB}
		\BoxPT[nX](x,y) &\le g(x-y) + 4b n\max_{z\in X}\d(z,X^{-\delta})+  r\ceil{n\lambda} \delta^2.
	\end{align}
	Inequalities~\eqref{eqn : PGD/Upper/curved_space/LB} and~\eqref{eqn : PGD/Upper/curved_space/UB}, along with~\eqref{eqn : PGD/Upper/X-delta_bon_approx} imply that for all $\eps>0$ there exists $\delta>0$ such that for large enough $n$,
	\begin{equation}
		\Pb{\LD_{n,X}(g,\eps)} \ge \Pb{\LD_{\ceil{n\lambda},\intervalleff01^d}(g,\delta^2)},
	\end{equation}
	thus~\eqref{eqn : PGD/Upper/curved_space}.
\end{proof}

\begin{proof}[Proof of Proposition~\ref{prop : PGD/Upper}]
	We first prove the bound~\eqref{eqn : PGD/Upper} for three particular cases with extra regularity on $D$.
\paragraph{Case 1: Constant, noncritical gradient on each tile.} Assume that there exists $k\ge 1$ and a family $(g^v)_{v\in\ExTiles(X)}$ of elements of $\NoncritAdmNorms$ such that $D$ is the metric defined like in Lemma~\eqref{lem : limit_space/constructions/blocs}, with $X_v = \Tile(v,k)\cap X$ and $D_v=g^v$. Let $0<\delta,\eps \le 1$ and $n\ge 1$. For all $v\in\intint0{k-1}^d$, we define%
\stepcounter{tile}
\begin{equation}
 	\cTile(v,k) \dpe \p{\Tile(v,k)\cap X}^{-\delta},
 \end{equation}
with the notation defined in~\eqref{eqn : PGD/Upper/X-delta}. We assume that $\delta$ is small enough for all the tiles $\cTile(v,k)$, with $v\in\intint0{k-1}^d$, to be non empty. Consider the set
\begin{align}
	\Corridor &\dpe \exedges{ nX} \setminus \p{\bigcup_{v \in \ExTiles(X)} \edges{n\cTile(v,k)}},\notag
\intertext{where $\exedges{nX}$ is defined in Subsection~\ref{subsec : intro/notations}. It satisfies}
	\label{eqn : PGD/Upper/CardCorridor}
	\#\Corridor &\le \#\p{\ExTiles(X)}\cro{ \p{\frac nk +2}^d -  \p{\frac nk -2n\delta-2}^d }.
\end{align}
Consider the event
\begin{align}
	\cFav &\dpe \p{\bigcap_{v\in\ExTiles(X)} \LD_{n,\cTile(v,k)}(g^v, \delta^2)} %
		\cap \p{\bigcap_{e\in\Corridor} \acc{\EPT[e]\ge b-\eps }}.
	\intertext{By independence and~\eqref{eqn : PGD/Upper/CardCorridor},}
	\label{eqn : PGD/Upper/PB_Fav}
	\Pb{\cFav} &\ge \p{ \prod_{v\in\ExTiles(X)} \Pb{\LD_{n,\cTile(v,k)}(g^v, \delta^2)}} \cdot \nu\p{\intervalleff{b-\eps}{b} }^{\#\ExTiles(X)\cro{ \p{\frac nk +2}^d -  \p{\frac nk -2n\delta-2}^d }}.
\end{align}

We follow the same strategy as in Lemma~\ref{lem : FdT_elem/existence/main_lemma}. Assume that the event $\cFav$ occurs. Lemma~\ref{lem : limit_space/corridor/corridor} implies that for all $x,y\in X$,
\begin{equation}
	\label{eqn : PGD/Upper/LB_BoxSPT}
 	\BoxSPT[n,X]\p{x,y} \ge D(x,y) - 3\diam(X)\p{\eps + \delta}.
\end{equation}
We define
\begin{equation}
 	\eta = \eta(\delta) \dpe \max_{v \in \ExTiles(X)} \max_{x\in X} \d\p{x, \cTile(v,k)}. 
\end{equation}
By Lemma~\ref{lem : windows/X-delta},
\begin{equation}
	\label{eqn : PGD/Upper/limit_eta}
	\lim_{\delta\to 0} \eta(\delta) =0.
\end{equation}
Let $(x_\ell)_{1\le \ell \le L}$ be a finite family of points in $X$ such that the balls $\ball{x_\ell, \eps}$ cover $X$. By definition of $D$ and convexity of the $(g^v)_{v\in\ExTiles(X)}$, for all $1\le \ell, \ell' \le L$, there exist sequences $(x_\ell = y_0(\ell, \ell'),\dots, y_{r(\ell,\ell')}(\ell, \ell') = x_{\ell'} )\in {X}^{r(\ell,\ell')+1}$ and $(v_0(\ell, \ell') , \dots , v_{r(\ell, \ell')-1}(\ell, \ell') )\in {\ExTiles(X)}^{r(\ell,\ell')}$ such that for all $0\le j \le r(\ell, \ell')-1$, both $y_j(\ell, \ell')$ and $y_{j+1}(\ell, \ell')$ belong to $\Tile\p{v_j(\ell,\ell'),k}$, and
\begin{equation}
	\label{eqn : PGD/Upper/UB_BoxSPT_net}
 	D(x_\ell, x_{\ell'}) \ge \sum_{j=0}^{r(\ell, \ell') - 1}g^{v_j(\ell,\ell')}\p{y_j(\ell, \ell') - y_{j+1}(\ell, \ell')} - \eps.
\end{equation}
Let $\mathbf{r}\dpe \max_{1\le \ell, \ell' \le L} r(\ell, \ell')$ and fix $1\le \ell, \ell' \le L$. We claim that
\begin{equation}
	\label{eqn : PGD/Upper/UB_BoxSPT}
	\BoxSPT[n,X](x_\ell, x_{\ell'}) \le D\p{x_\ell, x_{\ell'}} +\eps + (\mathbf{r}+1)\cro{\delta^2 + 4b\eta(\delta) }.
\end{equation}
The arguments are similar to the ones used in the proof of~\eqref{eqn : FdT_elem/existence/UB_Fav}. To lighten the notations we omit the dependancy in $(\ell,\ell')$ in the proof of this claim. For all $0\le j \le r$, let $p_j$ denote the projection onto $\cTile(v_j, k)$. By triangle inequality,
\begin{equation}
	\label{eqn : PGD/Upper/UB_BoxSPT/mainTI}
	\begin{split}\BoxSPT[n,X]\p{x_\ell,x_{\ell'}} &\le \BoxSPT[n,X]\p{x_\ell, p_0(x_\ell)} + \sum_{j=0}^r\BoxSPT[n,X]\p{p_j(y_j),p_j(y_{j+1})} \\
		&\quad+ \sum_{j=0}^{r-1}\BoxSPT[n,X]\p{p_j(y_{j+1}), p_{j+1}(y_{j+1})} + \BoxSPT[n,X]\p{p_r\p{x_{\ell'}} , x_{\ell'} }.
	\end{split}
\end{equation}
By definition of $\cFav$, the second term in the right-hand side of~\eqref{eqn : PGD/Upper/UB_BoxSPT/mainTI} satisfies
\begin{align}
	\sum_{j=0}^r\BoxSPT[n,X]\p{p_j(y_j), p_j(y_{j+1})} %
		&\le \sum_{j=0}^r \cro{ g^{v_j}\p{p_j(y_j) - p_j(y_{j+1})} + \delta^2 }.\nonumber\\
	\intertext{Applying the definition of $\eta$ and~\eqref{eqn : intro/main_thm/equivalence_distances}, we obtain}
	\sum_{j=0}^r\BoxSPT[n,X]\p{p_j(y_j), p_j(y_{j+1})} %
		&\le \sum_{j=0}^r \cro{ g^{v_j}\p{y_j - y_{j+1}} + \delta^2 + 2b \eta},\nonumber
	\intertext{thus, by~\eqref{eqn : PGD/Upper/UB_BoxSPT_net},}
	\label{eqn : PGD/Upper/UB_BoxSPT/mainTI_term2}
	\sum_{j=0}^r\BoxSPT[n,X]\p{p_j(y_j), p_j(y_{j+1})} %
		&\le \sum_{j=0}^r  g^{v_j}\p{y_j - y_{j+1}} + (\mathbf{r}+1)(\delta^2 + 2b \eta)\nonumber\\
		&\le D(x_\ell, x_{\ell'}) + \eps + (\mathbf{r}+1)(\delta^2 + 2b \eta).
\end{align}
Applying the definition of $\eta$ and~\eqref{eqn : intro/main_thm/equivalence_distances} to the three other terms in the right-hand side of~\eqref{eqn : PGD/Upper/UB_BoxSPT/mainTI}, we get
\begin{equation}
	\BoxSPT[n,X]\p{x_\ell,x_{\ell'}}%
		\le  D(x_\ell, x_{\ell'}) + \eps + (\mathbf{r}+1)(\delta^2 + 2b \eta) + 2b\eta + 2b\mathbf{r}\eta,
\end{equation}
hence~\eqref{eqn : PGD/Upper/UB_BoxSPT}. Equations~\eqref{eqn : PGD/Upper/LB_BoxSPT} and~\eqref{eqn : PGD/Upper/UB_BoxSPT}, alongside with the fact that the balls $\ball{x_\ell, \eps}$ cover $X$ and the limit~\eqref{eqn : PGD/Upper/limit_eta} imply that for all $\eps>0$, for small enough $\delta>0$, for all $n\ge1$,
\begin{equation}
	\label{eqn : PGD/Upper/Fav_subset_LD}
	\cFav\subseteq \LD_{n,X}\p{D,  \Cr{PGD/Upper2}\eps},
\end{equation}
where $\Cl{PGD/Upper2}$ only depends on $\diam(X)$.

From~\eqref{eqn : PGD/Upper/PB_Fav} and~\eqref{eqn : PGD/Upper/Fav_subset_LD} we deduce
\begin{align}
	&\limsup_{n\to\infty} - \frac{1}{n^d} \log \Pb{\LD_{n,X}\p{D, \Cr{PGD/Upper2}\eps}}\nonumber\\
		\quad &\le \sum_{v\in\ExTiles(X)} \FdTsup[\cTile(v,k)](g^v) - \frac{ \#\ExTiles(X)}{k^d}\p{ 1 -  (1-2k\delta)^d }\log \nu\p{\intervalleff{b-\eps}{b}}.\notag
	\intertext{By stationarity and~\eqref{eqn : PGD/Upper/curved_space}, we get}
	&\limsup_{n\to\infty} - \frac{1}{n^d} \log \Pb{\LD_{n,X}\p{D, \Cr{PGD/Upper2}\eps}}\nonumber\\
		\quad&\le \frac{1}{k^d}\sum_{v\in\ExTiles(X)} \FdT[\intervalleff01^d](g^v) - \frac{ \#\ExTiles(X)}{k^d}\p{ 1 -  (1-2k\delta)^d }\log \nu\p{\intervalleff{b-\eps}{b}}.\notag
	\intertext{Letting $\delta \to0$ then $\eps\to0$ yields}
	\FdTsup[X](D) &\le \frac{1}{k^d}\sum_{v\in\ExTiles(X)} \FdT[\intervalleff01^d](g^v).\notag
\end{align}
Lemma~\ref{lem : limit_space/constructions/blocs} implies that for all $z$ in the interior of $\Tile(v,k)\cap X$, $g_z=g^v$, hence 
\begin{align}
	\label{eqn : PGD/Upper/UB_FdTsup_grossiere}
	\FdTsup[X](D) &\le%
		\int_X \FdT[\intervalleff01^d](g_z) \d z %
		+ \Leb\p{ \p{\bigcup_{v\in \ExTiles(X)}\Tile(v,k)} \setminus X}  \max_{v\in \ExTiles(X)} \FdT[\intervalleff01^d](g^v).
	\intertext{Let $K$ be a multiple of $k$. Then $D$ may be seen as a metric built from uniform tiles of size $1/K$ rather than $1/k$, thus inequality~\eqref{eqn : PGD/Upper/UB_FdTsup_grossiere} may be enhanced to}
	\FdTsup[X](D) &\le \int_X \FdT[\intervalleff01^d](g_z) \d z + \Leb\p{ \p{\bigcup_{v\in \ExTiles[K](X)}\Tile(v,k)} \setminus X}  \max_{v\in \ExTiles(X)} \FdT[\intervalleff01^d](g^v).\notag
\end{align}
Note that the maximum in the second term is considered over $\ExTiles(X)$ rather than $\ExTiles[K](X)$. It is finite thanks to Proposition~\ref{prop : intro/sketch/ordre_grandeur}. Letting $K\to\infty$ and applying~\eqref{eqn : intro/notations/X_quarrable} yield~\eqref{eqn : PGD/Upper}.

\paragraph{Case 2: Continuous, non critical gradient.} Assume that there exists a continuous map 
\begin{align*}
	X &\longrightarrow \AdmNorms\\
	z &\longmapsto \hat g_z
\end{align*}
such that $b' \dpe \sup_{z\in X}\normeHom{\hat g_z} < b$ and $D$ satisfies~\eqref{eqn : limit_space/constructions/inverse_def} with $(\hat g_z)_{z\in X}$. In this case, by Lemma~\ref{lem : limit_space/constructions/inverse}~(\ref{item : limit_space/constructions/inverse_def/cas_sympa}), for all $z\in \mathring X$, $\hat g_z = g_z$, thus~\eqref{eqn : PGD/Upper} is equivalent to
\begin{equation}
\label{eqn : PGD/Upper/Grad_Continu/Reformulation}
	\FdTsup[X](D) \le \int_X \FdT[\intervalleff01^d]\p{\hat g_z}\d z.
\end{equation}
For all $k\ge 1$, we consider the metric $D_k\in\AdmDistances[X]$ defined as in Lemma~\ref{lem : limit_space/constructions/blocs}, with $X_v = \Tile(v,k)\cap X$ and $D_v =g_v^{(k)} \dpe \hat g_z$, with $z$ being any fixed point in $\Tile(v,k)\cap X$, for all $v\in\ExTiles(X)$. Then $D_k$ falls under Case~1, therefore
\begin{equation}
	\label{eqn : PGD/Upper/Grad_Continu/Dk}
	\FdTsup[X](D_k) \le \int_X\FdT[\intervalleff01^d]\p{g_{\frac1k\floor{kz}}^{(k)} }\d z.
\end{equation}
Besides $g_{\frac1k\floor{kz}}^{(k)}$ converges to $\hat g_z$ in $\ContHom$ as $k\to \infty$, uniformly on $X$. Since the restriction of $\FdT[\intervalleff01^d]$ on the compact $\acc{g\in \AdmNorms \mid a\norme\cdot \le g \le b'\norme\cdot }$ is continuous and bounded (see Propositions~\ref{prop : intro/sketch/ordre_grandeur} and~\ref{prop : intro/sketch/continuite}), 
\begin{equation}
	\int_X\FdT[\intervalleff01^d]\p{g_{\frac1k\floor{kz}}^{(k)} }\d z \xrightarrow[k\to \infty]{} \int_X\FdT[\intervalleff01^d]\p{\hat g_z}\d z.
\end{equation}
Moreover, the convergence of $g_{\frac1k\floor{kz}}^{(k)}$ to $\hat g_z$ and Lemma~\ref{lem : limit_space/constructions/inverse}~(\ref{item : limit_space/constructions/inverse_def/localisation}) imply the convergence of $D_k$ to $D$. Thus by lower semicontinuity of $\FdTsup[X]$, letting $k\to\infty$ in~\eqref{eqn : PGD/Upper/Grad_Continu/Dk} yields~\eqref{eqn : PGD/Upper/Grad_Continu/Reformulation}.

\paragraph{Case 3: Non critical gradient.} Assume that $b' \dpe \sup_{z\in X}\normeHom{g_z} < b$. We use a routine convolution argument (see e.g. \cite[Theorem~4.22]{Brezis}) to regularize $z\mapsto g_z$. Let $(\xi^n)$ be a sequence of test functions from $\R^d$ to $\intervalleff0\infty$, of integral $1$, such that the support of $\xi^n$ is included in $\clball{0,1/n}$ . For all $z\in X$ and $u\in\R^d$, we consider the convolution
\begin{equation}
\label{PGD/Upper/gradient_borne_convolution}
 	g_z^n(u) \dpe \int_{\R^d} g_{z-s}(u)\xi^n(s)\d s \le b'\norme u,
\end{equation} 
with the convention $g_{z-s} = b'\norme \cdot$ if $z-s\notin X$. For all $u\in \R^d$, as $n\to\infty$, $g_\cdot^n(u)$ converges to $g_\cdot(u)$ in $\mathrm L^1(X, \R)$, thus almost everywhere on $X$ along a subsequence. By a standard diagonal argument, this convergence is still true for all $u\in \Q^d$ almost everywhere on $X$. As the $g_z$ and $g_z^n$ are $b$-Lipschitz, there exists an extraction $\varphi$ such that for almost every $z\in X$,
\begin{equation}
	\label{eqn : PGD/Upper/Noncrit_Grad/cvg_grad}
	g_z^{\varphi(n)} \xrightarrow[n\to\infty]{\normeHom{\cdot}} g_z.
\end{equation}

Let $D^{\varphi(n)}$ denote the metric defined by~\eqref{eqn : limit_space/constructions/inverse_def} with the function $z\mapsto g_z^{\varphi(n)}$. Since $\AdmDistances[X]$ is compact (see Proposition~\ref{prop : limit_space/compactness/compactness}) there exists an extraction $\psi$ such that $D^{\varphi\circ \psi(n)}$ converges to some $D'\in\AdmDistances$. We claim that $D'\ge D$. Indeed, for all $x,y\in X$ and Lipschitz paths $x\ResPath{X}{\gamma} y$, Fubini's theorem yields
\begin{align}
	\int_0^{T_\gamma} g_{\gamma(t)}^{\varphi\circ \psi(n)}(\gamma'(t)) \d t %
		&= \int_0^{T_\gamma} \p{ \int_{\R^d} g_{\gamma(t)-s}\p{\gamma'(t)}\xi^{\varphi\circ \psi(n)}(s)\d s }  \d t\notag\\
		&= \int_{\R^d} \p{\int_0^{T_\gamma}  g_{\gamma(t)-s}\p{\gamma'(t)}\d t} \xi^{\varphi\circ \psi(n)}(s) \d s.\notag
	\intertext{Applying~\eqref{eqn : limit_space/gradient/D_avec_gradient}, we get}
	\int_0^{T_\gamma} g_{\gamma(t)}^{\varphi\circ \psi(n)}(\gamma'(t)) \d t%
		&\ge \int_{\R^d} D(x-s, y-s) \xi^{\varphi\circ \psi(n)}(s) \d s,\notag
	\intertext{thus}
	\label{eqn : PGD/Upper/gradient_borne_LB_Dn}
	D^{\phi\circ \psi(n)}(x,y)%
		&\ge \int_{\R^d} D(x-s, y-s) \xi^{\varphi\circ \psi(n)}(s) \d s.
\end{align}
By a standard regularization argument (see \cite[Proposition~4.21]{Brezis}) the right-hand side of~\eqref{eqn : PGD/Upper/gradient_borne_LB_Dn} converges to $D(x,y)$ as $n\to\infty$, thus $D'(x,y)\ge D(x,y)$.

Besides, for all $n\ge 1$, $z\mapsto g_z^{\varphi\circ \psi(n)}$ is continuous on $X$ and $\sup_{z\in X}\normeHom{g_z^{\varphi\circ \psi(n)} }<b$ by~\eqref{PGD/Upper/gradient_borne_convolution}, therefore by~\eqref{eqn : PGD/Upper/Grad_Continu/Reformulation} (see Case 2),
\begin{align}
	\FdTsup[X](D^{\varphi\circ \psi(n)}) &\le \int_X\FdT[\intervalleff01^d](g_z^{\varphi\circ \psi(n)}) \d z.\notag
	\intertext{The restriction of $\FdT[\intervalleff01^d]$ on $\acc{g\in \AdmNorms \mid a\norme\cdot \le g \le b'\norme\cdot }$ is continuous and bounded (see Proposition~\ref{prop : intro/sketch/continuite}), therefore by~\eqref{eqn : PGD/Upper/Noncrit_Grad/cvg_grad}, }
	\liminf_{n\to\infty} \FdTsup[X](D^{\varphi\circ \psi(n)}) &\le \int_X\FdT[\intervalleff01^d](g_z) \d z.\notag
	\intertext{Applying lower semicontinuity and Proposition~\ref{prop : mon/mon}, we obtain}
	\FdTsup(D) \le \FdTsup(D') &\le \int_X\FdT[\intervalleff01^d](g_z) \d z.
\end{align}

\paragraph{General case.} Let $D^n$ denote the metric defined by~\eqref{eqn : limit_space/constructions/inverse_def} with 
\begin{equation}
	g_z^n \dpe \frac{n-1}{n}g_z + \frac an\norme\cdot.
\end{equation}
Equation~\eqref{eqn : limit_space/gradient/D_avec_gradient} implies that for all Lipschitz paths $x\ResPath{X}\gamma y$,
\begin{align}
	\int_0^{T_\gamma} g_{\gamma(t)}^n\p{\gamma'(t)}\d t%
		&= \frac{n-1}{n} \int_0^{T_\gamma} g_{\gamma(t)}\p{\gamma'(t)}\d t + \frac an\int_0^{T_\gamma} \norme{\gamma'(t)}\d t\notag\\
	\label{eqn : PGD/Upper/General/LB_Dn}
		&\ge \frac{n-1}{n}D(x,y) + \frac an\norme{x-y}.
\end{align}
Consequently, for all $z\in X$, $(\grad D^n)_z\ge g_z^n$. Besides, Lemma~\ref{lem : limit_space/constructions/inverse} implies the converse inequality, therefore  $(\grad D^n)_z= g_z^n$. The metric $D^n$ falls under Case~3, thus
\begin{equation*}
	\FdTsup[X](D^n) \le \int_X\FdT[\intervalleff01^d](g_z^n)\d z.
\end{equation*}
Letting $n\to \infty$ gives~\eqref{eqn : PGD/Upper} by lower semicontinuity and monotone convergence.
\end{proof}


\subsection{Lower bounding the lower rate function}
\label{subsec : PGD/Lower}

\begin{Proposition}
\label{prop : PGD/Lower}
Let $D\in\AdmDistances[X]$ and $(g_z)_{z\in X}$ its gradient. Then
\begin{equation}
	\label{eqn : PGD/Lower}
	\FdTinf[X](D) \ge \int_X \FdT[\intervalleff01^d](g_z)\d z.
\end{equation} 	
\end{Proposition}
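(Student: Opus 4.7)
The plan is to discretize $X$ into small tiles, localize the event on each tile using Proposition~\ref{prop : limit_space/gradient/derivee}, and use independence together with stationarity and scaling to factorize the probability. By Corollary~\ref{cor : mon/LD+}, it suffices to lower bound $\liminf_{n\to\infty}-\frac{1}{n^d}\log\Pb{\LD_{n,X}^+(D,\eps)}$ as $\eps\to 0$.

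Fix a tile size $1/k$. For almost every $v\in\InTiles(X)$, pick $z_v\in\Tile(v,k)$ at which $g_{z_v}\in\AdmNorms$, at which $z_v$ is a Lebesgue density point of $z\mapsto\FdT[\intervalleff01^d](g_z)$, and at which Proposition~\ref{prop : limit_space/gradient/derivee} gives the local lower bound $D(x,y)\ge g_{z_v}(y-x)-\eta_{v,k}$ for all $x,y\in\Tile(v,k)$, with $\eta_{v,k}=o(1/k)$ as $k\to\infty$. To decouple neighbouring tiles, for each such $v$ introduce a shrunken subtile $Y_v\subseteq\Tile(v,k)$ obtained by removing a buffer of width $\delta_n/n$, with $\delta_n\to\infty$ and $\delta_n/n\to 0$; for $n$ large the sets $\edges{nY_v}$ are pairwise disjoint, so the associated passage time data are independent. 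On $\LD_{n,X}^+(D,\eps)$, the chain $\BoxSPT[n,Y_v](x,y)\ge\BoxSPT(x,y)\ge D(x,y)-\eps\ge g_{z_v}(y-x)-\eps-\eta_{v,k}$ gives $\LD_{n,X}^+(D,\eps)\subseteq\bigcap_v \LD_{n,Y_v}^+(g_{z_v},\eps+\eta_{v,k})$, and the probability factorizes.

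By stationarity and the scaling formula in Lemma~\ref{lem : limit_space/constructions/rescaling}, each factor equals $\Pb{\LD_{m_n,\intervalleff01^d}^+(g_{z_v},k(\eps+\eta_{v,k})+o_n(1))}$ with $m_n\sim n/k$. Taking $-n^{-d}\log$, $\liminf_n$, then $\eps\to 0$, and finally $k\to\infty$ (using $k\eta_{v,k}\to 0$), one obtains $\FdTinf[X](D)\ge\liminf_{k\to\infty}\frac{1}{k^d}\sum_{v\in\InTiles(X)}\FdT[\intervalleff01^d](g_{z_v})$, and the choice of $z_v$ as Lebesgue density points together with~\eqref{eqn : intro/notations/X_quarrable} makes the right-hand side converge to $\int_X\FdT[\intervalleff01^d](g_z)\d z$.

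The main obstacle will be the interchange of the three limits $n\to\infty$, $k\to\infty$, $\eps\to 0$, which requires uniform-in-$v$ control on the error $\eta_{v,k}$; since Proposition~\ref{prop : limit_space/gradient/derivee} only provides a pointwise modulus of continuity, this uniformity has to be recovered via an Egorov-type argument (on $X$ minus a small-measure set the convergence is uniform, and the excluded set contributes negligibly after $\eps\to 0$). A second, milder difficulty arises when $\FdT[\intervalleff01^d](g_z)=+\infty$ on a set of positive measure, which by Proposition~\ref{prop : intro/sketch/ordre_grandeur} happens precisely when $g_z$ is critical on such a set and $\nu(\acc b)=0$. In this pathological case one shows separately that $\FdTinf[X](D)=+\infty$ by noting that, on tiles where $g_{z_v}$ is critical, the event $\LD_{n,Y_v}^+(g_{z_v},\eps+\eta_{v,k})$ forces a volumic number of edges to have passage time near $b$, and invoking the lower bound on $\FdT[\intervalleff01^d]$ near critical norms from Proposition~\ref{prop : intro/sketch/ordre_grandeur}.
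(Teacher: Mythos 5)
Your overall strategy is the one the paper uses: decompose $X$ into tiles of side $1/k$, make the passage times on different tiles independent, rescale to reduce to $\intervalleff01^d$, and pass from the tile sum to the integral. The independence trick you propose (shrunken subtiles with a buffer of width $\delta_n/n$) is a valid alternative to the paper's resampling of boundary edges; neither costs anything in the exponent. The non-trivial scaling step is encapsulated in the paper's Lemma~\ref{lem : PGD/Lower/scaling} rather than Lemma~\ref{lem : limit_space/constructions/rescaling}, but that is a minor mislabel.

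The genuine gap is in the limit interchange, and you have correctly sensed it but your proposed fix does not actually close it. For fixed $k$, after $n\to\infty$ and $\eps\to 0$, your chain yields
$\FdTinf[X](D)\ge \frac{1}{k^d}\sum_v h\!\left(g_{z_v}, k\eta_{v,k}\right)$
where $h(g,\delta)\dpe \liminf_m -\frac{1}{m^d}\log\Pb{\LD_{m,\intervalleff01^d}^+(g,\delta)}$. Since $k\eta_{v,k}>0$ is fixed when $k$ is fixed, $h(g_{z_v},k\eta_{v,k})$ is strictly below $\FdT[\intervalleff01^d](g_{z_v})$; the claimed lower bound $\FdTinf[X](D)\ge\liminf_k\frac{1}{k^d}\sum_v\FdT[\intervalleff01^d](g_{z_v})$ does \emph{not} follow, and the Egorov idea does not rescue it: even with uniform smallness of $k\eta_{v,k}$ over a good set, $h(g,\omega(k))$ with $\omega(k)>0$ is still not $\FdT[\intervalleff01^d](g)$, and meanwhile the points $z_v$ (which must be simultaneously density points of the modulus in Proposition~\ref{prop : limit_space/gradient/derivee}, Lebesgue points of $z\mapsto\FdT[\intervalleff01^d](g_z)$, and points of the Egorov set) shift with $k$. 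The paper avoids all of this by never replacing $D$ by a norm at the level of events: the inclusion of events gives, for each fixed $k$, the purely deterministic inequality $\FdTinf[X](D)\ge\sum_v\FdTinf[\Tile(v,k)]\bigl(\restriction{D}{\Tile(v,k)}\bigr)$ (where $\eps$ has already been sent to $0$ via Corollary~\ref{cor : mon/LD+}); the scaling lemma converts each summand into $\frac{1}{k^d}\FdTinf[\intervalleff01^d]$ of a rescaled restriction of $D$; this sum is rewritten as $\int_{X_k}\FdTinf[\intervalleff01^d](\cdots)\d z$; and Fatou's lemma in $k$ reduces matters to the pointwise statement that for a.e.\ $z$ the rate function of the rescaled restriction has $\liminf_k$ at least $\FdT[\intervalleff01^d](g_z)$, which follows from Proposition~\ref{prop : limit_space/gradient/derivee}, the monotonicity of $\FdTinf$ and Corollary~\ref{cor : mon/LD+}. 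No choice of representative point $z_v$ and no uniformity in the modulus are needed.

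Finally, the separate treatment of the case $\FdT[\intervalleff01^d](g_z)=+\infty$ on a positive-measure set is superfluous: Fatou's lemma for nonnegative extended-real integrands already produces $\FdTinf[X](D)=+\infty$ in that case.
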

Lemma~\ref{lem : PGD/Lower/scaling} provides a link between the lower rate function evaluated on a metric and on a rescaled, translated version of it, as defined by~\eqref{eqn : limit_space/constructions/def_rescaling} and~\eqref{eqn : limit_space/constructions/def_translation}.
\begin{Lemma}
\label{lem : PGD/Lower/scaling}
	Let $k\ge 1, v\in\Z^d$ and $D\in\AdmDistances[\Tile(v,k)]$. Then
	\begin{equation}
	 	\FdTinf[\Tile(v,k)](D) \ge \frac{1}{k^d}\FdTinf[\intervalleff01^d]\p{\Translation{\Scaling{D}{k}}{-v}}.
	 \end{equation} 
\end{Lemma}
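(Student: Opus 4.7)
The plan is to compare, via stationarity of the passage times and monotonicity of the passage time under restriction, the probability of the event $\LD_{n,\Tile(v,k)}(D,\eps)$ to that of an analogous event on an integer-aligned sub-cube, which by stationarity has the same distribution as the corresponding event on the unit cube. Set $\tilde D \dpe \Translation{\Scaling{D}{k}}{-v}$, which satisfies the algebraic identity $D(x',y') = \frac{1}{k}\tilde D(kx'-v,\,ky'-v)$ for $x',y'\in \Tile(v,k)$. By Corollary~\ref{cor : mon/LD+}, in the definition of $\FdTinf$ we may replace $\LD$ by the one-sided event $\LD^+$; it therefore suffices to show, for every $\eps>0$ and all large $n$, the probability comparison
\begin{equation*}
\Pb{\LD_{n,\Tile(v,k)}^+(D,\eps)} \le \Pb{\LD_{m,\intervalleff01^d}^+\p{\tilde D,(k+1)\eps}},
\end{equation*}
with $m\dpe \floor{n/k}-1$.

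For each such $n$, I would select an integer vector $w=w(n)\in\Z^d$ so that the integer-aligned cube $W_n \dpe w+\intervalleff0m^d$ is contained in $n\Tile(v,k) = \frac{nv}{k}+\intervalleff0{n/k}^d$. The existence of $w$ follows because the set of admissible real shifts is a box of coordinate-wise side $\frac{n}{k}-m\in\intervalleff12$, hence contains an integer point. For arbitrary $x,y\in\intervalleff01^d$, set $\xi\dpe w+mx$ and $\eta\dpe w+my$, which lie in $W_n\subseteq n\Tile(v,k)$. On the event $\LD_{n,\Tile(v,k)}^+(D,\eps)$, monotonicity under restriction (Lemma~\ref{lem : limit_space/constructions/restriction}) and the definition of the event give
\begin{equation*}
\BoxPT[W_n](\xi,\eta) \ge \BoxPT[n\Tile(v,k)](\xi,\eta) \ge n D(\xi/n,\eta/n)-n\eps.
\end{equation*}
Using $\xi/n - (x+v)/k = (w-nv/k)/n + x(m/n-1/k) = \grando(1/n)$ and the $b$-Lipschitz property of $D$, one obtains $D(\xi/n,\eta/n)=\frac{1}{k}\tilde D(x,y)+\grando(1/n)$ uniformly in $x,y$. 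Dividing by $m$ and using $n/m = k+\grando(1/n)$ and boundedness of $\tilde D$, the lower bound on $\BoxPT[W_n](\xi,\eta)/m$ is at least $\tilde D(x,y)-k\eps+\grando(1/m)$, hence at least $\tilde D(x,y)-(k+1)\eps$ for $n$ sufficiently large, uniformly in $x,y\in\intervalleff01^d$.

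Finally, stationarity of $(\EPT[e])_{e\in\bbE^d}$ under the integer translation by $-w$ gives the distributional identity: $\BoxPT[W_n](w+mx,w+my)$ has the same distribution, jointly in $(x,y)$, as $\BoxPT[\intervalleff0m^d](mx,my)=m\,\BoxSPT[m,\intervalleff01^d](x,y)$. Combining with the preceding deterministic implication yields the probability comparison claimed above. Applying $-\frac{1}{n^d}\log$, noting $m^d/n^d\to 1/k^d$, taking $\liminf_{n\to\infty}$, and then letting $\eps\to 0$ (using again Corollary~\ref{cor : mon/LD+} on both sides to re-identify $\FdTinf$) produces the stated bound $\FdTinf[\Tile(v,k)](D) \ge \frac{1}{k^d}\FdTinf[\intervalleff01^d](\tilde D)$.

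The main obstacle is the careful accounting of three sources of error when passing from the tile event to the unit-cube event: the $\grando(1)$ boundary mismatch between $W_n$ and $n\Tile(v,k)$, the $\grando(1/n)$ Lipschitz perturbation when evaluating $D$ at shifted arguments, and the conversion factor $n/m=k+\grando(1/n)$ in the rescaling. These must all be absorbed uniformly in $x,y\in\intervalleff01^d$ into a single constant multiple of $\eps$ (here taken as $(k+1)\eps$), so that they vanish in the successive limits $n\to\infty$ and then $\eps\to 0$; invoking the one-sided formulation $\LD^+$ is what makes the restriction-monotonicity argument go through in this direction, since the two-sided event would require controlling $\BoxPT[W_n]$ from above as well, which the inclusion $W_n\subseteq n\Tile(v,k)$ does not provide.
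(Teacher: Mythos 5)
Your proof is correct, and it takes a genuinely different route from the paper's. The paper fixes a target scale $m$, defines $n$ by $nk\le m<(n+1)k$, and is then confronted with the fact that the boxes $m\Tile(v,k)$ and $nk\Tile(v,k)$ do not nest (their anchor points shift with the dilation when $v\neq0$). It resolves this by \emph{expanding} $m\Tile(v,k)$ by a $\grando(1)$-thick shell and conditioning the shell's edges to be slow (passage time $\ge b-\eps$), which neutralizes any shortcuts through the shell; this requires the auxiliary observation that the $\grando(m^{d-1})$ conditioned edges cost nothing at speed $m^d$. You instead fix the source scale $n$, set $m=\floor{n/k}-1$, and \emph{shrink} to an integer-aligned sub-cube $W_n\subseteq n\Tile(v,k)$, which exists because the admissible-shift interval has length $\tfrac nk-m\ge1$. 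Because you only need to propagate a lower bound (the one-sided $\LD^+$ event, via Corollary~\ref{cor : mon/LD+}), and because restriction of the passage time to a smaller box can only increase it (Lemma~\ref{lem : limit_space/constructions/restriction}), the shrinking costs nothing and no slow-edge conditioning is needed; stationarity under the integer translation by $-w$ then identifies the sub-cube's law with that of $\intervalleff0m^d$. Your error accounting — the $\grando(1/n)$ Lipschitz perturbation from the mismatch $\xi/n-(x+v)/k$ and the ratio $n/(mk)\to1$ — is sound and absorbs into $(k+1)\eps$ as claimed, and the $\liminf$ passage from the sequence $m(n)$ to the full sequence $m$ is valid since $m(n)\to\infty$ and $m^d/n^d\to1/k^d$. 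In short, the same monotonicity that the paper itself exploits elsewhere (one-sided events plus restriction monotonicity) lets you sidestep the slow-edge machinery entirely; this is a clean simplification.
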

\begin{proof}
	Let $\eps>0$, $m\ge 1$ and $n$ be the unique integer such that $nk \le m < (n+1)k$. Note that for all $x \in nk\Tile(v,k)$,
	\begin{equation}
	\label{eqn : PGD/Lower/scaling/ajustement_taille}
		\norme{\frac{m}{nk}\cdot x - x} \le \frac{\norme x}{n} \le \norme v +d.
	\end{equation}
	In particular,
	\begin{equation}
	\label{eqn : PGD/Lower/scaling/inclusion}
		nk\Tile(v,k) \subseteq m\Tile(v,k) + \clball{0,\norme v +d}.
	\end{equation}
	Consider the event%
	\stepcounter{tile}
	\begin{align*}
		\cFav &\dpe \LD_{m,\Tile(v,k)}(D,\eps) \cap \p{\bigcap_{e\in \exedges{m\Tile(v,k) + \clball{0,\norme v + d}}\setminus \exedges{m\Tile(v,k)}  } \acc{\EPT[e]\ge b-\eps} }.
	\end{align*}
	One easily checks that
	\begin{equation}
		\label{eqn : PGD/Lower/scaling/PbFav}
		\liminf_{m\to\infty}-\frac{1}{m^d}\log \Pb{\cFav} = \liminf_{m\to\infty}-\frac{1}{m^d}\log\Pb{\LD_{m,\Tile(v,k)}(D,\eps)}.
	\end{equation}

	Assume that the event $\cFav$ occurs. Let $x,y\in nk\Tile(v,k)$. By~\eqref{eqn : PGD/Lower/scaling/inclusion},
	\begin{align}
		\BoxPT[nk\Tile(v,k)]\p{x,y} &\ge \BoxPT[m\Tile(v,k)+\clball{0,\norme v + d}]\p{x,y}.\nonumber
	\intertext{Triangle inequality and~\eqref{eqn : intro/main_thm/equivalence_distances} yield}
		\BoxPT[nk\Tile(v,k)]\p{x,y} &\ge \BoxPT[m\Tile(v,k)+\clball{0,\norme v + d}]\p{\frac{mx}{nk} ,\frac{my}{nk} } - 2b\p{\norme v +d}.%
		\label{eqn : PGD/Lower/scaling/LB1}
	\end{align}
	Let $\frac{mx}{nk} \Path\sigma \frac{my}{nk} $ be a $ \BoxPT[m\Tile(v,k)+\clball{0,\norme v + d}]$-geodesic and $\gamma$ the path obtained by replacing excursions of $\sigma$ outside $m\Tile(v,k)$ by segments, as in~\eqref{eqn : limit_space/corridor/segments}. Since edges in $\exedges{m\Tile(v,k) + \clball{0,\norme v + d}}\setminus \exedges{m\Tile(v,k)}$ have a passage time greater than $b-\eps$,
	\begin{equation}
		\label{eqn : PGD/Lower/scaling/LB2}
		\BoxPT[m\Tile(v,k)+\clball{0,\norme v + d}]\p{\frac{mx}{nk} ,\frac{my}{nk} }%
			=\BoxPT(\sigma) \ge \frac{b-\eps}{b}\BoxPT(\gamma)%
			\ge \frac{b-\eps}{b}\BoxPT[m\Tile(v,k)]\p{\frac{mx}{nk} ,\frac{my}{nk} },
	\end{equation}
	with the definition~\eqref{eqn : limit_space/constructions/restriction}. Combining~\eqref{eqn : PGD/Lower/scaling/LB1} and~\eqref{eqn : PGD/Lower/scaling/LB2}, we get
	\begin{align}
		\BoxPT[nk\Tile(v,k)]\p{x,y}%
			&\ge \frac{b-\eps}{b}\BoxPT[m\Tile(v,k)]\p{\frac{mx}{nk} ,\frac{my}{nk} } - 2b\p{\norme v +d}\nonumber\\
			&\ge \frac{m(b-\eps)}{b}\cro{D\p{\frac{x}{nk} ,\frac{y}{nk}} - \eps} - 2b\p{\norme v +d},\nonumber
		\intertext{thus for small enough $\eps>0$ and large enough $m$ (thus large $n$), for all $x,y\in \Tile(v,k)$,}
		\BoxSPT[nk, \Tile(v,k)]\p{x,y}%
			&\ge \frac{b-\eps}{b}\cro{D(x,y)-\eps} - \frac{2b}{nk}\p{\norme v +d}\nonumber\\
			&\ge D(x,y) - \frac\eps b \diam\p{\Tile(v,k)} - \eps - \p{1+\frac1n}\cdot\frac{2b}{m}\p{\norme v +d}\nonumber,
	\end{align}
	therefore
	\begin{equation}
		\label{eqn : PGD/Lower/scaling/Fav_implies_LD}
		\cFav \subseteq \LD_{nk, \Tile(v,k)}^+\p{D, \Cl{PGD/Lower/scaling}\p{ \eps + \frac{\norme v+d}m} } %
		= \LD_{n, \Tile(v,1)}^+\p{\Scaling{D}{k},k\Cr{PGD/Lower/scaling}\p{ \eps + \frac{\norme v+d}m} },
	\end{equation}
	where $\Cr{PGD/Lower/scaling}$ only depends on $b$ and $\LD^+$ is defined in~\eqref{eqn : mon/def_LD+}. Equations~\eqref{eqn : PGD/Lower/scaling/PbFav} and~\eqref{eqn : PGD/Lower/scaling/Fav_implies_LD} lead to
	\begin{equation}
	 	\FdTinf[\Tile(v,k)](D) \ge \frac{1}{k^d}\FdTinf[\Tile(v,1)]\p{\Scaling{D}{k}}.
	 \end{equation} 
	We conclude by stationarity.
\end{proof}
\begin{proof}[Proof of Proposition~\ref{prop : PGD/Lower}]
	Let $k\ge 1$, $\eps>0$. Define
	\begin{equation}
		X_k \dpe \bigcup_{v\in \InTiles(X)}\Tile(v,k).
	\end{equation}
	Note that for all $z\in \mathring X$, $z\in X_k$ for large enough $k$. The idea is to show that a large deviation event around $D$ is included in an intersection of large deviation events around restrictions of $D$ (see \eqref{eqn : limit_space/constructions/restriction}) on tiles of the type $\Tile(v,k)$. We first need to make minor adjustments on the boundary of tiles to ensure independence. For all $v\in \InTiles[k](X)$, let $\bedges{n\Tile(v,k)}$ denote the set of edges belonging to $\exedges{n\Tile(v,k)}$, but not included in the \emph{interior} of $n\Tile(v,k)$. Consider a family of $\#\p{\InTiles[k](X)}+1$ independent configurations $\p{ \EPT, \p{\EPT^{(v)}}_{v\in \InTiles[k](X)} }$, each with distribution $\nu^{\otimes \bbE^d}$. For all $v\in \InTiles[k](X), e \in \bbE^d$, define
	\begin{equation}
		\bEPT[e]{v} \dpe%
			\begin{cases}
				\EPT[e]^{(v)} &\text{ if } e\in \bedges{n\Tile(v,k)},\\
				\EPT[e]		 	&\text{ otherwise.}
			\end{cases}
	\end{equation}
	Note that $\bigg( \p{\bEPT[e]{v} }_{e\in \exedges{n\Tile(v,k)} }, v\in\InTiles[k](X) \bigg)$ are independent. Consider the event%
	\stepcounter{tile}
	\begin{align}
		\cFav&\dpe \acc{\EPT \in \LD_{n,X}^+(D,\eps)} \cap \p{\bigcap_{v\in \InTiles[k](X)} \bigcap_{e\in \bedges{n\Tile(v,k)} } \acc{\EPT[e]^{(v)} \ge b - \eps} }.\notag
		\intertext{By independence,}
		\label{eqn : PGD/Lower/Pb_Fav}
		\Pb{\cFav} &\ge \Pb{\LD_{n,X}^+(D,\eps)} \cdot \nu\p{\intervalleff{b-\eps}{b}}^{\#\p{\InTiles[k](X)} \cdot \#\p{\bedges{n\Tile(0,k)}}}. 
	\end{align}
	Let $v \in \InTiles[k](X)$ and $n\ge 1$. Equation~\eqref{eqn : limit_space/geodesics/localisation_forte} implies that any $\BoxSPT[n, X]$-geodesic or $\BoxSPT[n, X]^{[v]}$-geodesic $\sigma$ has length at most $\frac{b\diam(X)}{a}$. Besides, for all $e\in \bbE^d$,
	\begin{equation*}
		\bEPT[e]{v} \ge \EPT[e]- \eps,
	\end{equation*}
	thus
	\begin{align}
		\cFav &\subseteq \acc{\bEPT{v} \in \LD_{n, \Tile(v,k)}^+\p{\restriction{D}{\Tile(v,k)} ,\p{ \frac{b\diam (X)}{a}+1}\eps} }.\notag
		\intertext{Consequently,}
		\label{eqn : PGD/Lower/inclusion}
		\cFav &\subseteq \bigcap_{v\in \InTiles(X)} \acc{\bEPT{v} \in \LD_{n, \Tile(v,k)}^+\p{\restriction{D}{\Tile(v,k)} ,\p{ \frac{b\diam (X)}{a}+1}\eps} }.
	\end{align}
	The events in the intersection are independent. Applying~\eqref{eqn : PGD/Lower/Pb_Fav}, we get 
	\begin{align}
		-\frac{1}{n^d}\log \Pb{\LD_{n,X}^+(D,\eps)} -&\frac{\#\p{\InTiles[k](X)} \cdot \#\p{\bedges{n\Tile(0,k)}} }{n^d} \log \nu\p{\intervalleff{b-\eps}{b}}\eol%
			&\ge \sum_{v\in \InTiles(X)} -\frac{1}{n^d} \log\Pb{\LD_{n, \Tile(v,k)}^+\p{\restriction{D}{\Tile(v,k)} ,\p{ \frac{b\diam (X)}{a}+1}\eps} }.\notag
		\intertext{Thanks to Corollary~\ref{cor : mon/LD+}, letting $n\to\infty$ then $\eps\to0$, we obtain}
		\FdTinf[X](D) %
			&\ge \sum_{v\in \InTiles(X)} \FdTinf[\Tile(v,k)]\p{\restriction{D}{\Tile(v,k)}}\notag.
		\intertext{Applying Lemma~\ref{lem : PGD/Lower/scaling} yields}
		\FdTinf[X](D) %
			&\ge \frac{1}{k^d} \sum_{v\in \InTiles(X)} \FdTinf[\intervalleff01^d]\p{ \Translation{\Scaling{\restriction{D}{\Tile(v,1)}}{k} }{-v} }\eol
			\label{eqn : PGD/Lower/int_tuiles}
			&\ge \int_{X_k}  \FdTinf[\intervalleff01^d]\p{\Translation{\Scaling{\restriction{D}{\Tile(v_k(z),1)}}{k} }{-v_k(z)}} \d z,
	\end{align}
	where $v_k(z)$ is such that $z\in \Tile(v_k(z), k)$, chosen in a measurable way in case of non-unicity. Proposition~\ref{prop : limit_space/gradient/derivee} implies that for almost every $z\in X$, for all $\eps>0$, for large enough $k$, for all $x,y \in \intervalleff01^d$, 
	\begin{align*}
		\Translation{\Scaling Dk}{-v_k(z)}(x,y)%
			&= k D\p{\frac{x+ v_k(z)}k , \frac{y + v_k(z)}k } \\
			&\ge k\cro{g_z\p{\frac xk - \frac yk} - \p{\norme{\frac{x+v_k(z)}{k } - z} + \norme{\frac{y+v_k(z)}{k } - z} }\eps }\\
			&\ge g_z(x-y) -2\eps d.
	\end{align*}
	Consequently, for almost every $z\in X$, for all $\eps>0$, for large enough $k$,
	\begin{align}
		\FdTinf[\intervalleff01^d]\p{\Translation{\Scaling{\restriction{D}{\Tile(v_k(z),1)}}{k} }{-v_k(z)}} &\ge \liminf_{n \to \infty} -\frac{1}{n^d} \log \Pb{\LD_{n,\intervalleff01^d}^+(g_z, 3d\eps)}.\nonumber
	\intertext{Letting $k\to\infty$ then $\eps\to 0$ and applying Corollary~\ref{cor : mon/LD+} again, we get, for almost every $z\in X$,}
	\label{eqn : PGD/Lower/liminf_integrande}
		\liminf_{k\to\infty}\FdTinf[\intervalleff01^d]\p{\Translation{\Scaling{\restriction{D}{\Tile(v_k(z),1)}}{k} }{-v_k(z)}} &\ge \FdT[\intervalleff01^d](g_z).
	\end{align}
	We conclude the proof by applying Fatou's lemma to~\eqref{eqn : PGD/Lower/int_tuiles} and using~\eqref{eqn : PGD/Lower/liminf_integrande}.
\end{proof}

\section{Proof of the corollaries}
\label{sec : app}

Lemma~\ref{lem : app/contraction} gives a LDP for any process that is the image of $(\BoxSPT)_{n\ge1}$ through a continuous map. We recall that it is a special case of the contraction principle (see e.g. \cite[Theorem~4.2.1]{LDTA}).
\begin{Lemma}
\label{lem : app/contraction}
	Let $X\in \Windows$. Under Assumption~\ref{ass : intro/main_thm/support}, for every Hausdorff topological space $\cY$ and continuous map $f:\AdmDistances \rightarrow \cY$, the process $\p{ f(\BoxSPT) }_{n\ge 1}$ satisfies the large deviation principle with the good rate function
	\begin{align}
		\cY &\longrightarrow \intervalleff0\infty \eol
		y &\longmapsto \min_{\substack{D \in \AdmDistances[X]: \\ f(D)=y} } \FdT[X](D) = \min_{\substack{D \in \AdmDistances[X]: \\ f(D)=y} } \int_X \FdT[\intervalleff01^d]\p{\grad D}_z \d z.
	\end{align}
\end{Lemma}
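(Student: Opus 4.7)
The plan is to invoke the contraction principle directly on the LDP established in Theorem~\ref{thm : MAIN}. First, I would note that Theorem~\ref{thm : MAIN} provides the LDP at speed $n^d$ for $(\BoxSPT)_{n\ge 1}$ with the good rate function $\FdT[X]$. Since $f : \AdmDistances[X] \to \cY$ is continuous and $\AdmDistances[X]$ is compact (Proposition~\ref{prop : limit_space/compactness/compactness}), Theorem~4.2.1 in \cite{LDTA} applies and yields the LDP for $(f(\BoxSPT))_{n\ge1}$ at speed $n^d$ with rate function
\[
	J(y) \dpe \inf\acc{\FdT[X](D) \Bigm| D\in \AdmDistances[X],\ f(D)=y},
\]
with the convention $\inf \emptyset = \infty$.

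Next, I would argue that the infimum is attained whenever it is finite, which justifies writing $\min$ rather than $\inf$ in the statement. Since $\cY$ is Hausdorff, $\acc{y}$ is closed in $\cY$; by continuity of $f$, the preimage $f^{-1}(\acc{y})$ is a closed subset of the compact space $\AdmDistances[X]$, hence compact. The rate function $\FdT[X]$ is lower semicontinuous, so it attains its minimum on every nonempty compact subset, and in particular on $f^{-1}(\acc{y})$.

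Finally, the integral representation of the rate function is an immediate consequence of Equation~\eqref{eqn : MAIN/integral} in Theorem~\ref{thm : MAIN}, which expresses $\FdT[X](D)$ as $\int_X \FdT[\intervalleff01^d]((\grad D)_z)\d z$ for every $D\in \AdmDistances[X]$. Substituting this identity inside the minimum gives the second equality in the statement.

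There is no real obstacle here beyond checking that the hypotheses of the contraction principle are satisfied. The compactness of $\AdmDistances[X]$ does the double duty of making $\FdT[X]$ automatically a good rate function and ensuring that level sets of $f$ intersected with $\AdmDistances[X]$ are compact, so that the infimum in $J(y)$ is in fact a minimum.
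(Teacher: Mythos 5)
Your proof is correct and takes essentially the same route as the paper, which simply cites the contraction principle (Theorem~4.2.1 in \cite{LDTA}) applied to the LDP from Theorem~\ref{thm : MAIN}. You add the routine verifications — that $\FdT[X]$ is automatically a good rate function on the compact space $\AdmDistances[X]$, that the infimum over each fiber is attained since $f^{-1}(\acc{y})$ is a compact set on which the lower semicontinuous function $\FdT[X]$ achieves its minimum, and that the integral formula follows by substituting~\eqref{eqn : MAIN/integral} — all of which are correct and implicit in the paper's one-line reference.
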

However, as Corollaries~\ref{cor : intro/applications/point_point} and~\ref{cor : intro/applications/crossing} are stated with assumptions on $\nu$ milder than Assumption~\ref{ass : intro/main_thm/support}, we need extra work consisting in taking the limit as $\alpha\to0$ of the model constructed with the passage times
\begin{equation}
	\tEPT[e]{\alpha} \dpe \EPT[e]\vee \alpha.
\end{equation}
Let $\nu^{(\alpha)}$ denote their distribution. Let $\exAdmDistances{\alpha}$ denote the space defined as in Definition~\ref{def : intro/main_thm/adm_metrics}, with $\alpha>0$ instead of $a$. The set $\exNorms{\alpha}$ is defined likewise. Let $\SNorms$ denote the set of seminorms $g$ such that $g\le b\norme\cdot$ and $\NoncritSNorms$ the set of seminorms $g$ such that $g\le (b-\eta)\norme\cdot$, for some $\eta>0$. Both are endowed with the topology induced by $\normeHom\cdot$ (see~\eqref{eqn : intro/notations/normes_ContHom}). For every rate function introduced in this section we will mark with the exponent $\cdot^{(\alpha)}$ the corresponding function when $\nu$ is replaced by $\nu^{(\alpha)}$.
\subsection{Preliminaries: limit of the elementary rate function}
\label{subsec : prelis}

For all $g\in \SNorms$, we introduce
\begin{align}
	\monFdTsup[\intervalleff01^d](g) &\dpe \inclim{\eps\to0} \limsup_{n\to\infty} -\frac{1}{n^d}\log \Pb{\LD_{n,X}^+(g,\eps) }\\
	\text{and  }%
	\monFdTinf[\intervalleff01^d](g) &\dpe \inclim{\eps\to0} \liminf_{n\to\infty} -\frac{1}{n^d}\log \Pb{\LD_{n,X}^+(g,\eps)},
\end{align}
with the definition~\eqref{eqn : mon/def_LD+}. The functions $\monFdTsup[\intervalleff01^d]$ and $\monFdTinf[\intervalleff01^d]$ are lower semicontinuous for the norm $\normeHom\cdot$. In this subsection we show that they are equal and have a simple expression using the elementary rate function under the distribution $\nu^{(\alpha)}$.
\begin{Lemma}
\label{lem : app/prelis/FdTmon}
	For all $g \in\SNorms$ and $\alpha>0$, 
\begin{equation}
\label{eqn : app/prelis/FdTmon}
\monexFdTsup[\intervalleff01^d]{\alpha}(g) = \monexFdTinf[\intervalleff01^d]{\alpha}(g) = \exFdT[\intervalleff01^d]{\alpha}\p{ \cro{\alpha\norme\cdot}\vee g } .
\end{equation}
We denote by $\monexFdT[\intervalleff01^d]{\alpha}(g)$ this number.
\end{Lemma}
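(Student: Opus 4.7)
The plan is to reduce the one-sided rate functions evaluated at $g$ to the two-sided rate function evaluated at the norm $\tilde g \dpe (\alpha\norme\cdot) \vee g$, for which Theorem~\ref{thm : intro/sketch/FdT_elem} and Corollary~\ref{cor : mon/LD+} are available under the truncated law $\nu^{(\alpha)}$. The crucial deterministic input is that every edge weight under $\nu^{(\alpha)}$ is at least $\alpha$, so any discrete path from $nx$ to $ny$ has passage time at least $\alpha\norme{nx-ny}$; after rescaling, $\BoxSPT^{(\alpha)}_{n,\intervalleff01^d}(x,y) \ge \alpha\norme{x-y}$ almost surely for all $x,y$.

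First I would verify that $\tilde g \in \exNorms{\alpha}$. Homogeneity and positive-definiteness are immediate, and the triangle inequality holds by the standard $\max$-of-seminorms argument. The bounds $\alpha\norme\cdot \le \tilde g \le b\norme\cdot$ follow from $g \in \SNorms$ and $\alpha \le b$. Viewing $\tilde g$ as an admissible metric on $\intervalleff01^d$ (affine segments being $\tilde g$-geodesics), Theorem~\ref{thm : intro/sketch/FdT_elem} applied to $\nu^{(\alpha)}$ yields
\begin{equation*}
\exFdTinf[\intervalleff01^d]{\alpha}(\tilde g) = \exFdTsup[\intervalleff01^d]{\alpha}(\tilde g) = \exFdT[\intervalleff01^d]{\alpha}(\tilde g).
\end{equation*}

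Next I would use the a.s. inequality $\BoxSPT^{(\alpha)} \ge \alpha\norme\cdot$ to identify, for every $\eps > 0$, the events $\LD_{n,\intervalleff01^d}^+(g,\eps)$ and $\LD_{n,\intervalleff01^d}^+(\tilde g,\eps)$ almost surely: indeed, $\BoxSPT^{(\alpha)} \ge \alpha\norme\cdot - \eps$ is automatic, so combining it with $\BoxSPT^{(\alpha)} \ge g-\eps$ is equivalent to $\BoxSPT^{(\alpha)} \ge \tilde g - \eps$. Consequently
\begin{equation*}
\monexFdTinf[\intervalleff01^d]{\alpha}(g) = \monexFdTinf[\intervalleff01^d]{\alpha}(\tilde g),
\qquad
\monexFdTsup[\intervalleff01^d]{\alpha}(g) = \monexFdTsup[\intervalleff01^d]{\alpha}(\tilde g).
\end{equation*}

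Finally I would invoke Corollary~\ref{cor : mon/LD+} applied to $\nu^{(\alpha)}$ to obtain $\monexFdTinf[\intervalleff01^d]{\alpha}(\tilde g) = \exFdTinf[\intervalleff01^d]{\alpha}(\tilde g) = \exFdT[\intervalleff01^d]{\alpha}(\tilde g)$. The trivial inclusion $\LD_{n,\intervalleff01^d}(\tilde g,\eps) \subseteq \LD_{n,\intervalleff01^d}^+(\tilde g,\eps)$ gives $\monexFdTsup[\intervalleff01^d]{\alpha}(\tilde g) \le \exFdTsup[\intervalleff01^d]{\alpha}(\tilde g) = \exFdT[\intervalleff01^d]{\alpha}(\tilde g)$, which together with $\monexFdTinf[\intervalleff01^d]{\alpha}(\tilde g) \le \monexFdTsup[\intervalleff01^d]{\alpha}(\tilde g)$ squeezes all four quantities to the common value $\exFdT[\intervalleff01^d]{\alpha}(\tilde g)$. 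The argument is essentially book-keeping between results already available under $\nu^{(\alpha)}$; the only substantive step is the deterministic lower bound on $\BoxSPT^{(\alpha)}$ used to identify the two $\LD^+$-events.
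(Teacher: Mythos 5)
Your argument is correct, and it reaches the same conclusion via a slightly more modular route than the paper's. The paper applies the full LDP (Theorem~\ref{thm : MAIN} under $\nu^{(\alpha)}$) to the compact set $A_\eps=\{D\in\exAdmDistances[\intervalleff01^d]{\alpha}\mid D\ge g-\eps\}$, extracts minimizers of $\exFdT[\intervalleff01^d]{\alpha}$ on $A_\eps$, and reruns a compactness plus lower-semicontinuity plus monotonicity argument to identify the limiting value with $\exFdT[\intervalleff01^d]{\alpha}\p{[\alpha\norme\cdot]\vee g}$; the identification of events such as $\{\tBoxSPT[n,\intervalleff01^d]{\alpha}\in A_\eps\}=\LD^+_{n,\intervalleff01^d}(g,\eps)$ and the observation that the relevant norm is $\tilde g\dpe[\alpha\norme\cdot]\vee g$ are present but kept implicit. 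You instead make the event identification explicit (using the deterministic bound $\tBoxSPT[n,\intervalleff01^d]{\alpha}\ge\alpha\norme\cdot$ to equate $\LD^+_n(g,\eps)$ with $\LD^+_n(\tilde g,\eps)$ a.s.), then outsource the lower bound to Corollary~\ref{cor : mon/LD+} (which already packages the Lemma~\ref{lem : intro/sketch/UB_LB}(iii) plus Proposition~\ref{prop : mon/mon} plus compactness argument) and Theorem~\ref{thm : intro/sketch/FdT_elem}, and get the upper bound from the trivial inclusion $\LD_n\subseteq\LD_n^+$. This avoids re-deriving inline what Corollary~\ref{cor : mon/LD+} already proves; the only thing to check — and you do — is that Assumption~\ref{ass : intro/main_thm/support} holds for $\nu^{(\alpha)}$ so that Corollary~\ref{cor : mon/LD+} and Theorem~\ref{thm : intro/sketch/FdT_elem} apply. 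Both arguments buy the same result, but yours is a bit cleaner book-keeping; the paper's is self-contained in that it does not depend on already having identified $\monexFdTinf[\intervalleff01^d]{\alpha}$ with $\exFdTinf[\intervalleff01^d]{\alpha}$ via the a.s.\ event identification.
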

\begin{proof}
Let $g\in \SNorms$ and $\eps>0$. Theorem~\ref{thm : MAIN} with $X=\intervalleff01^d$ and the distribution $\nu^{(\alpha)}$, applied to the compact set $A_\eps\dpe \acc{D\in \exAdmDistances[\intervalleff01^d]{\alpha} \biggm| \forall x,y\in \intervalleff01^d, D(x,y)\ge g(x-y)-\eps }$ gives
\begin{equation}
\label{eqn : app/prelis/FdTmon/PGD}
	\min_{D\in A_\eps}\exFdT[\intervalleff01^d]{\alpha}(D) %
		\le \liminf_{n\to\infty}-\frac{1}{n^d}\log\Pb{\tBoxSPT[n,\intervalleff01^d]{\alpha}\in A_\eps } %
		\le \limsup_{n\to\infty}-\frac{1}{n^d}\log\Pb{\tBoxSPT[n,\intervalleff01^d]{\alpha}\in A_\eps } %
		\le \inf_{D\in \mathring{A_\eps}}\exFdT[\intervalleff01^d]{\alpha}(D).
\end{equation}
The norm $\cro{\alpha\norme\cdot}\vee g$ is in $\mathring{A_\eps}$, therefore the upper bound in~\eqref{eqn : app/prelis/FdTmon/PGD} implies
\begin{equation}
	\label{eqn : app/prelis/FdTmon/UB}
	\inclim{\eps\to0} \limsup_{n\to\infty}-\frac{1}{n^d}\log\Pb{\tBoxSPT[n,\intervalleff01^d]{\alpha}\in A_\eps } %
		\le \exFdT[\intervalleff01^d]{\alpha}\p{\cro{\alpha\norme\cdot}\vee g }.
\end{equation}
Besides, by the lower bound in~\eqref{eqn : app/prelis/FdTmon/PGD}, there exists $D_\eps \in A_\eps$ such that
\begin{align}
	\liminf_{n\to\infty} - \frac{1}{n^d}\log \Pb{ \tBoxSPT[n,\intervalleff01^d]{\alpha}\in A_\eps  }%
		&\ge \exFdT[\intervalleff01^d]{\alpha}\p{ D_\eps }.\nonumber
	\intertext{By compactness of $\exAdmDistances[\intervalleff01^d]{\alpha}$, $(D_\eps)_{\eps>0}$ has an adherence value $D$ as $\eps\to 0$, thus by lower semicontinuity of $\exFdT[\intervalleff01^d]{\alpha}$,}
	\inclim{\eps \to 0}\liminf_{n\to\infty} - \frac{1}{n^d}\log \Pb{ \tBoxSPT[n,\intervalleff01^d]{\alpha}\in A_\eps  }%
		&\ge \exFdT[\intervalleff01^d]{\alpha}\p{ D }.\nonumber
	\intertext{Since $D\ge \cro{\alpha\norme\cdot}\vee g$ and $\exFdT[\intervalleff01^d]{\alpha}$ is nondecreasing, we have}
	\label{eqn : app/prelis/FdTmon/LB}
	\inclim{\eps \to 0}\liminf_{n\to\infty} - \frac{1}{n^d}\log \Pb{ \tBoxSPT[n,\intervalleff01^d]{\alpha}\in A_\eps  }%
		&\ge \exFdT[\intervalleff01^d]{\alpha}\p{ \cro{\alpha\norme\cdot}\vee g }.
\end{align}
The lemma is a consequence of~\eqref{eqn : app/prelis/FdTmon/UB} and~\eqref{eqn : app/prelis/FdTmon/LB}.
\end{proof}

\begin{Lemma}
\label{lem : app/prelis/cvg_fdt_elem}
	For all $g\in \SNorms$,
	\begin{equation}
		\label{eqn : app/prelis/cvg_fdt_elem}
	 	\monFdTsup[\intervalleff01^d](g) = \monFdTinf[\intervalleff01^d](g) = \inclim{\alpha \to 0} \monexFdT[\intervalleff01^d]{\alpha}(g),
	\end{equation} 
	with uniform convergence on the compact subsets of $\NoncritSNorms$. We denote by $\monFdT[\intervalleff01^d](g)$ this number. Moreover, the restriction of $\monFdT[\intervalleff01^d]$ on $\NoncritSNorms$ is continuous.
\end{Lemma}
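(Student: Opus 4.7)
The plan is to sandwich the two rate functions between the increasing limit $L(g) := \inclim_{\alpha \to 0}\monexFdT[\intervalleff01^d]{\alpha}(g)$ and itself. The natural coupling $\tEPT[e]{\alpha} = \EPT[e] \vee \alpha$ makes $\tBoxSPT{\alpha}$ pointwise nondecreasing in $\alpha$, so the probabilities $\Pb{\tBoxSPT{\alpha} \ge g - \eps}_\nu$ decrease as $\alpha \to 0^+$; in conjunction with Lemma~\ref{lem : app/prelis/FdTmon} this makes $\alpha \mapsto \monexFdT[\intervalleff01^d]{\alpha}(g)$ nondecreasing in this limit and $L(g)$ well-defined. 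The inclusion $\acc{\BoxSPT \ge g - \eps} \subseteq \acc{\tBoxSPT{\alpha} \ge g - \eps}$ gives $\Pb{\LD^+_{n,\intervalleff01^d}(g,\eps)}_\nu \le \Pb{\LD^+_{n,\intervalleff01^d}(g,\eps)}_{\nu^{(\alpha)}}$, hence $\monFdTinf[\intervalleff01^d](g), \monFdTsup[\intervalleff01^d](g) \ge \monexFdT[\intervalleff01^d]{\alpha}(g)$ for every $\alpha > 0$, and passing to the limit yields the easy direction $\monFdTinf[\intervalleff01^d], \monFdTsup[\intervalleff01^d] \ge L$.

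For the converse $\monFdTsup[\intervalleff01^d](g) \le L(g)$, I would lower bound $\Pb{\LD^+_{n,\intervalleff01^d}(g, \eps)}_\nu$ by conditioning on the event $\acc{\forall e \in \edges{\intint0n^d},\ \EPT[e] \ge \alpha}$. On this event, which has probability $\nu(\intervalleff{\alpha}{b})^{\#\edges{\intint0n^d}}$, the edge passage times are i.i.d.\ with law $\mu_\alpha := \restriction\nu{\intervalleff{\alpha}{b}}/\nu(\intervalleff{\alpha}{b})$ and $\BoxSPT$ coincides with $\tBoxSPT{\alpha}$. Since $\mu_\alpha$ satisfies Assumption~\ref{ass : intro/main_thm/support} with lower bound $\alpha > 0$, Theorem~\ref{thm : MAIN} and Lemma~\ref{lem : app/prelis/FdTmon} apply under $\mu_\alpha$, and the stochastic dominance $\nu^{(\alpha)} \le_{\mathrm{st}} \mu_\alpha$ on $\intervalleff{\alpha}{b}$ controls the resulting rate by $\monexFdT[\intervalleff01^d]{\alpha}(g)$. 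Combining,
\[ \monFdTsup[\intervalleff01^d](g) \le -d\log \nu(\intervalleff{\alpha}{b}) + \monexFdT[\intervalleff01^d]{\alpha}(g). \]
When $\nu(\acc 0) = 0$ the error term vanishes as $\alpha \to 0^+$ and the bound closes. The main obstacle is the case $\nu(\acc 0) > 0$, where $-d\log\nu(\intervalleff{\alpha}{b}) \to -d\log\nu(\intervalleoo 0 b)$ does not vanish; handling this requires replacing the global conditioning by a favorable event supported only on the edges of a net of approximate geodesics, so that the conditioning cost stays subvolumic while preserving the coincidence $\BoxSPT \approx \tBoxSPT{\alpha}$ on the relevant scales.

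For continuity and uniform convergence, each $\monexFdT[\intervalleff01^d]{\alpha}$ is continuous on $\NoncritSNorms \subseteq \exNorms{\alpha}$ by Proposition~\ref{prop : intro/sketch/continuite} applied to $\nu^{(\alpha)}$, and the pointwise nondecreasing limit $\monFdT[\intervalleff01^d] = L$ is therefore lower semicontinuous. Continuity of $\monFdT[\intervalleff01^d]$ on $\NoncritSNorms$ then follows by adapting the proof of Proposition~\ref{prop : intro/sketch/continuite} directly at the level of the limit: for $g \in \NoncritSNorms$, noncriticality provides the slack $\eta > 0$ needed to absorb the cost of a small modification of $g$ into a single logarithmic factor that is negligible at the exponential scale $n^d$. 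Once continuity of the limit is established, Dini's theorem applied to the monotone family of continuous functions $(\monexFdT[\intervalleff01^d]{\alpha})_{\alpha > 0}$ on compact subsets of $\NoncritSNorms$ yields the uniform convergence claim.
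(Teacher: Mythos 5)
Your easy direction matches the paper. The genuine problem is in the hard direction. You correctly identify that conditioning all edges to have passage time $\ge\alpha$ costs $-d\log\nu(\intervalleff{\alpha}{b})$ at the rate level, which does not vanish when $\nu(\acc 0)>0$. But your proposed fix --- ``a favorable event supported only on the edges of a net of approximate geodesics'' --- does not work as stated: which edges lie near a $\BoxPT$-geodesic is itself random and configuration-dependent, so you cannot a priori fix a sparse deterministic set of edges on which $\EPT=\tEPT{\alpha}$ and still be sure every geodesic avoids the remaining edges. The paper's mechanism is different in kind: instead of trying to make $\EPT=\tEPT{\alpha}$ along geodesics, it forces geodesics to be \emph{short}. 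Concretely, one raises the passage time to $\ge b/2$ on the deterministic, sparse (one in every $p$ hyperplanes, so density $O(1/p)$) family of ``hard'' edges. Any self-avoiding discrete path $\gamma$ then crosses at least $\floor{\norme\gamma/p^d}$ hard edges, so $\frac b2\floor{\norme\gamma/p^d}\le b\norme{x-y}$, i.e.\ $\norme\gamma\le 2(\norme{x-y}+1)p^d$. Since $\module{\EPT[e]-\tEPT[e]{\alpha}}\le\alpha$ on \emph{every} edge, one then has $\module{\BoxPT-\tBoxPT{\alpha}}\le \alpha\cdot\norme\gamma = O(\alpha n p^d)$ per pair of points. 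Choosing $\alpha$ small relative to $\eps/p^d$ makes this negligible. The cost of the hard-grid conditioning is $O(n^d/p)$ edges, i.e.\ a rate contribution $-\frac{C}{p}\log\nu(\intervalleff{b/2}{b})$, which is subvolumic and vanishes as $p\to\infty$. This is the argument you would need; it is not a conditioning-on-geodesics argument, and the structural ingredient is the path-length bound, not the coincidence of passage-time laws on a favourable set.

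On the continuity and uniform-convergence part: your Dini-based route can be made to work, but it is more roundabout than necessary. The observation that closes it cleanly is that Lemma~\ref{lem : FdT_elem/continuite/main_inequality}, applied to $\nu^{(\alpha)}$, produces a modulus $-\frac{\Cr{FdT_elem/DIM}}{p}\log\nu^{(\alpha)}(\intervalleff{b-\eta}{b})$ that is \emph{uniform in $\alpha$} once $\alpha<b-\eta$, since $\nu^{(\alpha)}$ agrees with $\nu$ on $\intervalleff{b-\eta}{b}$. Combined with the elementary fact $\normeHom{[\alpha\norme\cdot]\vee g_1-[\alpha\norme\cdot]\vee g_2}\le\normeHom{g_1-g_2}$ and Lemma~\ref{lem : app/prelis/FdTmon}, this gives equicontinuity of $(\monexFdT[\intervalleff01^d]{\alpha})_{\alpha>0}$ on each set $\acc{g\le (b-2\eta)\norme\cdot}$, hence continuity of the limit and uniform convergence in one step, without first proving continuity of the limit and then invoking Dini.
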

\begin{proof}
	Let $g\le b\norme\cdot$ be a seminorm. For all $0<\alpha_1 < \alpha_2$, the inclusions
	\begin{equation}
		\acc{\EPT \in \LD_n^+(g,\eps) } \subseteq \acc{\tEPT{\alpha_1} \in \LD_n^+(g,\eps) } \subseteq \acc{\tEPT{\alpha_2} \in \LD_n^+(g,\eps) } ,\notag
	\end{equation}
	hold, thus by Lemma~\ref{lem : app/prelis/FdTmon},
	\begin{equation}
		\label{eqn : app/prelis/sens_facile}
		\monFdTinf[\intervalleff01^d](g) \ge \inclim{\alpha \to 0} \monexFdT[\intervalleff01^d]{\alpha}(g).
	\end{equation}

	Let us show the converse inequality. Let $p\ge 1$. Consider the event%
	\stepcounter{tile}
	\begin{align}
		\cFav &\dpe \acc{\tEPT{\alpha} \in \LD_{n,\intervalleff01^d}^+(g,\eps)} %
			\cap \p{\bigcap_{\substack{e\in \edges{\intervalleff0n^d}  \\ e\,\text{hard}} } \acc{\tEPT{\alpha}\ge \frac b2}  },\notag
		\intertext{where for all $v=(v_1,\dots, v_d)\in \Z^d$ and $i\in\intint1d$, the edge $e=\p{v,v+\base i}$ is called \emph{hard} if $v_i\in p\Z$. By the FKG inequality, there exists a constant $\Cl{app/hard}$, depending only on $d$, such that }
		\label{eqn : app/prelis/Pb_Fav}
		\limsup_{n\to\infty}-\frac{1}{n^d}\log \Pb{\cFav}%
			&\le \limsup_{n\to\infty}-\frac{1}{n^d}\log \Pb{\tEPT{\alpha} \in \LD_{n,\intervalleff01^d}^+(g,\eps)}%
			- \frac {\Cr{app/hard}}p\log \nu\p{\intervalleff{\frac b2}{b}}.
	\end{align}
	Assume that $\cFav$ occurs. Let $x,y\in \intint0n^d$ and $x\Path{\gamma}y$ be a discrete $\BoxPT[\intervalleff0n^d]$-geodesic. Since $\gamma$ is self-avoiding, it uses at least $\floor{\frac{\norme\gamma}{p^d}}$ hard edges. Consequently,
	\begin{equation}
		\frac b2\cdot \floor{\frac{\norme\gamma}{p^d}} \le \BoxPT[\intervalleff0n^d](\gamma) \le b\norme{x-y},\notag
	\end{equation}
	thus
	\begin{equation}
		\norme\gamma \le 2(\norme{x-y}+1)p^d \le 2(nd+1)p^d.\notag
	\end{equation}
	Since for all edges $e$, $\tEPT[e]{\alpha}$ and $\EPT[e]$ differ by at most $\alpha$,
	\begin{align}
		\BoxPT[\intervalleff0n^d](\gamma) &\ge \tBoxPT[\intervalleff0n^d]{\alpha}(\gamma) - 2\alpha (nd+1)p^d,\notag
		\intertext{thus for all $x,y\in \intint0n^d$,}
		\BoxPT[\intervalleff0n^d](x,y) &\ge \tBoxPT[\intervalleff0n^d]{\alpha}(x,y) - 2\alpha (nd+1)p^d.\notag
	\end{align}
	This bound can be extended to all $x,y\in \intervalleff0n^d$, up to a bounded additive error term thanks to~\eqref{eqn : intro/main_thm/equivalence_distances}. Consequently, for large enough $n$,
	\begin{equation}
		\label{eqn : app/prelis/Inclusion_Fav}
		\cFav \subseteq \acc{\EPT \in \LD_{n,\intervalleff01^d}^+\p{g, 2\eps + 2\alpha d p^d } }.
	\end{equation}
	Combining~\eqref{eqn : app/prelis/Pb_Fav} and~\eqref{eqn : app/prelis/Inclusion_Fav}, we get
	\begin{align}
		\limsup_{n\to\infty}-\frac{1}{n^d} \log\Pb{ \LD_{n,\intervalleff01^d}^+\p{g, 2\eps + 2\alpha d p^d } }%
			&\le \limsup_{n\to\infty}-\frac{1}{n^d}\log \Pb{\tEPT{\alpha} \in \LD_n^+(g,\eps)}%
			- \frac {\Cr{app/hard}}p\log \nu\p{\intervalleff{\frac b2}{b}}.\notag%
		\intertext{Thus, by definition of $\monexFdTsup[\intervalleff01^d]{\alpha}(g)$ and by~\eqref{eqn : app/prelis/FdTmon},}
		\limsup_{n\to\infty}-\frac{1}{n^d} \log\Pb{ \LD_{n,\intervalleff01^d}^+\p{g, 2\eps + 2\alpha d p^d } }%
			&\le \monexFdT[\intervalleff01^d]{\alpha}(g)%
			- \frac {\Cr{app/hard}}p\log \nu\p{\intervalleff{\frac b2}{b}}.\notag
		\intertext{For small enough $\alpha$, $2\alpha dp^d \le \eps$, thus }
		\limsup_{n\to\infty}-\frac{1}{n^d} \log\Pb{ \LD_{n,\intervalleff01^d}^+\p{g, 3\eps } }%
			&\le \inclim{\alpha \to 0}\monexFdT[\intervalleff01^d]{\alpha}(g)%
			- \frac {\Cr{app/hard}}p\log \nu\p{\intervalleff{\frac b2}{b}}.\notag
		\intertext{Letting $p\to\infty$ and $\eps\to 0$, we obtain}
		\label{eqn : app/prelis/sens_difficile}
		\monFdTsup[\intervalleff01^d](g) &\le \inclim{\alpha \to 0} \monexFdT[\intervalleff01^d]{\alpha}(g).
	\end{align}
	This concludes the proof of~\eqref{eqn : app/prelis/cvg_fdt_elem}.

	Let $\eta>0$. Lemma~\ref{lem : FdT_elem/continuite/main_inequality} and the monotonicity of the rate function imply that for all $\alpha>0$, for all norms $\alpha\norme\cdot \le g_1,g_2 \le (b-2\eta)\norme\cdot$ such that $\normeHom{g_1-g_2}\le \frac\eta p$,%
	\begin{align*}
		\exFdT[\intervalleff01^d]{\alpha}(g_1)%
			&\le \exFdT[\intervalleff01^d]{\alpha}\p{g_2 + \frac\eta p\norme\cdot}\\
			&\le \exFdT[\intervalleff01^d]{\alpha}\p{g_2} - \frac{\Cr{FdT_elem/DIM}} p \log \nu^{(\alpha)}\p{\intervalleff{b-\eta}{b}}.
	\end{align*}
	Since the roles of $g_1$ and $g_2$ are symmetric, this gives 
	\[ \module{ \exFdT[\intervalleff01^d]{\alpha}(g_1) - \exFdT[\intervalleff01^d]{\alpha}(g_2) }%
	\le - \frac{\Cr{FdT_elem/DIM}} p \log \nu^{(\alpha)}\p{\intervalleff{b-\eta}{b}}%
	\le - \frac{\Cr{FdT_elem/DIM}} p \log \nu\p{\intervalleff{b-\eta}{b}}. \]
	Moreover, for all $g_1,g_2\in\SNorms$,
	\begin{equation*}
		\normeHom{\vphantom{\int} \cro{\alpha\norme\cdot}\vee g_1 - \cro{\alpha\norme\cdot}\vee g_2 } \le \normeHom{ g_1 - g_2 }.
	\end{equation*}
	Applying~\eqref{eqn : app/prelis/FdTmon}, we deduce that on the set of seminorms $g$ such that  $g \le (b-2\eta)\norme\cdot$, the functions $\p{\monexFdT[\intervalleff01^d]{\alpha}}_{\alpha>0}$ form an equicontinuous family, hence we obtain the announced uniform convergence and continuity.
\end{proof}

For all $X\in \Windows$ and $D\in \exAdmDistances[X]{\alpha}$, we define
\begin{equation}
	\monFdT[X](D) \dpe \inclim{\alpha'\to 0} \exFdT[X]{\alpha'}(D) = \inclim{\alpha' \to 0} \int_X \exFdT[\intervalleff01^d]{\alpha'}\p{(\grad D)_z}\d z =  \int_X \monFdT[\intervalleff01^d]\p{(\grad D)_z}\d z.
\end{equation}


\subsection{Point-point passage time: proof of Corollary~\ref{cor : intro/applications/point_point}}
\label{subsec : app/pp}
Fix $x\in \R^d\setminus \acc0$. In this subsection, we assume that $0\le a < b <\infty$, with $\nu\p{\acc 0} < \pc$.
In particular,
\begin{equation}
	\alpha_0\dpe \frac12\inf_{u\in \S} \mu(u) > 0.
\end{equation}
Let $\Cl{app/box}\dpe \frac{b\norme x}{\alpha_0}$ and $X\dpe \intervalleff{-\Cr{app/box}}{\Cr{app/box}}^d$. The Cox-Durrett shape theorem (see \cite[Theorem~3]{ShapeThm} for the case $d=2$ and \cite[Theorem~1.7]{KestenStFlour} for the general case) implies that
\begin{equation}
\label{eqn : app/pp/temps_bord}
	\lim_{n\to\infty} \Pb{ \BoxSPT[n,X](0, \partial X) \ge b\norme x } =1 	.
\end{equation}
Corollary~\ref{cor : intro/applications/point_point} is a consequence of Lemmas~\ref{lem : app/pp/LDP} and~\ref{lem : app/pp/continuite}.
\begin{Lemma}
	\label{lem : app/pp/LDP}
	The process $\p{\frac{\PT\p{0,nx}}{n}}$ satisfies the LDP at speed $n^d$ with the good rate function
	\begin{align}
		\FdTppx : \intervalleff{a\norme x}{b\norme x} &\longrightarrow \intervalleff0\infty \nonumber \\
		\zeta &\longmapsto \min_{ \substack{D \in \exAdmDistances[X]{\alpha_0}: \\D(0,x)\ge \zeta} } \monFdT[X](D).
		\label{eqn : app/pp/FdT}
	\end{align}
\end{Lemma}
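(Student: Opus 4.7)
The plan is to reduce to Theorem~\ref{thm : MAIN} via a truncation argument, combine with the contraction principle to obtain an LDP for $\tBoxSPT[n,X]{\alpha}(0,x)$, transfer this to the point-point variable using the shape theorem and the FKG inequality, and then pass to the zero-truncation limit. For each $\alpha\in(0,\alpha_0)$, the truncated weights $\tEPT[e]{\alpha}=\EPT[e]\vee\alpha$ have distribution $\nu^{(\alpha)}$ satisfying Assumption~\ref{ass : intro/main_thm/support}, so Theorem~\ref{thm : MAIN} gives an LDP at speed $n^d$ for $\tBoxSPT[n,X]{\alpha}$ in $(\exAdmDistances[X]{\alpha},\UnifDistance)$ with good rate function $\exFdT[X]{\alpha}$. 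Since $D\mapsto D(0,x)$ is $1$-Lipschitz, Lemma~\ref{lem : app/contraction} yields an LDP at speed $n^d$ for $\tBoxSPT[n,X]{\alpha}(0,x)=\tfrac1n\tBoxPT[nX]{\alpha}(0,nx)$ with good rate function
\begin{equation*}
\exFdTppx{\alpha}(\zeta)\dpe\min\bigl\{\exFdT[X]{\alpha}(D):D\in\exAdmDistances[X]{\alpha},\,D(0,x)\ge\zeta\bigr\},
\end{equation*}
where the reduction from $D(0,x)=\zeta$ to $D(0,x)\ge\zeta$ is legitimate by the monotonicity of $\exFdT[X]{\alpha}$ granted by Theorem~\ref{thm : MAIN}.

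Next I would transfer this to the point-point variable $\PT^{(\alpha)}(0,nx)/n$, where $\PT^{(\alpha)}$ denotes the passage time built from the weights $\tEPT[e]{\alpha}$. Set $A_n^{(\alpha)}\dpe\{\tBoxSPT[n,X]{\alpha}(0,\partial X)\ge b\norme x\}$; since $\PT^{(\alpha)}(0,nx)\le b\norme{nx}$, on $A_n^{(\alpha)}$ no $\PT^{(\alpha)}$-geodesic from $0$ to $nx$ can reach $\partial(nX)$, so $\PT^{(\alpha)}(0,nx)=\tBoxPT[nX]{\alpha}(0,nx)$, and~\eqref{eqn : app/pp/temps_bord} adapted to $\nu^{(\alpha)}$ (legitimate via $\mu^{(\alpha)}\ge\mu\ge2\alpha_0\norme\cdot$) gives $\Pb{A_n^{(\alpha)}}\to1$. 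Because $A_n^{(\alpha)}$ and $\bigl\{\tBoxPT[nX]{\alpha}(0,nx)\ge n\zeta\bigr\}$ are both increasing events in the truncated edge weights, the FKG inequality yields
\begin{equation*}
\Pb{\tBoxPT[nX]{\alpha}(0,nx)\ge n\zeta}\Pb{A_n^{(\alpha)}}\le\Pb{\PT^{(\alpha)}(0,nx)\ge n\zeta}\le\Pb{\tBoxPT[nX]{\alpha}(0,nx)\ge n\zeta},
\end{equation*}
and combined with $\Pb{A_n^{(\alpha)}}\to1$ this shows $\PT^{(\alpha)}(0,nx)/n$ satisfies the same upper-tail LDP at speed $n^d$ as $\tBoxSPT[n,X]{\alpha}(0,x)$, with rate function $\exFdTppx{\alpha}$.

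Finally I would send $\alpha\to0$. The coupling $\PT\le\PT^{(\alpha)}$ gives $\liminf-\tfrac1{n^d}\log\Pb{\PT(0,nx)/n\ge\zeta}\ge\sup_\alpha\exFdTppx{\alpha}(\zeta)$; identifying this supremum with $\FdTppx(\zeta)$ uses Lemma~\ref{lem : app/prelis/cvg_fdt_elem} together with a compactness argument in $\AdmDistances[X]$ for a sequence of optimizers $(D_\alpha)$, plus Proposition~\ref{prop : intro/sketch/ordre_grandeur}, which allows us to assume $(\grad D_\alpha)_z\ge\mu$ almost everywhere at the optimum, so that any limit point $D^*$ satisfies $D^*\ge\mu\ge2\alpha_0\norme\cdot$ and lies in $\exAdmDistances[X]{\alpha_0}$. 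For the matching $\limsup$, I would pick a near-optimizer $D^*\in\exAdmDistances[X]{\alpha_0}$ with $D^*(0,x)\ge\zeta$ and $\monFdT[X](D^*)\le\FdTppx(\zeta)+\eps$, invoke Theorem~\ref{thm : MAIN} under $\nu^{(\alpha)}$ (with $\alpha<2\alpha_0$) to get $\Pb{\tBoxSPT[n,X]{\alpha}\approx D^*}\ge\exp(-n^d(\exFdT[X]{\alpha}(D^*)+\eps))$, and exploit $D^*\ge2\alpha_0\norme\cdot>\alpha\norme\cdot$ to argue that the typical configurations realizing $\tBoxSPT[n,X]{\alpha}\approx D^*$ have edge weights exceeding $\alpha$ in the relevant region, so that $\PT^{(\alpha)}=\PT$ there. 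The full LDP on $[a\norme x,b\norme x]$ then follows from the upper-tail LDP via the monotonicity of $\FdTppx$ inherited from Proposition~\ref{prop : mon/mon} and its continuity (Lemma~\ref{lem : app/pp/continuite}). The main obstacle is this $\limsup$ direction when $\nu(\{0\})>0$: $\nu$ and $\nu^{(\alpha)}$ then differ on a positive fraction of edges, and the resolution hinges on the fact that optimal configurations only use edges with weights above $2\alpha_0>0$, making the truncation inconsequential at the exponential scale $n^d$.
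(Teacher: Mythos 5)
Your overall plan --- truncate at level $\alpha$, invoke Theorem~\ref{thm : MAIN} and contract, relate the box time and the full-space time by the shape theorem and the FKG inequality, then send $\alpha\to 0$ --- matches the paper's in its broad strokes. But the order of operations differs (the paper first establishes the one-sided rate function for the \emph{untruncated} $\BoxSPT[n,X](0,x)$ via the $\alpha\to0$ limit, and only at the very end transfers to $\SPT[n](0,x)$), and two steps of your argument contain genuine gaps.

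First, your $\limsup$ direction for the $\alpha\to 0$ step rests on the assertion that "typical configurations realizing $\tBoxSPT[n,X]{\alpha}\approx D^*$ have edge weights exceeding $\alpha$ in the relevant region, so that $\PT^{(\alpha)}=\PT$ there". The event $\acc{\tBoxSPT[n,X]{\alpha}\approx D^*}$ constrains only the \emph{truncated} weights $\EPT[e]\vee\alpha$; it places no constraint on the raw weights below $\alpha$. When $\nu(\acc 0)>0$, a fixed positive density of edges has raw weight below $\alpha$ regardless of the conditioning, and the untruncated geodesic is free to exploit them. Moreover the coupling $\PT\le\PT^{(\alpha)}$ runs the wrong way for a lower bound on $\PT(0,nx)$: knowing $\tBoxSPT[n,X]{\alpha}(0,x)\ge\zeta$ does not yield $\PT(0,nx)/n\ge\zeta-o(1)$ unless one first controls $\PT^{(\alpha)}(0,nx)-\PT(0,nx)\le\alpha\cdot\norme{\gamma}$ by bounding the number of edges $\norme{\gamma}$ on an untruncated geodesic linearly in $n$. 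The paper achieves this by the hard-edge device of Lemma~\ref{lem : app/prelis/cvg_fdt_elem}: it additionally forces a sparse $1/p$-density spanning family of edges to have weight at least $b/2$, at cost $\exp\p{\grando(n^d/p)}$, which caps $\norme{\gamma}$ by $\grando\p{np^d}$ and hence the truncation error by $\grando\p{\alpha p^d}$, negligible as $\alpha\to0$ at fixed $p$. Your proof needs this (or an equivalent geodesic-length estimate) before the truncation can be declared inconsequential.

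Second, you derive the two-sided LDP from the upper-tail LDP using only the monotonicity and continuity of $\FdTppx$. This is not enough: the LDP lower bound requires a lower bound on $\Pb{\module{\SPT[n](0,x)-\zeta}\le\eps}$, and writing this quantity as $\Pb{\SPT[n](0,x)\ge\zeta-\eps}-\Pb{\SPT[n](0,x)>\zeta+\eps}$ is useless if $\FdTppx$ happens to be constant on $\intervalleff{\zeta-\eps}{\zeta+\eps}$ --- Corollary~\ref{cor : intro/applications/point_point} only asserts that $\FdTppx$ is nondecreasing, not strictly, and strict monotonicity is not established anywhere. The paper closes this gap with the modification argument following~\eqref{eqn : app/pp/unboxing}: on $\acc{\SPT[n](0,x)\ge\zeta-\eps}$, one resamples a random prefix of a shortest lattice path from $0$ to $\floor{nx}$ with near-$a$ weights to drive the passage time into the window $\intervalleff{\zeta-\eps}{\zeta+\eps}$, at a multiplicative cost $\exp\p{\grando(n)}$ that is negligible at speed $n^d$. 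Without such a step you have not shown $\FdTppxsup(\zeta)\le\monFdTppx(\zeta)$.
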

\begin{Lemma}
	\label{lem : app/pp/continuite}
	The function $\FdTppx$ is continuous on $\intervalleff{a\norme x}{b\norme x}$.
\end{Lemma}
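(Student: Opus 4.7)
By Lemma~\ref{lem : app/pp/LDP}, $\FdTppx$ is a good rate function and hence lower semicontinuous, and the defining formula~(\ref{eqn : app/pp/FdT}) makes it manifestly nondecreasing on $[a\norme x, b\norme x]$, since increasing $\zeta$ shrinks the feasible set $\{D\in\exAdmDistances[X]{\alpha_0} : D(0,x)\ge\zeta\}$. These two properties imply left-continuity at every point, so the task reduces to proving right-continuity.

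Fix $\zeta_0 \in [a\norme x, b\norme x)$ with $\FdTppx(\zeta_0)<\infty$, the infinite case being trivial by monotonicity. By lower semicontinuity of $\monFdT[X]$ and compactness of $\exAdmDistances[X]{\alpha_0}$ (Proposition~\ref{prop : limit_space/compactness/compactness}), the infimum in~(\ref{eqn : app/pp/FdT}) is attained at some $D_0$. If $D_0(0,x)>\zeta_0$, then $\FdTppx$ is constant on $[\zeta_0, D_0(0,x)]$ and right-continuity is immediate. Otherwise $D_0(0,x)=\zeta_0$, and the plan is to construct, for every $\epsilon>0$ and some associated $\delta>0$, a metric $D_* \in \exAdmDistances[X]{\alpha_0}$ with $D_*(0,x) \ge \zeta_0+\delta$ and $\monFdT[X](D_*)\le \FdTppx(\zeta_0)+\epsilon$.

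Since $\zeta_0 < b\norme x$, pick $\beta\in(\zeta_0/\norme x, b)$. For small $\eta, t>0$, apply Lemma~\ref{lem : limit_space/constructions/inverse} to the gradient
\[ g_z^{\eta,t} := (1-t)\bigl[(1-\eta)(\grad D_0)_z + \eta \alpha_0 \norme{\cdot}\bigr] + t \beta \norme{\cdot}, \]
a convex combination of elements of $\exNorms{\alpha_0}$ satisfying $g_z^{\eta,t} \le \max(b-\eta(b-\alpha_0),\beta)\norme{\cdot}$ uniformly in $z$. This defines a metric $D_{\eta,t}\in\exAdmDistances[X]{\alpha_0}$, and a direct computation using the variational definition yields
\[ D_{\eta,t}(0,x) \ge \zeta_0 + t(\beta\norme x - \zeta_0) - (1-t)\eta(\zeta_0 - \alpha_0\norme x)^+, \]
so $D_{\eta,t}(0,x) \ge \zeta_0 + \delta$ holds for $t$ of order $\delta/(\beta\norme x - \zeta_0)$ and $\eta$ small enough.

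The crux is controlling $\monFdT[X](D_{\eta,t})$. By Lemma~\ref{lem : limit_space/constructions/inverse} and monotonicity of $\monFdT[\intervalleff01^d]$, $\monFdT[X](D_{\eta,t}) \le \int_X \monFdT[\intervalleff01^d](g_z^{\eta,t})\d z$. For any fixed $\eta>0$, the uniform bound on $g_z^{\eta,t}$ keeps it inside a compact subset of $\NoncritSNorms$ on which $\monFdT[\intervalleff01^d]$ is continuous and bounded (Lemma~\ref{lem : app/prelis/cvg_fdt_elem}), so dominated convergence handles the $t\to 0$ limit. For the subsequent $\eta \to 0$ limit, $g_z^{\eta,0}=(1-\eta)(\grad D_0)_z+\eta\alpha_0\norme{\cdot}\le (\grad D_0)_z$ pointwise, hence monotonicity furnishes the integrable dominator $\monFdT[\intervalleff01^d]((\grad D_0)_z)$. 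The main obstacle — pointwise convergence of the integrand at $(\grad D_0)_z$, where $\monFdT[\intervalleff01^d]$ can blow up — is bypassed by observing that $(\grad D_0)_z\in\NoncritAdmNorms$ for almost every $z$: trivially when $\nu(\acc{b})>0$, and otherwise forced by $\monFdT[X](D_0)<\infty$ combined with Proposition~\ref{prop : intro/sketch/ordre_grandeur}(i), so continuity from Lemma~\ref{lem : app/prelis/cvg_fdt_elem} applies on a full-measure set. Dominated convergence then closes both limits, and a diagonal choice of $(\eta,t)$ produces the required $D_*$, completing the proof.
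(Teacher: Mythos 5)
The idea of perturbing the gradient of an optimal metric $D_0$ is the right one and close in spirit to the paper's Case~2, but your argument has a genuine gap in the case $\nu(\acc b)>0$, and the way you close the limits is not valid.

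First, the assertion that $(\grad D_0)_z\in\NoncritAdmNorms$ for a.e.\ $z$ "trivially when $\nu(\acc b)>0$" is false. When $\nu(\acc b)>0$, Proposition~\ref{prop : intro/sketch/ordre_grandeur}\,(i) only says that $\monFdT[\intervalleff01^d]$ is \emph{finite} at critical norms; it does not force the minimizing $D_0$ to have a noncritical gradient. The set $\acc{z : \normeHom{(\grad D_0)_z}=b}$ may well have positive measure. (Your argument for the case $\nu(\acc b)=0$ is correct: there, finiteness of $\monFdT[X](D_0)$ together with Proposition~\ref{prop : intro/sketch/ordre_grandeur}\,(i) forces noncriticality a.e.)

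Second, the "diagonal choice of $(\eta,t)$" does not close the proof. Your lower bound reads $D_{\eta,t}(0,x)\ge \zeta_0 + t(\beta\norme x - \zeta_0) - (1-t)\eta(\zeta_0-\alpha_0\norme x)^+$, so when $\zeta_0>\alpha_0\norme x$ you need roughly $t>c\eta$ with $c>0$. But your order of limits (first $t\to0$ for fixed $\eta$, then $\eta\to0$) selects $t$ after $\eta$, as small as the continuity estimate requires. The modulus of continuity coming from Lemma~\ref{lem : FdT_elem/continuite/main_inequality} on the set $\acc{g\le (b-\eta(b-\alpha_0))\norme\cdot}$ is of order $(t/\eta)\cdot\bigl(-\log\nu(\intervalleff{b-c'\eta}{b})\bigr)$, so the admissible range of $t$ is $t\lesssim \epsilon\eta/\bigl(-\log\nu(\intervalleff{b-c'\eta}{b})\bigr)$, which is strictly smaller than $c\eta$ for small $\epsilon$. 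Nothing in the argument guarantees a compatible pair exists.

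When $\nu(\acc b)=0$, the construction can actually be salvaged by dropping $\eta$ entirely: with $g_z^t\dpe (1-t)(\grad D_0)_z + t\beta\norme\cdot$ one has $D_t(0,x)\ge(1-t)\zeta_0+t\beta\norme x>\zeta_0$ for every $t>0$; since $g_z^t\le (\grad D_0)_z\vee\beta\norme\cdot$, the FKG bound $\monFdT[\intervalleff01^d](g_1\vee g_2)\le\monFdT[\intervalleff01^d](g_1)+\monFdT[\intervalleff01^d](g_2)$ used in the paper's proof gives the integrable dominator $\monFdT[\intervalleff01^d]((\grad D_0)_z)+\monFdT[\intervalleff01^d](\beta\norme\cdot)$, and pointwise convergence holds a.e.\ by continuity of $\monFdT[\intervalleff01^d]$ on $\NoncritSNorms$. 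But when $\nu(\acc b)>0$, neither this nor your original route works: $g_z^t$ does not approach $(\grad D_0)_z$ monotonically, and $(\grad D_0)_z$ may be critical, so you would need upper semicontinuity of $\monFdT[\intervalleff01^d]$ at critical seminorms, which is nowhere established in the paper (Lemma~\ref{lem : app/prelis/cvg_fdt_elem} only gives continuity on $\NoncritSNorms$, and Proposition~\ref{prop : intro/sketch/continuite} does not extend continuity to $\AdmNorms$ when $\nu(\acc b)>0$). The paper circumvents precisely this by switching to a genuinely different construction in Case~1 (the scattering of $X$ into translated blocks $X_A'$ separated by layers with gradient $b\norme\cdot$), whose cost increment is $(\text{vanishing volume})\times\monFdT[\intervalleff01^d](b\norme\cdot)$; this uses only finiteness at $b\norme\cdot$, never continuity at critical norms. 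You would need to adopt a comparable device to handle $\nu(\acc b)>0$.
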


\begin{proof}[Proof of Lemma~\ref{lem : app/pp/LDP}]
	For all $a\norme x \le \zeta \le b\norme x$, we define
	\begin{align*}
		\monFdTppxsup(\zeta) &\dpe \inclim{\eps\to0} \limsup_{n\to\infty} -\frac{1}{n^d} \log\Pb{\BoxSPT(0,x) \ge \zeta-\eps}\\
		\text{and  }%
		\monFdTppxinf(\zeta) &\dpe \inclim{\eps\to0} \liminf_{n\to\infty} -\frac{1}{n^d} \log\Pb{\BoxSPT(0,x) \ge \zeta-\eps}.
	\end{align*}
	We claim that for all $0<\alpha\le \alpha_0$ and $a\norme x \le \zeta \le b\norme x$,
	\begin{equation}
	\label{eqn : app/pp/LDP/tronque_mon}
		\monexFdTppx{\alpha}(\zeta) \dpe \monexFdTppxsup{\alpha}(\zeta) = \monexFdTppxinf{\alpha}(\zeta) = \min_{ \substack{D \in \exAdmDistances[X]{\alpha_0} :\\D(0,x)\ge \zeta} } \exFdT[X]{\alpha}(D).
	\end{equation}
	Indeed let $\eps>0$ and
	\begin{equation*}
	 	A_\eps \dpe \acc{D \in \exAdmDistances[X]{\alpha_0} \mid D(0,x) \ge \zeta-\eps}.
	 \end{equation*}
	Since $\tBoxSPT{\alpha}$ satisfies the LDP with the rate function $\exFdT{\alpha}$,
	\begin{equation*}
		\min_{ D\in A_\eps } \exFdT[X]{\alpha}(D)%
			\le \liminf_{n\to\infty} -\frac{1}{n^d} \log\Pb{\tBoxSPT{\alpha}\in A_\eps } %
			\le \limsup_{n\to\infty} -\frac{1}{n^d} \log\Pb{\tBoxSPT{\alpha}\in A_\eps } %
			\le \inf_{ D\in \mathring{A_\eps} } \exFdT[X]{\alpha}(D).
	\end{equation*}
	Besides, by the FKG inequality and the definition of $\alpha_0$,
	\begin{equation*}
	\Pb{\tBoxSPT{\alpha}\in A_\eps } \underset{n\to\infty}{\sim} \Pb{\tBoxSPT{\alpha}(0,x) \ge \zeta-\eps},
	\end{equation*}
	thus
	\begin{equation*}
	\begin{split}
		\min_{ D\in A_\eps } \exFdT[X]{\alpha}(D)%
			&\le \liminf_{n\to\infty} -\frac{1}{n^d} \log\Pb{\tBoxSPT{\alpha}(0,x)\ge \zeta-\eps }\\ %
			&\le \limsup_{n\to\infty} -\frac{1}{n^d} \log\Pb{\tBoxSPT{\alpha}(0,x)\ge \zeta-\eps } %
			\le \inf_{ D\in \mathring{A_\eps} } \exFdT[X]{\alpha}(D).
	\end{split}
	\end{equation*}
	Thanks to the lower semicontinuity of $\exFdT[X]{\alpha}$, the same arguments as the onesused for the proof of~\eqref{eqn : app/prelis/FdTmon/UB} and~\eqref{eqn : app/prelis/FdTmon/LB} give~\eqref{eqn : app/pp/LDP/tronque_mon}.

	The same arguments as the ones used in the proof of Lemma~\ref{lem : app/prelis/cvg_fdt_elem} lead to
	\begin{equation*}
		\monFdTppx(\zeta) \dpe \monFdTppxsup(\zeta) = \monFdTppxinf(\zeta) = \inclim{\alpha\to 0 }\monexFdTppx{\alpha}(\zeta).
	\end{equation*}
	We claim that
	\begin{equation}
		\label{eqn : app/pp/LDP/limit_alpha}
		\inclim{\alpha\to 0 }\monexFdTppx{\alpha}(\zeta) =  \min_{ \substack{D \in \exAdmDistances[X]{\alpha_0} :\\D(0,x)\ge \zeta} } \monFdT[X](D) \eqqcolon \FdTppx(\zeta).
	\end{equation}
	Indeed, the inequality
	\begin{equation*}
		\inclim{\alpha\to 0 }\monexFdTppx{\alpha}(\zeta) \le \min_{ \substack{D \in \exAdmDistances[X]{\alpha_0} :\\D(0,x)\ge \zeta} } \monFdT[X](D)
	\end{equation*}
	is a direct consequence of $\exFdT{\alpha} \le \monFdT[X]$ and~\eqref{eqn : app/pp/LDP/tronque_mon}. Let us prove the converse inequality. For all $\alpha>0$, the set \[K_\alpha \dpe \acc{ D\in \exAdmDistances{\alpha_0} \biggm| D(0,x)\ge \zeta,\quad \exFdT[X]{\alpha}(D) \le \inclim{\alpha' \to 0}\monexFdTppx{\alpha'}(\zeta) }\] is compact. Moreover if $\alpha_1 < \alpha_2$ then $\exFdT{\alpha_1}\ge \exFdT{\alpha_2}$, thus $K_{\alpha_1} \subseteq K_{\alpha_2}$. Hence the intersection $K_0$ of all the $K_\alpha$ contains at least one element $D$. We have
	\begin{equation}
		\monFdT[X](D) = \inclim{\alpha\to 0} \exFdT{\alpha}(D) \le \inclim{\alpha\to 0} \monexFdTppx{\alpha}(\zeta),
	\end{equation}
	thus~\eqref{eqn : app/pp/LDP/limit_alpha}.

	For all $a \norme x \le \zeta \le b\norme x$, define
	\begin{align*}
	\FdTppxsup(\zeta) &\dpe \inclim{\eps\to0} \limsup_{n\to\infty} -\frac{1}{n^d}\log \Pb{\module{ \SPT[n](0,x) - \zeta} \le \eps }\\
	\text{and  }%
	\FdTppxinf(\zeta) &\dpe \inclim{\eps\to0} \liminf_{n\to\infty} -\frac{1}{n^d}\log \Pb{\module{ \SPT[n](0,x) - \zeta} \le \eps }.
	\end{align*}
	Thanks to Lemma~\ref{lem : intro/sketch/UB_LB}, it remains to show that
	\begin{equation}
	\label{eqn : app/pp/LB_UB}
		\FdTppxsup(\zeta) = \FdTppxinf(\zeta)= \monFdTppx(\zeta).
	\end{equation}
	The inequality
	\begin{equation}
	\label{eqn : app/pp/LB}
		\FdTppxinf(\zeta)\ge \monFdTppx(\zeta)
	\end{equation}
	is a direct consequence of the inclusion
	\begin{equation*}
		\acc{ \module{ \SPT[n](0,x) - \zeta} \le \eps } \subseteq \acc{\BoxSPT(0,x) \ge \zeta-\eps}.
	\end{equation*}
	To prove the converse inequality, first note that by the FKG inequality and~\eqref{eqn : app/pp/temps_bord},
	\begin{equation*}
		\Pb{\BoxSPT(0,x) \ge \zeta-\eps} \underset{n\to\infty}\sim \Pb{\SPT(0,x) \ge \zeta-\eps},
	\end{equation*}
	thus
	\begin{equation}
	\label{eqn : app/pp/unboxing}
		\limsup_{n\to \infty} -\frac{1}{n^d} \log \Pb{\BoxSPT(0,x) \ge \zeta-\eps} %
			= \limsup_{n\to \infty} -\frac{1}{n^d} \log \Pb{\SPT(0,x) \ge \zeta-\eps}.
	\end{equation}
	Let $n\ge 1$ be an integer large enough so that
	\begin{equation*}
		\frac1n\p{ a\norme{\floor{nx}} + bd } \le \zeta + \eps \text{ and } \frac{b}n \le \eps.
	\end{equation*} Let $\gamma = \p{0= y_{0}, y_{1},\dots, y_{ \norme{\floor{nx} } } = \floor{nx} }$ be a discrete path from $0$ to $\floor{nx}$ with minimal number of edges. For all $R\in\intint{0}{ \norme{\floor{nx}} }$, we define the configuration $\FastEPT{R}$ by
	\begin{equation}
	 	\FastEPT[e]{R} \dpe%
	 	\begin{cases}
	 		a &\text{  if }e\in \Pathedges{\restriction{\gamma}{\intervalleff0R}},\\
	 		\EPT[e] &\text{  otherwise.}	
	 	\end{cases}
	 \end{equation}
	On the event $\acc{\SPT(0,x) \ge \zeta-\eps}$, for large enough $n$,
	\begin{align*}
	 	\FastBoxSPT[n]{0}(0,x) &= \BoxSPT[n](0,x) \ge \zeta -\eps\\
	 	\text{and  }%
	 	\FastBoxSPT[n]{\norme{\floor{nx} } }(0,x) &\le \frac1n\p{ a\norme{\floor{nx}} + bd} \le \zeta+ \eps.
	\end{align*}
	Moreover, for all $R \in \intint{0}{\norme{ \floor{nx} } -1 }$,
	\begin{equation*}
		 0 \le \FastBoxSPT[n]{R}(0,x) - \FastBoxSPT[n]{R+1}(0,x) \le \frac bn \le \eps,
	\end{equation*} 
	hence there exists a $\EPT$-measurable random integer $R_0$ such that
	\begin{equation*}
		\module{\FastBoxSPT[n]{R_0}(0,x) - \zeta} \le \eps,
	\end{equation*}
	take for example the smallest $R$ such that $\FastBoxSPT[n]{R}(0,x)\le \zeta +\eps$. Consider a configuration $\EPT'$ with the same distribution as $\EPT$, and $(X_r)_{1\le r \le \norme{\floor{nx}} }$ a family of independent Bernoulli variables with parameter $1/2$, and assume the $\tau$, $\tau'$ and $(X_r)$ are independent. The configuration defined by
	\begin{equation}
		\FFastEPT[e] \dpe%
		\begin{cases}
			X_r\EPT[e]' + (1-X_r)\EPT[e] \quad &\text{if } e=(y_{r-1}, y_{r}),\text{ with } 1\le r \le \norme{\floor{nx}} ,\\
			\EPT[e] \quad &\text{otherwise,}
		\end{cases}
	\end{equation}
	has the same distribution as $\EPT$. Reasoning as in Step 4 of the proof of Lemma~\ref{lem : mon/HW_is_free/modification}, on the event
	\begin{equation*}
		\acc{\SPT(0,x) \ge \zeta-\eps}%
			\cap \p{ \bigcap_{r=1}^{ \norme{\floor{nx}} } \acc{X_r =\ind{\intint{1}{R_0} }(r) } }%
			\cap \p{ \bigcap_{r=1}^{ \norme{\floor{nx}} } \acc{\EPT[(y_{r-1}, y_r)]' \le a+\eps} },
	\end{equation*}
	the configurations $\FFastEPT$ and $\FastEPT{R_0}$ agree on all edges except $R_0$ of them, where they may differ up to $\eps$. Consequently,
	\begin{align}
		\limsup_{n\to\infty} - \frac{1}{n^d}\log \Pb{\SPT(0,x) \ge \zeta-\eps}%
			&\ge \limsup_{n\to\infty} - \frac{1}{n^d}\log \Pb{ \module{ \SPT(0,x) - \zeta} \le \p{1+ \frac{R_0}{n} } \eps} \nonumber\\
		\label{eqn : app/pp/modification}
			&\ge \limsup_{n\to\infty} - \frac{1}{n^d}\log \Pb{ \module{ \SPT(0,x) - \zeta} \le \p{1+ \frac{\norme{\floor{nx}} }{n} } \eps}.
	\end{align}
	Combining~\eqref{eqn : app/pp/unboxing} and~\eqref{eqn : app/pp/modification} and letting $\eps\to 0$, we get
	\begin{equation}
	\label{eqn : app/pp/UB}
		\FdTppxsup(\zeta)\le \monFdTppx(\zeta).
	\end{equation}
	Consequently,~\eqref{eqn : app/pp/LB_UB} holds. This concludes the proof of the lemma.
\end{proof}

\begin{proof}[Proof of Lemma~\ref{lem : app/pp/continuite}]
Since $\FdTppx$ is nondecreasing and lower semicontinuous, it is sufficient to prove the right-continuity. Let $a\norme x \le \zeta < b\norme x$. By~\eqref{eqn : app/pp/FdT} there exists $D\in \exAdmDistances{\alpha_0}$ such that  $D(0,x) \ge \zeta$ and 
\begin{equation}
\label{eqn : app/pp/D_optimal}
	\FdTppx(\zeta) = \monFdT[X](D).
\end{equation}
The idea is to build another metric $\hat D$ whose cost is slightly larger, such that $\hat D(0,x)>\zeta$, leading to
\begin{equation}
\label{eqn : app/pp/right-cont}
	\FdTppx(\zeta^+) \le \FdTppx(\zeta).
\end{equation}
The details of the construction of $\hat D$ differ whether $\nu$ has an atom at $b$ or not.

\emph{Case 1: $\nu(\acc b)>0$.} Without loss of generality we assume that $x_i\ge 0$ for all $i\in\intint1d$. For all $A\subseteq \intint1d$, we define the sets
\begin{align*}
	X_A &\dpe \acc{y\in X \Biggm| \forall i \in A, y_i\ge \frac{x_i}{2}\text{ and } \forall i \in \intint1d\setminus A, y_i \le \frac{x_i}{2} }%
	\intertext{and}
	X_A'&\dpe X_A + \eps \sum_{i\in A}x_i \base i.
\end{align*}
We define $D_A'$ as the metric on $X_A'$ defined by translating $\restriction{D}{X_A}$ by $z_A \dpe \eps \sum_{i\in A}x_i \base i$, i.e. $D_A' \dpe \Translation{ \restriction{D}{X_A} }{z_A}$ (see Lemmas~\ref{lem : limit_space/constructions/translation} and \ref{lem : limit_space/constructions/restriction}). We define $D'$ as the metric on $(1+\eps)X$ constructed as in Lemma~\ref{lem : limit_space/constructions/blocs}, with the family of subsets $(X_A')_A$ and the family of metrics $(D_A')_A$. Finally, we define $\hat D\dpe \Scaling{D'}{\frac{1}{1+\eps}}$ (see Lemma~\ref{lem : limit_space/constructions/rescaling}). Thanks to the equality of gradients provided by these four lemmas,
\begin{align}
	\monFdT[X](\hat D) &= \frac{\monFdT[(1+\eps)X](D') }{(1+\eps)^d}\eol
		&=\frac{1}{(1+\eps)^d}\cro{\sum_{A\subseteq \intint1d}\monFdT[X_A']\p{D_A' } +  \Leb\p{ (1+\eps)X \setminus \bigcup_{A\subseteq \intint1d}X_A'} \monFdT[\intervalleff01^d]\p{b\norme\cdot} } \eol
		\label{eqn : app/pp/continuite/cas_atome/LB_FdT}
		&=	\frac{1}{(1+\eps)^d}\cro{\monFdT[X](D) + \p{(1+\eps)^d -1}\Leb(X) \monFdT[\intervalleff01^d]\p{b\norme\cdot} }.
\end{align}
We claim that 
\begin{equation}
\label{eqn : app/pp/continuite/cas_atome/hatD_good}
	\hat D(0,x)\ge \frac{\zeta + \eps b\norme x}{1+\eps}>\zeta.
\end{equation}
Indeed let $0\Path{\gamma}(1+\eps)x$ be a $D'$-geodesic. Since the gradient of $D'$ is $b\norme\cdot$ outside the tiles $X_A'$ (see~\eqref{eqn : limit_space/constructions/blocs_gradient2}), $D'(\gamma)$ does not increase when replacing every excursion of $\gamma$ outside the tiles by the concatenation of $d$ (possibly degenerate) segments, such that the $i$\textsuperscript{th} one is colinear to $\base i$. In particular we may assume that $\gamma$ is of the form
\begin{equation}
	0 \Path{\gamma^\mathrm{int}_1} y(1) \Path{\gamma^\mathrm{ext}_1} z(1) \Path{\gamma^\mathrm{int}_2} y(2) \Path{\gamma^\mathrm{ext}_2} \dots \Path{\gamma^\mathrm{ext}_r} z(r) \Path{\gamma^\mathrm{int}_{r+1}} x,
\end{equation}
where
\begin{enumerate}[(i)]
	\item Every $\gamma^\mathrm{int}_j$ is included in some tile $X_{A(j)}'$.
	\item For all $j\in\intint1r$, $\gamma^\mathrm{ext}_j=\intervalleff{y(j)}{z(j)}$.
	\item For all $j\in\intint1r$, $y(j) - z(j)$ is colinear to some $\base{i(j)}$ and $\intervalleoo{y(j)}{z(j)}\subseteq \cro{(1+\eps)X} \setminus \p{\bigcup_{A\subseteq \intint1d}X_A' }$.
\end{enumerate}
\begin{figure}
	\center
	\def\svgwidth{\textwidth}
	\begingroup%
	  \makeatletter%
	  \providecommand\color[2][]{%
	    \errmessage{(Inkscape) Color is used for the text in Inkscape, but the package 'color.sty' is not loaded}%
	    \renewcommand\color[2][]{}%
	  }%
	  \providecommand\transparent[1]{%
	    \errmessage{(Inkscape) Transparency is used (non-zero) for the text in Inkscape, but the package 'transparent.sty' is not loaded}%
	    \renewcommand\transparent[1]{}%
	  }%
	  \providecommand\rotatebox[2]{#2}%
	  \newcommand*\fsize{\dimexpr\f@size pt\relax}%
	  \newcommand*\lineheight[1]{\fontsize{\fsize}{#1\fsize}\selectfont}%
	  \ifx\svgwidth\undefined%
	    \setlength{\unitlength}{1449.97939222bp}%
	    \ifx\svgscale\undefined%
	      \relax%
	    \else%
	      \setlength{\unitlength}{\unitlength * \real{\svgscale}}%
	    \fi%
	  \else%
	    \setlength{\unitlength}{\svgwidth}%
	  \fi%
	  \global\let\svgwidth\undefined%
	  \global\let\svgscale\undefined%
	  \makeatother%
	  \begin{picture}(1,0.56790142)%
	    \lineheight{1}%
	    \setlength\tabcolsep{0pt}%
	    \put(0,0){\includegraphics[width=\unitlength,page=1]{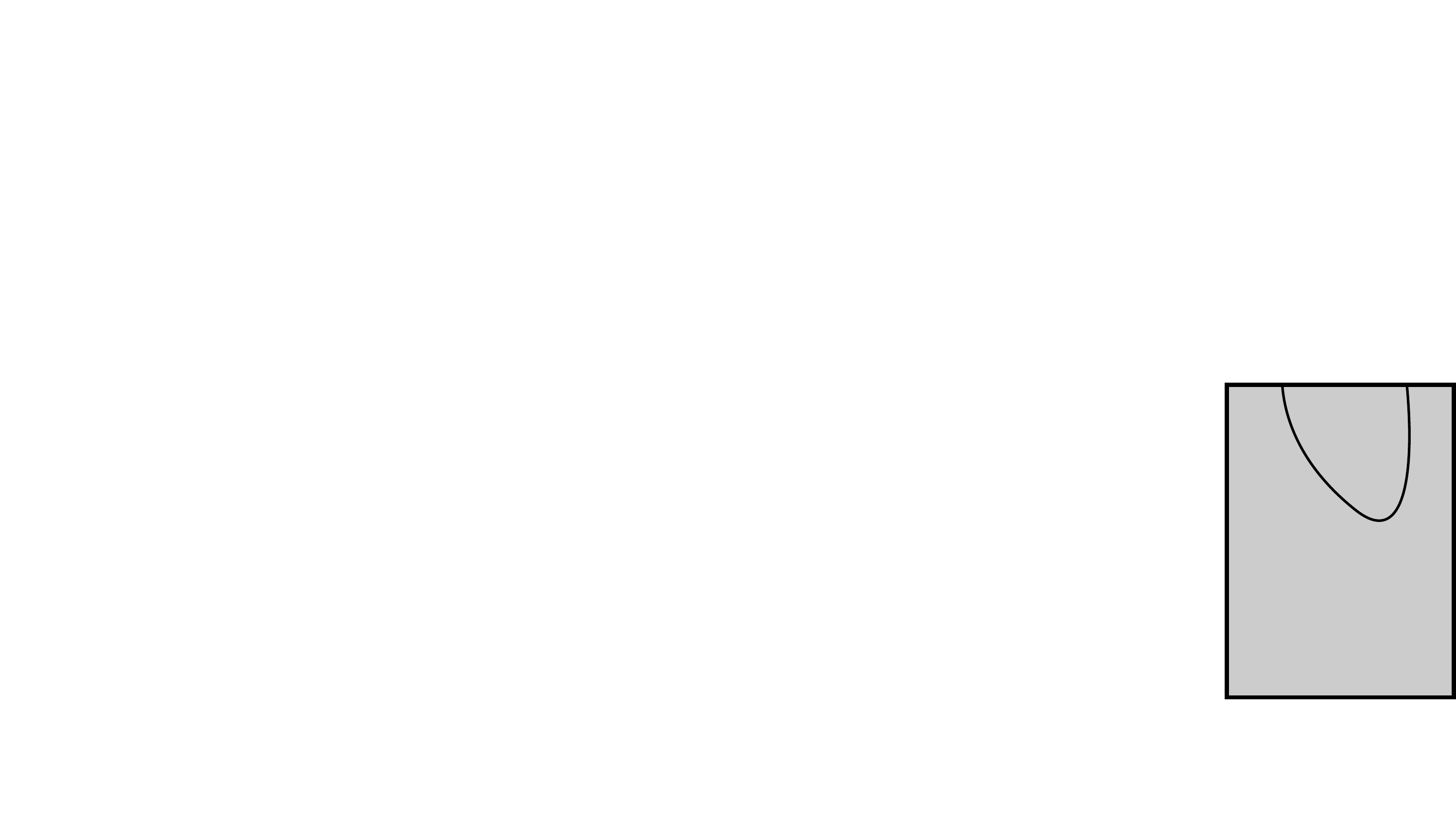}}%
	    \put(0.94734228,0.10579539){\color[rgb]{0,0,0}\makebox(0,0)[lt]{\lineheight{1.25}\smash{\begin{tabular}[t]{l}$X_{\acc{1}}$\\\end{tabular}}}}%
	    \put(0,0){\includegraphics[width=\unitlength,page=2]{FIG_PPScatter.pdf}}%
	    \put(0.88532898,0.34028599){\color[rgb]{0,0,0}\makebox(0,0)[lt]{\lineheight{1.25}\smash{\begin{tabular}[t]{l}$x$\\\end{tabular}}}}%
	    \put(0,0){\includegraphics[width=\unitlength,page=3]{FIG_PPScatter.pdf}}%
	    \put(0.93824996,0.46012155){\color[rgb]{0,0,0}\makebox(0,0)[lt]{\lineheight{1.25}\smash{\begin{tabular}[t]{l}$X_{\acc{1,2} }$\\\end{tabular}}}}%
	    \put(0,0){\includegraphics[width=\unitlength,page=4]{FIG_PPScatter.pdf}}%
	    \put(0.61957188,0.46012155){\color[rgb]{0,0,0}\makebox(0,0)[lt]{\lineheight{1.25}\smash{\begin{tabular}[t]{l}$X_{\acc{2} }$\\\end{tabular}}}}%
	    \put(0,0){\includegraphics[width=\unitlength,page=5]{FIG_PPScatter.pdf}}%
	    \put(0.8093935,0.2667943){\color[rgb]{0,0,0}\makebox(0,0)[lt]{\lineheight{1.25}\smash{\begin{tabular}[t]{l}$0$\\\end{tabular}}}}%
	    \put(0,0){\includegraphics[width=\unitlength,page=6]{FIG_PPScatter.pdf}}%
	    \put(0.61957188,0.10475072){\color[rgb]{0,0,0}\makebox(0,0)[lt]{\lineheight{1.25}\smash{\begin{tabular}[t]{l}$X_\emptyset$\\\end{tabular}}}}%
	    \put(0,0){\includegraphics[width=\unitlength,page=7]{FIG_PPScatter.pdf}}%
	    \put(0.49501887,0.10579539){\color[rgb]{0,0,0}\makebox(0,0)[lt]{\lineheight{1.25}\smash{\begin{tabular}[t]{l}$X_{\acc{1}}'$\\\end{tabular}}}}%
	    \put(0,0){\includegraphics[width=\unitlength,page=8]{FIG_PPScatter.pdf}}%
	    \put(0.42291755,0.36597076){\color[rgb]{0,0,0}\makebox(0,0)[lt]{\lineheight{1.25}\smash{\begin{tabular}[t]{l}$(1+\eps)x$\\\end{tabular}}}}%
	    \put(0,0){\includegraphics[width=\unitlength,page=9]{FIG_PPScatter.pdf}}%
	    \put(0.48592655,0.49352412){\color[rgb]{0,0,0}\makebox(0,0)[lt]{\lineheight{1.25}\smash{\begin{tabular}[t]{l}$X_{\acc{1,2} }'$\\\end{tabular}}}}%
	    \put(0,0){\includegraphics[width=\unitlength,page=10]{FIG_PPScatter.pdf}}%
	    \put(0.10006968,0.49352412){\color[rgb]{0,0,0}\makebox(0,0)[lt]{\lineheight{1.25}\smash{\begin{tabular}[t]{l}$X_{\acc{2} }'$\\\end{tabular}}}}%
	    \put(0,0){\includegraphics[width=\unitlength,page=11]{FIG_PPScatter.pdf}}%
	    \put(0.2898913,0.2667943){\color[rgb]{0,0,0}\makebox(0,0)[lt]{\lineheight{1.25}\smash{\begin{tabular}[t]{l}$0$\\\end{tabular}}}}%
	    \put(0,0){\includegraphics[width=\unitlength,page=12]{FIG_PPScatter.pdf}}%
	    \put(0.10006968,0.10475072){\color[rgb]{0,0,0}\makebox(0,0)[lt]{\lineheight{1.25}\smash{\begin{tabular}[t]{l}$X_\emptyset'$\\\end{tabular}}}}%
	    \put(0,0){\includegraphics[width=\unitlength,page=13]{FIG_PPScatter.pdf}}%
	    \put(0.21869875,0.17387371){\color[rgb]{0,0,0}\makebox(0,0)[lt]{\lineheight{1.25}\smash{\begin{tabular}[t]{l}$\gamma^\mathrm{int}_1$\end{tabular}}}}%
	    \put(0.18111471,0.45445062){\color[rgb]{0,0,0}\makebox(0,0)[lt]{\lineheight{1.25}\smash{\begin{tabular}[t]{l}$\gamma^\mathrm{int}_2$\end{tabular}}}}%
	    \put(0.48964531,0.45693562){\color[rgb]{0,0,0}\makebox(0,0)[lt]{\lineheight{1.25}\smash{\begin{tabular}[t]{l}$\gamma^\mathrm{int}_3$\end{tabular}}}}%
	    \put(0.46745279,0.195092){\color[rgb]{0,0,0}\makebox(0,0)[lt]{\lineheight{1.25}\smash{\begin{tabular}[t]{l}$\gamma^\mathrm{int}_4$\end{tabular}}}}%
	    \put(0.39339495,0.34560256){\color[rgb]{0,0,0}\makebox(0,0)[lt]{\lineheight{1.25}\smash{\begin{tabular}[t]{l}$\gamma^\mathrm{int}_5$\end{tabular}}}}%
	    \put(0.16836927,0.31501345){\color[rgb]{0,0,0}\makebox(0,0)[lt]{\lineheight{1.25}\smash{\begin{tabular}[t]{l}$\gamma^\mathrm{ext}_1$\end{tabular}}}}%
	    \put(0.33177494,0.48771159){\color[rgb]{0,0,0}\makebox(0,0)[lt]{\lineheight{1.25}\smash{\begin{tabular}[t]{l}$\gamma^\mathrm{ext}_2$\end{tabular}}}}%
	    \put(0.51837704,0.31501345){\color[rgb]{0,0,0}\makebox(0,0)[lt]{\lineheight{1.25}\smash{\begin{tabular}[t]{l}$\gamma^\mathrm{ext}_3$\end{tabular}}}}%
	    \put(0.37558583,0.31501345){\color[rgb]{0,0,0}\makebox(0,0)[lt]{\lineheight{1.25}\smash{\begin{tabular}[t]{l}$\gamma^\mathrm{ext}_4$\end{tabular}}}}%
	    \put(0.01422356,0.01789194){\color[rgb]{0,0,0}\makebox(0,0)[lt]{\lineheight{1.25}\smash{\begin{tabular}[t]{l}$(1+\eps)X$\end{tabular}}}}%
	  \end{picture}%
	\endgroup%
	\caption{Up to translations, the paths $\gamma^\mathrm{int}_j$ may be concatenated into a path from $0$ to $x$.}
	\label{fig : app/pp/continuite/cas_atome}
\end{figure}
The concatenation of the translated paths $\p{\gamma^\mathrm{int}_j - z_{A(j)}}_{j\in\intint1{r+1}}$ is a path from $0$ to $x$ (see Figure~\ref{fig : app/pp/continuite/cas_atome}), thus
\begin{equation}
	\label{eqn : app/pp/continuite/cas_atome/LB_int}
	\sum_{j=1}^{r+1} D'\p{\gamma^\mathrm{int}_j} \ge D(0,x) \ge \zeta.
\end{equation}
Moreover,
\begin{align}
	\sum_{j=1}^{r} D'\p{\gamma^\mathrm{ext}_j}%
		&= \sum_{j=1}^{r} b\norme{y(j)-z(j)}\nonumber \\
		&= b\eps \sum_{j=1}^{r} x_{i(j)}.\nonumber
	\intertext{Since $(1+\eps)x\in X_{\intint1d}'$, every coordinate of $x$ appear at least once in the sum above, hence}
	\label{eqn : app/pp/continuite/cas_atome/LB_ext}
	\sum_{j=1}^{r} D'\p{\gamma^\mathrm{ext}_j}%
		&\ge b\eps \norme x.
\end{align}
Inequalities~\eqref{eqn : app/pp/continuite/cas_atome/LB_int} and~\eqref{eqn : app/pp/continuite/cas_atome/LB_ext} give
\begin{equation*}
	D'\p{0,(1+\eps)x} \ge \zeta + b\eps \norme x,
\end{equation*}
i.e. \eqref{eqn : app/pp/continuite/cas_atome/hatD_good}.
By~\eqref{eqn : app/pp/FdT},~\eqref{eqn : app/pp/continuite/cas_atome/LB_FdT}, and~\eqref{eqn : app/pp/continuite/cas_atome/hatD_good},
\begin{equation*}
	\FdTppx\p{\frac{\zeta + \eps b \norme x}{1+\eps}} %
		\le \frac{1}{(1+\eps)^d}\cro{\monFdT[X](D) + \p{(1+\eps)^d -1}\Leb(X) \monFdT[\intervalleff01^d]\p{b\norme\cdot} }.\notag
\end{equation*}
Besides,~\eqref{eqn : FdT_elem/ordre_grandeur/infty/UB} and~\eqref{eqn : app/prelis/cvg_fdt_elem} imply $\monFdT[\intervalleff01^d]\p{b\norme\cdot}<\infty$. Letting $\eps\to 0$, we obtain~\eqref{eqn : app/pp/right-cont}.
	

\emph{Case 2: $\nu(\acc b)=0$.} For all $z\in X$, we write $g_z\dpe (\grad D)_z$. We follow the proof of Proposition~1.4 in Basu, Ganguly and Sly \cite{Bas21}, in a deterministic setting: rather than slightly increasing every edge passage times in an configuration satisfying $\acc{\SPT(0,x)\ge \zeta}$, we slightly increase $g_z$ to build $\hat D$. Let $0 < \eps \le \delta \le \frac{b\norme x - \zeta }{10\norme x}$. Let $\omega_\delta$ be a modulus of continuity for the restriction of $\monFdT[\intervalleff01^d]$ on the set of norms $g$ such that $\alpha_0 \norme\cdot\le g \le (b-\delta)\norme\cdot$, which is compact (see Lemma~\ref{lem : app/prelis/cvg_fdt_elem}). Define the sets
\begin{equation}
	\label{eqn : app/pp/continuite/cas_sans_atome/X1_X2}
	X_1(\delta) \dpe \acc{z\in X \Bigm| \normeHom{g_z}\le (b-2\delta) } \text{ and } X_2(\delta)\dpe X \setminus X_1(\delta).
\end{equation}
For all $z\in X$, define
\begin{equation*}
	\GB \dpe%
		\begin{cases}
			g_z + \eps\norme\cdot \qquad &\text{ if } z \in X_1(\delta),\\
			\cro{(b-8\delta)\norme\cdot}\vee g_z \qquad &\text{ if } z \in X_2(\delta).
		\end{cases}
\end{equation*}
Let $\hat D$ denote the metric defined by~\eqref{eqn : limit_space/constructions/inverse_def}, with the family $(\GB)_{z\in X}$. Note that by~\eqref{eqn : eqn : limit_space/constructions/inegalite_gz_grad}, for all $z\in X$, $(\grad \hat D)_z\le \GB$. Consequently,
\begin{equation}
\label{eqn : app/pp/continuite/cas_sans_atome/FdT_D'}
	\monFdT(\hat D) \le \int_{X_1(\delta)} \monFdT[\intervalleff01^d]\p{g_z + \eps\norme\cdot } \d z + \int_{X_2(\delta)} \monFdT[\intervalleff01^d]\p{\cro{(b-8\delta)\norme\cdot}\vee g_z} \d z.
\end{equation}
The first term is upper bounded by 
\begin{equation}
	\label{eqn : app/pp/continuite/cas_sans_atome/UB_FdT_1}
	\int_{X_1(\delta)} \monFdT[\intervalleff01^d]\p{g_z + \eps\norme\cdot } \d z \le%
		\int_{X_1(\delta)} \monFdT[\intervalleff01^d]\p{g_z  }\d z  + \Leb(X_1(\delta))\omega_\delta(\eps)%
		\le \monFdT(D)  + \Leb(X)\omega_\delta(\eps).
\end{equation}
Let us bound the second. Let $z\in X_2(\delta)$. First, note that for all $\alpha>0$, by~\eqref{eqn : FdT_elem/ordre_grandeur/infty/UB} and~\eqref{eqn : FdT_elem/ordre_grandeur/infty/LB},
\begin{align}
	\exFdT[\intervalleff01^d]{\alpha}\p{(b-8\delta)\norme\cdot} %
		&\le  - d\log \nu^{(\alpha)}\p{\intervalleff{b-8\delta}{b} } \nonumber\\
		&\le  2d\exFdT[\intervalleff01^d]{\alpha}\p{g_z  } + 2d\log 2.\nonumber
	\intertext{Letting $\alpha\to 0$, we get}
	\label{eqn : app/pp/continuite/cas_sans_atome/estimation_bord_N}
	\monFdT[\intervalleff01^d]\p{(b-8\delta)\norme\cdot} &\le 2d \monFdT[\intervalleff01^d]\p{g_z  } + 2d\log 2.
	\intertext{Moreover, by the FKG inequality,}
	\monFdT[\intervalleff01^d]\p{\cro{(b-8\delta)\norme\cdot}\vee g_z} &\le \monFdT[\intervalleff01^d]\p{(b-8\delta)\norme\cdot} + \monFdT[\intervalleff01^d]\p{g_z  }\eol
		&\le (2d+1)\monFdT[\intervalleff01^d]\p{g_z } + 2d\log 2.\notag
	\intertext{This implies}
	\label{eqn : app/pp/continuite/cas_sans_atome/UB_FdT_2}
	\int_{X_2(\delta)} \monFdT[\intervalleff01^d]\p{\cro{(b-8\delta)\norme\cdot}\vee g_z} \d z%
		&\le (2d+1) \int_{X_2(\delta)} \monFdT[\intervalleff01^d]\p{ g_z} \d z + 2d \Leb\p{X_2(\delta)} \log 2.
	\end{align}
	Combining~\eqref{eqn : app/pp/continuite/cas_sans_atome/UB_FdT_1} and~\eqref{eqn : app/pp/continuite/cas_sans_atome/UB_FdT_2}, we obtain
	\begin{equation}
	\label{eqn : app/pp/continuite/cas_sans_atome/UB_FdT}
	\monFdT(\hat D) \le \monFdT(D)  + \Leb(X)\omega_\delta(\eps) + (2d+1) \int_{X_2(\delta)} \monFdT[\intervalleff01^d]\p{ g_z} \d z + 2d\Leb\p{X_2(\delta)} \log 2.
	\end{equation}

We claim that $\hat D(0,x) > \zeta$. Let $0\ResPath{X}{\gamma} x$ be a Lipschitz path. If
\begin{align}
	\int_0^{T_\gamma} \norme{\gamma'(t)} \ind{X_2(\delta)}\p{\gamma(t)} \d t &\ge \frac{\zeta + \delta\norme x}{b-8\delta},\notag
	\intertext{then}
	\int_0^{T_\gamma} \GB[\gamma(t)]\p{\gamma'(t)} \d t &\ge (b-8\delta)\int_0^{T_\gamma} \norme{\gamma'(t)} \ind{X_2(\delta)}\p{\gamma(t)} \d t \eol
	\label{eqn : app/pp/continuite/cas_sans_atome/LB_PT_1}
	&\ge \zeta + \delta\norme x.
\end{align}
Otherwise,
\begin{align}
	\int_0^{T_\gamma}  \norme{\gamma'(t)} \ind{X_1(\delta)}\p{\gamma(t)} \d t\notag %
		&= \int_0^{T_\gamma} \norme{\gamma'(t)}\d t - \int_0^{T_\gamma} \norme{\gamma'(t)} \ind{X_2(\delta)}\p{\gamma(t)} \d t\notag\\
		&\ge \norme x - \frac{\zeta + \delta\norme x }{b-8\delta}\notag \\
		&= \frac{b\norme x - \zeta - 9\delta\norme x}{b-8\delta}\notag\\
		&\ge \frac{\delta \norme x}{b-8\delta}.\notag
\end{align}
Consequently,
\begin{align}
	\int_0^{T_\gamma} \GB[\gamma(t)]\p{\gamma'(t)} \d t%
		&\ge \int_0^{T_\gamma} g_{\gamma(t)}\p{\gamma'(t)} \d t + \int_0^{T_\gamma} \p{ \GB[\gamma(t)]\p{\gamma'(t)} - g_{\gamma(t)}\p{\gamma'(t)}  } \ind{X_1(\delta)}\p{\gamma(t)} \d t\eol
		&\ge  D(\gamma) + \eps\int_0^{T_\gamma} \norme{\gamma'(t)} \ind{X_1(\delta)}\p{\gamma(t)}\d t\eol
	\label{eqn : app/pp/continuite/cas_sans_atome/LB_PT_2}
		&\ge \zeta + \frac{\eps \delta}{b-8\delta}\norme x. 
\end{align}
Combining inequalities~\eqref{eqn : app/pp/continuite/cas_sans_atome/LB_PT_1} and~\eqref{eqn : app/pp/continuite/cas_sans_atome/LB_PT_2} with the definition of $\hat D$, we get $\hat D(0,x)>\zeta$. Thus, by~\eqref{eqn : app/pp/D_optimal} and~\eqref{eqn : app/pp/continuite/cas_sans_atome/UB_FdT},
\begin{equation}
\label{eqn : app/pp/continuite/cas_sans_atome/UB_FdTppx}
	\FdTppx(\zeta^+) \le \FdTppx(\zeta) + \Leb(X)\omega_\delta(\eps) + (2d+1) \int_{X_2(\delta)} \monFdT[\intervalleff01^d]\p{ g_z} \d z + 2d \Leb\p{X_2(\delta)} \log 2.
\end{equation}
Besides, since $\zeta < b\norme x$,  $\monFdT(D) = \FdTppx(\zeta) \le \monFdT\p{\frac{\zeta\norme\cdot}{\norme x} }<\infty$, therefore by~\eqref{eqn : app/pp/continuite/cas_sans_atome/estimation_bord_N}, $\Leb\p{X_2(\delta)}$ converges to $0$ as $\delta\to0$. Thus, letting $\eps\to0$ then $\delta\to 0$ in~\eqref{eqn : app/pp/continuite/cas_sans_atome/UB_FdTppx}, we get~\eqref{eqn : app/pp/right-cont}.
\end{proof}

\subsection{Crossing time: proof of Corollary~\ref{cor : intro/applications/crossing}}
\label{subsec : app/crossing}
In this subsection, we only assume that $\nu$ has a bounded support, i.e. $0\le a < b < \infty$, and $X=\intervalleff01^d$. For all $\alpha>0$, Lemma~\ref{lem : app/contraction} implies that $\p{\CT^{(\alpha)}(n)}_{n\ge 1}$ satisfies the LDP at speed $n^d$ with the rate function
\begin{align}
	\exFdTCT{\alpha} : \intervalleff \alpha b^d &\longrightarrow \intervalleff0\infty \eol
	\label{eqn : app/CT/def_FdTCT}
	\zeta &\longrightarrow \min_{\substack{D\in \exAdmDistances[\intervalleff01^d]{\alpha} :\\ \forall i, D(H_i, H_i')=\zeta_i }  } \exFdT[\intervalleff01^d]{\alpha}(D)
\end{align}

Corollary~\ref{cor : intro/applications/crossing} is a consequence of Lemmas~\ref{lem : app/CT/FdTCT_norme}, \ref{lem : app/CT/alpha_to_0} and \ref{lem : app/CT/ptes_FdT}.
\begin{Lemma}
\label{lem : app/CT/FdTCT_norme}
For all $\alpha>0$, $\zeta \in \intervalleff{\alpha}{b}^d$,
\begin{equation}
\label{eqn : app/CT/FdTCT_norme}
	\exFdTCT{\alpha}(\zeta) = \monexFdT[\intervalleff01^d]{\alpha}\p{ g^{\zeta} }.
\end{equation}
\end{Lemma}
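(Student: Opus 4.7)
The plan is to prove both inequalities in~\eqref{eqn : app/CT/FdTCT_norme} via the variational expression~\eqref{eqn : app/CT/def_FdTCT} given by the contraction principle (Lemma~\ref{lem : app/contraction}). For the upper bound, I use $D^\star \dpe \cro{\alpha\norme\cdot} \vee g^\zeta$ as a witness in the infimum: this is a norm (the pointwise maximum of two norms being a norm) lying in $\exNorms{\alpha}$, and $D^\star(H_i, H_i') = \zeta_i$ for each $i$ because $g^\zeta(y-x) \ge \zeta_i|y_i - x_i| = \zeta_i$ for any $x \in H_i, y \in H_i'$, with equality attained at $y = x + \base i$ (where $\alpha\norme{y-x} = \alpha \le \zeta_i$). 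Applying Lemma~\ref{lem : app/prelis/FdTmon} yields $\exFdT[\intervalleff01^d]{\alpha}(D^\star) = \monexFdT[\intervalleff01^d]{\alpha}(g^\zeta)$, which gives the desired $\le$ inequality.

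For the lower bound, the task is to show that every $D$ in the constraint set satisfies $\exFdT[\intervalleff01^d]{\alpha}(D) \ge \exFdT[\intervalleff01^d]{\alpha}(D^\star)$. The direct route via monotonicity of $\exFdT{\alpha}$ (Proposition~\ref{prop : mon/mon}) together with the pointwise bound $D \ge D^\star$ is doomed: this pointwise bound can genuinely fail, as exemplified by a Finsler-type metric built from a continuous medium with a fast square tube in the middle of the cube. The plan is instead to pass to a finer scale. Given $D$ and $k \ge 1$, define $D_k$ on $\intervalleff01^d$ by partitioning into the $k^d$ sub-cubes $T_v = v/k + [0,1/k]^d$ and stitching a scaled translated copy of $D$ onto each, using Lemmas~\ref{lem : limit_space/constructions/rescaling},~\ref{lem : limit_space/constructions/translation} and~\ref{lem : limit_space/constructions/blocs}. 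A change of variables in the integral formula of Theorem~\ref{thm : MAIN} will yield $\exFdT[\intervalleff01^d]{\alpha}(D_k) = \exFdT[\intervalleff01^d]{\alpha}(D)$ exactly, for every $k$.

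The crucial estimate will be the pointwise bound $D_k \ge g^\zeta - b/k$. Fix a direction $i$ and let $L_i(s) \dpe \acc{z \in \intervalleff01^d : z_i = s}$. The key ingredient is the superadditivity of slab-to-slab distances in any geodesic metric: any $D_k$-geodesic between $L_i(s)$ and $L_i(s'')$ meets $L_i(s')$ at some point $r$ and splits there into two sub-geodesics whose lengths dominate $D_k(L_i(s), L_i(s'))$ and $D_k(L_i(s'), L_i(s''))$ respectively. Combined with the fact that each unit-row distance $D_k(L_i(j/k), L_i((j+1)/k))$ equals $\zeta_i/k$ exactly (by the tile scaling and the hypothesis $D(H_i, H_i') = \zeta_i$), this gives $D_k(L_i(x_i), L_i(y_i)) \ge \lfloor k|y_i-x_i|\rfloor \zeta_i/k \ge \zeta_i|y_i - x_i| - \zeta_i/k$. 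Since $D_k(x,y) \ge D_k(L_i(x_i), L_i(y_i))$, taking the maximum over $i$ yields the claimed bound.

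Extracting a convergent subsequence $D_{k_n} \to D_\infty$ in the compact space $\exAdmDistances[\intervalleff01^d]{\alpha}$ (Proposition~\ref{prop : limit_space/compactness/compactness}) and passing to the pointwise limit gives $D_\infty \ge g^\zeta$; combined with $D_\infty \ge \alpha\norme\cdot$, this produces $D_\infty \ge D^\star$. Monotonicity of $\exFdT{\alpha}$ then forces $\exFdT[\intervalleff01^d]{\alpha}(D_\infty) \ge \exFdT[\intervalleff01^d]{\alpha}(D^\star)$, while lower semicontinuity (Lemma~\ref{lem : intro/sketch/UB_LB}) provides $\exFdT[\intervalleff01^d]{\alpha}(D_\infty) \le \liminf_n \exFdT[\intervalleff01^d]{\alpha}(D_{k_n}) = \exFdT[\intervalleff01^d]{\alpha}(D)$; chaining the three inequalities completes the proof. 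The most delicate step will be a careful accounting of the boundary effects in the slab superadditivity argument when $x_i$ and $y_i$ do not lie at tile boundaries.
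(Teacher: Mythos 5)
Your upper bound, using $D^\star \dpe \cro{\alpha\norme\cdot}\vee g^\zeta$ as a witness in the variational formula of Lemma~\ref{lem : app/contraction}, is correct, and the architecture of your lower bound (replace $D$ by a stitched $D_k$ with the same rate function, take a limit, invoke monotonicity and lower semicontinuity) is the right shape; the change-of-variables computation showing $\exFdT[\intervalleff01^d]{\alpha}(D_k)=\exFdT[\intervalleff01^d]{\alpha}(D)$ is also fine. The gap is the claim that $D_k(L_i(j/k),L_i((j+1)/k))=\zeta_i/k$: translation-based periodic stitching does \emph{not} preserve crossing times. Concretely, in $d=2$ prescribe $D$ via Lemma~\ref{lem : limit_space/constructions/inverse} with gradient $\alpha\norme\cdot$ on two thin parallel diagonal strips, one from $(0,\tfrac12)$ to $(\tfrac12,1)$ and one from $(\tfrac12,0)$ to $(1,\tfrac12)$, and $b\norme\cdot$ elsewhere. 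Since each strip covers only half the horizontal width and they are disjoint, $\zeta_1=D(H_1,H_1')\ge\alpha+\tfrac b2-o(1)$ as the strip width shrinks. But after scaling and translating onto tiles of side $1/k$, the end of the second strip in one tile coincides with the start of the first strip in the adjacent tile, with the same direction $(1,1)$; in $D_k$ the strips therefore concatenate into one fast diagonal corridor spanning the whole cube. Following it from $L_1(j/k)$ to $L_1((j+1)/k)$ gives $D_k$-length $2\alpha/k<\zeta_1/k$ once $\alpha<\tfrac b2$, and more damagingly $D_k((0,0),(1,1))\approx 2\alpha<\zeta_1=g^\zeta(\base1+\base2)$, so the pointwise bound $D_k\ge g^\zeta-b/k$ fails by a fixed amount and the limit $D_\infty$ need not dominate $g^\zeta$. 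Your slab superadditivity is itself correct, but it is only as strong as the unit-row estimate feeding into it, and periodicity can destroy that.

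The paper sidesteps exactly this by stitching with orthogonal \emph{reflections} rather than translations: the symmetrization $s(D)$ places a mirror image of $\Scaling{D}{1/2}$ on each of the $2^d$ sub-cubes, so a fast corridor hitting a tile boundary is folded back rather than continued into the neighbor. The continuous folding map $h$ then turns any crossing path of $s(D)$ into a path of $D$ from $H_i$ to $H_i'$ and back, which is precisely what yields the crossing-time preservation of Claim~\ref{claim : app/CT/ptes_s_1}; the rate-function invariance additionally uses the distributional symmetry of the model under lattice reflections. The dyadic translation invariance of $s^n(D)$ (Claim~\ref{claim : app/CT/ptes_s_2}) then produces a norm in the limit, and a convexity argument shows that norm dominates $g^\zeta$. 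Replacing your periodic $D_k$ by the reflection-based $s^n(D)$ would make your plan go through.
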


\begin{Lemma}
\label{lem : app/CT/alpha_to_0}
The process $\p{\CT(n)}_{n\ge 1}$ satisfies the LDP at speed $n^d$ with the rate function
\begin{equation}
\label{eqn : app/CT/expression_FdTCT}
	\FdTCT(\zeta) \dpe \inclim{\alpha\to 0} \exFdTCT{\alpha}(\zeta) =  \monFdT[\intervalleff 01^d](g^\zeta).
\end{equation}
\end{Lemma}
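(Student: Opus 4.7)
The plan is to first identify the candidate rate function and then establish the LDP by matching upper and lower bounds on the probability of small balls around each $\zeta \in \intervalleff{0}{b}^d$. The identity $\FdTCT(\zeta) = \inclim{\alpha \to 0}\exFdTCT{\alpha}(\zeta) = \monFdT[\intervalleff01^d](g^\zeta)$ is immediate from Lemma~\ref{lem : app/CT/FdTCT_norme}, which gives $\exFdTCT{\alpha}(\zeta) = \monexFdT[\intervalleff01^d]{\alpha}(g^\zeta)$ for each $\alpha > 0$, combined with Lemma~\ref{lem : app/prelis/cvg_fdt_elem}. Since $\intervalleff{0}{b}^d$ is compact, Lemma~\ref{lem : intro/sketch/UB_LB} reduces the LDP to showing $\overline I_\CT(\zeta) = \underline I_\CT(\zeta) = \FdTCT(\zeta)$ for every $\zeta$, where $\overline I_\CT, \underline I_\CT$ are the upper and lower rate functions of $(\CT(n))_{n\ge 1}$ with respect to $\norme{\cdot}_\infty$.

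For the easy direction $\underline I_\CT(\zeta) \ge \FdTCT(\zeta)$, the pointwise inequality $\tau_e \le \tau_e \vee \alpha$ gives $\CT(n) \le \CT^{(\alpha)}(n)$ componentwise, so $\acc{\norme{\CT(n) - \zeta}_\infty \le \eps} \subseteq \acc{\CT^{(\alpha)}(n)_i \ge \zeta_i - \eps,\ \forall i}$. The LDP for $\CT^{(\alpha)}(n)$, obtained by applying Lemma~\ref{lem : app/contraction} to $\BoxSPT^{(\alpha)}$, combined with the monotonicity and lower semicontinuity of $\exFdTCT{\alpha}$, yields
\begin{equation*}
\liminf_{n\to\infty}-\frac{1}{n^d}\log\Pb{\norme{\CT(n) - \zeta}_\infty \le \eps} \ge \exFdTCT{\alpha}(\zeta - \eps\mathbf 1) \xrightarrow[\eps \to 0]{} \exFdTCT{\alpha}(\zeta),
\end{equation*}
and letting $\alpha \to 0$ delivers the claim.

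For the reverse inequality $\overline I_\CT(\zeta) \le \FdTCT(\zeta)$, I would combine the tile-based construction of Lemma~\ref{lem : FdT_elem/existence/main_lemma} with the hard-edge trick from the proof of Lemma~\ref{lem : app/prelis/cvg_fdt_elem}. Fix $\eta > 0$ and a large integer $p$, and call an edge $(v, v+\base i)$ \emph{hard} if $v_i \in p\Z$. I would assemble a favorable event $G_n$ enforcing simultaneously (i) $\norme{\CT^{(\alpha)}(n) - \zeta}_\infty \le \eta$, built from $k^d$ independent truncated sub-configurations close to $g^\zeta$ as in Lemma~\ref{lem : FdT_elem/existence/main_lemma}, and (ii) all hard edges satisfy $\tau_e \ge b/2$, with both sub-events living on disjoint edge sets to make them independent. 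On $G_n$, every self-avoiding path $\gamma$ in $\intervalleff0n^d$ obeys $\tau(\gamma) \ge (b/2) \norme{\gamma}/p^d$, so every $\tau$-geodesic $\gamma^*$ realizing $\CT(n)_i$ has $\norme{\gamma^*} \le 2 p^d n$. This yields the key bound $\CT^{(\alpha)}(n)_i - \CT(n)_i \le \alpha \norme{\gamma^*}/n \le 2\alpha p^d$; choosing $\alpha = \eta/(4p^d)$ forces $G_n \subseteq \acc{\norme{\CT(n) - \zeta}_\infty \le 2\eta}$, while independence gives $-\frac{1}{n^d}\log \Pb{G_n} \le \exFdTCT{\alpha}(\zeta) + O(1/p) + o_n(1)$. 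Sending $p \to \infty$ and then $\alpha \to 0$ finishes the argument.

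The main obstacle is to arrange the two sub-events composing $G_n$ so that they live on disjoint edge sets with cleanly adding rates: a naive conditioning on the hard-edge event produces a non-uniform product law on edges, which lies outside the IID framework of Theorem~\ref{thm : MAIN} and obstructs the direct use of its LDP. I would circumvent this by placing the hard edges into the "corridor" between independent sub-tiles in the construction of Lemma~\ref{lem : FdT_elem/existence/main_lemma}, which is itself designed to separate tile-local randomness, so that the truncated LDP event is carried by the tiles and the hard-edge event by the corridor, with the two edge families disjoint by construction.
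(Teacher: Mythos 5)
Your identification of the rate function and your easy direction ($\underline I_{\mathrm{cross}}(\zeta) \ge \FdTCT(\zeta)$ via the tail inclusion and the LDP for $\CT^{(\alpha)}$) are both fine. The gap is in the upper bound $\overline I_{\mathrm{cross}}(\zeta) \le \FdTCT(\zeta)$, and it is a genuine one.

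Your favorable event $G_n$ requires two properties that cannot be made to coexist. For the path-length bound ``every self-avoiding path $\gamma$ satisfies $\tau(\gamma)\ge (b/2)\norme{\gamma}/p^d$'', the hard edges must sit on hyperplanes spaced a \emph{constant} $p$ apart throughout $\intervalleff0n^d$; a path confined to one tile of side $\grando(n/k)\to\infty$ otherwise avoids all hard edges, so placing the hard edges only in the corridor (width $\grando(n\delta)$, spacing $\grando(n/k)$) destroys the length bound entirely. But if the hard edges are kept at spacing $p$ they overlap with the tile edge sets, and the independence you rely on to get $\Pb{G_n}\ge \Pb{\text{tiles}}\cdot\Pb{\text{hard}}$ is false. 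The proof of Lemma~\ref{lem : app/prelis/cvg_fdt_elem} gets around exactly this obstruction by FKG, which requires both events to be increasing; the tile tail-event $\LD^+$ used there is increasing, but your two-sided concentration event $\acc{\norme[\infty]{\CT^{(\alpha)}(n)-\zeta}\le \eta}$ is not, so FKG is unavailable to you. There is thus no way to bound $\Pb{G_n}$ from below by a product, and the chain $-\frac1{n^d}\log\Pb{G_n}\le \exFdTCT{\alpha}(\zeta)+\grando(1/p)+\petito_n(1)$ does not follow.

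What is missing is precisely the \emph{modification} (resampling) step that the paper inserts between the monotone and the two-sided rate functions. The paper first establishes, via the FKG-plus-hard-edges argument of Lemma~\ref{lem : app/prelis/cvg_fdt_elem} applied to the increasing tail events, that $\monFdTCTsup(\zeta)=\monFdTCTinf(\zeta)=\monFdT[\intervalleff01^d](g^\zeta)$. Then, starting from a realization of the tail event $\bigcap_i\acc{\CT(n)_i\ge \zeta_i-\eps}$, it couples the configuration to a fresh i.i.d.\ copy along the $d$ axis segments $\acc{r\base i, 0\le r\le n}$ and chooses (measurably) a prefix of each segment to set to passage time close to $a$, bringing $\CT(n)_i$ down into $\intervalleff{\zeta_i-\eps}{\zeta_i+\eps}$. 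This uses only $\grando(n)$ edges per coordinate, i.e.\ a sub-volumic cost, so $\FdTCTsup(\zeta)\le\monFdTCT(\zeta)$. Your single-step construction cannot replace this because it conflates the production of the lower tail (which needs a two-sided, non-monotone target) with the hard-edge length control, and no independence or correlation inequality reconciles them.
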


\begin{Lemma}
\label{lem : app/CT/ptes_FdT}\leavevmode\vspace{-\baselineskip}
\begin{enumerate}[(i)]
	\item \label{item : app/CT/continuite}%
	$\FdTCT$ is continuous on $\intervalleff{a}{b}^d$.
	\item \label{item : app/CT/croissance}%
	$\FdTCT$ is nondecreasing on $\intervalleff{a}{b}^d$ for the componentwise order, i.e. for all $\zeta= (\zeta_1,\dots, \zeta_d)$ and $\zeta'=(\zeta_1',\dots, \zeta_d')$ in $\intervalleff{a}{b}^d$, if $\zeta_i \le \zeta_i'$ for all $i$, then $\FdTCT(\zeta) \le \FdTCT(\zeta')$.
	\item \label{item : app/CT/convexite}%
	$\FdTCT$ is separately convex on $\intervalleff{a}{b}^d$, i.e. for all $1\le i \le d$ and $\zeta_1,\dots, \zeta_{i-1}, \zeta_{i+1},\dots, \zeta_d \in \intervalleff ab$, the function
	\begin{equation*}
		t \mapsto \FdTCT\p{\zeta_1 ,\dots, \zeta_{i-1}, t, \zeta_{i+1}, \dots, \zeta_d }
	\end{equation*}
	is convex on $\intervalleff ab$.
	\item \label{item : app/CT/cas_infty}%
	$\FdTCT(b,\dots,b)<\infty$ if and only if $\nu(\acc b)>0$. 
\end{enumerate}
\end{Lemma}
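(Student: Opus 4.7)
The proof breaks into four items, which I would address in roughly the order (\ref{item : app/CT/croissance}), (\ref{item : app/CT/cas_infty}), (\ref{item : app/CT/convexite}), (\ref{item : app/CT/continuite}). The monotonicity (\ref{item : app/CT/croissance}) is immediate from the identity $\FdTCT(\zeta)=\monFdT[\intervalleff 01^d](g^\zeta)$ of Lemma~\ref{lem : app/CT/alpha_to_0}, the pointwise monotonicity $\zeta\le\zeta'\Rightarrow g^\zeta\le g^{\zeta'}$ from~\eqref{eqn : intro/applications/def_magic_norm}, and the fact that each $\monexFdT[\intervalleff 01^d]{\alpha}$ is nondecreasing on $\SNorms$ (by Proposition~\ref{prop : mon/mon} applied to $\nu^{(\alpha)}$ together with Lemma~\ref{lem : app/prelis/FdTmon}), a property preserved in the limit $\alpha\to 0$ of Lemma~\ref{lem : app/prelis/cvg_fdt_elem}. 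For the finiteness dichotomy~(\ref{item : app/CT/cas_infty}), a short calculation shows $\normeHom{g^{(b,\dots,b)}}=b$. The upper bound $\FdTCT(b,\dots,b)\le -d\log\nu(\{b\})$ then follows by combining Lemma~\ref{lem : FdT_elem/ordre_grandeur/infty}(i) applied to $\nu^{(\alpha)}$ with $g^{(b,\dots,b)}\le b\norme\cdot$ and the $\alpha\to 0$ limit; conversely, if $\nu(\{b\})=0$, Lemma~\ref{lem : FdT_elem/ordre_grandeur/infty}(ii) applied to $\nu^{(\alpha)}$ with $g=g^{(b,\dots,b)}$, sending first $\alpha\to 0$ and then $\eta\to 0$, forces $\FdTCT(b,\dots,b)=\infty$.

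For separate convexity~(\ref{item : app/CT/convexite}), my plan is to prove convexity first for the truncated rate $\exFdTCT{\alpha}$ at fixed $\alpha>0$ by an assembly argument, then transfer the inequality to $\FdTCT$ via the pointwise $\alpha\to 0$ limit of Lemma~\ref{lem : app/CT/alpha_to_0}. Fixing $\zeta_2,\dots,\zeta_d$ and $t_a,t_b\in\intervalleff ab$, set $\zeta^a=(t_a,\zeta_2,\dots)$, $\zeta^b=(t_b,\zeta_2,\dots)$, $\zeta^{\mathrm{mid}}=\p{\tfrac{t_a+t_b}{2},\zeta_2,\dots}$. Tile $\intervalleff{0}{2n}^d$ with $2^d$ sub-cubes of side $n$ indexed by $\varepsilon\in\{0,1\}^d$; on each sub-cube place an independent configuration conditioned on the monotone crossing event associated to $\zeta^a$ if $\varepsilon_1=0$, to $\zeta^b$ if $\varepsilon_1=1$, and set the $O(n^{d-1})$ interface edges to values $\ge b-\varepsilon'$, which costs a factor $\nu^{(\alpha)}([b-\varepsilon',b])^{O(n^{d-1})}$ that is negligible at speed $n^d$. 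Decomposing any crossing path of the combined box at the bisecting hyperplane $\{x_1=n\}$ (for direction~$1$) or $\{x_i=n\}$ (for $i\ge 2$), and using that the near-$b$ interface edges prevent a zigzag path from beating the per-sub-cube crossing bounds by taking shortcuts through neighbouring tiles, yields all $d$ crossings of the combined box at level at least $\zeta^{\mathrm{mid}}\cdot 2n$. Independence of the $2^d$ configurations combined with~\eqref{eqn : intro/applications/crossing_monotone_evt} for $\nu^{(\alpha)}$ then produces $\exFdTCT{\alpha}(\zeta^{\mathrm{mid}})\le\frac12\p{\exFdTCT{\alpha}(\zeta^a)+\exFdTCT{\alpha}(\zeta^b)}$, and taking $\alpha\to 0$ concludes.

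The continuity~(\ref{item : app/CT/continuite}) splits according to whether $g^\zeta$ is critical. On $\intervallefo ab^d$ one has $\normeHom{g^\zeta}=\max_i\zeta_i<b$, so $g^\zeta\in\NoncritSNorms$ and continuity of $\FdTCT$ follows from the continuity of $\monFdT[\intervalleff 01^d]$ on $\NoncritSNorms$ (Lemma~\ref{lem : app/prelis/cvg_fdt_elem}) composed with the continuous map $\zeta\mapsto g^\zeta$. On the boundary $\{\exists i:\zeta_i=b\}$, if $\nu(\{b\})=0$ the argument of~(\ref{item : app/CT/cas_infty}) extends to give $\FdTCT\equiv\infty$ there, and continuity as a map into $\intervalleff 0\infty$ is then a consequence of lower semicontinuity of $\FdTCT$. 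If $\nu(\{b\})>0$, by monotonicity together with lower semicontinuity only right-continuity at a boundary point $\zeta$ is nontrivial, and I adapt Case~1 of the proof of Lemma~\ref{lem : app/pp/continuite}: starting from an optimizer $D\in\AdmDistances[\intervalleff 01^d]$ for $\FdTCT(\zeta)$, I stitch via Lemmas~\ref{lem : limit_space/constructions/blocs} and~\ref{lem : limit_space/constructions/rescaling} a rescaled copy of $D$ with thin buffer slabs of gradient $b\norme\cdot$ along each face, yielding a metric $\hat D$ whose crossings strictly exceed $\zeta_i$ in every direction at additive cost $O(\varepsilon)\cdot\monFdT[\intervalleff 01^d](b\norme\cdot)$, which is finite because $\nu(\{b\})>0$.

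The main obstacle is the crossing-bound in the assembly argument of~(\ref{item : app/CT/convexite}): one must argue that paths which zigzag across interfaces between parallel sub-cubes, while crossing the $\{x_1=n\}$ hyperplane only once, cannot exploit cheap interior edges of one sub-cube to outperform the direction-$1$ crossing bound of a single sub-cube. The role of the near-$b$ interface edges is precisely to make each interface traversal costly, but turning this intuition into a rigorous quantitative bound—controlling the trade-off between interior savings and interface penalties—is the delicate step where most of the technical work of the proof concentrates.
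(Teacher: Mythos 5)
Items (\ref{item : app/CT/croissance}), (\ref{item : app/CT/cas_infty}), and (\ref{item : app/CT/continuite}) of your proposal track the paper's proof closely: (\ref{item : app/CT/croissance}) and (\ref{item : app/CT/cas_infty}) are the same one-line deductions from Lemmas~\ref{lem : app/CT/alpha_to_0} and~\ref{lem : FdT_elem/ordre_grandeur/infty}, and for (\ref{item : app/CT/continuite}) your case split (interior via continuity of $\monFdT[\intervalleff01^d]$ on $\NoncritSNorms$, boundary via lower semicontinuity when $\nu(\acc b)=0$ and via an explicit slab construction when $\nu(\acc b)>0$) is essentially the paper's argument; the paper's Case~1 works with the single metric having gradient $g^\zeta\vee\alpha\norme\cdot$ on $\intervalleff{0}{1-\eps}^d$ and $b\norme\cdot$ on the boundary slab, without first extracting an optimizer, which is a slight simplification of what you describe but not a different idea.

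The genuine divergence is item~(\ref{item : app/CT/convexite}), and there your plan has a real gap that you yourself flag. You propose a probabilistic assembly argument: tile $\intervalleff0{2n}^d$ into $2^d$ sub-cubes, condition each on a monotone crossing event, set interface edges near $b$, and argue that zigzag paths cannot exploit cheap interior edges across interfaces. That last step is precisely the corridor-type control of Lemma~\ref{lem : limit_space/corridor/corridor}, and making it quantitative would amount to re-proving a significant chunk of the machinery that went into Theorem~\ref{thm : MAIN}. The paper's proof of~(\ref{item : app/CT/convexite}) sidesteps all of this: for fixed $\alpha>0$, it builds the \emph{deterministic} metric $D\in\exAdmDistances[\intervalleff01^d]{\alpha}$ defined by prescribing the gradient
\begin{equation*}
g_z\dpe\begin{cases} g^{(\zeta_1,\zeta_2,\dots,\zeta_d)}\vee\alpha\norme\cdot &\text{if }0\le z_1\le\theta,\\ g^{(\zeta_1',\zeta_2,\dots,\zeta_d)}\vee\alpha\norme\cdot &\text{if }\theta\le z_1\le 1,\end{cases}
\end{equation*}
computes directly that $D(H_1,H_1')=\theta\zeta_1+(1-\theta)\zeta_1'$ and $D(H_i,H_i')=\zeta_i$ for $i\ge2$, and reads off $\exFdT[\intervalleff01^d]{\alpha}(D)=\theta\,\exFdT[\intervalleff01^d]{\alpha}(g^{(\zeta_1,\dots)}\vee\alpha\norme\cdot)+(1-\theta)\,\exFdT[\intervalleff01^d]{\alpha}(g^{(\zeta_1',\dots)}\vee\alpha\norme\cdot)$ from the integral formula~\eqref{eqn : MAIN/integral}. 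Combined with~\eqref{eqn : app/CT/def_FdTCT} and~\eqref{eqn : app/CT/FdTCT_norme}, this gives separate convexity of $\exFdTCT{\alpha}$ for all $\theta\in\intervalleff01$ in one stroke, and $\alpha\to0$ passes the inequality to $\FdTCT$. The point is that once Theorem~\ref{thm : MAIN} is in hand, convexity should be obtained \emph{through} the LDP and the integral cost formula, not by another round of tile-assembly probability estimates; your approach is not wrong in spirit but leaves the hardest estimate unproved, whereas the deterministic route makes the whole item elementary.

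Two smaller remarks. Your midpoint-convexity variant ($\theta=1/2$) needs an extra step (measurability or continuity) to upgrade to full convexity, whereas the deterministic construction gives arbitrary $\theta$ for free. And your reduction of boundary continuity to right-continuity via $\tilde\zeta^n\dpe\zeta\vee\zeta^n$ is correct, but note that the paper's Case~1 construction proves upper semicontinuity at \emph{every} $\zeta\in\intervalleff{a}{b}^d$ simultaneously, so no such reduction is needed there.
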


\begin{proof}[Proof of Lemma~\ref{lem : app/CT/FdTCT_norme} ]
Fix $\alpha>0$ and $\zeta\in \intervalleff{\alpha}{b}^d$. The inequality
\begin{equation}
	\exFdTCT{\alpha}(\zeta) \le \exFdT[\intervalleff01^d]{\alpha}\p{g^\zeta\vee \cro{\alpha\norme\cdot} } = \monexFdT[\intervalleff01^d]{\alpha}\p{g^\zeta}
\end{equation}
is straightforward. Let us prove the converse inequality.

For all $A \subseteq \intint1d$, let
\begin{align}
	h_A : \intervalleff{0}{\frac12}^d + \frac12 \sum_{i\in A}\base i &\longrightarrow \intervalleff0{\frac12}^d\eol
		z &\longmapsto z+\sum_{i\in A} \p{1 - 2z_i}\base i
\end{align}
denote the orthogonal symmetry with respect to the affine subspace $\acc{z \in \R^d \bigm| \forall i\in \intint1d, z_i =1/2 }$. For all $D \in \exAdmDistances{\alpha}$, let $s(D)$ denote the metric on $\intervalleff01^d$ defined as in Lemma~\ref{lem : limit_space/constructions/blocs} with the family of sets $\p{\intervalleff{0}{\frac12}^d + \frac12 \sum_{i\in A}\base i }_{A \subseteq \intint1d}$ and the family of metrics $(D_A)_{A \subseteq \intint1d}$ defined by
\begin{equation*}
	D_A(x,y) \dpe \Scaling{D}{1/2}\p{h_A(x), h_A(y)},
\end{equation*}
(see~\eqref{eqn : limit_space/constructions/def_rescaling} and Figure~\ref{fig : app/CT/SymMetric}).
\begin{figure}
	\center
	\def\svgwidth{0.8\textwidth}
	\begingroup%
	  \makeatletter%
	  \providecommand\color[2][]{%
	    \errmessage{(Inkscape) Color is used for the text in Inkscape, but the package 'color.sty' is not loaded}%
	    \renewcommand\color[2][]{}%
	  }%
	  \providecommand\transparent[1]{%
	    \errmessage{(Inkscape) Transparency is used (non-zero) for the text in Inkscape, but the package 'transparent.sty' is not loaded}%
	    \renewcommand\transparent[1]{}%
	  }%
	  \providecommand\rotatebox[2]{#2}%
	  \newcommand*\fsize{\dimexpr\f@size pt\relax}%
	  \newcommand*\lineheight[1]{\fontsize{\fsize}{#1\fsize}\selectfont}%
	  \ifx\svgwidth\undefined%
	    \setlength{\unitlength}{458.40012918bp}%
	    \ifx\svgscale\undefined%
	      \relax%
	    \else%
	      \setlength{\unitlength}{\unitlength * \real{\svgscale}}%
	    \fi%
	  \else%
	    \setlength{\unitlength}{\svgwidth}%
	  \fi%
	  \global\let\svgwidth\undefined%
	  \global\let\svgscale\undefined%
	  \makeatother%
	  \begin{picture}(1,0.28324853)%
	    \lineheight{1}%
	    \setlength\tabcolsep{0pt}%
	    \put(0,0){\includegraphics[width=\unitlength,page=1]{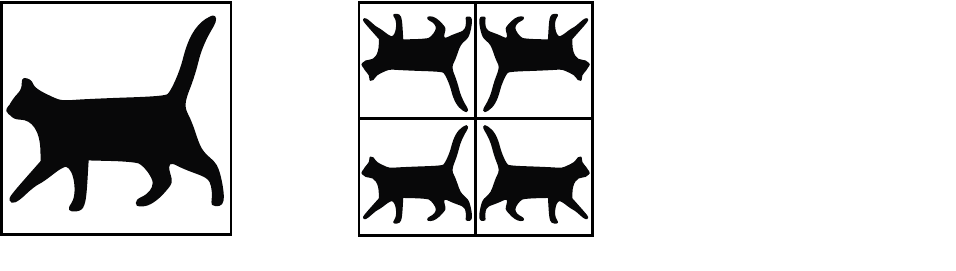}}%
	    \put(0.10215186,0.004016){\color[rgb]{0,0,0}\makebox(0,0)[lt]{\lineheight{1.25}\smash{\begin{tabular}[t]{l}$D$\end{tabular}}}}%
	    \put(0.47294798,0.00520475){\color[rgb]{0,0,0}\makebox(0,0)[lt]{\lineheight{1.25}\smash{\begin{tabular}[t]{l}$s(D)$\end{tabular}}}}%
	    \put(0,0){\includegraphics[width=\unitlength,page=2]{FIG_SymMetric.pdf}}%
	    \put(0.85124973,0.00401604){\color[rgb]{0,0,0}\makebox(0,0)[lt]{\lineheight{1.25}\smash{\begin{tabular}[t]{l}$s^2(D)$\end{tabular}}}}%
	  \end{picture}%
	\endgroup%
	\caption{Illustration of the first two iterations of $s$ in the case $d=2$.}
	\label{fig : app/CT/SymMetric}
\end{figure}
We postpone the proof of Claims~\ref{claim : app/CT/ptes_s_1} and~\ref{claim : app/CT/ptes_s_2} to the end of the subsection.
\begin{Claim}
	\label{claim : app/CT/ptes_s_1}
	Let $D\in \exAdmDistances[\intervalleff01^d]{\alpha}$. For all $1\le i\le d$,
	\begin{align}
	\label{eqn : app/CT/ptes_s/crossing}
		s(D)(H_i, H_i') &= D(H_i, H_i').
	\intertext{Moreover,}
	\label{eqn : app/CT/ptes_s/FdT}
		\exFdT[\intervalleff01^d]{\alpha}\p{ s(D)} &=\exFdT[\intervalleff01^d]{\alpha}\p{ D}.
	\end{align}
\end{Claim}
\begin{Claim}
	\label{claim : app/CT/ptes_s_2}
	Let $D\in \exAdmDistances[\intervalleff01^d]{\alpha}$ and $n\ge 1$. For all $x,y\in \intervalleff01^d$ and $z\in 2^{-n+1}\Z^d$ such that $x+z, y+z\in \intervalleff01^d$,
	\begin{equation}
	\label{eqn : app/CT/ptes_s/uniforme}
		s^{n}(D)(x,y) = s^{n}(D)(x+z,y+z).
	\end{equation}
\end{Claim}
Let $K$ denote the set of minimizers in~\eqref{eqn : app/CT/def_FdTCT}. Let $D\in K$. A straightforward induction argument using Claim~\ref{claim : app/CT/ptes_s_1} implies that for all $n\ge 1$,  $s^n(D)\in K$. Moreover $\exFdT[\intervalleff01^d]{\alpha}$ is lower semicontinuous, therefore $K$ is compact, therefore $\p{ s^n(D) }_{n\ge 1}$ has an adherence value $D'$ in $K$. Letting $n\to\infty$ in~\eqref{eqn : app/CT/ptes_s/uniforme}, we obtain that for all $x,y\in \intervalleff01^d$ and $z\in \R^d$ with dyadic coordinates such that $x+z, y+z \in \intervalleff01^d$,
\begin{equation*}
 	D'(x,y) = D'(x+z, y+z).
 \end{equation*} 
Since $D'$ is continuous, this equality is true for any $z$. Consequently, $z\mapsto (\grad D')_z$ is constant on $\intervalleoo01^d$. Proposition~\ref{prop : limit_space/gradient/derivee} further implies that for all $z\in \intervalleoo01^d$, $(\grad D')_z$ is equal to a norm $g\in \exNorms{\alpha}$. Consequently,
\begin{equation}
\label{eqn : app/CT/FdTCT_norme_presque}
	\exFdTCT{\alpha}(\zeta) = \exFdT[\intervalleff01^d]{\alpha}(g).
\end{equation}
Moreover, $g$ is invariant by orthogonal reflexions with respect to the hyperplanes $\acc{x\in \R^d \bigm| x_i=0}$, and for all $1\le i \le d$, $g(\base i)\ge \zeta_i$. 

Let $x=(x_1,\dots,x_d)\in \R^d$ and $1\le i \le d$. We have
\begin{align}
	x_i \base i &= 2^{-d+1}\sum_{\substack{\lambda \in \acc{-1,1}^d \\ \lambda_i =1} } \sum_{j=1}^d\lambda_j x_j \base j,\nonumber
	\intertext{thus}
	\module{x_i} g(\base i) &\le 2^{-d+1}\sum_{\substack{\lambda \in \acc{-1,1}^d :\\ \lambda_i =1} } g\p{ \sum_{j=1}^d\lambda_j x_j \base j }.\nonumber
	\intertext{Besides, since $g$ is invariant by orthogonal reflexions with respect to the hyperplanes $\acc{x\in \R^d \bigm| x_j=0}$, every term the right-hand side is equal to $g(x)$. Applying $g(\base i)\ge \zeta_i$ leads to }
	\zeta_i \module{x_i} &\le g(x),\nonumber
\end{align}
hence $g\ge g^{\zeta}$. Proposition~\ref{prop : mon/mon} and Equation~\eqref{eqn : app/CT/FdTCT_norme_presque} yield~\eqref{eqn : app/CT/FdTCT_norme}.
\end{proof}
\begin{proof}[Proof of Lemma~\ref{lem : app/CT/alpha_to_0}]
For all $\zeta \in \intervalleff{a}{b}^d$, we define
\begin{align}
	\label{eqn : app/CT/def_FdTCTsup}
	\FdTCTsup(\zeta) &\dpe \inclim{\eps\to0} \limsup_{n\to\infty} -\frac{1}{n^d}\log \Pb{\norme[\infty]{ \CT(n) - \zeta} \le \eps },\\
	\label{eqn : app/CT/def_FdTCTinf}
	\FdTCTinf(\zeta) &\dpe \inclim{\eps\to0} \liminf_{n\to\infty} -\frac{1}{n^d}\log \Pb{\norme[\infty]{ \CT(n) - \zeta} \le \eps },\\
	\label{eqn : app/CT/def_monFdTCTsup}
	\monFdTCTsup(\zeta) &\dpe \inclim{\eps\to0} \limsup_{n\to\infty} -\frac{1}{n^d}\log \Pb{\bigcap_{i=1}^d\acc{\BoxSPT[n,\intervalleff01^d](H_i, H_i') \ge \zeta_i - \eps} },\\
	\label{eqn : app/CT/def_monFdTCTinf}
	\monFdTCTinf(\zeta) &\dpe \inclim{\eps\to0} \liminf_{n\to\infty} -\frac{1}{n^d}\log \Pb{\bigcap_{i=1}^d\acc{\BoxSPT[n,\intervalleff01^d](H_i, H_i') \ge \zeta_i - \eps} }.
\end{align}
The same arguments as the ones used in the proof of Lemmas~\ref{lem : app/prelis/FdTmon} and~\ref{lem : app/prelis/cvg_fdt_elem} lead to
\begin{equation}
	\monexFdTCT{\alpha}(\zeta) \dpe \monexFdTCTsup{\alpha}(\zeta) =\monexFdTCTinf{\alpha}(\zeta) = \monexFdT[\intervalleff01^d]{\alpha}\p{ g^{\zeta} },
\end{equation}
and
\begin{equation}
\label{eqn : app/CT/monFdT_norme}
	\monFdTCT(\zeta) \dpe \monFdTCTsup(\zeta) =\monFdTCTinf(\zeta) = \inclim{\alpha\to0} \monexFdTCT{\alpha}(\zeta) = \monFdT[\intervalleff01^d]\p{ g^{\zeta} }.
\end{equation}
Following the proof of~\eqref{eqn : app/pp/LB_UB}, we now show by a modification argument that
\begin{equation}
\label{eqn : app/CT/modification}
	\FdTCT(\zeta)\dpe \FdTCTsup(\zeta) = \FdTCTinf(\zeta) = \monFdTCT(\zeta).
\end{equation}
The inequality
\begin{equation}
	\FdTCTinf(\zeta) \ge \monFdTCT(\zeta)
\end{equation}
is straightforward. Let us prove the converse inequality. Consider a configuration $\EPT'$ with the same distribution as $\EPT$, and $(X_e)$ a family of independent Bernoulli variables with parameter $1/2$, indexed by $\bigcup_{i=1}^d\acc{(r\base i, (r+1)\base i), 0\le r \le n-1 }$. We assume $\EPT, \EPT'$ and $(X_e)$ to be independent. The configuration defined by
\begin{equation}
	\FFastEPT[e]\dpe%
		\begin{cases}
			X_e \EPT[e]' + (1-X_e) \EPT[e] &\text{ if } e\in \bigcup_{i=1}^d\acc{(r\base i, (r+1)\base i), 0\le r \le n-1  },\\
			 \EPT[e] &\text{ otherwise,}
		\end{cases}
\end{equation}
has the same distribution as $\EPT$. As for Lemma~\ref{lem : mon/HW_is_free/modification} and~\eqref{eqn : app/pp/LB_UB}, the idea is to construct an event %
\stepcounter{tile}%
 $\cFav$ whose probability has the same order as $\Pb{\bigcap_{i=1}^d \acc{ \BoxSPT[n,\intervalleff01^d](H_i, H_i') \ge \zeta_i - \eps}}$, such that
\begin{equation}
\label{eqn : app/CT/modification/Fav_inclusion}
 	\cFav \subseteq \acc{\norme[\infty]{ \FFastCT(n) - \zeta} \le \eps}.
 \end{equation} 

Let $n\ge 1$ and $1\le i \le d$. For all $0\le R \le n$, consider the modified configuration defined by
\begin{equation*}
	\FastEPT[e]{i,R} \dpe%
		\begin{cases}
			a &\text{ if } e\in\acc{(r\base i, (r+1)\base i), 0\le r \le R-1  }\\
			\EPT[e] &\text{ otherwise.} 
		\end{cases}
\end{equation*}
Define the random variable 
\begin{equation*}
	R_i \dpe \min \acc{0\le R \le n \Biggm| \FastBoxSPT[n,\intervalleff01^d]{i,R}(H_i, H_i') \le \zeta_i }.
\end{equation*}
It is clear that $\FastBoxSPT[n,\intervalleff01^d]{i,n}(H_i, H_i') =a \le \zeta_i$, thus $R_i$ is well-defined. Note that
\begin{equation}
\label{eqn : app/CT/modification/ideal}
	\zeta_i - \frac bn \le \FastBoxSPT[n,\intervalleff01^d]{i,R_i}(H_i, H_i') \le \zeta_i.
\end{equation}
Let
\begin{equation}
	\FastE \dpe \bigcup_{i=1}^d\acc{(r\base i, (r+1)\base i), 0\le r \le R_i-1 }.
\end{equation}
We can now define $\cFav$ as
\begin{align}
	\begin{split}
	\cFav &\dpe %
			\p{ \bigcap_{i=1}^d\acc{ \BoxSPT[n,\intervalleff01^d](H_i, H_i') \ge \zeta_i - \eps} }%
		\cap\p{\bigcap_{i=1}^d \bigcap_{r=0}^{n-1} \acc{X_{(r\base i, (r+1)\base i )} = \ind{\FastE}(e)}  }\\%
		&\qquad \cap\p{\bigcap_{i=1}^d \bigcap_{r=0}^{n-1} \acc{\EPT'\p{ r\base i, (r+1)\base i } \le a+\eps }}.
	\end{split}
	\intertext{By independence of $\EPT, \EPT'$ and $(X_e)$,}
	\label{eqn : app/CT/modification/Pb_Fav}
	\Pb{\cFav} &= %
		\Pb{ \bigcap_{i=1}^d \acc{ \BoxSPT[n,\intervalleff01^d](H_i, H_i') \ge \zeta_i - \eps} }%
		\cdot\p{\frac12}^{nd}%
		\cdot\nu\p{\intervalleff{a}{a+\eps}}^{nd}.
\end{align}
For all $1\le i \le d$, there exists a $\BoxPT[\intervalleff0n^d]$-geodesic $nH_i \Path\gamma nH_i'$ which does not use any edge in $nH_i$, therefore $\FFastBoxSPT[n,\intervalleff01^d](H_i, H_i') = \FastBoxSPT[n,\intervalleff01^d]{i,R_i}(H_i, H_i')$. Consequently, applying~\eqref{eqn : app/CT/modification/ideal}, we get~\eqref{eqn : app/CT/modification/Fav_inclusion} for large enough $n$. The inequality~\eqref{eqn : app/CT/modification/Pb_Fav} then implies
\begin{equation}\begin{split}
	&-\frac{1}{n^d} \log\Pb{\norme[\infty]{ \CT(n) - \zeta}\le \eps}\\
		&\quad\le -\frac{1}{n^d} \log\Pb{ \bigcap_{i=1}^d \acc{\BoxSPT[n,\intervalleff01^d](H_i, H_i') \ge \zeta_i - \eps} } -\frac{d}{n^{d-1}} \log\p{ \frac{\nu\p{\intervalleff{a}{a+\eps}}}{2} }.\notag
\end{split}\end{equation}
Taking the superior limit as $n\to\infty$ then $\eps \to 0$,
\begin{equation}
	\label{eqn : app/CT/modification/FdTCTsup_UB}
	\FdTCTsup(\zeta) \le \monFdTCT(\zeta),
\end{equation}
thus~\eqref{eqn : app/CT/modification}. Consequently, by Lemma~\ref{lem : intro/sketch/UB_LB} and the remark below it, $(\CT(n))_{n\ge 1}$ satisfies the LDP with the rate function $\FdTCT$. Besides, by~\eqref{eqn : app/prelis/cvg_fdt_elem} and~\eqref{eqn : app/CT/monFdT_norme},
\begin{equation*}
	\FdTCT(\zeta) = \monFdT[\intervalleff 01^d](g^\zeta).
\end{equation*}
\end{proof}
\begin{proof}[Proof of Lemma~\ref{lem : app/CT/ptes_FdT}]We prove the different parts in order of difficulty.

Item~(\ref{item : app/CT/croissance}) is a consequence of Lemma~\ref{lem : app/CT/alpha_to_0} and the monotonicity of $ \monFdT[\intervalleff 01^d]$.

Item~(\ref{item : app/CT/cas_infty}) is a consequence of Lemma~\ref{lem : FdT_elem/ordre_grandeur/infty}.

To prove Item~(\ref{item : app/CT/convexite}), it is sufficient by Lemma~\ref{lem : app/CT/alpha_to_0} to prove that for all $\alpha>0$, $\exFdTCT{\alpha}$ is separately convex on $\intervalleff{\alpha}{b}^d$. Let $\alpha>0$. Without loss of generality, we only prove that, given $\alpha \le \zeta_2,\dots, \zeta_d \le b$,  $\exFdTCT{\alpha}(\cdot, \zeta_2, \dots , \zeta_d)$ is convex. Let $0\le \theta \le 1$ and $\alpha \le \zeta_1, \zeta_1' \le b$. Consider the metric $D$ defined on $\intervalleff01^d$ by prescribing its gradient, as in Lemma~\ref{lem : limit_space/constructions/inverse},~\eqref{eqn : limit_space/constructions/inverse_def}, with
\begin{equation*}
	g_z \dpe%
	\begin{cases}
		g^{(\zeta_1, \zeta_2,\dots, \zeta_d)}\vee \alpha\norme\cdot &\text{ if } 0\le z_1 \le \theta,\\
		g^{(\zeta_1', \zeta_2,\dots, \zeta_d)}\vee \alpha\norme\cdot &\text{ if } \theta\le z_1 \le 1.
	\end{cases}
\end{equation*}
Then 
\begin{align}
D(H_1, H_1') &= \theta \zeta_1 + (1-\theta) \zeta_1'\notag \\
\intertext{and for all $2\le i \le d$,}
D(H_i, H_i') &= \zeta_i.\notag
\end{align}
Moreover, by~\eqref{eqn : MAIN/integral},
\begin{equation*}
\FdT[\intervalleff01^d](D) = \theta\exFdT[\intervalleff01^d]{\alpha}\p{g^{(\zeta_1, \zeta_2,\dots, \zeta_d)}\vee \alpha\norme\cdot } + (1-\theta)\exFdT[\intervalleff01^d]{\alpha}\p{g^{(\zeta_1', \zeta_2,\dots, \zeta_d)}\vee \alpha\norme\cdot }.
\end{equation*}
Equations~\eqref{eqn : app/CT/def_FdTCT} and~\eqref{eqn : app/CT/FdTCT_norme} yield
\begin{equation}
\exFdTCT{\alpha}\p{\theta \zeta_1 + (1-\theta)\zeta_1', \zeta_2,\dots, \zeta_d}%
	\le \theta \exFdTCT{\alpha}\p{\zeta_1 , \zeta_2,\dots, \zeta_d} + (1-\theta)\exFdTCT{\alpha}\p{\zeta_1', \zeta_2,\dots, \zeta_d},
\end{equation}
thus Item (\ref{item : app/CT/convexite}) is proven.

To prove Item (\ref{item : app/CT/continuite}), we proceed differently whether $\nu$ has an atom at $b$ or not.

\emph{Case 1: $\nu(\acc b)>0$.} Let $\zeta \in \intervalleff ab^d$, $\eps>0$ and $\alpha>0$. Consider the metric $D$ on $\intervalleff01^d$ defined by prescribing its gradient, as in Lemma~\ref{lem : limit_space/constructions/inverse},~\eqref{eqn : limit_space/constructions/inverse_def}, with
\begin{equation*}
	g_z \dpe%
	\begin{cases}
		g^\zeta \vee \alpha\norme\cdot \qquad &\text{if } z\in \intervalleff0{1-\eps}^d,\\
		b\norme\cdot &\text{otherwise.}
	\end{cases}
\end{equation*}
Then for all $1\le i \le d$,
\begin{equation*}
	D(H_i, H_i') \ge \eps b + (1-\eps)\zeta_i \eqqcolon \zeta_i'.
\end{equation*}
Note that $U\dpe\acc{\hat\zeta \in \intervalleff ab^d \bigm| \forall 1\le i \le d, \hat\zeta_i \le \zeta_i'}$ is a neighbourhood of $\zeta$ in $\intervalleff{a}b^d$, regardless of whether $\zeta_i =b$ for some $i$ or not. Besides,
\begin{equation*}
	\exFdT[\intervalleff01^d]{\alpha}(D) = (1-\eps)^d\monexFdT[\intervalleff01^d]{\alpha}\p{g^\zeta} + \cro{ 1- (1-\eps)^d}\monexFdT[\intervalleff01^d]{\alpha}\p{b\norme\cdot},
\end{equation*}
therefore 
\begin{equation*}
	\monexFdTCT{\alpha}(\zeta') \le (1-\eps)^d\monexFdT[\intervalleff01^d]{\alpha}\p{g^\zeta} + \cro{ 1- (1-\eps)^d}\monexFdT[\intervalleff01^d]{\alpha}\p{b\norme\cdot}.
\end{equation*}
Letting $\alpha\to0$, applying~\eqref{eqn : app/CT/expression_FdTCT} and Item~(\ref{item : app/CT/croissance}), we deduce that for all $\hat\zeta \in U$,
\begin{align*}
	\FdTCT(\hat\zeta) &\le (1-\eps)^d\monFdT[\intervalleff01^d]\p{g^\zeta} + \cro{ 1- (1-\eps)^d}\monFdT[\intervalleff01^d]\p{b\norme\cdot}\\
		&\le \monFdT[\intervalleff01^d]\p{g^\zeta} + \cro{ 1- (1-\eps)^d}\monFdT[\intervalleff01^d]\p{b\norme\cdot}\\
		&= \FdTCT(\zeta) + \cro{ 1- (1-\eps)^d}\monFdT[\intervalleff01^d]\p{b\norme\cdot}
\end{align*}
therefore $\FdTCT$ is upper semicontinuous on $\intervalleff ab^d$. Since $\FdTCT$ is lower semicontinuous, it is continuous on $\intervalleff ab^d$. 

\emph{Case 2: $\nu(\acc b)=0$.} Lemma~\ref{lem : app/prelis/cvg_fdt_elem} and Equation~\eqref{eqn : app/CT/expression_FdTCT} imply that $\FdTCT$ is continuous on $\intervallefo ab^d$. By Proposition~\ref{prop : intro/sketch/ordre_grandeur}, $\FdTCT(\zeta)=\infty$ if $\zeta_i=b$ for some $1\le i\le d$. Since $\FdTCT$ is lower semicontinuous, it is continuous at such $\zeta$, thus~(\ref{item : app/CT/continuite}).
\end{proof}
This concludes the proof of Corollary~\ref{cor : intro/applications/crossing}, up to the proof of Claims~\ref{claim : app/CT/ptes_s_1} and~\ref{claim : app/CT/ptes_s_2}.
\begin{proof}[Proof of Claim~\ref{claim : app/CT/ptes_s_1}]
	Let $\alpha>0$ and $D$ as in the lemma. Without loss of generality, we only prove~\eqref{eqn : app/CT/ptes_s/crossing} for $i=1$. By rescaling and concatenating a $D$-geodesic between $H_1$ and $H_1'$ with its symmetric, one shows the inequality
	\begin{equation*}
		s(D)(H_1, H_1') \le D(H_1, H_1').
	\end{equation*}
	For all $z\in\intervalleff01^d$ and $u\in \R^d$, let
	\begin{equation*}
    		g_z(u) \dpe \min \acc{ (\grad D_A)_z(u), A\subseteq \intint1d \text{ s.t. }z \in \intervalleff0{\frac12}^d + \frac12 \sum_{i\in A}\base i}
	\end{equation*}
	be the map involved in the definition of $s(D)$ (see Lemma~\ref{lem : limit_space/constructions/blocs}). Note that if $z$ belongs the interior of $\intervalleff{0}{1/2}^d + 1/2\sum_{i\in A}\base i$ for some $A\subseteq \intint1d$, then $A$ in the definition of $g_z(u)$ is unique. Moreover, for all pairs $(A_1, A_2)$, $h_{A_1}$ and $h_{A_2}$ agree on the intersection \[ \p{\intervalleff{0}{\frac12}^d + \frac12 \sum_{i\in A_1}\base i } \cap \p{\intervalleff{0}{\frac12}^d + \frac12 \sum_{i\in A_2}\base i }.\] We can thus define without amibiguity
	\begin{align}
		h : \intervalleff01^d &\longrightarrow \intervalleff0{\frac12}^d \eol
			z &\longmapsto h_A(z)\text{ if }z\in \intervalleff{0}{\frac12}^d + \frac12 \sum_{i\in A}\base i.
	\end{align}
	It is straightforward to check for all $z\in\intervalleff{0}{\frac12}^d + \frac12\sum_{i\in A}\base i$,
	\begin{align}
		\p{\grad D_A }_z &= \p{\grad D}_{2h_A(z)}\circ f_A,\\
		\intertext{where $f_A$ denote the orthogonal symmetry with respect to $\acc{\forall i \in A, z_i=0}$. In particular, for all $z\in\intervalleff{0}{\frac12}^d + \frac12\sum_{i\in A}\base i$ and $u\in \R^d$ such that $z+\eps u \in\intervalleff{0}{\frac12}^d + \frac12\sum_{i\in A}\base i$ for small enough $\eps>0$, }
		\label{eqn : app/CT/grad_sD}
		g_z(u) &= \p{\grad D}_{2h_A(z)}\circ f_A(u).
		\intertext{Let $x\in H_1$, $y\in H_1'$ and $x\ResPath{\intervalleff01^d}{\gamma}y$ a Lispchitz path. For all $t$, let $A(t)$ denote a subset $A\subseteq \intint1d$ such that $\gamma(t) \in \intervalleff{0}{1/2}^d + 1/2\sum_{i\in A}\base i$, chosen in a measurable way. By~\eqref{eqn : app/CT/grad_sD},}
		\int_0^{T_\gamma}g_{\gamma(t)}\p{\gamma'(t)}\d t %
			&= \int_0^{T_\gamma} \p{\grad D}_{2h\circ \gamma(t)}\circ f_{A(t)} \p{\gamma'(t)} \d t \nonumber\\
			&= \int_0^{T_\gamma} \p{\grad D}_{2h\circ \gamma(t)}\p{(h\circ \gamma)'(t)} \d t \nonumber\\ 
			&= \frac12 \int_0^{T_\gamma}(\grad D)_{2h\circ\gamma(t)}\p{(2h\circ\gamma)'(t)}\d t.
		\intertext{Moreover, $2h\circ \gamma$ is a Lipschitz path going from $H_1$ to $H_1'$, then going back to $H_1$, therefore}
		\int_0^{T_\gamma}g_{\gamma(t)}\p{\gamma'(t)}\d t%
			&\ge D(H_1, H_1').\notag
		\intertext{Consequently,}
		s(D)(H_1, H_1') &\ge D(H_1, H_1').
	\end{align}
	This concludes the proof of~\eqref{eqn : app/CT/ptes_s/crossing}.

	By~\eqref{eqn : limit_space/constructions/blocs_gradient} and~\eqref{eqn : app/CT/grad_sD},
	\begin{align*}
		\exFdT[\intervalleff01^d]{\alpha}\p{s(D)}%
			&= \int_{\intervalleff01^d} \exFdT[\intervalleff01^d]{\alpha}\p{ \p{\grad s(D)}_z } \d z\\
			&= \sum_{A\subseteq \intint1d} \int_{\intervalleff{0}{\frac12}^d + \frac12 \sum_{i\in A}\base i } \exFdT[\intervalleff01^d]{\alpha}\p{ \p{\grad D}_{2h_A(z)}\circ f_A } \d z.%
		\intertext{Since the model is invariant in distribution with respect to the orthogonal transformations of $\Z^d$,}
		\exFdT[\intervalleff01^d]{\alpha}\p{s(D)}%
			&= \sum_{A\subseteq \intint1d} \int_{\intervalleff{0}{\frac12}^d + \frac12 \sum_{i\in A}\base i } \exFdT[\intervalleff01^d]{\alpha}\p{ \p{\grad D}_{2h_A(z)}} \d z.
	\end{align*}
	Equation~\eqref{eqn : app/CT/ptes_s/FdT} follows.
\end{proof}
\begin{proof}[Proof of Claim~\ref{claim : app/CT/ptes_s_2}]
	Recall definition~\eqref{eqn : intro/notations/def_tiles}. Let $x,y,z$ as in the lemma. We assume that $x\le y$, the other cases being similar. Let $v_x\dpe \floor{2^n x}$ and $v_y\dpe \ceil{2^n y}$. For all $v,w\in\intint{0}{2^n-1}^d$ such that $v-w\in 2\Z^d$, the restriction of $s^n(D)$ on the tiles $\Tile(v,2^n)$ and $\Tile(w,2^n)$ are equal up to a translation (see Figure~\ref{fig : app/CT/SymMetric}, right). It is thus sufficient to show that there exists a $s^n(D)$-geodesic $x\Path\gamma y$ included in the box $B\dpe \acc{z'\in \R^d \bigm| \frac{v_x}{2^n} \le z' \le \frac{v_y}{2^n} }$.

	We claim that for all $0\le k \le \ps{v_x}{\base 1}$, there exists a $s^n(D)$-geodesic $x\Path\gamma y$ included in
	\[ B_k\dpe \acc{z' \in \R^d \Biggm| \frac{k}{2^n} \le z_1' \le 1 }.
	\]
	For $k=0$, there is nothing to prove. Let $0\le k < \ps{v_x}{\base 1}$ such that the statement is true. There exists a $s^n(D)$-geodesic $x\Path\gamma y$ included in $B_k$. Applying to the points of $\gamma \cap (B_{k}\setminus B_{k+1})$ the orthogonal symmetry with respect to 
	\[ \acc{z' \in \R^d \Biggm| z_1' = \frac{k+1}{2^n} },\]
	we obtain a geodesic included in $B_{k+1}$ (see Figure~\ref{fig : app/CT/Rolling}). The claim follows by induction on $k$.
	\begin{figure}
	\center
	\def\svgwidth{0.5\textwidth}
	\begingroup%
	  \makeatletter%
	  \providecommand\color[2][]{%
	    \errmessage{(Inkscape) Color is used for the text in Inkscape, but the package 'color.sty' is not loaded}%
	    \renewcommand\color[2][]{}%
	  }%
	  \providecommand\transparent[1]{%
	    \errmessage{(Inkscape) Transparency is used (non-zero) for the text in Inkscape, but the package 'transparent.sty' is not loaded}%
	    \renewcommand\transparent[1]{}%
	  }%
	  \providecommand\rotatebox[2]{#2}%
	  \newcommand*\fsize{\dimexpr\f@size pt\relax}%
	  \newcommand*\lineheight[1]{\fontsize{\fsize}{#1\fsize}\selectfont}%
	  \ifx\svgwidth\undefined%
	    \setlength{\unitlength}{242.51013087bp}%
	    \ifx\svgscale\undefined%
	      \relax%
	    \else%
	      \setlength{\unitlength}{\unitlength * \real{\svgscale}}%
	    \fi%
	  \else%
	    \setlength{\unitlength}{\svgwidth}%
	  \fi%
	  \global\let\svgwidth\undefined%
	  \global\let\svgscale\undefined%
	  \makeatother%
	  \begin{picture}(1,0.93605678)%
	    \lineheight{1}%
	    \setlength\tabcolsep{0pt}%
	    \put(0,0){\includegraphics[width=\unitlength,page=1]{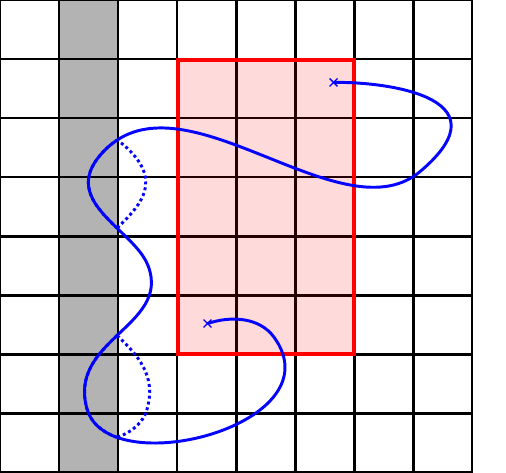}}%
	    \put(0.36235287,0.26369359){\color[rgb]{0,0,1}\makebox(0,0)[lt]{\lineheight{1.25}\smash{\begin{tabular}[t]{l}$x$\end{tabular}}}}%
	    \put(0.60274809,0.74526579){\color[rgb]{0,0,1}\makebox(0,0)[lt]{\lineheight{1.25}\smash{\begin{tabular}[t]{l}$y$\end{tabular}}}}%
	    \put(0.74131889,0.53731379){\color[rgb]{0,0,1}\makebox(0,0)[lt]{\lineheight{1.25}\smash{\begin{tabular}[t]{l}$\gamma$\end{tabular}}}}%
	    \put(0,0){\includegraphics[width=\unitlength,page=2]{FIG_Rolling.pdf}}%
	    \put(0.99582863,0.47013313){\color[rgb]{0,0,0}\makebox(0,0)[lt]{\lineheight{1.25}\smash{\begin{tabular}[t]{l}$1$\end{tabular}}}}%
	    \put(0.61519245,0.26836531){\color[rgb]{1,0,0}\makebox(0,0)[lt]{\lineheight{1.25}\smash{\begin{tabular}[t]{l}$B$\end{tabular}}}}%
	  \end{picture}%
	\endgroup%
	\caption{The symmetry argument used in Lemma~\ref{claim : app/CT/ptes_s_2}, with $d=2, n=3, k=1$. The set $B_1\setminus B_2$ is represented by the shaded strip. The path $\gamma$ (solid line) may be transformed into a path of same $s^n(D)$-length, included in $B_2$.}
	\label{fig : app/CT/Rolling}
	\end{figure}

	Applying a similar construction on the other side, we obtain a geodesic included in
	\[\acc{z' \in \R^d \Biggm| \frac{\ps{v_x}{\base 1}}{2^n} \le z_1' \le \frac{\ps{v_x}{\base 1}}{2^n} }.
	\]
	Finally, repeating the procedure along other coordinates concludes the proof. 
\end{proof}

\subsection{Rescaled random ball: proof of Corollary~\ref{cor : intro/applications/ball}}
\label{subsec : app/ball}
Let $\Compacts$ denote the set of compact subsets of $\R^d$ et $X\dpe \intervalleff{-\frac1a}{\frac1a}^d$. Define the map
\begin{align}
	\Phi : \AdmDistances &\longrightarrow \Compacts \eol
			D &\longmapsto \acc{x\in X \bigm| D(0,x)\le 1 }.
\end{align}
The lower bound in~\eqref{eqn : intro/main_thm/equivalence_distances} implies that for all $n\ge 1$, $\RUB(n)=\Phi(\BoxSPT[n,X])$. By Lemma~\ref{lem : app/contraction}, it is thus sufficient to show that $\Phi$ is continuous for the Hausdorff distance. We actually prove that it is $\frac1a$-Lipchitz. Let $D_1, D_2 \in \AdmDistances[X]$ and $x\in \Phi(D_1)$. By definition of $\UnifDistance$,
\begin{align}
	D_2(0,x) &\le D_1(0,x) + \UnifDistance(D_1, D_2) \le 1 + \UnifDistance(D_1, D_2).\notag
	\intertext{Let $0\Path{\sigma} x$ be a $D_2$-geodesic and $y\dpe \sigma\p{ \cro{D_2(0,x) - \UnifDistance(D_1, D_2)}\wedge 0 }$. Then $y\in \Phi(D_2)$ and }
	\norme{x-y} &\le \frac 1a D_2(x,y) \le  \frac1a \UnifDistance(D_1, D_2).\notag
	\intertext{Consequently,}
	\Phi(D_1) &\subseteq \Phi(D_2) + \clball{0, \frac1a\UnifDistance(D_1, D_2)}.\notag
	\intertext{Transposing $D_1$ and $D_2$ leads to}
	\Phi(D_2) &\subseteq \Phi(D_1) + \clball{0, \frac1a\UnifDistance(D_1, D_2)}.\notag
\end{align}
Hence $\Phi$ is $\frac1a$-Lipschitz.\qed

\appendix
\section{Large deviations tools: proof of Lemma~\ref{lem : intro/sketch/UB_LB}}
\label{appsec : LD}
Let $\cX$, $(X_n)$, $\overline I$ and $\underline I$ as in the lemma. We denote by $\clball[\cX]{x,r}$ the closed ball of center $x$ and radius $r$ in $(\cX, d_\cX)$.

\emph{Proof of (i).} Let $(x_k)_{k\ge 1}$ a sequence converging to $x$ when $k\to\infty$ in $\cX$ and $\eps>0$. Then for all $n\ge 1$,
	\begin{align*}
		\acc{ \d_\cX(x_k,X_n) \le \eps} &\subseteq \acc{ \d_\cX(x,X_n) \le \eps + \d_\cX(x,x_k)},
		\intertext{thus}
		-\frac1{n^d} \log \Pb{ \d_\cX(x_k,X_n) \le \eps} &\ge -\frac1{n^d}\log \Pb{ \d_\cX(x,X_n) \le \eps + \d_\cX(x,x_k)}.
		\intertext{Letting $n\to\infty$, we get, for any $\eps>0$,}
		\overline{I}(x_k)&\ge \limsup_{n\to\infty} -\frac1{n^d}\log \Pb{ \d_\cX(x,X_n) \le \eps + \d_\cX(x,x_k)}.
		\intertext{In particular, for large enough $k$,}
		\overline{I}(x_k)&\ge \limsup_{n\to\infty} -\frac1{n^d}\log \Pb{ \d_\cX(x,X_n) \le 2\eps},
		\intertext{therefore}
		\liminf_{k\to\infty} \overline{I}(x_k)&\ge \limsup_{n\to\infty} -\frac1{n^d}\log \Pb{ \d_\cX(x,X_n) \le 2\eps}.
		\intertext{Letting $\eps\to0$ yields}
		\liminf_{k\to\infty} \overline{I}(x_k)&\ge \overline{I}(x),
	\end{align*}
	i.e. $\overline I$ is rate function. Likewise $\underline I$ is a rate function.

\emph{Proof of (ii).} Let $U\subseteq \cX$ be an open set and $x\in U$. There exists $\eps>0$ such that $\clball[\cX]{x,\eps}\subseteq U$. In particular, for all $n\ge 1$,
	\begin{equation*}
		 \acc{\d_\cX(x,X_n)\le \eps } \subseteq \acc{X_n \in U}, 
	\end{equation*}
	thus
	\begin{equation*}
		\limsup_{n\to\infty}-\frac{1}{n^d}\log \Pb{X_n \in U} \le \limsup_{n\to\infty}-\frac{1}{n^d}\log \Pb{\d_\cX(x,X_n)\le \eps } \le \overline I (x).
	\end{equation*}
	Taking the infimum over $x\in U$, we get~\eqref{eqn : intro/sketch/UB_LB/UB}.

\emph{Proof of (iii).} Let $K\subseteq \cX$ a compact set and $\eps>0$. There exists a finite family $(x_p)_{p=1}^P$ of elements of $K$ such that
	\begin{equation*}
		K \subseteq \bigcup_{p=1}^P \clball[\cX]{x_p, \eps}.
	\end{equation*}
	Consequently by union bound, for all $n\ge1$,
	\begin{align*}
		\Pb{X_n \in K} &\le \sum_{p=1}^P \Pb{\d_\cX(x_p, X_n) \le \eps} \\
			&\le P \max_{1\le p \le P} \Pb{\d_\cX(x_p, X_n) \le \eps}.
	\end{align*}
	Taking the $\log$, dividing by $-n^d$ and letting $n\to \infty$, we get
	\begin{align}
		\liminf_{n\to\infty} -\frac{1}{n^d}\log \Pb{X_n \in K}%
			&\ge \liminf_{n\to\infty} -\frac{1}{n^d} \log \p{ \max_{1\le p \le P} \Pb{\d_\cX(x_p, X_n) \le \eps} }\nonumber \\
			&= \liminf_{n\to\infty} \min_{1\le p \le P} \p{ -\frac{1}{n^d} \log \Pb{\d_\cX(x_p, X_n) \le \eps} }\nonumber \\
			&= \min_{1\le p \le P} \liminf_{n\to\infty}  -\frac{1}{n^d} \log \Pb{\d_\cX(x_p, X_n) \le \eps}.\label{eqn : LD/inv_liminf_min}
	\end{align}
	Let us denote by $\hat x(\eps)$ an element $x_p$ on which the minimum is attained. By compactness there exist a sequence $(\eps_k)_{k\ge 1}$ converging to $0$, such that $\p{\hat x(\eps_k)}_{k\ge 1}$ converges in $K$. We write $\hat x$ its limit. Then for all $k\ge 1$,
	\begin{align*}
		\liminf_{n\to\infty} -\frac{1}{n^d}\log \Pb{X_n \in K}%
			&\ge \liminf_{n\to\infty}  -\frac{1}{n^d} \log \Pb{\d_\cX\p{ \hat x(\eps_k), X_n } \le \eps_k}\\
		 	&\ge  \liminf_{n\to\infty}  -\frac{1}{n^d} \log \Pb{\d_\cX\p{\hat x, X_n } \le \eps_k + \d_\cX\p{\hat x , \hat x(\eps_k)} }.
		 \intertext{Letting $k\to\infty$ gives}
		 \liminf_{n\to\infty} -\frac{1}{n^d}\log \Pb{X_n \in K}%
		 	&\ge \underline{I}(\hat x),
	\end{align*}
	thus~\eqref{eqn : intro/sketch/UB_LB/LB}.\qed
\begin{Remark}
	\label{rk : LD/inv_limsup_min_non}
	The permutation of $\liminf$ and $\min$ in Equation~\eqref{eqn : LD/inv_liminf_min} is false in general with $\limsup$ instead of $\liminf$, thus this proof may not be adapted to show
	\begin{equation*}
		\limsup_{n\to\infty}-\frac{1}{n^d} \log \Pb{X_n \in K} \ge \min_{x\in K} \overline I(x).
	\end{equation*}
	As a consequence we have no straightforward proof of the analogue of Corollary~\ref{cor : mon/LD+} with $\FdTsup[X]$ instead of $\FdTinf[X]$.
\end{Remark}


\section{Properties of compact, convex sets with nonempty interior}
\label{appsec : windows}

In this section $X\in \Windows$ is fixed. Let $z\in \mathring X$. We define the function
\begin{align}
	\LevelX{z}{X} : \R^d &\longrightarrow \intervalleff0\infty\nonumber \\
		x&\longmapsto \inf\acc{ t>0 \Biggm| \frac{x-z}{t} + z \in X}.
\end{align}

\begin{Lemma}
	\label{lem : windows/level_function}
	For all $x\in \R^d$ and $\lambda >0$,
	\begin{equation}
		\label{eqn : windows/level_function/homogeneity}
		\LevelX{z}{X}\p{ \lambda x + z} = \lambda \LevelX zX(x + z).
	\end{equation}
	Moreover, $\LevelX zX$ is convex and for all $x\in X$,
	\begin{equation}
		\label{eqn : windows/level_function/critere}
		x\in X \iff \LevelX zX(x)\le 1.
	\end{equation}
\end{Lemma}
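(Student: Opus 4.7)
The plan is to recognize $\LevelX{z}{X}$ as (a translate of) the Minkowski functional of $X$, and then verify the three properties by standard gauge-function arguments, making careful use of the fact that $z$ lies in the interior of $X$ and that $X$ is closed.

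First I would establish homogeneity by a direct change of variable. For $\lambda>0$, writing out the definition gives
\[
\LevelX{z}{X}(\lambda x+z) = \inf\acc{t>0 \biggm| \frac{\lambda x}{t}+z \in X} = \inf\acc{\lambda s > 0 \biggm| \frac{x}{s}+z \in X} = \lambda\,\LevelX{z}{X}(x+z),
\]
by setting $s=t/\lambda$, which yields~\eqref{eqn : windows/level_function/homogeneity}.

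Next I would prove the criterion~\eqref{eqn : windows/level_function/critere}. One direction is immediate: if $x\in X$, then $t=1$ witnesses $\LevelX{z}{X}(x)\le 1$. For the converse, suppose $\LevelX{z}{X}(x)\le 1$. By definition there exists a sequence $t_n\downarrow \LevelX{z}{X}(x)$ with $y_n := (x-z)/t_n + z \in X$. If $\LevelX{z}{X}(x)<1$, some $t_n$ lies in $(0,1)$, and then $x$ is on the segment from $z\in X$ to $y_n\in X$, so $x\in X$ by convexity. If $\LevelX{z}{X}(x)=1$, then $y_n\to x$ and $x\in X$ since $X$ is closed.

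Finally, for convexity, I would reduce to the classical subadditivity of Minkowski gauges. Set $K\dpe X-z$, which is a compact convex set containing $0$ in its interior, and let $g(u)\dpe \inf\acc{t>0 : u/t \in K}$, so $\LevelX{z}{X}(x)=g(x-z)$. The key subadditivity step is: if $u_i/t_i \in K$ for $i=1,2$, then
\[
\frac{u_1+u_2}{t_1+t_2} = \frac{t_1}{t_1+t_2}\cdot\frac{u_1}{t_1} + \frac{t_2}{t_1+t_2}\cdot\frac{u_2}{t_2} \in K
\]
by convexity of $K$, so $g(u_1+u_2)\le t_1+t_2$, and taking infima gives $g(u_1+u_2)\le g(u_1)+g(u_2)$. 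Combined with the positive homogeneity just established (transferred to $g$ via $g(\lambda u)=\lambda g(u)$ for $\lambda>0$), this yields convexity of $g$, hence of $\LevelX{z}{X}$.

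There is no real obstacle here; the only point requiring any care is the boundary case $\LevelX{z}{X}(x)=1$ in the criterion, where one must invoke closedness of $X$ rather than mere convexity.
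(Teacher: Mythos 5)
Your proof is correct and follows essentially the same path as the paper's: homogeneity by the change of variable $s=t/\lambda$, the criterion via convexity for the case $\LevelX zX(x)<1$ and closedness of $X$ for the boundary case, and convexity via the weighted convex-combination identity. The only cosmetic difference is that you package the convexity step as "subadditivity of the Minkowski gauge plus homogeneity implies convexity," whereas the paper folds the $\theta$-weights directly into the same algebraic identity; the underlying computation is identical.
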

\begin{proof}
	Equation~\eqref{eqn : windows/level_function/homogeneity} is straightforward.

	Let $x_1, x_2 \in \R^d$ and $\theta_1, \theta_2\in\intervalleff01$ such that $\theta_1+\theta_2=1$. Let $t_1, t_2>0$ such that
	\begin{equation*}
		\frac{x_1-z}{t_1}+z \in X\text{  and  } \frac{x_2-z}{t_2}+z \in X.
	\end{equation*}
	Then
	\begin{equation*}
		\frac{\theta_1x_1 + \theta_2x_2 -z }{\theta_1t_1 + \theta_2t_2}%
			= \frac{\theta_1 t_1}{\theta_1t_1 + \theta_2 t_2} \cdot \frac{x_1 - z}{t_1}%
			+ \frac{\theta_2 t_2}{\theta_1t_1 + \theta_2 t_2} \cdot \frac{x_2 - z}{t_2}.
	\end{equation*}
	By convexity of $X$,
	\begin{equation*}
		\frac{\theta_1x_1 + \theta_2x_2 -z }{\theta_1t_1 + \theta_2t_2} \in X.
	\end{equation*}
	Consequently,
	\begin{align*}
		\LevelX zX\p{\theta_1x_1 + \theta_2x_2} &\le\theta_1t_1 + \theta_2t_2.
		\intertext{Taking the infimum over $t_1$ and $t_2$, we get}
		\LevelX zX\p{\theta_1x_1 + \theta_2x_2} &\le\theta_1 \LevelX zX (x_1) + \theta_2 \LevelX zX (x_2),
	\end{align*}
	thus $\LevelX zX$ is convex.

	We now turn to the proof of~\eqref{eqn : windows/level_function/critere}. The part $\implies$ is clear. Let $x\in \R^d$ such that $\LevelX zX(x)\le 1$. Then for all $n\ge 1$,
	\begin{equation*}
		\frac{x-z}{1+\frac1n} + z \in X,
	\end{equation*}
	thus by compactness,
	\begin{equation*}
		x= \frac{x-z}{1} + z \in X.
	\end{equation*}
\end{proof}

\begin{Lemma}
	\label{lem : windows/safety_strip}
	For all $0<\delta \le 1$,
	\begin{align}
		\label{eqn : windows/safety_strip_int}
		\d\p{(1-\delta)X + \delta z, \R^d \setminus X} &\ge \frac\delta{\max_{u \in \S} \LevelX zX(u)}
		\intertext{and}
		\label{eqn : windows/safety_strip_ext}
		\d\p{X, \R^d\setminus \p{(1+\delta)X - \delta z}} &\ge \frac\delta{\max_{u \in \S} \LevelX zX(u)}.
	\end{align}
\end{Lemma}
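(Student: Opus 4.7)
The plan is to reduce both inequalities to a Lipschitz-type control on $\LevelX{z}{X}$, derived from its sublinearity around $z$. First I would use Lemma~\ref{lem : windows/level_function} to re-express the relevant sets as sublevel sets of the level function. Combining the homogeneity relation~\eqref{eqn : windows/level_function/homogeneity} with the criterion~\eqref{eqn : windows/level_function/critere} yields
\[
  (1-\delta)X + \delta z = \{y\in\R^d : \LevelX{z}{X}(y) \le 1-\delta\},
  \qquad (1+\delta)X - \delta z = \{y\in\R^d : \LevelX{z}{X}(y) \le 1+\delta\},
\]
because $y = (1-\delta)x+\delta z = z + (1-\delta)(x-z)$ gives $\LevelX{z}{X}(y) = (1-\delta)\LevelX{z}{X}(x)$, and conversely one recovers $x\in X$ from the level bound via the same homogeneity. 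Likewise, \eqref{eqn : windows/level_function/critere} rewrites $\R^d\setminus X = \{\LevelX{z}{X}>1\}$.

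Next I would establish a subadditive inequality for $\LevelX{z}{X}$. The function $p(v) \dpe \LevelX{z}{X}(z+v)$ is positively homogeneous (by~\eqref{eqn : windows/level_function/homogeneity}, applied with base point $z$) and convex (since $\LevelX{z}{X}$ is convex). A sublinear function is automatically subadditive, so $p(v_1+v_2) \le p(v_1)+p(v_2)$. Writing $\LevelX{z}{X}(y') = p(y'-z)$ and applying this with $v_1 = y-z$, $v_2 = y'-y$, we obtain the key estimate
\[
  \LevelX{z}{X}(y') \le \LevelX{z}{X}(y) + p(y'-y), \qquad y,y'\in\R^d.
\]
By positive homogeneity, $p(y'-y) \le \|y'-y\| \cdot \max_{u\in\S} p(u)$, which plays the role of a Lipschitz constant for $\LevelX{z}{X}$.

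For~\eqref{eqn : windows/safety_strip_int}, take $y \in (1-\delta)X+\delta z$ and $w \in \R^d\setminus X$, so $\LevelX{z}{X}(y)\le 1-\delta$ and $\LevelX{z}{X}(w)>1$. Plugging into the subadditive estimate gives $1 < (1-\delta) + \|w-y\| \cdot \max_{u\in\S}\LevelX{z}{X}(u)$, from which $\|w-y\| > \delta / \max_{u\in\S}\LevelX{z}{X}(u)$ follows; taking the infimum over such pairs $(y,w)$ yields the bound. The proof of~\eqref{eqn : windows/safety_strip_ext} is entirely symmetric: apply the same estimate to $y\in X$ (so $\LevelX{z}{X}(y)\le 1$) and $w \notin (1+\delta)X - \delta z$ (so $\LevelX{z}{X}(w)>1+\delta$), producing $\|w-y\| > \delta/\max_{u\in\S}\LevelX{z}{X}(u)$.

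The step requiring care is controlling $p(v) = \LevelX{z}{X}(z+v)$ uniformly on $\S$ by $\max_{u\in\S}\LevelX{z}{X}(u)$. Both maxima are finite because $z$ lies in the interior of the compact set $X$, which makes $v\mapsto p(v)$ a continuous gauge, and one can rescale each unit $v\in\S$ along the ray from $-z$ to a point $u\in\S$, bounding $p(v)$ by a comparable value of $\LevelX{z}{X}$ on $\S$. This comparison is the only nontrivial computation; all other steps are immediate consequences of the convex-analytic identities in Lemma~\ref{lem : windows/level_function}.
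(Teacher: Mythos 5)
Your main argument is correct and is essentially the paper's: the paper's device of rewriting $\LevelX{z}{X}(x+y)=2\,\LevelX{z}{X}\p{\tfrac{x-z}{2}+\tfrac{y}{2}+z}$ and then invoking convexity and positive homogeneity is exactly the standard derivation of subadditivity for the gauge $p(v)=\LevelX{z}{X}(z+v)$ that you spell out. Both routes yield the bound with constant $\max_{u\in\S}p(u)=\max_{u\in\S}\LevelX{z}{X}(u+z)$, and this is the step that matters.

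The part you flag as ``requiring care'' is where the gap is, and it cannot be closed as you sketch it. Your rescaling reads $p(v)=\LevelX{z}{X}(z+tv)/t$ with $u\dpe z+tv\in\S$, giving $p(v)\le\max_{w\in\S}\LevelX{z}{X}(w)/t$; but $t$ can be strictly less than $1$ (whenever $\S$ is closer to $z$ in direction $v$ than to the origin), so $\max_{u\in\S}p(u)\le\max_{u\in\S}\LevelX{z}{X}(u)$ fails in general. In fact the bound as printed in the lemma is false: for $X=\intervalleff{-1}{1}^2$, $z=(\tfrac12,0)$, $\delta=\tfrac12$, the left side of \eqref{eqn : windows/safety_strip_int} equals $\tfrac14$ while $\delta/\max_{u\in\S}\LevelX{z}{X}(u)=\tfrac12$, whereas $\delta/\max_{u\in\S}\LevelX{z}{X}(u+z)=\tfrac14$ holds with equality. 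The paper's own proof ends with the constant $\max_{u\in\S}\LevelX{z}{X}(u+z)$, so the denominator in the lemma statement is a typo rather than something you need to recover. Drop the final reconciliation step and state your conclusion with $\max_{u\in\S}\LevelX{z}{X}(u+z)$, which is what your argument (and the paper's) actually delivers, and which is what the lemma is used for in the rest of the article since only the dependence on $X$ and $z$ matters.
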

\begin{proof}
	Let $0<\delta \le 1$ and $x\in (1-\delta)X + \delta z$. By Lemma~\ref{lem : windows/level_function},
	\begin{equation*}
		\LevelX zX(x) \le 1-\delta.
	\end{equation*}
	Consequently, for all $y\in \ball{0,\frac\delta{\max_{u \in \S} \LevelX zX(u)}}$, since by Lemma~\ref{lem : windows/level_function}, $\LevelX zX(\cdot + z)$ is convex and positively homogeneous,
	\begin{align*}
		\LevelX zX(x+y) &= \LevelX zX(x-z+y+z)\\
			&=2 \cdot \LevelX zX \p{ \frac{x-z}{2}  +  \frac y2  +  z }\\
			&\le 2 \cdot \frac12 \cdot \cro{ \LevelX zX(x-z+z) + \LevelX zX(y+z) }\\
			&=  \LevelX zX(x) + \LevelX zX(y+z)\\
			&\le 1-\delta + \norme y \max_{u\in \S}\LevelX zX(u+z)\\
			&\le 1,
	\end{align*}
	i.e. $x+y\in X$. This concludes the proof of \eqref{eqn : windows/safety_strip_int}. Inequality~\eqref{eqn : windows/safety_strip_ext} is proven analogously.
\end{proof}
\begin{Lemma}
	\label{lem : windows/boundary}
	The boundary of $X$ is Lebesgue-negligible.
\end{Lemma}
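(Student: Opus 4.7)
The plan is to fix a point $z \in \mathring{X}$ and control $\partial X$ by a thin shell whose Lebesgue measure tends to $0$. Concretely, I will show that for every $\delta \in (0,1]$,
\begin{equation*}
\partial X \subseteq X \setminus \bigl((1-\delta)X + \delta z\bigr),
\end{equation*}
and then estimate the measure of the right-hand side by scaling.

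First I would establish the inclusion. Given $x \in \partial X$, if $x$ belonged to $(1-\delta)X + \delta z$ then by Lemma~\ref{lem : windows/safety_strip}, equation~\eqref{eqn : windows/safety_strip_int}, $x$ would lie at positive distance from $\mathbb{R}^d \setminus X$, hence $x \in \mathring{X}$, which contradicts $x \in \partial X$. Therefore the inclusion holds.

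Next, the set $(1-\delta)X + \delta z$ is the image of $X$ under the affine map $y \mapsto (1-\delta)y + \delta z$, whose Jacobian determinant is $(1-\delta)^d$. Hence
\begin{equation*}
\Leb\bigl((1-\delta)X + \delta z\bigr) = (1-\delta)^d \Leb(X).
\end{equation*}
Since $(1-\delta)X + \delta z \subseteq X$ (by convexity of $X$ and $z \in X$), we deduce
\begin{equation*}
\Leb(\partial X) \le \Leb\bigl(X \setminus ((1-\delta)X + \delta z)\bigr) = \bigl(1 - (1-\delta)^d\bigr)\Leb(X).
\end{equation*}

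Finally, letting $\delta \to 0$ and using that $\Leb(X) < \infty$ (since $X$ is compact), the right-hand side tends to $0$, which gives $\Leb(\partial X) = 0$. There is no real obstacle here: the argument is entirely standard once Lemma~\ref{lem : windows/safety_strip} is available, which is what makes the inclusion $\partial X \subseteq X \setminus ((1-\delta)X + \delta z)$ work cleanly.
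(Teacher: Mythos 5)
Your proof is correct and takes essentially the same route as the paper: both rely on Lemma~\ref{lem : windows/safety_strip} to place $\partial X$ in a thin shell whose measure is controlled by rescaling $X$ and letting $\delta\to 0$. Your version is marginally leaner, using only the inner inclusion $(1-\delta)X+\delta z\subseteq \mathring X$ together with $\partial X\subseteq X$, where the paper sandwiches $X$ between $(1-\delta)X+\delta z$ and $(1+\delta)X-\delta z$ and bounds $\Leb(\partial X)$ by $\bigl[(1+\delta)^d-(1-\delta)^d\bigr]\Leb(X)$.
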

\begin{proof}
Let $\delta>0$. By Lemma~\eqref{lem : windows/safety_strip},
\begin{equation*}
	(1-\delta)X + \delta z \subseteq \partial X \subseteq (1+\delta)X - \delta z,
\end{equation*}
thus
\begin{equation*}
	\Leb\p{\partial X} \le \cro{(1+\delta)^d - (1-\delta)^d} \Leb\p{X}.
\end{equation*}
Letting $\delta\to0$, we get $\Leb\p{\partial X}=0$, which concludes the proof.
\end{proof}
\begin{Lemma}
	\label{lem : windows/tiles}
	Recall definitions~\eqref{eqn : intro/notations/intiles} and~\eqref{eqn : intro/notations/extiles}. These sets satisfy
	\begin{equation}
	\label{eqn : windows/tiles}
	\lim_{k\to\infty} \frac{\# \ExTiles(X)}{k^d} = \lim_{k\to\infty} \frac{\# \InTiles(X)}{k^d} = \Leb\p{X}.
	\end{equation}
\end{Lemma}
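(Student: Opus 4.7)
Set $A_k \dpe \bigcup_{v \in \InTiles(X)} \Tile(v,k)$ and $B_k \dpe \bigcup_{v \in \ExTiles(X)} \Tile(v,k)$. Since any two distinct tiles $\Tile(v,k)$, $\Tile(v',k)$ intersect only on a set of Lebesgue measure zero, we have $\Leb(A_k) = \#\InTiles(X)/k^d$ and $\Leb(B_k) = \#\ExTiles(X)/k^d$. Directly from the definitions, $A_k \subseteq X \subseteq B_k$, so
\begin{equation*}
    \frac{\#\InTiles(X)}{k^d} \le \Leb(X) \le \frac{\#\ExTiles(X)}{k^d}.
\end{equation*}

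The core of the argument is to show that $\Leb(B_k \setminus A_k) \to 0$. Any tile $\Tile(v,k)$ belonging to $\ExTiles(X) \setminus \InTiles(X)$ contains a point of $X$ and a point of $\R^d \setminus X$; since $\Tile(v,k)$ is connected, it must therefore meet $\partial X$. As $\diam \Tile(v,k) = d/k$ (for the norm $\norme{\cdot}_1$ used throughout), each such tile is contained in $\partial X + \clball{0,d/k}$. Hence
\begin{equation*}
    B_k \setminus A_k \subseteq \partial X + \clball{0,d/k}.
\end{equation*}

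By Lemma~\ref{lem : windows/boundary}, $\Leb(\partial X) = 0$. Since $\partial X$ is compact, the decreasing family $\bigl(\partial X + \clball{0,d/k}\bigr)_{k\ge 1}$ has intersection $\partial X$, so by continuity from above of the Lebesgue measure, $\Leb\bigl(\partial X + \clball{0,d/k}\bigr) \to 0$ as $k \to \infty$. Combined with the two-sided bound above, this yields $\Leb(B_k) - \Leb(A_k) \to 0$ and both quantities converge to $\Leb(X)$, which is~\eqref{eqn : windows/tiles}. The only step requiring any care is justifying that a tile not entirely contained in $X$, but meeting $X$, intersects $\partial X$; this follows immediately from connectedness of the tile.
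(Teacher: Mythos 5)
Your proof is correct, but it takes a genuinely different route from the paper's. The paper applies Lemma~\ref{lem : windows/safety_strip} to sandwich the two tile unions between the dilated convex bodies $(1-\delta)X + \delta z$ and $(1+\delta)X - \delta z$, and then compares Lebesgue measures of these scaled copies directly; this leans on the convex-geometric structure (positive homogeneity of the gauge $\LevelX zX$). You instead bound the symmetric difference $B_k\setminus A_k$ by observing that any tile counted in $\ExTiles(X)\setminus\InTiles(X)$ must, by connectedness, meet $\partial X$, so $B_k\setminus A_k$ lies in the $(d/k)$-neighborhood of $\partial X$; combined with $\Leb(\partial X)=0$ (Lemma~\ref{lem : windows/boundary}) and continuity from above, this forces $\Leb(B_k)-\Leb(A_k)\to 0$. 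Your argument is the classical Jordan-measurability criterion and applies verbatim to any compact set with Lebesgue-null boundary, so it is more elementary and slightly more general; the paper's version is more tightly coupled to the convexity machinery it has already built (note, though, that Lemma~\ref{lem : windows/boundary} itself is proved there via the safety-strip lemma, so both proofs ultimately rest on the same underlying fact). All the individual steps — measure-zero overlaps of tiles, $\diam\Tile(v,k)=d/k$ in $\norme{\cdot}_1$, the decreasing intersection of $\partial X + \clball{0,d/k}$ being exactly $\partial X$ because $\partial X$ is closed — check out.
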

\begin{proof}
	Let $\delta>0$. By Lemma~\ref{lem : windows/safety_strip}, for large enough $k$,
	\begin{equation}
		(1-\delta)X + \delta z \subseteq \bigcup_{v\in \InTiles(X)}\Tile(v,k) \subseteq \bigcup_{v\in \ExTiles(X)}\Tile(v,k) \subseteq (1+\delta)X - \delta z,
	\end{equation}
	thus by taking the Lebesgue measure,
	\begin{equation*}
		(1-\delta)^d \Leb(X) \le \frac{\# \InTiles(X)}{k^d} \le  \frac{\# \ExTiles(X)}{k^d} \le (1+\delta)^d \Leb(X).
	\end{equation*}
	Consequently,
	\begin{equation*}
		(1-\delta)^d \Leb(X) \le \liminf_{k\to\infty }\frac{\# \InTiles(X)}{k^d} \le  \limsup_{k\to\infty }\frac{\# \ExTiles(X)}{k^d} \le (1+\delta)^d \Leb(X).
	\end{equation*}
	Letting $\delta\to0$ concludes the proof.
\end{proof}
\begin{Lemma}
	\label{lem : windows/X-delta}
	The set $X^{-\delta}$ defined by~\eqref{eqn : PGD/Upper/X-delta} satisfies
	\begin{equation}
		\lim_{\delta \to 0} \max_{x\in X} \d\p{x, X^{-\delta}}=0.
	\end{equation}
\end{Lemma}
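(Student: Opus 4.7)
Fix an interior point $z \in \mathring{X}$ and set $M \dpe \max_{u\in\S} \LevelX{z}{X}(u+z)$, which is finite by compactness of $\S$ and continuity of $\LevelX{z}{X}$ (itself continuous as a convex function finite in a neighborhood of $z$). The strategy is simply to push each point of $X$ slightly towards $z$ along a straight line, use Lemma on the safety strip to verify that the resulting point lies in $X^{-\delta}$, and control the displacement by $\diam(X)$.

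Concretely, let $0<\delta\le 1/M$. By Lemma on the safety strip (Equation on the interior safety strip) applied with the scaling $M\delta$ in place of $\delta$, we have
\begin{equation*}
    \d\bigl((1-M\delta)X + M\delta\, z,\; \R^d \setminus X\bigr) \ge \frac{M\delta}{M} = \delta,
\end{equation*}
which is precisely the statement that $(1-M\delta)X + M\delta\, z \subseteq X^{-\delta}$. Given any $x\in X$, set $x' \dpe (1-M\delta)x + M\delta\, z$; then $x'\in X^{-\delta}$ and
\begin{equation*}
    \norme{x-x'} = M\delta \, \norme{x-z} \le M\delta \, \diam(X).
\end{equation*}
Therefore
\begin{equation*}
    \max_{x\in X} \d(x, X^{-\delta}) \le M\delta \, \diam(X) \xrightarrow[\delta\to 0]{} 0,
\end{equation*}
which is the desired conclusion.

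The only genuine subtlety is to ensure that $M<\infty$, i.e.\ that $\LevelX{z}{X}$ is bounded on the unit sphere: this is where the hypothesis $z\in\mathring{X}$ enters, since then there exists $r>0$ with $\clball{z,r}\subseteq X$, which gives $\LevelX{z}{X}(u+z)\le 1/r$ for every $u\in\S$. No further obstacles are expected; the argument is essentially a one-line consequence of Lemma on the safety strip.
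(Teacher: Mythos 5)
Your proof is correct and takes essentially the same approach as the paper: shrink $X$ toward the fixed interior point $z$ by a $\delta$-dependent homothety, use Lemma~\ref{lem : windows/safety_strip} to verify that the shrunken set lies inside $X^{-\delta}$, and bound the displacement by $\grando(\delta)\diam(X)$. The only cosmetic difference is that you spell out the reparametrization ($M\delta$ in place of $\delta$) and the finiteness of $M$ explicitly, whereas the paper leaves those steps implicit; there is also a harmless inconsistency between writing $\LevelX{z}{X}(u)$ versus $\LevelX{z}{X}(u+z)$, which already appears in the paper's own statement and proof of Lemma~\ref{lem : windows/safety_strip} and does not affect the argument.
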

\begin{proof}
	The inequality~\eqref{eqn : windows/safety_strip_int} implies that for all $\delta>0$,
	\begin{equation*}
		\p{ 1-\delta \max_{u \in \S} \LevelX zX(u) }X + \delta \max_{u \in \S}\p{ \LevelX zX(u)} z \subseteq X^{-\delta}.
	\end{equation*}
	Consequently, for all $x \in X$,
	\begin{equation*}
		\p{ 1-\delta \max_{u \in \S} \LevelX zX(u)} x + \delta \max_{u \in \S}\p{ \LevelX zX(u)} z \in X^{-\delta},
	\end{equation*}
	thus
	\begin{align*}
		\d\p{x, X^{-\delta}} %
			&\le \norme{x - \p{ 1-\delta \max_{u \in \S} \LevelX zX(u)} x - \delta \max_{u \in \S}\p{ \LevelX zX(u)} z }\\
			&= \delta \max_{u \in \S}\p{ \LevelX zX(u)}\norme{x-z}\\
			&\le \delta \max_{u \in \S}\p{ \LevelX zX(u)} \diam(X).
	\end{align*}
	This concludes the proof.
\end{proof}
\newpage


\begin{thebibliography}{10}

\bibitem{Amb04}
L.~Ambrosio and P.~Tilli.
\newblock {\em Topics on Analysis in Metric Spaces}.
\newblock Oxford Lecture Mathematics and. Oxford University Press, 2004.

\bibitem{50yFPP}
A.~Auffinger, M.~Damron, and J.~Hanson.
\newblock {\em 50 Years of First-Passage Percolation}.
\newblock University Lecture Series. American Mathematical Society, 2017.

\bibitem{Bas21}
Riddhipratim Basu, Allan Sly, and Shirshendu Ganguly.
\newblock Upper tail large deviations in first passage percolation.
\newblock {\em Communications on Pure and Applied Mathematics},
  74(8):1577--1640, 2021.
\newblock URL: \url{https://onlinelibrary.wiley.com/doi/abs/10.1002/cpa.22010},
  \href
  {https://arxiv.org/abs/https://onlinelibrary.wiley.com/doi/pdf/10.1002/cpa.22010}
  {\path{arXiv:https://onlinelibrary.wiley.com/doi/pdf/10.1002/cpa.22010}},
  \href {https://doi.org/10.1002/cpa.22010} {\path{doi:10.1002/cpa.22010}}.

\bibitem{Brezis}
H.~Brezis.
\newblock {\em Functional Analysis, Sobolev Spaces and Partial Differential
  Equations}.
\newblock Universitext. Springer New York, NY, 2010.
\newblock \href {https://doi.org/10.1007/978-0-387-70914-7}
  {\path{doi:10.1007/978-0-387-70914-7}}.

\bibitem{Bridson1999}
Martin~R. Bridson and Andr{\'e} Haefliger.
\newblock {\em Length Spaces}.
\newblock Springer Berlin Heidelberg, Berlin, Heidelberg, 1999.
\newblock \href {https://doi.org/10.1007/978-3-662-12494-9_3}
  {\path{doi:10.1007/978-3-662-12494-9_3}}.

\bibitem{Cho03}
Yunshyong Chow and Yu~Zhang.
\newblock {Large deviations in first-passage percolation}.
\newblock {\em The Annals of Applied Probability}, 13(4):1601 -- 1614, 2003.
\newblock \href {https://doi.org/10.1214/aoap/1069786513}
  {\path{doi:10.1214/aoap/1069786513}}.

\bibitem{Cos23}
Cl{\'e}ment Cosco and Shuta Nakajima.
\newblock {A variational formula for large deviations in first-passage
  percolation under tail estimates}.
\newblock {\em The Annals of Applied Probability}, 33(3):2103 -- 2135, 2023.
\newblock \href {https://doi.org/10.1214/22-AAP1861}
  {\path{doi:10.1214/22-AAP1861}}.

\bibitem{ShapeThm}
J.~Theodore Cox and Richard Durrett.
\newblock {Some Limit Theorems for Percolation Processes with Necessary and
  Sufficient Conditions}.
\newblock {\em The Annals of Probability}, 9(4):583 -- 603, 1981.
\newblock \href {https://doi.org/10.1214/aop/1176994364}
  {\path{doi:10.1214/aop/1176994364}}.

\bibitem{Cra09}
M.~Cranston, D.~Gauthier, and T.~S. Mountford.
\newblock {On large deviation regimes for random media models}.
\newblock {\em The Annals of Applied Probability}, 19(2):826 -- 862, 2009.
\newblock \href {https://doi.org/10.1214/08-AAP535}
  {\path{doi:10.1214/08-AAP535}}.

\bibitem{Dem21+}
Barbara Dembin and Marie Théret.
\newblock Large deviation principle for the streams and the maximal flow in
  first passage percolation.
\newblock 2021.
\newblock \href {https://arxiv.org/abs/2010.05526} {\path{arXiv:2010.05526}}.

\bibitem{LDTA}
A.~Dembo and O.~Zeitouni.
\newblock {\em Large Deviations Techniques and Applications}.
\newblock Stochastic Modelling and Applied Probability. Springer Berlin
  Heidelberg, 2009.

\bibitem{Fed69}
H.~Federer.
\newblock {\em Geometric Measure Theory}.
\newblock Grundlehren der mathematischen Wissenschaften. Springer Berlin
  Heidelberg, 1969.

\bibitem{KestenStFlour}
Harry Kesten.
\newblock {\em Ecole d'Ete de Probabilites de Saint Flour XIV, 1984}.
\newblock École d'Été de Probabilités de Saint-Flour, 1180. Springer Berlin
  Heidelberg, Berlin, Heidelberg, 1st ed. 1986. edition, 1986.

\bibitem{Kir94}
Bernd Kirchheim.
\newblock Rectifiable metric spaces: Local structure and regularity of the
  hausdorff measure.
\newblock {\em Proceedings of the American Mathematical Society},
  121(1):113--123, 1994.
\newblock URL: \url{http://www.jstor.org/stable/2160371}.

\bibitem{Rudin}
W.~Rudin.
\newblock {\em Real and Complex Analysis}.
\newblock Higher Mathematics Series. McGraw-Hill Education, 1987.

\end{thebibliography}
\end{document}